\def\ps@pprintTitle{
 \let\@oddhead\@empty
 \let\@evenhead\@empty
 \let\@oddfoot\@empty
 \let\@evenfoot\@oddfoot
}
\numberwithin{equation}{section}
\newtheorem{theorem}{Theorem}[section]
\newtheorem{proposition}[theorem]{Proposition}
\newtheorem{corollary}[theorem]{Corollary}
\newtheorem{lemma}[theorem]{Lemma}
\newtheorem{remark}[theorem]{Remark}
\DeclareMathOperator{\col}{col}
\DeclareMathOperator{\diag}{diag}
\DeclareMathOperator{\dom}{dom}
\DeclareMathOperator*{\esssup}{ess\,sup}
\newcommand{\oneto}[1]{\{1, \ldots, {#1}\}}
\newcommand{\oneton}{\oneto{n}}
\let\Im\relax
\DeclareMathOperator{\Im}{Im}
\let\Re\relax
\DeclareMathOperator{\Re}{Re}
\newcommand{\wt}{\widetilde}
\newcommand{\wh}{\widehat}
\renewcommand{\le}{\leqslant}
\renewcommand{\ge}{\geqslant}
\renewcommand{\atop}[2]{\genfrac{}{}{0pt}{2}{#1}{#2}}
\newcommand{\abs}[1]{\left|{#1}\right|}
\newcommand{\bigabs}[1]{\bigl|{#1}\bigr|}
\newcommand{\floor}[1]{\left\lfloor{#1}\right\rfloor}
\newcommand{\ceil}[1]{\left\lceil{#1}\right\rceil}
\newcommand{\eps}{\varepsilon}
\newcommand{\alp}{\alpha}
\newcommand{\gam}{\gamma}
\renewcommand{\l}{\lambda}
\renewcommand{\L}{\Lambda}
\def\bC{\mathbb{C}}
\def\bN{\mathbb{N}}
\def\bR{\mathbb{R}}
\def\bZ{\mathbb{Z}}
\journal{Journal of Functional Analysis}
\begin{document}

\sloppy

\begin{frontmatter}

\title
{On the trace formulas and completeness property of root vectors
systems for \texorpdfstring{$2 \times 2$}{2 x 2} Dirac type operators}

\author{Anton~A.~Lunyov}
\address{
Facebook, Inc. \\
1 Hacker Way, Menlo Park, California, 94025 \\
United States of America}

\ead{A.A.Lunyov@gmail.com}

\author{Mark~M.~Malamud}

\address{
Peoples Friendship University of Russia (RUDN University) \\
6 Miklukho-Maklaya St., Moscow, 117198 \\
Russian Federation}

\ead{malamud3m@gmail.com}

\begin{abstract}
The paper is concerned with the completeness property of the system of root vectors of a boundary value problem for the following $2 \times 2$ Dirac type equation
\begin{align*}
 & L y = -i B^{-1} y' + Q(x) y = \lambda y , \quad y= \col(y_1, y_2),
 \quad x \in [0,1], \\
 & B = \diag(b_1, b_2),
 \quad b_1 < 0 < b_2, \quad\text{and}\quad
 Q \in W_1^n[0,1] \otimes \bC^{2 \times 2},
\end{align*}
subject to general non-regular two-point boundary conditions $C y(0) + D y(1) = 0$.
If $b_2 = -b_1 = 1$ this equation is equivalent to the one dimensional Dirac equation.

We establish asymptotic expansion of the characteristic determinant of this boundary value problem. This expansion directly yields new completeness result for the system of root vectors of such boundary value problem with \emph{non-regular and even degenerate} boundary conditions. We also present several explicit completeness results in terms of the values $Q^{(j)}(0)$ and $Q^{(j)}(1)$. In the case of degenerate boundary conditions and analytic $Q(\cdot)$, the criterion of completeness property is established.
\end{abstract}

\begin{keyword}
Systems of ordinary differential equations \sep
Boundary value problem \sep
Characteristic determinant
Asymptotic expansion \sep
Completeness property

\MSC 47E05 \sep 34L40 \sep 34L10
\end{keyword}

\end{frontmatter}

\renewcommand{\contentsname}{Contents}
\tableofcontents

\section{Introduction}
In this paper we continue investigations from the cycle of our previous papers~\cite{MalOri12,LunMal14IEOT,LunMal15JST,LunMal16JMAA,LunMal22JDE} on spectral properties of first order $n\times n$ systems of ODE.
Here we restrict ourselves to the case of $2 \times 2$ Dirac type system of the form
\begin{equation}\label{eq:system}
 -i B^{-1} y'+Q(x)y=\l y, \qquad y=\col(y_1,y_2), \qquad x\in[0,1],
\end{equation}
where
\begin{equation}\label{eq:BQ}
 B = \diag(b_1, b_2), \quad b_1 < 0 < b_2 \quad \text{and}\quad
 Q = \begin{pmatrix} 0 & Q_{12} \\ Q_{21} & 0 \end{pmatrix}
 \in L^1([0,1];\bC^{2 \times 2}).
\end{equation}
In the case $-b_1 = b_2 =1$ system~\eqref{eq:system} is equivalent to the classical $2 \times 2$ Dirac system.

With system~\eqref{eq:system} one associates, in a natural way,
the maximal operator $L = L(Q)$ acting in $L^2([0,1]; \bC^2)$ on
the domain
\begin{equation}
 \dom(L) = \{y \in W_1^1([0,1]; \bC^2) : Ly \in L^2([0,1]; \bC^2)\}.
\end{equation}
Here $W_p^n[a,b]$ denotes the Sobolev space of functions $f$ having $n-1$ absolutely continuous derivatives on $[a,b]$ and satisfying $f^{(n)}\in L^p[a,b]$, in particular $W_p^0[a,b] = L^p[a,b]$

To obtain a boundary value problem (BVP), equation~\eqref{eq:system} is subject to the following boundary conditions
\begin{equation}\label{eq:Udef}
 U_j(y) := a_{j 1}y_1(0) + a_{j 2}y_2(0) + a_{j 3}y_1(1) + a_{j 4}y_2(1)= 0,
 \quad j \in \{1,2\}.
\end{equation}
Going forward we set
\begin{equation} \label{eq:Ajk.Jjk.def}
 A_{jk} := \begin{pmatrix} a_{1j} & a_{1k} \\ a_{2j} & a_{2k} \end{pmatrix}
 \qquad\text{and}\qquad J_{jk} := \det (A_{jk}), \qquad j,k\in \oneto{4}.
\end{equation}

Denote by $L_U := L_U(Q)$ the operator associated in
$L^2([0,1]; \bC^n)$ with the
BVP~\eqref{eq:system}--\eqref{eq:Udef}. It is defined as the
restriction of the maximal operator $L = L(Q)$ to the domain
\begin{equation} \label{eq:dom}
 \dom(L_U) = \{y \in \dom(L) : U_1(y) = U_2(y) = 0\}.
\end{equation}

The general spectral problem for $n \times n$ first order system of the form~\eqref{eq:system}
for the first was investigated by G.\;Birkhoff and R.\;Langer~\cite{BirLan23}. More precisely, they introduced the concepts of \emph{regular and strictly regular boundary conditions}, investigated the asymptotic behavior of eigenvalues and eigenfunctions and proved \emph{a pointwise convergence result} on spectral decompositions for the corresponding differential operator.

The first completeness result for such systems was established by V.P. Ginzburg~\cite{Gin71} in the case $B = I_n$, $Q(\cdot) = 0$.

Recall that boundary conditions~\eqref{eq:Udef} are called \emph{regular}, if
\begin{equation} \label{eq:reg.BC.intro}
 J_{32} = \det(A_{12} P_+ + A_{34} P_-) \ne 0 \quad\text{and}\quad
 J_{14} = \det(A_{12} P_- + A_{34} P_+) \ne 0,
\end{equation}
where $P_+$ (resp.\ $P_-$) is the spectral projection
onto positive (resp.\ negative) part of the spectrum of the matrix $B = B^*$.

V.A.~Marchenko~\cite{Mar77} was the first to establish
completeness property for the system of root functions of the Dirac operator $L_U$ ($-b_1 = b_2$) with regular boundary conditions and continuous potential matrix $Q$. The last restriction occurs because the transformation operators used for the proof was constructed in~\cite{Mar77} only for continuous $Q$.

Later, L.L.~Oridoroga and one of the authors~\cite{MalOri12} established completeness property for \emph{$B$-weakly regular} boundary value problems for arbitrary $n \times n$ first order systems of ODE with integrable matrix potential $Q \in L^1([0,1]; \bC^{n \times n})$ (originally this result was announced in~\cite{MalOri00} much earlier).
In particular, for $2 \times 2$ Dirac type system (with $Q\in L^1([0,1]; \bC^{2 \times 2})$ )
\emph{the condition of $B$-weakly regularity turns into~\eqref{eq:reg.BC.intro}} and hence ensures the completeness property for such BVP with regular boundary conditions.

Let us also briefly mention that the Riesz basis property in $L^2([0,1];\bC^2)$ of BVP~\eqref{eq:system}--\eqref{eq:Udef}
for $2 \times 2$ Dirac system with various smoothness
assumptions on the potential matrix $Q$ was investigated in numerous papers
(see~\cite{DjaMit10,DjaMit12UncDir,DjaMit12Crit,LunMal14Dokl,SavShk14, LunMal16JMAA,SavSad15,KurAbd18,KurGad20,Lun23,Iba23} and the references therein).
The first general result for non-smooth potentials was obtained by P.~Djakov and B.~Mityagin~\cite{DjaMit10, DjaMit12UncDir}
who proved under the assumption $Q \in L^2([0,1]; \bC^{2 \times 2})$ that the system of root vectors of the BVP~\eqref{eq:system}--\eqref{eq:Udef} \emph{with strictly regular boundary conditions} forms a Riesz basis and forms \emph{a block Riesz basis} whenever boundary conditions are only regular.
Note however that the methods of these papers substantially rely on $L^2$-techniques (such as Parseval equality, Hilbert-Schmidt operators, etc.) and that cannot be applied to $L^1$-potentials.
The most complete result on the Riesz basis property for $2\times 2$ Dirac and Dirac-type systems, respectively, with $Q \in L^1([0,1]; \bC^{2 \times 2})$ and strictly regular boundary conditions was obtained independently, by different methods and at the same time by A.M.~Savchuk and A.A.~Shkalikov~\cite{SavShk14} and by the authors~\cite{LunMal14Dokl,LunMal16JMAA}.
The case of regular boundary conditions is treated in~\cite{SavShk14} for the first time.

Let us denote by $\L_Q := \L_{U,Q} = \{\l_{Q,n}\}_{n \in \bZ} := \{\l_n\}_{n \in \bZ}$ the spectrum of the operator $L_U(Q)$, counting multiplicity.
Namely, if $m_a(\l_n)$ is the algebraic multiplicity of the eigenvalue $\l_n$ of the operator $L_U(Q)$, then the number $\l_n$ appears in the sequence $\L_Q$ exactly $m_a(\l_n)$ times.
It is worth mentioning, that the algebraic multiplicity $m_a(\l_n)$ of the eigenvalue $\l_n$ coincides with the multiplicity of the zero $\l_n$ of the characteristic determinant $\Delta_Q(\cdot) = \Delta_{Q,U}(\cdot)$, an entire function given by the following formula:
\begin{multline} \label{eq:Delta.intro}
 \Delta_Q(\l) = J_{12} + J_{34}e^{i(b_1+b_2)\l} \\
 + J_{32}\varphi_{11}(1, \l) + J_{13}\varphi_{12}(1, \l)
 + J_{42}\varphi_{21}(1, \l) + J_{14}\varphi_{22}(1, \l).
\end{multline}
where $(\varphi_{jk}(x,\l))_{j,k=1}^2 := \Phi(x,\l)$ is the fundamental matrix of the system~\eqref{eq:system} (uniquely) determined by the initial condition $\Phi(0,\l)=I_2$.
Recall also that
\begin{equation}\label{eq:Delta0.intro}
 \Delta_0(\l) = J_{12} + J_{34}e^{i(b_1+b_2)\l}
 + J_{32}e^{ib_1\l} + J_{14}e^{ib_2\l}
\end{equation}
is the characteristic determinant of problem~\eqref{eq:system}--\eqref{eq:Udef} with $Q=0$.

Note that \emph{boundary conditions~\eqref{eq:reg.BC.intro} are regular if and only if the entire function $\Delta_0(\cdot)$ is of maximal growth in both half-planes $\bC_{\pm}$}. In this connection we recall one of the key results of our previous work~\cite{LunMal16JMAA} which states
that the characteristic determinant $\Delta_Q(\cdot)$ of the problem~\eqref{eq:system}--\eqref{eq:Udef} with $Q(\cdot) \in L^1([0,1]; \bC^{2 \times 2})$ admits the following representation:
\begin{align} \label{eq:Delta=Delta0+_Intro}
 \Delta_Q(\l) &= \Delta_0(\l) + \int^1_0 g_1 (t) e^{i b_1 \l t} dt
 + \int^1_0 g_2 (t) e^{i b_2 \l t} dt,
\end{align}
with certain functions $g_1, g_2 \in L^1[0,1]$ expressed via kernels of the transformation operators. In turn, this formula ensures that the
determinant $\Delta_Q(\cdot)$ is also an entire sine-type function with the indicator diagram $[-ib_2, i|b_1|] \subset i \bR$,
i.e. has the exponential type $|b_{1}| (b_2)$ in $\bC_+ (\bC_-)$.
However, the converse is not true: \emph{certain non-regular boundary conditions} also lead to the characteristic
determinant $\Delta_Q(\cdot)$ with the same indicator diagram $[-ib_2, i|b_1|]$ (see~\cite{MalOri12} and~\cite{LunMal15JST})
although the indicator diagram of $\Delta_0(\cdot)$ is narrower.
A stronger effect (of the maximal growth of $\Delta_Q(\cdot)$) for system~\eqref{eq:system} with variable matrix
$B(\cdot)$ was recently discovered in~\cite{LunMal22POMI}.

Emphasize that representation~\eqref{eq:Delta=Delta0+_Intro} plays a crucial role in several recent investigations. For instance, A.S.\;Makin~\cite{Mak20,Mak21DE} applied representation~\eqref{eq:Delta=Delta0+_Intro} to prove that a given entire function of exponential type is a characteristic determinant of a degenerate BVP ($\Delta_0(\cdot) = 0$) for $2 \times 2$
Dirac system if and only if this function is of Paley-Wiener class.
In~\cite{Mak22} Makin used this representation to find explicit algebraic conditions on a potential matrix $Q(\cdot)$ that guarantee the (non-block!) Riesz basis property for arbitrary regular but not necessary strictly regular $2 \times 2$ Dirac operator.
Recently the authors~\cite{LunMal22JDE} applied this representation to evaluate Lipschitz dependance of the spectral
data on the potential matrix $Q(\cdot)$ running through compact sets of $L^p([0,\ell]; \bC^{2 \times 2})$, $p \in [1,2]$.
Moreover, in~\cite{LunMal21} we generalized representation~\eqref{eq:Delta=Delta0+_Intro} to the case of $n \times n$ system of ODE and applied it to establish the Riesz basis property in $L^2([0,1];\bC^n)$ of the root vectors system of the corresponding strictly regular BVP.

If the functions $g_1, g_2 \in W_1^n[0,1]$, one can integrate by parts in~\eqref{eq:Delta=Delta0+_Intro} and use Riemann-Lebesgue Lemma to obtain the following asymptotic expansion for $\Delta_Q(\l)$:
\begin{align}
\label{eq:Delta+}
 \Delta_Q(\l) &= e^{i b_1 \l} \cdot \Bigl(J_{32}
 - \sum_{k=1}^n \frac{g_1^{(k-1)}(1)}{(-i b_1 \l)^k}
 + o(\l^{-n})\Bigr), \qquad \Im \l \to + \infty, \\
 \label{eq:Delta-}
 \Delta_Q(\l) &= e^{i b_2 \l} \cdot \Bigl(J_{14}
 - \sum_{k=1}^n \frac{g_2^{(k-1)}(1)}{(-i b_2 \l)^k}
 + o(\l^{-n})\Bigr), \qquad \Im \l \to - \infty,
\end{align}
(see Remark~\ref{rem:byparts} for details).

Note in this connection that similar representation for the characteristic determinant of the Sturm-Liouville operators was used in~\cite{Mal08} to establish potential-dependent completeness property of the BVP with
degenerate boundary conditions and a potential $q \in C^n[0,1]$.
Later A.S.~Makin~\cite{Mak14} significantly improved this result by relaxing the continuity assumption.

In this paper we investigate completeness property for the problem~\eqref{eq:system}--\eqref{eq:Udef} \emph{with non-regular boundary conditions}, i.e.\ when $J_{32} J_{14} = 0$. In this case BVP~\eqref{eq:system}--\eqref{eq:Udef} necessarily has \emph{an incomplete system of root functions whenever $Q(\cdot) \equiv 0$}.

In~\cite{MalOri12} L.L.~Oridoroga and one of the authors (see also~\cite{MalOri10}) established the first potential-dependent completeness result for the operator $L_U(Q)$ with non-regular boundary conditions assuming that $Q \in C^1([0,1]; \bC^{2 \times 2})$. It reads as follows:
\begin{theorem}[Theorem 5.1 in~\cite{MalOri12}] \label{th:MalOri}
Let $Q_{12}, Q_{21} \in C^1[0,1]$. Then the system of root functions of the operator $L_U(Q)$ is complete in $L^2([0,1]; \bC^2)$ whenever the following two conditions hold:
\begin{align}
\label{eq:A32}
 & |J_{32}| + |b_1 J_{13} Q_{12}(0) + b_2 J_{42} Q_{21}(1) | \ne 0, \\
\label{eq:A14}
 & |J_{14}| + |b_1 J_{13} Q_{12}(1) + b_2 J_{42} Q_{21}(0) | \ne 0.
\end{align}
\end{theorem}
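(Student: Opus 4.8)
The plan is to reduce the completeness assertion to a growth (indicator) property of the characteristic determinant and then to read that property off from the asymptotic expansion. The starting point is the usual resolvent/duality reduction. Suppose $f \in L^2([0,1];\bC^2)$ is orthogonal to the whole root system of $L_U(Q)$. Representing the resolvent through its Green function, $(L_U-\l)^{-1}h = \int_0^1 G(x,\xi,\l)h(\xi)\,d\xi$ with $G = K(x,\xi,\l)/\Delta_Q(\l)$ and $K$ entire in $\l$, orthogonality to every root function (eigen- and associated) means that all principal parts of $\l\mapsto\langle(L_U-\l)^{-1}h,f\rangle$ vanish, so this meromorphic function is in fact entire for each $h$. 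Thus $\l\mapsto\langle(L_U-\l)^{-1}h,f\rangle = E_{h,f}(\l)/\Delta_Q(\l)$ is entire, where $E_{h,f}$ is an entire function of exponential type, built from $h$, $f$ and the numerator $K$, whose indicator diagram lies inside $[-ib_2,i|b_1|]$. If I can force this quotient to vanish identically for every $h$, then $(L_U^*-\bar\l)^{-1}f=0$, and injectivity of the resolvent yields $f=0$, i.e.\ completeness.

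First I would pin down the indicator diagram of $\Delta_Q$. Since $Q_{12},Q_{21}\in C^1[0,1]$, the functions $g_1,g_2$ in~\eqref{eq:Delta=Delta0+_Intro} lie in $W_1^1[0,1]$, so expansions~\eqref{eq:Delta+}--\eqref{eq:Delta-} apply with $n=1$. More transparently, I would feed the first Picard approximation of the fundamental matrix into~\eqref{eq:Delta.intro}: writing $\Phi(x,\l)=e^{i\l Bx}\Psi(x,\l)$ gives $\Psi' = -i\,e^{-i\l Bx}BQ\,e^{i\l Bx}\Psi$, whose first off-diagonal corrections are $\int_0^1 Q_{12}(s)e^{i\l(b_2-b_1)s}\,ds$ and $\int_0^1 Q_{21}(s)e^{i\l(b_1-b_2)s}\,ds$. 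One integration by parts (exactly what $Q\in C^1$ permits) plus Riemann--Lebesgue isolates the boundary-layer contributions at $s=0,1$. In $\bC_+$ the dominant exponential is $e^{ib_1\l}$; its coefficient equals $J_{32}$ to leading order, and matching with~\eqref{eq:Delta+} when $J_{32}=0$ identifies
\begin{equation*}
 \frac{g_1(1)}{ib_1}=\frac{1}{b_2-b_1}\bigl(b_1 J_{13}Q_{12}(0)+b_2 J_{42}Q_{21}(1)\bigr).
\end{equation*}
Symmetrically, in $\bC_-$ the coefficient of $e^{ib_2\l}$ equals $J_{14}$ to leading order, the next coefficient being proportional to $b_1 J_{13}Q_{12}(1)+b_2 J_{42}Q_{21}(0)$.

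The point of this computation is that hypotheses~\eqref{eq:A32}--\eqref{eq:A14} say \emph{precisely} that the leading nonzero coefficient of $e^{ib_1\l}$ in $\bC_+$ (either $J_{32}$, or the $\l^{-1}$-term when $J_{32}=0$) is nonzero, and likewise for $e^{ib_2\l}$ in $\bC_-$. Hence the indicator of $\Delta_Q$ satisfies $h_{\Delta_Q}(\pi/2)=|b_1|$ and $h_{\Delta_Q}(-\pi/2)=b_2$, so $\Delta_Q$ has the full diagram $[-ib_2,i|b_1|]$, at worst with a harmless factor $\l^{-1}$ along $i\bR$ in the degenerate case. Since $E_{h,f}$ has indicator inside the same segment, the entire quotient $E_{h,f}/\Delta_Q$ is of minimal exponential type in every direction; a Phragm\'en--Lindel\"of argument then forces it to be a polynomial, and the decay of $E_{h,f}/\Delta_Q$ along $\bR$ forces it to vanish. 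This gives $f=0$.

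The hard part will be the last step in the genuinely non-regular case $J_{32}J_{14}=0$. There $\Delta_Q$ is no longer a sine-type function: it loses the uniform lower bound on rays approaching $i\bR$, so $1/\Delta_Q$ blows up polynomially there and one cannot simply invoke boundedness of the quotient. The delicate point is to show that the boundary-layer decay of the numerator $E_{h,f}$ — produced by the same endpoint mechanism that generated the $\l^{-1}$ term in $\Delta_Q$ — exactly compensates this blow-up, so that $E_{h,f}/\Delta_Q$ stays of polynomial growth along $i\bR$ while decaying along $\bR$; closing this Phragm\'en--Lindel\"of estimate, rather than the determinant asymptotics, is the real obstacle.
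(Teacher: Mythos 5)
Your overall strategy is the right one and matches the logic by which this statement is actually established: extract from \eqref{eq:Delta+}--\eqref{eq:Delta-} with $n=1$ that under \eqref{eq:A32}--\eqref{eq:A14} the determinant obeys two-sided lower bounds along the imaginary axis with at most one power of $\l$ lost, and then convert those bounds into completeness. Your identification of the $\l^{-1}$-coefficients is correct: the paper's Lemma~\ref{lem:c123}(i) gives exactly $c_1^+=-i\bigl(J_{13}b_1Q_{12}(0)+J_{42}b_2Q_{21}(1)\bigr)$ and $c_1^-=-i\bigl(J_{13}b_1Q_{12}(1)+J_{42}b_2Q_{21}(0)\bigr)$, and Corollary~\ref{cor:c01} is precisely this theorem (under the weaker hypothesis $Q_{12},Q_{21}\in W_1^1[0,1]\supset C^1[0,1]$). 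Your route to these coefficients — first Picard approximation of $\Psi'=-ie^{-i\l Bx}BQe^{i\l Bx}\Psi$ plus one integration by parts — differs from the paper's machinery (Marchenko-type solution expansions in Section~\ref{sec:trace}, or, in the original \cite{MalOri12}, triangular transformation operators), and is plausible for $C^1$ potentials; but note you assert rather than prove that the tail of the Picard series contributes $o(\l^{-1})e^{-b_1\Im\l}$ uniformly (in particular the second-order diagonal iterates, which multiply $J_{32}$ and genuinely contribute at order $\l^{-1}$ along $i\bR$, must be controlled when $J_{32}\ne 0$). That remainder control is exactly what the transformation-operator representation \eqref{eq:Delta=Delta0+_Intro} with $g_1,g_2\in W_1^1$, or Theorem~\ref{th:Delta}, supplies.

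The genuine gap is the step you yourself flag as ``the real obstacle'': the passage from the lower bounds $|\Delta_Q(it)|\ge Ce^{-b_1t}/t$, $|\Delta_Q(-it)|\ge Ce^{b_2t}/t$ to completeness in the non-regular case $J_{32}J_{14}=0$. This is not a detail to be closed by the sketchy Phragm\'en--Lindel\"of paragraph you give — your claim that ``decay along $\bR$ forces the polynomial to vanish'' is unavailable, since no lower bound for $\Delta_Q$ on or near $\bR$ exists here, and the quotient $E_{h,f}/\Delta_Q$ is merely $O(t^{m})$ along $i\bR$, not $o(1)$, so vanishing does not follow from growth estimates alone. This closing argument is precisely the content of the abstract completeness criterion quoted as Theorem~\ref{th:compl.gen} (Theorem 2.3 of \cite{LunMal14IEOT}), whose hypotheses \eqref{eq:Delta(+-it)>=} are exactly the bounds you derived; it is the missing key lemma in your proposal, and with it the proof closes in one line, as in Theorem~\ref{th:compl.gen.2x2}. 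Incidentally, your worry that the numerator's ``boundary-layer decay must exactly compensate'' the polynomial blow-up of $1/\Delta_Q$ is a misdiagnosis: the criterion tolerates an arbitrary fixed polynomial loss $t^{-m}$ on the two rays, and the Riemann--Lebesgue-type bound $|E_{h,f}(it)|=O(e^{-b_1t})$ needs no fine cancellation — the delicate work lives inside the proof of the cited abstract theorem, not in matching asymptotic coefficients.
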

The proof relies on formulas~\eqref{eq:Delta+}--\eqref{eq:Delta-} for $n=1$.
Namely, using triangular transformation operators the authors
obtained explicit form of the coefficients $g_1(1)$ and $g_2(1)$ in~\eqref{eq:Delta+}--\eqref{eq:Delta-} in terms of a potential matrix $Q(\cdot)$ (see also~\cite[Remark 4.6]{LunMal15JST} on the discussion about smoothness assumption in this result).
In~\cite{AgiMalOri12}, similar results using the same method were obtained in the case of $B \ne B^*$ and analytic $Q(\cdot)$.

In~\cite{LunMal14IEOT} and~\cite{LunMal15JST}, we generalized and clarified~\cite[Theorem 5.1]{MalOri12} to establish \emph{potential-dependent completeness as well as spectral synthesis property} for the system of root functions of the $n \times n$ system with not necessarily selfadjoint matrix $B$ and non-weakly-regular boundary conditions.
In particular, for $2 \times 2$ Dirac-type systems~\cite[Proposition 4.5]{LunMal15JST} improves~\cite[Theorem 5.1]{MalOri12} by relaxing $C^1$-smoothness of the potential $Q(\cdot)$ on $[0,1]$ to continuity of $Q(\cdot)$ at the endpoints $\{0,1\}$.

In a very recent paper~\cite{KosShk21} A.P.\;Kosarev and A.A.\;Shkalikov extended completeness results from~\cite{MalOri12,AgiMalOri12,LunMal15JST} to the case of $2 \times 2$ Dirac-type operators with non-constant matrix $B = \diag(b_1(x), b_2(x))$ and degenerate boundary conditions of a special form ($y_1(0) = y_2(1) = 0$) under the smoothness assumption $b_1, b_2, Q_{12}, Q_{21} \in W_1^1[0,1]$.
In another recent preprint~\cite{Mak23}, A.S.\;Makin generalized and substantially improved our result~\cite[Proposition 4.5]{LunMal15JST} in the Dirac case by proposing a new interesting approach (see Theorem~\ref{th:Makin}). Namely, instead of continuity condition on $Q(\cdot)$ he proposed a new more general integral limit condition.

In this paper, we aim to further refine results from~\cite{MalOri12} and~\cite{LunMal15JST} when potential matrix $Q(\cdot)$ has additional smoothness.
However, finding explicit form of the derivatives $g_1^{(k-1)}(1)$ and $g_2^{(k-1)}(1)$ in asymptotic formulas~\eqref{eq:Delta+}--\eqref{eq:Delta-} directly using transformation operators is rather difficult.
In this paper we propose another approach to compute these coefficients by adapting Marchenko's method from~\cite{Mar77}.
Namely, we establish asymptotic expansion of
the characteristic determinant $\Delta_Q(\l)$ of BVP~\eqref{eq:system}--\eqref{eq:Udef} of the form~\eqref{eq:Delta+}--\eqref{eq:Delta-} with coefficients
explicitly expressed as polynomials in boundary values of the functions $Q_{12}, Q_{21} \in W_1^n[0,1]$ and their derivatives as well as numbers $b_1$, $b_2$, $J_{14}$, $J_{32}$, $J_{13}$, $J_{42}$ (see Theorem~\ref{th:Delta}).
In turn, we apply this result to provide general explicit conditions of completeness in Theorem~\ref{th:compl.gen.2x2}.
For instance, in the case of $Q_{12}, Q_{21} \in W_1^2[0,1]$, it reads as follows.
\begin{proposition} \label{prop:n=2}
Let $Q_{12}, Q_{21} \in W_1^2[0,1]$. Assume that either condition~\eqref{eq:A32} holds or the following condition holds:
\begin{equation} \label{eq:c2+ne0.intro}
 b_1 J_{13} Q_{12}'(0) - b_2 J_{42} Q_{21}'(1)
 + i b_1 b_2 J_{14} Q_{12}(0) Q_{21}(1) \ne 0.
\end{equation}
Further, assume also that either condition~\eqref{eq:A14} holds or the following condition holds:
\begin{equation} \label{eq:c2-ne0.intro}
 b_1 J_{13} Q_{12}'(1) - b_2 J_{42} Q_{21}'(0)
 - i b_1 b_2 J_{32} Q_{12}(1) Q_{21}(0) \ne 0
\end{equation}
Then the system of root vectors of the BVP~\eqref{eq:system}--\eqref{eq:Udef} is complete and minimal in $L^2([0,1];\bC^2)$.
\end{proposition}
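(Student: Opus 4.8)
The plan is to obtain Proposition~\ref{prop:n=2} as the special case $n=2$ of the general completeness criterion (Theorem~\ref{th:compl.gen.2x2}) once the asymptotic coefficients in \eqref{eq:Delta+}--\eqref{eq:Delta-} have been computed explicitly (Theorem~\ref{th:Delta}). The guiding principle is that completeness and minimality of the root system are governed by the growth of $\Delta_Q$: they hold as soon as $\Delta_Q$ attains \emph{maximal growth} in both half-planes, i.e.\ has the full indicator diagram $[-ib_2, i|b_1|]$, with exponential type $|b_1|$ in $\bC_+$ and $b_2$ in $\bC_-$. So I would first reduce the assertion to this growth condition and then read the growth off the hypotheses.

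For the reduction I would argue as follows. By \eqref{eq:Delta=Delta0+_Intro} the determinant $\Delta_Q$ is entire of exponential type with indicator diagram contained in $[-ib_2, i|b_1|]$. Suppose some $h \in L^2([0,1];\bC^2)$ is orthogonal to all root subspaces of $L_U$; then for every $g$ the scalar function $\l \mapsto \langle (L_U - \l)^{-1} g, h\rangle$ is entire, since the orthogonality annihilates the principal parts of the resolvent at the eigenvalues. Expressing $(L_U-\l)^{-1}$ through the fundamental matrix $\Phi(x,\l)$ and the Green function, whose denominator is $\Delta_Q(\l)$, this entire function has exponential type and, on the rays where $\Delta_Q$ grows maximally, the factor $1/\Delta_Q(\l)$ exactly offsets the growth $e^{|b_j|\,|\Im\l|\,x}$ of $\Phi$. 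A Phragm\'en--Lindel\"of argument then forces the function to vanish identically, whence $h=0$ and the system is complete; minimality follows because maximal growth makes $\Delta_Q$ a sine-type function, whose zeros are separated and of bounded multiplicity, so that a biorthogonal system exists. Note that control in both $\bC_+$ and $\bC_-$ is needed to cover the whole plane, which is why two conditions appear.

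It remains to extract maximal growth from the hypotheses. Expansion \eqref{eq:Delta+} with $n=2$ reads
\[
 \Delta_Q(\l) = e^{ib_1\l}\Bigl(J_{32} - \frac{g_1(1)}{(-ib_1\l)} - \frac{g_1'(1)}{(-ib_1\l)^2} + o(\l^{-2})\Bigr), \qquad \Im\l \to +\infty,
\]
with the symmetric statement for \eqref{eq:Delta-} in $\bC_-$ involving $J_{14}$, $g_2(1)$, $g_2'(1)$. I would compute these coefficients by adapting Marchenko's iteration to second order, obtaining $g_1(1)$ equal, up to a nonzero constant, to $b_1 J_{13}Q_{12}(0) + b_2 J_{42}Q_{21}(1)$ and $g_1'(1)$ proportional to the left-hand side of \eqref{eq:c2+ne0.intro}, and likewise $g_2(1)$, $g_2'(1)$ matching \eqref{eq:A14}, \eqref{eq:c2-ne0.intro}. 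The hypotheses then say precisely that at least one of the three leading $\bC_+$-coefficients $\{J_{32},\, g_1(1),\, g_1'(1)\}$ is nonzero, and at least one of $\{J_{14},\, g_2(1),\, g_2'(1)\}$ is nonzero. Letting $m_\pm \in \{0,1,2\}$ be the least index with a nonvanishing coefficient, the expansions give $\Delta_Q(\l) \sim c_\pm\,\l^{-m_\pm}e^{ib_{1,2}\l}$; since the polynomial factor $\l^{-m_\pm}$ does not alter the indicator, $\Delta_Q$ has type exactly $|b_1|$ in $\bC_+$ and $b_2$ in $\bC_-$, i.e.\ maximal growth. Feeding this into the reduction completes the proof.

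The hard part will be the explicit second-order coefficient $g_1'(1)$ (and $g_2'(1)$): extracting it requires tracking the transformation-operator kernels, equivalently the Marchenko iteration, two steps deep. The genuinely new feature relative to the first-order result of Theorem~\ref{th:MalOri} is the \emph{quadratic} cross-term $ib_1 b_2 J_{14}Q_{12}(0)Q_{21}(1)$ in \eqref{eq:c2+ne0.intro}, which originates from the second Neumann iterate and has no first-order analogue. A secondary technical point is justifying the integration by parts yielding \eqref{eq:Delta+}--\eqref{eq:Delta-} together with the $o(\l^{-2})$ remainder for $Q_{12},Q_{21}\in W_1^2[0,1]$ via the Riemann--Lebesgue lemma, so that the first three coefficients genuinely control the growth.
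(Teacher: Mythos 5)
Your top-level architecture coincides with the paper's: reduce completeness and minimality to lower bounds on the growth of $\Delta_Q$ in $\bC_\pm$, then verify those bounds via a second-order asymptotic expansion of the determinant. This is exactly Theorem~\ref{th:compl.gen.2x2} combined with Lemma~\ref{lem:c123}(ii), i.e.\ Corollary~\ref{cor:c012}, of which Proposition~\ref{prop:n=2} is a rewording. However, your plan for the one genuinely hard step --- producing the coefficients $g_1(1)$, $g_1'(1)$, $g_2(1)$, $g_2'(1)$ --- has a real gap. You propose to integrate \eqref{eq:Delta=Delta0+_Intro} by parts, which presupposes $g_1, g_2 \in W_1^2[0,1]$, and then to extract the boundary values by tracking the transformation-operator kernels ``two steps deep.'' Neither ingredient is available: for $Q \in W_1^n$ the kernels $R_{l,j,k}$ (hence $g_1, g_2$) are only known to lie in $X_\infty^0$, resp.\ $L^1[0,1]$, and no regularity theory giving $g_1, g_2 \in W_1^n$ with computable traces $g_j^{(k-1)}(1)$ exists in the paper; Remark~\ref{rem:byparts} states explicitly that extracting these values from the transformation operators is ``rather difficult'' and that this is precisely why the authors take a different route. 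The paper sidesteps the kernels entirely: it constructs Marchenko-type solutions with explicit recursions (Theorem~\ref{th:asymp}), passes to the quotient $\sigma^\pm = u_2^\pm/u_1^\pm$, which satisfies a Riccati equation, derives the integration-free recursion \eqref{eq:sigma1}--\eqref{eq:sigma(k+1)} (Proposition~\ref{prop:sigma}), and obtains the determinant coefficients in closed form \eqref{eq:ck+}--\eqref{eq:ck-}, whence \eqref{eq:c2+}--\eqref{eq:c2-}. You assert the final values of the coefficients (correctly, including the quadratic cross-term $i b_1 b_2 J_{14} Q_{12}(0) Q_{21}(1)$), but you provide no mechanism by which they would actually be derived, so the heart of the proof is missing.

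A second, smaller defect: your minimality argument (``maximal growth makes $\Delta_Q$ a sine-type function, whose zeros are separated'') fails in exactly the cases this proposition is designed for. If, say, $J_{32}=0$ and \eqref{eq:c2+ne0.intro} holds, then $|\Delta_Q(it)| \asymp e^{|b_1|t}/t^2$ as $t \to +\infty$, so $\Delta_Q$ admits no uniform lower bound of sine type along horizontal lines and nothing guarantees separated zeros. Both completeness and minimality must instead be drawn from the abstract criterion of Theorem~\ref{th:compl.gen} (Theorem 2.3 of \cite{LunMal14IEOT}), which requires only the two lower bounds \eqref{eq:Delta(+-it)>=} along the imaginary semi-axes, with polynomial loss permitted --- exactly what the expansions \eqref{eq:Delta.in.Omega+.ck}--\eqref{eq:Delta.in.Omega-.ck} furnish once one of $J_{32}, c_1^+, c_2^+$ and one of $J_{14}, c_1^-, c_2^-$ is nonzero. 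Your Phragm\'en--Lindel\"of sketch of that criterion is a reasonable outline of the cited result, but the sine-type shortcut to minimality should be dropped in favor of it.
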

It is evident that Proposition~\ref{prop:n=2} (announced in~\cite{LunMal13Dokl}) even in the classical Dirac case is not covered by previous completeness results from~\cite{MalOri12,AgiMalOri12,LunMal15JST,KosShk21,Mak23}, if one of conditions~\eqref{eq:A32}--\eqref{eq:A14} is violated (see Remark~\ref{rem:not.covered} for detailed discussion).

Note, however, that general form of the coefficients in the expansions~\eqref{eq:Delta+}--\eqref{eq:Delta-} is somewhat cumbersome. If $J_{14} = J_{32} = 0$, then we can present more explicit and refined condition of completeness announced in~\cite[Theorem 4]{LunMal13Dokl}.

The following function is highly involved in the following series of results,
\begin{equation} \label{eq:Pdef.intro}
 P(x) := J_{13} b_1 Q_{12}(x) + J_{42} b_2 Q_{21}(1-x).
\end{equation}
\begin{theorem} \label{th:J32=J14=0.intro}
Let $Q_{12}, Q_{21} \in W_1^n[0,1]$ for some $n \in \bN$ and let
\begin{equation} \label{eq:J32=J14=0}
 J_{32}=J_{14}=0, \qquad J_{13} J_{42} \ne 0.
\end{equation}
Further, let for some $n_0, n_1 \in \{0, 1, \ldots, n-1\}$ the following conditions hold
\begin{align}
\label{eq:Pk0.intro}
 P^{(k)}(0) &= 0, \qquad k \in \{0, 1, \ldots, n_0 - 1\}, \\
\label{eq:Pk1=0.intro}
 P^{(k)}(1) &= 0, \qquad k \in \{0, 1, \ldots, n_1 - 1\}.
\end{align}
Then the system of root vectors of the BVP~\eqref{eq:system}--\eqref{eq:Udef} is
complete and minimal in $L^2([0,1];\bC^2)$ provided that
\begin{equation} \label{eq:Pn0.Pn1}
 |n_1 - n_0| \le 1 \qquad\text{and}\qquad
 P^{(n_0)}(0) \ne 0, \qquad P^{(n_1)}(1) \ne 0.
\end{equation}
\end{theorem}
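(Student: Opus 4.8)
The plan is to deduce completeness and minimality from the growth of the characteristic determinant $\Delta_Q$, and to extract that growth from Theorem~\ref{th:Delta}. By the representation \eqref{eq:Delta=Delta0+_Intro}, $\Delta_Q$ is entire with indicator diagram contained in the maximal segment $[-ib_2, i|b_1|]$. I will use the completeness criterion from our earlier work \cite{LunMal14IEOT,LunMal15JST}: the root system of \eqref{eq:system}--\eqref{eq:Udef} is complete and minimal in $L^2([0,1];\bC^2)$ as soon as $\Delta_Q$ attains this full indicator diagram, i.e.\ has maximal exponential type in both half-planes, with $\Delta_Q(\lambda)\,e^{-ib_1\lambda}$ of exact order $\lambda^{-p}$ as $\Im\lambda\to+\infty$ and $\Delta_Q(\lambda)\,e^{-ib_2\lambda}$ of exact order $\lambda^{-q}$ as $\Im\lambda\to-\infty$, and with the two orders balanced, $|p-q|\le 1$. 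Everything thus reduces to locating the first non-vanishing coefficient in each of the one-sided expansions \eqref{eq:Delta+}--\eqref{eq:Delta-}.

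Substituting \eqref{eq:J32=J14=0} into \eqref{eq:Delta+}--\eqref{eq:Delta-} removes the leading $J_{32}$- and $J_{14}$-terms, so the half-plane behaviour of $\Delta_Q$ is controlled by the coefficients $g_1^{(k-1)}(1)$ and $g_2^{(k-1)}(1)$, $k=1,\dots,n$; here $Q_{12},Q_{21}\in W_1^n$ makes the expansion valid to order $n$, and $n_0,n_1\le n-1$ keeps the relevant coefficients in range. The decisive step is to evaluate these coefficients through the explicit formulas of Theorem~\ref{th:Delta}. I claim that under \eqref{eq:J32=J14=0} they take the form
\begin{equation*}
 g_1^{(k-1)}(1) = c_k\,P^{(k-1)}(0) + \Pi^+_k,
 \qquad
 g_2^{(k-1)}(1) = d_k\,P^{(k-1)}(1) + \Pi^-_k,
\end{equation*}
where $P$ is the function \eqref{eq:Pdef.intro}, the constants $c_k,d_k$ (built from $b_1,b_2,J_{13},J_{42}$) are non-zero, and the nonlinear remainders $\Pi^{\pm}_k$ are polynomials every monomial of which carries a factor $P^{(j)}(0)$, resp.\ $P^{(j)}(1)$, with $j\le k-2$. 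The linear part arises because the off-diagonal entries $\varphi_{12},\varphi_{21}$ of \eqref{eq:Delta.intro}, weighted by $J_{13}$ and $J_{42}$, contribute precisely $J_{13}b_1 Q_{12}^{(k-1)}(0) + (-1)^{k-1}J_{42}b_2 Q_{21}^{(k-1)}(1) = P^{(k-1)}(0)$ (and symmetrically at $x=1$); the remaining contributions (higher iterated integrals of $Q$, together with the diagonal entries $\varphi_{11},\varphi_{22}$, the latter pre-multiplied by $J_{32}=J_{14}=0$) recombine into the lower-order multiples collected in $\Pi^{\pm}_k$.

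Granting this structure, \eqref{eq:Pk0.intro}--\eqref{eq:Pk1=0.intro} complete the evaluation. The conditions $P^{(j)}(0)=0$ for $j<n_0$ annihilate both the linear terms and every $\Pi^+_k$ for $k\le n_0$, so $g_1^{(k-1)}(1)=0$ for $k\le n_0$; for $k=n_0+1$ the remainder again vanishes (its monomials involve $P^{(j)}(0)$ with $j\le n_0-1$) and the linear part leaves $g_1^{(n_0)}(1)=c_{n_0+1}P^{(n_0)}(0)\ne0$ by \eqref{eq:Pn0.Pn1}. Hence $\Delta_Q(\lambda)=e^{ib_1\lambda}\bigl(C_+\lambda^{-(n_0+1)}+o(\lambda^{-(n_0+1)})\bigr)$ with $C_+\ne0$ as $\Im\lambda\to+\infty$, so $\Delta_Q$ has maximal type in $\bC_+$ of order $p=n_0+1$. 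The same argument using \eqref{eq:Pk1=0.intro} and $P^{(n_1)}(1)\ne0$ gives maximal type in $\bC_-$ of order $q=n_1+1$.

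Since $|p-q|=|n_0-n_1|\le 1$ by \eqref{eq:Pn0.Pn1}, the balance hypothesis of the completeness criterion is met, and the criterion yields completeness and minimality of the root system. The crux — and the only non-routine part — is the structural claim of the second paragraph: that after setting $J_{32}=J_{14}=0$ every nonlinear term in the coefficients of Theorem~\ref{th:Delta} carries a factor $P^{(j)}$ of strictly lower order. This is a combinatorial property of the iterated-integral (Born) expansion of the fundamental matrix; its simplest manifestation is that the two triple-integral contributions coming from $\varphi_{12}$ and $\varphi_{21}$ coalesce into the single multiple $P(0)\int_0^1 Q_{12}(s)Q_{21}(s)\,ds$, which vanishes precisely when $P(0)=0$. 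Establishing this cancellation at every order is where the real work lies.
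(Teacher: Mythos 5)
Your argument has a genuine gap, and it sits exactly where you flag ``the real work'': the structural claim of your second paragraph is false as stated. It is not true that, once $J_{32}=J_{14}=0$, every nonlinear monomial in the coefficient $c_k^+$ (equivalently $g_1^{(k-1)}(1)$) carries a factor $P^{(j)}(0)$ with $j\le k-2$. The nonlinear terms mix the two endpoints: by Lemma~\ref{lem:c123}(iii), with $J_{32}=J_{14}=0$ one has
\begin{equation*}
 c_3^+ = -i P''(0) - \bigl(J_{13}\, q_+^2(0)\, q_-(0) + J_{42}\, q_-^2(1)\, q_+(1)\bigr),
\end{equation*}
and if $P(0)=P'(0)=0$ (so $J_{42}q_-(1)=-J_{13}q_+(0)$), the bracket does \emph{not} vanish; it collapses to $-\tfrac{i J_{13}}{J_{42}}\, q_+^2(0)\, P(1)$, i.e.\ a multiple of $P$ at the \emph{other} endpoint. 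So conditions \eqref{eq:Pk0.intro} alone do not annihilate your remainders $\Pi_k^+$, and your third paragraph's conclusion $g_1^{(n_0)}(1)=c_{n_0+1}P^{(n_0)}(0)\ne0$ does not follow. Indeed, if your claim were true the theorem would hold with no constraint whatsoever on $|n_0-n_1|$, contradicting the extended Theorem~\ref{th:J32=J14=0}(ii),(iii), where for $|n_0-n_1|\ge2$ genuinely new mixed conditions such as \eqref{eq:P.n1+2} (which couples $P^{(n_1+2)}(0)$ with $P^{(n_1)}(1)$) are required. The correct mechanism, as in the paper's proof, is a \emph{pairwise} cancellation: the expansion of $\sigma_j^\pm$ (Lemma~\ref{lem:sigma.q}) has the same universal coefficients $C_{j,k,\alpha,\beta}$ for both signs, and under \emph{both} hypothesis sets \eqref{eq:Pk0.intro} \emph{and} \eqref{eq:Pk1=0.intro} the resulting proportionality identities \eqref{eq:J13q+n0}--\eqref{eq:q+q-0=q-q+1} make each nonlinear pair in $c_j^\pm$ cancel — but only for $j\le\min\{n_0,n_1\}+2$. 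The hypothesis $|n_0-n_1|\le1$ is consumed precisely here: it guarantees $n_0+1\le\min\{n_0,n_1\}+2$ and $n_1+1\le\min\{n_0,n_1\}+2$, so that $c_{n_0+1}^+=-iP^{(n_0)}(0)\ne0$ and $c_{n_1+1}^-=-iP^{(n_1)}(1)\ne0$.

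A secondary error compounds this: you invoke a ``balance hypothesis'' $|p-q|\le1$ as part of the completeness criterion, but no such hypothesis exists. Theorem~\ref{th:compl.gen} (and hence Theorem~\ref{th:compl.gen.2x2}) requires only lower bounds $|\Delta_Q(\pm it)|\ge C e^{\mp b_{1,2} t} t^{-m}$ along the imaginary axis with some finite $m$, with no relation between the two orders; one non-vanishing coefficient in each half-plane expansion suffices. So $|n_0-n_1|\le1$ cannot be discharged by the criterion, as your final paragraph does — it must be discharged in the coefficient computation, which is exactly the step your structural claim short-circuits incorrectly.
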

Extended version of Theorem~\ref{th:J32=J14=0.intro}, which also covers the case $|n_1 - n_0| > 1$, is contained in Theorem~\ref{th:J32=J14=0}.

Let us outline a few additional simple sufficient conditions guaranteeing completeness property. In all results stated below we assume that $Q_{12}, Q_{21} \in W_1^n[0,1]$ for some $n \ge 3$.
First, we mention result that demonstrates substantial difference between Sturm-Liouville and Dirac operators (see Corollary~\ref{cor:P0.P1=0} and Remark~\ref{rem:Dirac.vs.SL}). Namely, if $J_{32}=J_{14}=0$, $J_{13} J_{42} \ne 0$, then the following condition ensures the desired completeness property,
\begin{equation}
 Q_{12}(1) \ne 0, \qquad P^{(n-3)}(0) \ne 0, \qquad
 P^{(k)}(1) = 0, \quad k \in \{0,1,\ldots,n-1\}.
\end{equation}
In particular, completeness is possible if $P(x) = 0$, $x \in [a,1]$, for some $a \in (0,1)$.

Further, under some algebraic assumption on the potential matrix $Q(\cdot)$, we can eliminate any restrictions between $n_0$ and $n_1$ in condition~\eqref{eq:Pn0.Pn1}. Indeed, according to Corollary~\ref{cor:P0.P1=0} (see also Proposition~\ref{prop:Q=0.P0.P1} for more general result), if $J_{32}=J_{14}=0$, $J_{13} J_{42} \ne 0$,
then desired completeness property holds whenever both of the following conditions hold
\begin{align}
\label{eq:Q12.Q21=0.intro}
 & Q^{(j)}(0) = Q^{(j)}(1) = 0, \quad j \in \{0,1,\ldots,m-1\},
 \quad\text{where}\quad m = \text{\scalebox{0.85}{$\ceil{\frac{n-2}{3}}$}}, \\
\label{eq:Pn00.Pn11.intro}
 & P^{(n_0)}(0) P^{(n_1)}(1) \ne 0 \qquad\text{for some}\quad
 n_0, n_1 \in \{m, m+1, \ldots, n-1\}.
\end{align}
Similar result is valid if $J_{32} = 0$, while $J_{14} \ne 0$. Namely, in this case, Corollary~\ref{cor:Makin.gen1} ensures the desired completeness property whenever
\begin{equation} \label{eq:Pn.Q12.Q21j.intro}
 P^{(n-1)}(0) \ne 0, \quad
 Q_{12}^{(j)}(0) = Q_{21}^{(j)}(1) = 0, \quad 0 \le j < m,
 \quad\text{where}\quad m = \text{\scalebox{0.85}{$\ceil{\frac{n-1}{2}}$}}.
\end{equation}
See Proposition~\ref{prop:Q12.Q21=0} and Remark~\ref{rem:Makin1} for more details.

We also find explicit completeness condition in the case $J_{32} = J_{13} = 0$ (see Corollaries~\ref{cor:J32=J42=J13=0} and~\ref{cor:J32=J13=0}). Namely, if
\begin{equation} \label{eq:Q12j0.Q21j1.intro}
 J_{32} = J_{13} = J_{42} = 0, \qquad J_{14} \ne 0, \qquad\text{and}\qquad
 Q_{12}^{(j_0)}(0) Q_{21}^{(j_1)}(1) \ne 0,
\end{equation}
for some $j_0, j_1 \in \{0, 1, \ldots, n-2\}$ such that $j_0 + j_1 \le n-2$,
then the system of root vectors of the BVP~\eqref{eq:system}--\eqref{eq:Udef} is
complete and minimal in $L^2([0,1];\bC^2)$. Moreover, completeness property is preserved
whenever one of the following conditions holds,
\begin{align}
\label{eq:J14ne0.Q211}
 & J_{32} = J_{13} = 0, \quad J_{42} \ne 0, \qquad J_{14} \ne 0,
 \qquad Q_{21}^{(j_1)}(1) \ne 0, \\
\label{eq:J14=0.Q2101}
 & J_{32} = J_{13} = 0, \quad J_{42} \ne 0, \qquad J_{14} = 0,
 \qquad Q_{21}^{(j_0)}(0) Q_{21}^{(j_1)}(1) \ne 0,
\end{align}
for some $j_0, j_1 \in \{0, 1, \ldots, n-1\}$.

Note also that an $m$-dissipative operator $L_U(0)$ automatically meets the condition $J_{14} \ne 0$. Therefore, in this case, condition~\eqref{eq:J14ne0.Q211} shows that under certain additional algebraic restrictions, completeness property for $L_U(Q)$ depends only on behavior of $Q_{21}(\cdot)$ at the endpoint $1$. This result and its applications to the spectral synthesis of dissipative operators (previously investigated in~\cite{LunMal14IEOT}) will be discussed elsewhere.

It is surprising that the function $Q_{12}(\cdot)$ is not involved in conditions~\eqref{eq:J14ne0.Q211}--\eqref{eq:J14=0.Q2101}. In turn, these conditions
imply the following peculiar criterion of completeness when the potential $Q(\cdot)$ is an \emph{analytic matrix function} on $[0,1]$ and boundary conditions are of the form
\begin{equation} \label{eq:y2(1)=0.intro}
 \alpha_1 y_1(0) + \alpha_2 y_2(0) + \alpha_3 y_1(1) = 0,
 \quad y_2(1)=0, \quad \alpha_2 \ne 0.
\end{equation}
Namely, in accordance with Corollary~\ref{cor:criterion}, the system of root vectors of the BVP~\eqref{eq:system}, \eqref{eq:y2(1)=0.intro} is complete in $L^2([0,1];\bC^2)$ \emph{if and only if $Q_{21}(\cdot) \not \equiv 0$}.

The paper is organized as follows.
In Section~\ref{sec:prelim} we recall key results from~\cite{LunMal16JMAA} on transformation operators and define special functional spaces $X_\infty$ and $X_\infty^0$. Here we also prove our first new completeness result, Proposition~\ref{prop:2x2.notR}, which easily follows from previous results in~\cite{LunMal14IEOT,LunMal15JST}.
In Section~\ref{sec:trace} we establish asymptotic expansions for solutions to system~\eqref{eq:system} (see Theorem~\ref{th:asymp}) and ``trace formulas'' for certain solution quotients (Proposition~\ref{prop:sigma}).
In Section~\ref{sec:Delta} we apply these trace formulas to derive asymptotic expansion of the characteristic determinant $\Delta_Q(\cdot)$ (Theorem~\ref{th:Delta}).
In Section~\ref{sec:compl.gen} using this asymptotic expansion we establish a general refined completeness result (Theorem~\ref{th:compl.gen.2x2}).
In Sections~\ref{sec:sigmajx} and~\ref{sec:compl.refined} we ``decipher'' Theorem~\ref{th:compl.gen.2x2} to establish a series of explicit completeness results outlined above.

The main results of this paper
were announced in our short communication~\cite{LunMal13Dokl} published in 2013.
However, the recent publications~\cite{KosShk21,Mak23} (especially paper~\cite{Mak23} by A.S.~Makin) influenced us to compare our results from~\cite{LunMal13Dokl} with the new ones. Namely, we substantially extended the previous version of the paper written along~\cite{LunMal13Dokl} by adding several new results on completeness (see Section~\ref{sec:compl.refined}).
\section{Preliminaries}
\label{sec:prelim}
Following~\cite{Mal94,LunMal16JMAA} denote by $X_\infty := X_\infty(\Omega)$ the linear space composed of (equivalent classes of) measurable functions defined on the domain
\begin{equation} \label{eq:Omega.def}
 \Omega := \{(x,t) : 0 \le t \le x \le 1\}
\end{equation}
satisfying
\begin{equation} \label{eq:B1.norm.def}
 \|f\|_{X_\infty} := \esssup_{x \in [0,1]} \int_0^x |f(x,t)| dt < \infty.
\end{equation}
It can easily be shown that the space $X_\infty$ equipped with the norm~\eqref{eq:B1.norm.def} forms a Banach space that is not separable. Denote by $X_\infty^0$ the (separable) subspace of $X_\infty$ obtained by taking the closure of the set of continuous functions $C(\Omega)$.
Evidently, the set $C^1(\Omega)$ of smooth functions is also dense in the space $X_\infty^0$.
In the sequel, we need to following important property of the space $X_\infty^0$ established in~\cite{LunMal16JMAA}.
\begin{lemma} \label{lem:K.exp.X0}
Let $K \in X_\infty^0$ and $b \in \bR \setminus \{0\}$. Then for each $\delta >0$ there exists $R_\delta = R_\delta(K, b) > 0$ such that the following uniform estimate holds
\begin{equation}
 \abs{\int_0^x K(x,t) e^{i b \l t} \, dt} < \delta \cdot (e^{-b \Im \l} + 1),
 \qquad |\l| > R_\delta, \quad x \in [0,1].
\end{equation}
\end{lemma}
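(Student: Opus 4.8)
The goal is to establish a uniform-in-$x$ bound on the oscillatory integral $\int_0^x K(x,t) e^{i b \l t}\,dt$ for kernels $K \in X_\infty^0$, with the bound $\delta \cdot (e^{-b \Im \l} + 1)$ valid once $|\l|$ exceeds some threshold $R_\delta$. The natural strategy is a density/approximation argument: since $X_\infty^0$ is by definition the closure in the $X_\infty$-norm of $C(\Omega)$ (and $C^1(\Omega)$ is dense as well), I would first reduce to the case of a smooth kernel and then handle the smooth case by integration by parts, which produces the decay needed to absorb everything into $\delta$ for large $|\l|$.

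The plan is as follows. Fix $\delta > 0$. First I would split $K = K_1 + K_2$ where $K_1 \in C^1(\Omega)$ satisfies $\|K - K_1\|_{X_\infty} = \|K_2\|_{X_\infty} < \delta/2$, using density of $C^1(\Omega)$ in $X_\infty^0$. The remainder term is handled for \emph{all} $\l$ with $\Im\l \ge 0$ or $\Im\l \le 0$ by the crude pointwise bound
\begin{equation*}
 \Bigl| \int_0^x K_2(x,t) e^{i b \l t}\,dt \Bigr|
 \le \int_0^x |K_2(x,t)| \, e^{-b (\Im \l) t}\,dt
 \le \|K_2\|_{X_\infty} \cdot (e^{-b \Im \l} + 1),
\end{equation*}
since $t \in [0,1]$ forces $e^{-b(\Im\l)t} \le e^{-b\Im\l} + 1$ regardless of the sign of $b\Im\l$. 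This already gives the $K_2$-contribution a bound of $(\delta/2)(e^{-b\Im\l}+1)$, uniformly in $x$ and in $\l$, with no largeness assumption on $|\l|$.

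It remains to treat the smooth piece $K_1$. Here I would integrate by parts once in $t$, writing $e^{ib\l t} = \frac{1}{ib\l}\frac{d}{dt} e^{ib\l t}$, to obtain
\begin{equation*}
 \int_0^x K_1(x,t) e^{ib\l t}\,dt
 = \frac{1}{ib\l}\Bigl( K_1(x,x) e^{ib\l x} - K_1(x,0) \Bigr)
 - \frac{1}{ib\l}\int_0^x \partial_t K_1(x,t)\, e^{ib\l t}\,dt.
\end{equation*}
Each of the three resulting terms carries a factor $1/\l$ and is bounded, after again using $|e^{ib\l t}| \le e^{-b\Im\l} + 1$, by $\frac{C}{|b\l|}(e^{-b\Im\l}+1)$, where $C$ depends only on the $C^1$-norm of $K_1$ on $\Omega$. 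Choosing $R_\delta$ so large that $\frac{C}{|b| R_\delta} < \delta/2$ makes this contribution at most $(\delta/2)(e^{-b\Im\l}+1)$ for $|\l| > R_\delta$. Combining with the $K_2$-estimate yields the claimed bound with total $\delta$.

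The main obstacle, though a mild one, is purely bookkeeping: tracking the factor $(e^{-b\Im\l}+1)$ correctly across both half-planes $\bC_\pm$ simultaneously, since the sign of $b\Im\l$ changes and one must verify the elementary inequality $e^{s} \le e^{|s|} + 1$ (here with $s = -b(\Im\l)t$, $t \in [0,1]$) uniformly. The role of the abstract hypothesis $K \in X_\infty^0$ rather than $K \in X_\infty$ is precisely to license the approximation in the first step; the non-separable space $X_\infty$ need not admit such smooth approximants, so the closure structure is essential and should be invoked explicitly.
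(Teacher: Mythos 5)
Your proof is correct and takes essentially the same route as the paper, which does not reprove the lemma in the text but invokes~\cite[Lemma 3.2]{LunMal16JMAA}, whose argument is exactly your scheme: approximate $K$ in the $X_\infty$-norm by a smooth kernel (licensed by the definition of $X_\infty^0$ as the closure of $C(\Omega)$), bound the remainder crudely by $\|K_2\|_{X_\infty}\,(e^{-b\Im\l}+1)$ using $e^{-b(\Im\l)t}\le e^{-b\Im\l}+1$ for $t\in[0,1]$, and gain the factor $|b\l|^{-1}$ on the smooth part by one integration by parts. Your removal of the strip restriction $|\Im\l|\le h$ via that elementary exponential inequality is precisely what the paper's accompanying remark asserts ("the proof in more general case remains the same"), so the proposal matches the intended argument.
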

\begin{remark}
Note that~\cite[Lemma 3.2]{LunMal16JMAA} is formulated with additional restriction $|\Im \l| \le h$, but the proof in more general case remains the same. See also~\cite[Lemma~5.12]{LunMal22JDE} for stronger statement concerning compact sets in $X_\infty^0(\Omega)$.
\end{remark}
The space $X_\infty^0$ played crucial role in~\cite{LunMal16JMAA} to establish existence of triangular transformation operators for solutions to system~\eqref{eq:system}.
In particular, for each $a \in [0,1]$, the trace operator $i_a: X_\infty^0(\Omega) \to L^1[0,a]$ (originally defined on $C(\Omega)$ via $i_a\bigl(N(x,t)\bigr):=N(a,t)$) is correctly extended to $X_\infty^0(\Omega)$.

To formulate the next result (\cite[Proposition~3.1]{LunMal16JMAA}), denote by $\Phi(x,\l)$ the fundamental matrix
of the system~\eqref{eq:system} (uniquely) determined by the initial condition $\Phi(0,\l)=I_2$, i.e.,
\begin{equation} \label{eq:Phi.def}
 \Phi(x,\l) := \begin{pmatrix} \Phi_1(x,\l) & \Phi_2(x,\l) \end{pmatrix}, \quad
 \Phi_k(x,\l) := \begin{pmatrix} \varphi_{1k}(x,\l) \\ \varphi_{2k}(x,\l)
 \end{pmatrix}, \quad k\in\{1,2\},
\end{equation}
where $\displaystyle \Phi_1(0,\l) := \binom{1}{0},\ \Phi_2(0,\l)
= \binom{0}{1}$.
\begin{proposition} \label{prop:phi.jk=e+int}
Let $Q \in L^1([0,1]; \bC^{2 \times 2})$.
Then the functions $\varphi_{jk}(\cdot, \l)$ admit the following representations
\begin{equation} \label{eq:phijk}
 \varphi_{jk}(x,\l) = \delta_{jk} e^{i b_k \l x}
 + \int_0^x R_{1,j,k}(x,t) e^{ib_1\l t}dt
 + \int_0^x R_{2,j,k}(x,t) e^{ib_2\l t}dt,
\end{equation}
for $x \in [0,1]$, $\l \in \bC$, where $R_{l,j,k} \in X_\infty^0(\Omega),$ \ $j,k,l \in \{1,2\}$.
\end{proposition}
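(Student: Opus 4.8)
The plan is to realize the fundamental matrix $\Phi(\cdot,\l)$ as a Volterra perturbation of the free propagator $\Phi_0(x,\l) = \diag(e^{ib_1\l x}, e^{ib_2\l x})$ and to read off the kernels $R_{l,j,k}$ directly from the perturbation series. First I would rewrite \eqref{eq:system} in the equivalent first order form $y' = i\l B y - iBQ(x)y$ and, by variation of parameters relative to $\Phi_0$, recast the initial condition $\Phi(0,\l) = I_2$ as the integral equation
\[
 \Phi(x,\l) = \Phi_0(x,\l) - i\int_0^x \Phi_0(x-s,\l)\,B\,Q(s)\,\Phi(s,\l)\,ds .
\]
The structural fact that drives everything is that $\Phi_0$ is diagonal while $Q$ is purely off-diagonal, so $\Phi_0(x-s,\l)BQ(s)$ is again off-diagonal, with entries $-ib_1 e^{ib_1\l(x-s)}Q_{12}(s)$ and $-ib_2 e^{ib_2\l(x-s)}Q_{21}(s)$.

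The second step is to show that the class of matrix functions of the target form
\[
 F_{jk}(x,\l) = \delta_{jk}e^{ib_k\l x} + \int_0^x R_{1,j,k}(x,t)e^{ib_1\l t}\,dt + \int_0^x R_{2,j,k}(x,t)e^{ib_2\l t}\,dt,
\]
with kernels in $X_\infty$, is invariant under the Volterra operator above. Applying that operator either to a free term $e^{ib_p\l s}$ or to an integral building block $\int_0^s h(s,\tau)e^{ib_p\l\tau}\,d\tau$ produces a single, respectively double, integral whose phase has the form $\psi = b_l(x-s) + b_p\tau$ over the triangle $0\le\tau\le s\le x$. A direct inspection of the corners shows $\psi$ takes values in $[b_1 x, b_2 x]$, precisely because $b_1 < 0 < b_2$; splitting the domain along $\{\psi = 0\}$ and performing the linear change of variable $t = \psi/b_q$, with $q=1$ where $\psi\le 0$ and $q=2$ where $\psi\ge 0$, keeps $t\in[0,x]$ and rewrites each contribution in the target form, the new kernel being obtained by integrating out the remaining variable along the fibers. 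This simultaneously produces the two exponential carriers $e^{ib_1\l t}$, $e^{ib_2\l t}$ and confines every kernel to $\Omega$.

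With the invariance in hand I would solve the integral equation by the Picard/Neumann series $\Phi = \sum_m \Phi^{(m)}$. Each step inserts one more factor of $Q$ integrated against a uniformly bounded exponential kernel, so a standard Volterra estimate yields a majorant of the form $\|R^{(m)}\|_{X_\infty}\le C\,(C\|Q\|_{L^1})^m/m!$; hence the kernel series converges absolutely in the Banach space $X_\infty$ and, by uniqueness of the Volterra solution, its sum is the genuine $\Phi$. To upgrade membership from $X_\infty$ to the separable subspace $X_\infty^0 = \overline{C(\Omega)}$, I would argue by density: for continuous $Q$ the substitution $t=\psi/b_q$ turns $Q_{12},Q_{21}$ into genuinely continuous kernels, so all $R_{l,j,k}$ lie in $C(\Omega)\subset X_\infty^0$; since the whole construction is bounded (indeed locally Lipschitz) as a map $L^1([0,1];\bC^{2\times2})\to X_\infty$, approximating an arbitrary $Q\in L^1$ by continuous potentials in $L^1$ and passing to the limit places the kernels in the closed subspace $X_\infty^0$.

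The step I expect to be the main obstacle is the bookkeeping of the characteristic change of variables under iteration: one must check that after splitting each iterated phase at its zero set and substituting, the resulting kernels are supported in $\Omega$, that the Jacobians and nested integration domains assemble so that the two Volterra operators genuinely close on the target form, and that integrating out the intermediate variables yields kernels whose $X_\infty$ norms obey the factorial majorant. By contrast, convergence of the series is routine once the substitution is set up, and the $X_\infty^0$ statement is a soft density argument; the delicate geometric point is the sign analysis $b_1 < 0 < b_2$ guaranteeing exactly the two carriers with kernels living on $0\le t\le x\le 1$.
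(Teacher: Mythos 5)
Your proposal is sound, but it follows a genuinely different route from the one the paper relies on. Note first that the paper does not prove Proposition~\ref{prop:phi.jk=e+int} at all: it quotes it as \cite[Proposition~3.1]{LunMal16JMAA}, where the representation \eqref{eq:phijk} is derived from the existence of \emph{triangular transformation operators} --- $\lambda$-independent kernels in $X_\infty^0(\Omega)$ relating the perturbed solutions to the free exponentials, obtained by solving Goursat-type integral equations via successive approximation --- after which \eqref{eq:phijk} follows by applying these operators to $e^{ib_k\lambda x}$ and performing precisely the characteristic substitutions that the present paper displays, in the reverse direction, in the proof of Corollary~\ref{cor:Phi+-}. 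You instead bypass transformation operators entirely: you iterate the Volterra equation for $\Phi$ and convert each Picard term by pushing forward the linear phase $\psi=b_l(x-s)+b_p\tau$, whose range lies in $[b_1x,b_2x]$ exactly because $b_1<0<b_2$, onto the two carriers $e^{ib_q\lambda t}$, $t=\psi/b_q\in[0,x]$. This works: the gradient $(-b_l,b_p)$ never vanishes, so the fibers are segments; the $L^1_t$-norm of each new kernel is dominated by the total mass of the iterated integrand (pushforward of a measure preserves mass, with no Jacobian loss), which over the $m$-simplex gives your factorial majorant $C^m/m!$ and convergence in $X_\infty$; and for continuous $Q$ the fiber endpoints are continuous piecewise-linear functions of $(x,t)$, so the kernels are continuous --- and even if only piecewise continuity across interfaces such as $t=\tfrac{-b_1x}{b_2-b_1}$ held, that would still place them in $X_\infty^0$, the same gloss the paper itself makes for $R_j^+$ in Corollary~\ref{cor:Phi+-}. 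As for what each approach buys: yours is shorter and self-contained, and since each Picard term is $m$-linear in $Q$, the local Lipschitz dependence of the kernels on $Q\in L^1$ (needed for your density step, and established separately in \cite{LunMal22JDE}) comes essentially for free; the transformation-operator route of \cite{LunMal16JMAA} costs more but yields a reusable $\lambda$-independent similarity structure, which is what underlies the determinant representation \eqref{eq:Delta=Delta0+_Intro} with $g_1,g_2$ arising as traces of the kernels, and the authors' Riesz-basis and stability results --- structure that a raw Neumann series does not directly supply.
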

Integral representations~\eqref{eq:phijk} for the entries of the fundemantal matrix $\Phi(x,\l)$ played crucial role in our papers~\cite{LunMal14Dokl, LunMal16JMAA}. Moreover, the functions $g_1, g_2 (\in L^1[0,1])$ in formula~\eqref{eq:Delta=Delta0+_Intro} are linear combinations of traces $i_1\bigl(R_{l,j,k}(x,t)\bigr):= R_{l,j,k}(1,t)$ and due to inclusions $R_{l,j,k} \in X_\infty^0(\Omega)$ are well-defined. Note, that for Sturm-Liouville operator the kernels of transformation operators are always continuous, and similar difficulties in representation~\eqref{eq:Delta=Delta0+_Intro} do not occur.
Besides, for the case of Dirac operator with continuous potential $Q(\cdot)$ treated in~\cite{Mar77}, the transformation operator matrix kernel is continuous: $R_{l,j,k} \in C(\Omega)$, and therefore the traces $R_{l,j,k}(1,t)$ and hence the functions $g_1, g_2$ in~\eqref{eq:Delta=Delta0+_Intro} are also continuous.

In the sequel we need alternative, equivalent to~\eqref{eq:phijk} representation of the fundamental matrix $\Phi(x,\l)$.
\begin{corollary} \label{cor:Phi+-}
Let $Q \in L^1([0,1]; \bC^{2 \times 2})$.
Then the fundamental matrix $\Phi(x,\l)$ of the system~\eqref{eq:system} admits the following representation
\begin{equation} \label{eq:Phi+-}
 \Phi(x,\l) = \begin{pmatrix}
 e^{i b_1 \l x} v_1^+(x,\l) & e^{i b_2 \l x} v_2^-(x,-\l) \\
 e^{i b_1 \l x} v_2^+(x,\l) & e^{i b_2 \l x} v_1^-(x,-\l)
 \end{pmatrix}, \quad x \in [0,1], \quad \l \in \bC,
\end{equation}
where
\begin{equation} \label{eq:vkpm.def}
 v_k^\pm (x,\l) = \delta_{1k} + \int_0^x R^\pm _k(x,t)
 e^{i (b_2 - b_1) \l t}dt, \qquad k \in \{1,2\}.
\end{equation}
and $R^\pm _1, R^\pm _2\in X_\infty^0(\Omega)$ are certain integrable kernels.
\end{corollary}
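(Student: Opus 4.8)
The plan is to reduce the full system to two scalar–frequency problems by peeling off the leading exponential from each column of $\Phi$. Writing the first column as $\varphi_{j1}(x,\l) = e^{ib_1\l x} u_j(x,\l)$ and the second as $\varphi_{j2}(x,\l) = e^{ib_2\l x} w_j(x,\l)$ and inserting these into~\eqref{eq:system}, I expect the diagonal factors $e^{ib_k\l x}$ to absorb exactly the $\l$-linear contribution of the unperturbed part, leaving first–order Volterra systems in which $\l$ survives only through the single exponent $(b_2-b_1)\l$. A direct substitution gives, for the first column,
\begin{equation*}
 u_1' = -ib_1 Q_{12} u_2, \qquad
 u_2' = i(b_2-b_1)\l\, u_2 - ib_2 Q_{21} u_1, \qquad
 u_1(0)=1,\ u_2(0)=0,
\end{equation*}
and for the second column the same system with $\l$ replaced by $-\l$ and the pair $(b_1 Q_{12},\,b_2 Q_{21})$ interchanged. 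This symmetry is precisely what manufactures the superscripts $\pm$ and the argument $-\l$ in~\eqref{eq:Phi+-}: defining $v_1^+,v_2^+$ as the solutions of the first reduced system and $v_1^-,v_2^-$ as the solutions of the interchanged one, I read off $u_j=v_j^+(\cdot,\l)$ and $w_j=v_j^-(\cdot,-\l)$; substituting these back into the two columns, together with the initial conditions (which force the constant leading term into $\varphi_{11}$ for the first column and into $\varphi_{22}$ for the second), reproduces exactly the entry pattern of~\eqref{eq:Phi+-}.

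It remains to show each $v_k^\pm$ has the single–exponential form~\eqref{eq:vkpm.def} with kernel in $X_\infty^0(\Omega)$, and here I would recycle Proposition~\ref{prop:phi.jk=e+int} rather than re-solve the ODE. Factoring $e^{ib_1\l x}$ out of~\eqref{eq:phijk} for $\varphi_{11}$ turns the two phases $b_1 t$ and $b_2 t$ into $b_1(t-x)$ and $b_2 t-b_1 x$; each is an affine image of the single frequency $(b_2-b_1)\tau$, namely $\tau=\tfrac{b_1(t-x)}{b_2-b_1}$ and $\tau=\tfrac{b_2 t-b_1 x}{b_2-b_1}$. Because $b_1<0<b_2$, these two substitutions sweep out the complementary subintervals $[0,\tfrac{|b_1|}{b_2-b_1}x]$ and $[\tfrac{|b_1|}{b_2-b_1}x,x]$, whose union is exactly $[0,x]$; hence the two integrals in~\eqref{eq:phijk} merge into a single integral of the shape~\eqref{eq:vkpm.def}, which defines $R_1^+$ as the glued, affinely reparametrized kernel assembled from $R_{1,1,1}$ and $R_{2,1,1}$. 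The identical computation on $\varphi_{21}$ gives $R_2^+$, while factoring $e^{ib_2\l x}$ out of the second column flips the frequency to $-(b_2-b_1)\l$ (whence the argument $-\l$) and yields $R_1^-,R_2^-$; the corresponding seam then sits at $\tfrac{b_2}{b_2-b_1}x$, again interior to $[0,x]$ since $b_2<b_2-b_1$.

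The main obstacle is not the phase bookkeeping but the verification that the glued kernels lie in the \emph{separable} subspace $X_\infty^0(\Omega)$ rather than merely in $X_\infty(\Omega)$ --- this is what makes the traces $i_1(R_k^\pm)$ well defined and Lemma~\ref{lem:K.exp.X0} applicable downstream, and it has to be secured under the sole hypothesis $Q\in L^1$. Each affine substitution has constant Jacobian, so it preserves the $X_\infty$-norm up to the factor $\tfrac{b_2-b_1}{|b_1|}$ or $\tfrac{b_2-b_1}{b_2}$ and carries $C(\Omega)$-approximable kernels to $C(\Omega)$-approximable ones; the one delicate point is the gluing across the seam, where a continuous kernel is restricted to $\{t\le\tfrac{|b_1|}{b_2-b_1}x\}$ and extended by zero. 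Such a restriction has a jump along the seam, but replacing the sharp cutoff by a continuous one supported in a strip of width $\eps$ about the seam alters $\int_0^x|\cdot|\,dt$ by $O(\eps)$ uniformly in $x$, so the result is still an $X_\infty$-limit of continuous functions and therefore lies in $X_\infty^0$. I would isolate this ``cutoff-and-smooth'' estimate as the single genuine lemma of the argument.

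As a fully self-contained alternative one can bypass Proposition~\ref{prop:phi.jk=e+int} altogether and solve the reduced Volterra system directly by successive approximations: each iterate is manifestly of single–frequency type, with kernel built by integrating $L^1$ data (hence automatically in $X_\infty^0$), and the standard Volterra bound furnishes convergence of the series in $X_\infty$, the limit remaining in the closed subspace $X_\infty^0$. Either route delivers $R_1^\pm,R_2^\pm\in X_\infty^0(\Omega)$ and thus establishes~\eqref{eq:Phi+-}--\eqref{eq:vkpm.def}.
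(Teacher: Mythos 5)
Your proposal is correct and follows essentially the same route as the paper: factoring the leading exponentials out of the representation~\eqref{eq:phijk} and performing exactly the affine changes of variables used there (with the same seams $\frac{-b_1x}{b_2-b_1}$ and $\frac{b_2x}{b_2-b_1}$) to glue the two integrals into the single frequency $(b_2-b_1)\l$. You additionally flesh out, via the cutoff-and-smooth estimate, the membership $R_k^\pm \in X_\infty^0(\Omega)$ that the paper only asserts with ``it can be shown,'' which is a sound and welcome completion of that step.
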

\begin{proof}
Let $j \in \{1,2\}$. Making a change of variable $b_1 t = (b_2 - b_1) s + b_1 x$ we get for $x \in [0,1]$ and $\l \in \bC$,
\begin{equation}
 \int_0^x R_{1,j,1}(x,t) e^{i b_1 \l t} \,dt
 = e^{i \l b_1 x} \int_0^{\frac{-b_1 x}{b_2 - b_1}} R_{1,j,1}\left(x,
 \text{\scalebox{0.75}{$\frac{(b_2-b_1)s + b_1 x}{b_1}$}}\right)
 e^{i (b_2-b_1) \l s} \,ds.
\end{equation}
Similarly, making a change of variable $b_2 t = (b_2 - b_1) s + b_1 x$ we get
\begin{equation}
 \int_0^x R_{2,j,1}(x,t) e^{i b_2 \l t} \,dt
 = e^{i \l b_1 x} \int_{\frac{-b_1 x}{b_2 - b_1}}^{x} R_{2,j,1}\left(x,
 \text{\scalebox{0.75}{$\frac{(b_2-b_1)s + b_1 x}{b_2}$}}
 \right) e^{i (b_2-b_1) \l s} \,ds.
\end{equation}
Inserting these two relations into~\eqref{eq:phijk} we arrive at
\begin{equation} \label{eq:phij1}
 \varphi_{j1}(x,\l) = e^{i b_1 \l x} \left(\delta_{j1}
 + \int_0^x R_j^+(x, t) e^{i (b_2-b_1) \l t} \, dt \right),
 \qquad x \in [0,1], \quad \l \in \bC,
\end{equation}
where
\begin{equation}
 R_j^+(x,t) = \begin{cases}
 R_{1,j,1}\left(x,\frac{(b_2-b_1)t + b_1 x}{b_1}\right), &
 \quad t \in \left[0, \frac{-b_1 x}{b_2 - b_1}\right], \vspace{5pt} \\
 R_{2,j,1}\left(x,\frac{(b_2-b_1)t + b_1 x}{b_2}\right), &
 \quad t \in \left[\frac{-b_1 x}{b_2 - b_1}, x\right].
 \end{cases}
\end{equation}
Starting from inclusions $R_{1,j,k} \in X_\infty^0(\Omega)$, it can be shown that $R_j^+ \in X_\infty^0(\Omega)$. This proves part of relations~\eqref{eq:Phi+-}--\eqref{eq:vkpm.def} for $\Phi_1(\cdot, \l)$. Similar formula for $\Phi_2(\cdot,\l)$ can be proved by performing change of variable $b_k t = (b_1 - b_2) s + b_2 x$ in the integral $\int_0^x R_{k,j,2}(x,t) e^{i b_k \l t} \,dt$ for $k \in \{1,2\}$.
\end{proof}
Next, we recall the following abstract completeness theorem for operators $L_U(Q)$ proved in~\cite{LunMal14IEOT,LunMal15JST}. For BVP~\eqref{eq:system}--\eqref{eq:Udef} it takes the following form.
\begin{theorem}[Theorem 2.3 in~\cite{LunMal14IEOT}] \label{th:compl.gen}
Assume that there exist $C,R>0$ and $m \in \bZ_+$ such
that
\begin{equation} \label{eq:Delta(+-it)>=}
 |\Delta_Q(i t)| \ge \frac{C e^{-b_1 t}}{t^m}, \qquad
 |\Delta_Q(-i t)| \ge \frac{C e^{b_2 t}}{t^m}, \qquad t > R.
\end{equation}
Then the system of root functions of the
BVP~\eqref{eq:system}--\eqref{eq:Udef} $($of the operator
$L_U(Q))$ is complete and minimal in $L^2([0,1];
\bC^2)$.
\end{theorem}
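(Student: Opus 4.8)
The plan is to argue by contradiction, deducing completeness from the growth of the resolvent along the imaginary axis. Suppose the system of root vectors of $L_U(Q)$ were not complete in $L^2([0,1];\bC^2)$. Then there is a nonzero $f \in L^2([0,1];\bC^2)$ orthogonal to every root vector of $L_U(Q)$. Since $L_U(Q)$ has compact resolvent, its spectrum is discrete and the resolvent $R_\lambda := (L_U(Q) - \lambda)^{-1}$ is a meromorphic operator function whose poles are exactly the zeros of the characteristic determinant $\Delta_Q(\cdot)$ from~\eqref{eq:Delta.intro}, with principal parts built from the Riesz projections $P_n$ onto the root subspaces. The condition that $f$ annihilates every root vector is precisely $P_n^* f = 0$ for all $n$. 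I would realize $R_\lambda$ as an integral operator whose Green kernel has the form $G(x,\xi,\lambda) = \mathcal{N}(x,\xi,\lambda)/\Delta_Q(\lambda)$, where the numerator $\mathcal{N}$ is assembled from the fundamental matrix $\Phi(\cdot,\lambda)$ and the boundary forms $U_1,U_2$ and is entire in $\lambda$.

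First I would reduce completeness to the vanishing of a family of entire functions. For an arbitrary $g \in L^2([0,1];\bC^2)$ set $F_g(\lambda) := (R_\lambda g, f)$. At each eigenvalue $\lambda_n$ the principal part of $F_g$ is a finite sum of terms each carrying a factor $P_n^* f = 0$, so all poles cancel and $F_g$ extends to an entire function. As $\Delta_Q$ is independent of $x,\xi$, one may write $F_g(\lambda) = \mathcal{M}_g(\lambda)/\Delta_Q(\lambda)$ with $\mathcal{M}_g(\lambda) := \int_0^1 \int_0^1 \mathcal{N}(x,\xi,\lambda)\, g(\xi)\, \overline{f(x)}\, d\xi\, dx$ entire.

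The heart of the matter is a Phragm\'en--Lindel\"of estimate for $F_g$. Using the representation~\eqref{eq:Phi+-} of $\Phi$ from Corollary~\ref{cor:Phi+-}, the kernel numerator $\mathcal{N}(x,\xi,\cdot)$ is, uniformly in $(x,\xi)$, an entire function of exponential type with indicator diagram contained in $[-ib_2, i|b_1|]$, the same diagram that~\eqref{eq:Delta=Delta0+_Intro} yields for $\Delta_Q$; hence $\mathcal{M}_g$, and with it $F_g$, is of exponential type. Along the imaginary axis the hypothesis~\eqref{eq:Delta(+-it)>=} gives the lower bounds $|\Delta_Q(it)| \ge C e^{-b_1 t} t^{-m}$ and $|\Delta_Q(-it)| \ge C e^{b_2 t} t^{-m}$, which outweigh the maximal growth of the numerator; consequently the kernel $G(x,\xi,\pm it)$ decays pointwise for almost every $(x,\xi)$ as $t \to +\infty$ --- Lemma~\ref{lem:K.exp.X0} controlling the integral remainders --- so that $F_g(\pm it) = \int_0^1 \int_0^1 G(x,\xi,\pm it)\, g(\xi)\, \overline{f(x)}\, d\xi\, dx$ stays bounded and in fact tends to $0$. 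On the real axis $F_g$ is bounded because $\mathcal{M}_g$ and $\Delta_Q$ are sine-type functions with one and the same indicator diagram, the finitely many zeros of $\Delta_Q$ in a strip about $\bR$ being removable singularities of the entire function $F_g$. Applying the Phragm\'en--Lindel\"of principle in each of the four quadrants, whose bounding rays lie on the real and imaginary axes where $F_g$ is already bounded, shows that $F_g$ is bounded on all of $\bC$; by Liouville it is constant, and the decay along the imaginary axis forces $F_g \equiv 0$.

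Finally, $F_g \equiv 0$ for every $g$ means $(R_\lambda g, f) = 0$ for all $g \in L^2([0,1];\bC^2)$ and every regular $\lambda$. Since $\range(R_\lambda) = \dom(L_U(Q))$ is dense, this gives $f = 0$, contradicting $f \ne 0$; hence the root vector system is complete. Minimality follows from the general fact that for an operator with compact resolvent and finite algebraic multiplicities the root vectors of $L_U(Q)$ and those of $L_U(Q)^*$ can be chosen biorthogonal, so no root vector lies in the closed linear span of the remaining ones. I expect the genuine obstacle to be the Phragm\'en--Lindel\"of step: establishing the uniform indicator bound for $\mathcal{N}$ and the boundedness of $F_g$ on the real axis, both of which rest on $\Delta_Q$ being an entire function of sine type with indicator diagram $[-ib_2, i|b_1|]$.
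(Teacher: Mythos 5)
Your overall skeleton --- take $f$ orthogonal to all root vectors, observe that the poles of $F_g(\lambda)=(R_\lambda g,f)$ cancel so $F_g=\mathcal{M}_g/\Delta_Q$ is entire, estimate its growth, and conclude by Liouville --- is indeed the standard Keldysh-type route behind the cited result (the paper itself does not prove Theorem~\ref{th:compl.gen}; it imports it from~\cite{LunMal14IEOT}). But the step you yourself flagged as the obstacle is where your argument genuinely breaks. You bound $F_g$ on the real axis by asserting that $\Delta_Q$ is a sine-type function with the same indicator diagram as $\mathcal{M}_g$ and with only finitely many zeros in a strip about $\bR$. That is precisely what fails in the regime this theorem is designed for: sine-typeness of $\Delta_Q$ (maximal growth in both half-planes, two-sided bounds on horizontal lines, zeros confined to a strip) is tied to \emph{regular} boundary conditions~\eqref{eq:reg.BC.intro}, where completeness was already known; for non-regular and degenerate conditions, $\Delta_0$ has a narrower indicator diagram, $|\Delta_Q|$ need not be bounded below anywhere on $\bR$, and the zeros of $\Delta_Q$ --- the eigenvalues --- typically form an \emph{infinite} set in every horizontal strip. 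So the real-axis bound has no basis, the quadrant Phragm\'en--Lindel\"of argument cannot start (two rays at angle $\pi$ plus exponential type is not enough: $e^{\sigma\lambda}$ is unimodular on $i\bR$), and assuming sine-typeness begs the question. The actual proof replaces your real-axis bound by bounded-type arguments: by~\eqref{eq:Delta=Delta0+_Intro} and the analogous representation of the numerator, both $\mathcal{M}_g$ and $\Delta_Q$ are of exponential type and bounded on $\bR$, hence of Cartwright class; by Krein's theorem the \emph{entire} quotient $F_g$ is then of bounded type in both half-planes, hence itself of Cartwright class with indicator diagram a vertical segment; the hypothesis~\eqref{eq:Delta(+-it)>=} forces $h_{F_g}(\pm\pi/2)\le 0$, i.e.\ minimal type, and only for minimal-type functions does Phragm\'en--Lindel\"of on the half-planes bounded by the imaginary axis become legitimate.

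A second, smaller but real, error: you claim $F_g(\pm it)\to 0$. The hypothesis allows the polynomial loss $t^{-m}$, and the numerator is genuinely of size $e^{-b_1 t}$ (resp.\ $e^{b_2 t}$) along the rays, so all you get is $|F_g(\pm it)|\le C t^{m}$. Consequently the Liouville step yields that $F_g$ is a polynomial of degree $\le m$, not a constant tending to $0$, and an extra device is needed to kill it: e.g.\ iterate the resolvent identity $R_\lambda g=-\lambda^{-1}g+\lambda^{-1}R_\lambda(L_U g)$ for $g$ in the dense subspace $\dom(L_U^{m+1})$, gaining a factor $t^{-1}$ per step along the rays, to conclude $F_g\equiv 0$ first for such $g$ and then for all $g$ by density. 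Your closing argument ($f=0$ from density of $\range(R_\lambda)$) and the minimality claim via the biorthogonal system built from Riesz projections of the adjoint are fine as stated.
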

As a first application of this result and formula for the asymptotic behavior of solutions to system~\eqref{eq:system} obtained in~\cite{LunMal15JST}, we establish a simple completeness result in the case of degenerate boundary conditions of the form $y_1(0) = y_2(1) = 0$. We formulate it in more general case that covers both Dirac-type system and the case when $b_1 / b_2 \notin \bR$.
\begin{proposition} \label{prop:2x2.notR}
Let $\arg b_1 \ne \arg b_2$, let $Q_{12}$, $Q_{21}$ be continuous at the endpoints 0 and 1, and let the following condition hold
\begin{equation} \label{eq:J13=0}
 J_{32} = J_{42} = J_{13} = 0, \qquad J_{14} \ne 0,
 \qquad Q_{12}(0) Q_{21}(1) \ne 0.
\end{equation}
Then the system of root functions of the operator $L_U(Q)$ is complete in $L^2([0,1]; \bC^2)$.
\end{proposition}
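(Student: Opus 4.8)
The plan is to deduce completeness from the abstract criterion of Theorem~\ref{th:compl.gen}, whose general (possibly non-real $b_1,b_2$) form from \cite{LunMal14IEOT,LunMal15JST} reduces completeness of the root system to two-sided lower bounds on the characteristic determinant $\Delta_Q(\cdot)$ along the two critical rays; for the Dirac-type normalization $b_1<0<b_2$ these are the imaginary half-axes $\l=\pm it$, $t\to+\infty$. First I would use the algebraic hypotheses \eqref{eq:J13=0} to collapse the six-term formula \eqref{eq:Delta.intro}: since $J_{32}=J_{42}=J_{13}=0$, the only surviving solution entry is $\varphi_{22}$, so
\[
 \Delta_Q(\l) = J_{12} + J_{34} e^{i(b_1+b_2)\l} + J_{14}\varphi_{22}(1,\l).
\]
Everything then hinges on the asymptotics of $\varphi_{22}(1,\l)$ along the two rays, for which I would invoke Corollary~\ref{cor:Phi+-}, namely $\varphi_{22}(x,\l)=e^{ib_2\l x}v_1^-(x,-\l)$ with $v_1^-$ as in \eqref{eq:vkpm.def}.

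The ray $\l=-it$ (lower half-plane) is the easy one. Here the integral in $v_1^-(1,it)$ carries the decaying factor $e^{-(b_2-b_1)ts}$, so $v_1^-(1,it)\to 1$ and $\varphi_{22}(1,-it)=e^{b_2 t}(1+o(1))$. Since the summands $J_{12}$ and $J_{34}e^{(b_1+b_2)t}$ are of strictly lower growth than $e^{b_2 t}$ (because $b_1<0$), this yields $|\Delta_Q(-it)|\ge C e^{b_2 t}$ for large $t$, i.e.\ the second estimate of \eqref{eq:Delta(+-it)>=} with $m=0$, using only $J_{14}\ne 0$.

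The ray $\l=it$ (upper half-plane) is the crux, and this is where the potential enters. In this direction the leading term of the unperturbed determinant $\Delta_0$ would be $J_{32}e^{ib_1\l}$, but it vanishes by hypothesis, so the required growth $e^{-b_1 t}$ must be supplied by the $Q$-dependent part of $\varphi_{22}$. Because $Q$ is off-diagonal and $\varphi_{22}$ starts from the vector $\col(0,1)$, the first-order Picard term for the lower component vanishes and the leading coupling is of second order. Carrying out two Picard iterations and extracting the dominant contribution by integration by parts — a Watson-type argument that needs only the continuity of $Q_{12},Q_{21}$ at the endpoints — gives, in the upper half-plane,
\[
 \varphi_{22}(1,\l) = e^{ib_2\l}\bigl(1+o(1)\bigr)
 + \frac{b_1 b_2\, Q_{12}(0) Q_{21}(1)}{(b_2-b_1)^2}\cdot\frac{e^{ib_1\l}}{\l^2}\bigl(1+o(1)\bigr),
\]
the growing term being governed by the corner value $Q_{12}(0)Q_{21}(1)$; equivalently, this two-term expansion is exactly the endpoint-continuity asymptotics of solutions established in \cite{LunMal15JST}. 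Since the $e^{ib_2\l}$ term decays and both $J_{12}$ and $J_{34}e^{-(b_1+b_2)t}$ are negligible against $e^{-b_1 t}/t^2$, the growing part dominates and $\Delta_Q(it)$ is asymptotically $\dfrac{J_{14}\,b_1 b_2\, Q_{12}(0)Q_{21}(1)}{(b_2-b_1)^2}\cdot\dfrac{e^{-b_1 t}}{(it)^2}$, whose coefficient is nonzero precisely because $J_{14}\ne 0$ and $Q_{12}(0)Q_{21}(1)\ne 0$. This gives the first estimate of \eqref{eq:Delta(+-it)>=} with $m=2$.

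With both bounds in \eqref{eq:Delta(+-it)>=} in hand, Theorem~\ref{th:compl.gen} yields completeness. The main obstacle is the upper-half-plane step: one must justify the second-order $\l^{-2}$ expansion of $\varphi_{22}$ and identify its coefficient as a nonzero multiple of $Q_{12}(0)Q_{21}(1)$ under the minimal regularity assumed (mere continuity of the potential at $0$ and $1$, not full smoothness), and this is exactly where the quoted solution asymptotics of \cite{LunMal15JST} does the decisive work. For general $b_1,b_2$ with $\arg b_1\ne\arg b_2$ the same argument runs verbatim after replacing the half-axes $\pm it$ by the two rays on which $e^{ib_1\l}$ and $e^{ib_2\l}$ attain maximal growth.
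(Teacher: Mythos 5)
Your proposal is correct and follows essentially the same route as the paper: after collapsing $\Delta_Q$ to $J_{12}+J_{34}e^{i(b_1+b_2)\l}+J_{14}\varphi_{22}(1,\l)$, the paper likewise gets the easy bound $|\Delta_Q(\l)|\ge Ce^{\Re(ib_2\l)}$ on one ray from $J_{14}\ne 0$ and, on the opposite ray, derives exactly your two-term expansion of $\varphi_{22}(1,\l)$ — with the same coefficient $b_1 b_2\,Q_{12}(0)Q_{21}(1)/(b_2-b_1)^2$ at $e^{ib_1\l}/\l^2$ — not by Picard iterations but from the Birkhoff-type solution asymptotics of \cite[Proposition~3.2]{LunMal15JST} (which you yourself cite as the rigorous backing) via $\Phi(x,\l)=Y(x,\l)[Y(0,\l)]^{-1}$ and $\varphi_{22}(\l)=y_{22}(1,\l)-y_{12}(0,\l)y_{21}(1,\l)$, then concludes by Theorem~\ref{th:compl.gen}. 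The only cosmetic caveat is your use of Corollary~\ref{cor:Phi+-} for the decaying ray: that representation is established under $b_1<0<b_2$, so in the general case $\arg b_1\ne\arg b_2$ the easy bound should instead be read off from the diagonal entries of the same solution $Y(x,\l)$, as the paper does.
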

\begin{proof}
Since $J_{32} = J_{42} = J_{13} = 0$ then formula~\eqref{eq:Delta.intro} for $\Delta_Q(\cdot)$ simplifies to
\begin{equation} \label{eq:Delta.J14}
 \Delta_Q(\l) = J_{12} + J_{34}e^{i(b_1+b_2)\l} + J_{14}\varphi_{22}(1, \l),
 \qquad \l \in \bC.
\end{equation}
Since $\arg b_1 \ne \arg b_2$, then there exists $z \in \bC$ such that $\Re(i b_1 z) < 0 < \Re(i b_2 z)$. Since $J_{14} \ne 0$, it easily follows (cf.~\cite[formula~(3.38)]{MalOri12} and~\cite[Proposition~3.4]{LunMal15JST}) that
\begin{equation} \label{eq:2x2.Delta+}
 |\Delta_Q(\l)| \ge C e^{\Re(i b_2 \l)}, \qquad |\arg \l - \arg z| < \eps,
 \quad |\l| \ge R,
\end{equation}
for sufficiently small $\eps>0$ and sufficiently large $R$.

Next, we estimate $\Delta_Q(\l)$ on the ray $\arg \l = -\arg z$. Condition $\Re(i b_1 z) < 0 < \Re(i b_2 z)$ implies that
\begin{equation} \label{eq:argl=-argz}
 \Re(i b_2 \l) < 0 < \Re(i b_1 \l), \qquad \arg \l = -\arg z.
\end{equation}
According to~\cite[Proposition~3.2]{LunMal15JST} system~\eqref{eq:system} has a matrix solution $Y(x,\l) = \begin{psmallmatrix} y_{11}(x,\l) & y_{12}(x,\l) \\ y_{21}(x,\l) & y_{22}(x,\l) \end{psmallmatrix}$ satisfying
\begin{equation}
 Y(0,\l) = \begin{pmatrix}
 1 & \frac{b_1 Q_{12}(0)+o(1)}{b_1-b_2} \cdot \frac{1}{\l} \\
 0 & 1 \end{pmatrix} \quad\text{as}\quad \l \to \infty, \quad \arg \l = -\arg z,
\end{equation}
and
\begin{equation}
 Y(1,\l) = \begin{pmatrix}
 (1+o(1)) \cdot e^{i b_1 \l} & 0 \\
 \frac{b_2 Q_{21}(1)+o(1)}{b_2-b_1} \cdot \frac{e^{i b_1 \l}}{\l}
 & (1+o(1)) \cdot e^{i b_2 \l}
 \end{pmatrix} \quad\text{as}\quad \l \to \infty,\ \ \arg \l = -\arg z.
\end{equation}
Evidently
\begin{equation}
 \Phi(x,\l) = Y(x,\l) [Y(0,\l)]^{-1}, \qquad \arg \l = -\arg z.
\end{equation}
Therefore,
\begin{multline} \label{eq:DeltaQ=frac.frac}
 \varphi_{22}(\l) = y_{22}(1, \l) - y_{12}(0, \l) y_{21}(1, \l) \\
 = (1+o(1)) \cdot e^{i b_2 \l}
 - \frac{b_1 Q_{12}(0)+o(1)}{b_1-b_2} \cdot
 \frac{b_2 Q_{21}(1)+o(1)}{b_2-b_1} \cdot \frac{e^{i b_1 \l}}{\l^2}
\end{multline}
as $\l \to \infty$ and $\arg \l = -\arg z$. Inserting~\eqref{eq:DeltaQ=frac.frac} into~\eqref{eq:Delta.J14} with account of inequalities~\eqref{eq:argl=-argz}
and condition $J_{14} Q_{12}(0) Q_{21}(1) \ne 0$ (see~\eqref{eq:J13=0}) now yields
\begin{equation} \label{eq:2x2.Delta-}
 |\Delta_Q(\l)| \ge C \frac{e^{\Re(i b_1 \l)}}{|\l|^2},
 \qquad \arg \l = -\arg z, \quad |\l| \ge R,
\end{equation}
for some $C > 0$ and sufficiently large $R > 0$. Now completeness and minimality of the system of root functions of the BVP~\eqref{eq:system}--\eqref{eq:Udef} follows from Theorem 2.3 in~\cite{LunMal14IEOT} due of estimates~\eqref{eq:2x2.Delta+} and~\eqref{eq:2x2.Delta-}.
\end{proof}
\begin{remark}
Proposition~\ref{prop:2x2.notR} was implicitly contained in~\cite{LunMal15JST} but was not included due to the lack of space.
Note that it significantly relaxes smoothness assumptions compared to~\cite[Theorems 1.5]{AgiMalOri12} where the same result was proved for analytic $Q(\cdot)$ and non-Dirac-type case $(b_1 / b_2 \notin \bR)$.

In a very recent paper~\cite{KosShk21} A.P.\;Kosarev and A.A.\;Shkalikov considered the case of $2 \times 2$ Dirac-type operators with non-constant matrix $B = \diag(b_1(x), b_2(x))$ and degenerate boundary conditions of special form $(y_1(0) = y_2(1) = 0)$. Namely, in~\cite[Theorem 4]{KosShk21} they established completeness property of the root vectors system of the corresponding BVP under the smoothness assumption $b_1, b_2, Q_{12}, Q_{21} \in W_1^1[0,1]$ assuming the same algebraic condition $Q_{12}(0) Q_{21}(1) \ne 0$. Note that in the case of a constant matrix $B$, Proposition~\ref{prop:2x2.notR} ensures the completeness property under slightly relaxed smoothness assumption on the potential matrix $Q(\cdot)$: continuity at the endpoints 0 and 1 instead of absolute continuity on $[0,1]$.
\end{remark}
\section{Asymptotic expansion of solutions to the Dirac type system}
\label{sec:trace}
Here we obtain announced in~\cite{LunMal13Dokl}, refined asymptotic formulas for the solutions to the system~\eqref{eq:system} following Marchenko's method from~\cite{Mar77}, where these expansions were obtained for the classical Dirac system under the assumptions $Q \in C^n([0,1]; \bC^{2 \times 2})$, $n \ge 0$, and $Q \in W_2^n([0,1]; \bC^{2 \times 2})$, $n \ge 1$ (see problems 1 and 3 in~\cite[\S1.4]{Mar77}, respectively).

We generalize these results to Dirac-type systems and relax smoothness assumption to condition $Q \in W_1^n([0,1]; \bC^{2 \times 2})$, $n \ge 0$. Namely, condition $Q \in L^1([0,1]; \bC^{2 \times 2})$ is covered by our Corollary~\ref{cor:Phi+-} (the case $n=0$), while the case $n \ge 1$ is treated in Theorem~\ref{th:asymp} below, where the space $X_\infty^0$ again plays significant role.

Recall that $B = \diag(b_1, b_2)$, where $b_1 < 0 < b_2$. To formulate the next result, we set
\begin{equation} \label{eq:b+b-q+q-}
 b^+:=i b_1, \quad b^-:=i b_2, \quad q_+(x):=-i b_1 Q_{12}(x),
 \quad q_-(x):=-i b_2 Q_{21}(x).
\end{equation}
Then, system~\eqref{eq:system} takes the form
\begin{equation}\label{eq:system.new}
 \begin{cases}
 y_1' = b^+ \l y_1 + q_+(x) y_2, \\
 y_2' = b^- \l y_2 + q_-(x) y_1.
 \end{cases}
\end{equation}
\begin{theorem} \label{th:asymp}
Let $n \in \bN$ and $q_+, q_- \in W_1^n[0,1]$. Then system~\eqref{eq:system.new} has a matrix solution of the form
\begin{equation}\label{eq:Y(x,l).def}
 Y(x,\l) =
 \begin{pmatrix}
 e^{b^+ \l x}u_1^+(x,\l) & e^{b^- \l x}u_2^-(x,-\l) \\
 e^{b^+ \l x}u_2^+(x,\l) & e^{b^- \l x}u_1^-(x,-\l)
 \end{pmatrix}, \quad x \in [0,1], \quad \l \in \bC \setminus \{0\},
\end{equation}
where the functions $u_j^\pm (x,\l)$, $j\in \{1,2\}$, admit the following representation
\begin{align}
 \label{eq:u1}
 & u_1^\pm (x,\l) =
 \sum_{k=0}^n \frac{b_k^\pm (x)}{(i (b_1 - b_2) \l)^k}
 + \frac{b_n^\pm (x,\l)}{(i (b_1 - b_2) \l)^n}, \\
 \label{eq:u2}
 & u_2^\pm (x,\l) =
 \sum_{k=0}^n \frac{a_k^\pm (x)}{(i (b_1 - b_2) \l)^k}
 + \frac{a_n^\pm (x,\l)}{(i (b_1 - b_2) \l)^n},
\end{align}
where
\begin{align}
 \label{eq:a0b0}
 & a_0^\pm (x) = 0, \qquad b_0^\pm (x) = 1, \\
 \label{eq:ak}
 & a_k^\pm (x) = q_\mp(x) b_{k-1}^\pm (x) - \bigl(a_{k-1}^\pm (x)\bigr)',
 \quad k \in \oneton, \\
 \label{eq:bk}
 & b_k^\pm (x) = \int_0^x q_\pm (t) a_{k}^\pm (t)dt,
 \quad k \in \oneton,
\end{align}
Besides, the functions $a_n^\pm (x,\l)$ and $b_n^\pm (x,\l)$ are given by
\begin{eqnarray} \label{eq:an(x,l).def}
 a_n^\pm (x,\l) & = & \int_0^x A_n^\pm (x,t) e^{i (b_2 - b_1) \l t}dt, \\
 \label{eq:bn(l)}
 b_n^\pm (x,\l) & = & \int_0^x \left(\int_t^x A_n^\pm (s,t) q_\pm (s) ds\right) e^{i (b_2 - b_1) \l t}dt,
\end{eqnarray}
where $A_n^\pm \in X_\infty^0$ is the unique solution to the linear integral equation
\begin{equation} \label{eq:A.int.int}
 A_n^\pm (x,\xi) = v_n^\pm (x-\xi) + \int_0^{\xi} \int_t^{t+x-\xi}
 q_\mp(t+x-\xi) q_\pm (s) A_n^\pm (s,t) \,ds \,dt,
\end{equation}
for $0 \le \xi \le x \le 1$, and $v_n^\pm (\cdot) := - \left(a_n^\pm (\cdot)\right)' + q_\mp(\cdot) b_n^\pm (\cdot) \in L^1[0,1]$.
\end{theorem}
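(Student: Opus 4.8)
The plan is to strip off the exponential factors, reduce~\eqref{eq:system.new} to a triangular pair of scalar first-order equations for the entries $u_1^+,u_2^+$, read off the coefficient recursions by matching powers of $\l^{-1}$, and finally control the remainder through a single $\l$-independent integral equation for the kernel $A_n^+$. I treat only the first column of~\eqref{eq:Y(x,l).def} (superscript ``$+$''); the second column leads, after the reflection $\l\mapsto-\l$ built into~\eqref{eq:Y(x,l).def} together with the interchange $q_+\leftrightarrow q_-$, to an identical problem, so that case is not repeated.

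Setting $\mu:=i(b_1-b_2)\l$ and substituting $y_1=e^{b^+\l x}u_1^+(x,\l)$, $y_2=e^{b^+\l x}u_2^+(x,\l)$ into~\eqref{eq:system.new}, the exponentials cancel and one is left with $(u_1^+)'=q_+u_2^+$ and $(u_2^+)'+\mu u_2^+=q_-u_1^+$. Inserting the formal series $u_1^+=\sum_{k\ge0}b_k^+\mu^{-k}$ and $u_2^+=\sum_{k\ge0}a_k^+\mu^{-k}$ and equating like powers of $\mu^{-1}$, the first equation gives $(b_k^+)'=q_+a_k^+$ for all $k$, which with the normalization dictated by the form~\eqref{eq:u1}--\eqref{eq:u2} produces~\eqref{eq:a0b0} and~\eqref{eq:bk}, while the second equation yields the shift relation $a_{k+1}^+=q_-b_k^+-(a_k^+)'$, i.e.~\eqref{eq:ak}. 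The hypothesis $q_\pm\in W_1^n$ is precisely what keeps these recursions well defined through $k=n$; a descent count gives $a_k^+\in W_1^{\,n-k+1}$, so $a_n^+\in W_1^1$ and hence $v_n^+:=-(a_n^+)'+q_-b_n^+\in L^1[0,1]$.

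Next I would isolate the remainder by writing $u_1^+=\sum_{k=0}^n b_k^+\mu^{-k}+R_1\mu^{-n}$ and $u_2^+=\sum_{k=0}^n a_k^+\mu^{-k}+R_2\mu^{-n}$. Because $(b_k^+)'=q_+a_k^+$ holds for every $k$, the first scalar equation is satisfied \emph{identically} by the truncated part, forcing $(R_1)'=q_+R_2$ with $R_1(0)=0$, that is $R_1=\int_0^x q_+R_2\,dt$; by Fubini this is exactly~\eqref{eq:bn(l)}. Feeding the truncation into the second equation and using~\eqref{eq:ak} telescopes everything but the top order, leaving the residual $-v_n^+\mu^{-n}$, so the remainder must satisfy the linear ODE $(R_2)'+\mu R_2=v_n^++q_-R_1$ with $R_2(0)=0$. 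Variation of parameters rewrites this as a Volterra equation; I then posit $R_2=a_n^+(x,\l)=\int_0^x A_n^+(x,t)e^{i(b_2-b_1)\l t}\,dt$ (note $e^{i(b_2-b_1)\l t}=e^{-\mu t}$) and, via the substitution $t\mapsto x-s$ in the $v_n^+$ term and Fubini in the quadratic term, convert the Volterra equation into the $\l$-\emph{independent} integral equation~\eqref{eq:A.int.int} for $A_n^+$. This conversion is mostly bookkeeping but has to be carried out carefully so as to land precisely on the integration region $0\le t\le\xi$, $t\le s\le t+x-\xi$ appearing in~\eqref{eq:A.int.int}.

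The main obstacle is the last step: producing a solution $A_n^+\in X_\infty^0$ of~\eqref{eq:A.int.int}. First I would verify that the free term $v_n^+(x-\xi)$ lies in $X_\infty^0$: it lies in $X_\infty$ since $\esssup_{x}\int_0^x|v_n^+(x-t)|\,dt\le\|v_n^+\|_{L^1}$, and approximating $v_n^+\in L^1$ by continuous functions places it in the closure $X_\infty^0$. Reading~\eqref{eq:A.int.int} as $A=v_n^+(x-\xi)+\cK A$ with $\cK$ the displayed double-integral operator, I would check that $\cK$ is bounded on $X_\infty$ and preserves $X_\infty^0$ (this uses continuity of $q_\pm$ and the smoothing effect of the inner integration), and then establish convergence of the Neumann series $\sum_{m\ge0}\cK^m\bigl[v_n^+(x-\cdot)\bigr]$ in the $X_\infty$-norm. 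Convergence is where the real work lies: each application of $\cK$ forces the argument of $A$ closer to the diagonal (one gains the constraint $s-t\le x-\xi$), and exploiting this Volterra structure yields summable bounds of the form $\|\cK^m[\,\cdot\,]\|_{X_\infty}\le C^m/m!$, whence a unique $A_n^+\in X_\infty^0$. Reversing the construction then shows that the $u_j^\pm$ built from these data obey~\eqref{eq:u1}--\eqref{eq:u2} and that $Y(x,\l)$ in~\eqref{eq:Y(x,l).def} solves~\eqref{eq:system.new}; Lemma~\ref{lem:K.exp.X0} applied to the kernels $A_n^\pm\in X_\infty^0$ guarantees, in addition, that the remainder integrals $a_n^\pm(x,\l),b_n^\pm(x,\l)$ are genuinely $o(1)$ as $\l\to\infty$, so the displayed expansions are asymptotic.
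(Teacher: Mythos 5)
Your proposal is correct and follows the paper's proof (Marchenko's method) step for step: the same exponential peel-off leading to the system $(u_1^+)'=q_+u_2^+$, $(u_2^+)'+c\l u_2^+=q_-u_1^+$, the same power-matching giving \eqref{eq:a0b0}--\eqref{eq:bk} with the descent count $a_k^\pm\in W_1^{n-k+1}[0,1]$, the Fubini identification of \eqref{eq:bn(l)}, the reduction to the $\l$-independent equation \eqref{eq:A.int.int}, and the same endgame of checking the free term $v_n^\pm(x-\xi)\in X_\infty^0$, proving the double-integral operator preserves $X_\infty^0$, and summing a Neumann series with factorial decay (the paper gets the sharper bound $\|(T^\pm)^k\|\le C^k/((k-1)!)^2$; your claimed $C^m/m!$ suffices equally). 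The one genuine divergence is how you arrive at \eqref{eq:A.int.int}: you keep the remainder equation in integral (Volterra) form via variation of parameters and substitute the ansatz \eqref{eq:an(x,l).def} directly into it, whereas the paper differentiates $a_n^\pm(x,\l)=\int_0^x A_n^\pm(x,x-u)e^{-c\l(x-u)}\,du$ in $x$, which obliges it to establish the auxiliary regularity $A_n^\pm(\cdot,\cdot-u)\in W_1^1[u,1]$ for a.e.\ $u$ (its relation \eqref{eq:A.in.AC}, verified at the end via \eqref{eq:B.in.L1}); your integral-form route sidesteps that technical lemma entirely, since absolute continuity of $R_2$ and the a.e.\ ODE come for free from the Volterra equation, at the cost of the change-of-variables bookkeeping you correctly flag (it does land exactly on the region $0\le t\le\xi$, $t\le s\le t+x-\xi$). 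Two minor remarks: your reduction of the second column to the first via $\l\mapsto-\l$ and $q_+\leftrightarrow q_-$ matches the paper's combined $\pm$ treatment; and the final $o(1)$ statement you append via Lemma~\ref{lem:K.exp.X0} is not part of Theorem~\ref{th:asymp} itself but is the paper's subsequent Lemma~\ref{lem:anbn=o1}, so including it is harmless surplus.
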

\begin{proof}[Proof]
Substituting the first column of~\eqref{eq:Y(x,l).def}
into~\eqref{eq:system.new} we obtain
\begin{eqnarray}
 b^+ \l \ e^{b^+ \l x} u_1^+ + e^{b^+ \l x} (u_1^+)'
 & = & b^+ \l \ e^{b^+ \l x} u_1^+ + q_+(x) e^{b^+ \l x} u_2^+, \\
 b^+ \l \ e^{b^+ \l x} u_2^+ + e^{b^+ \l x} (u_2^+)'
 & = & b^- \l \ e^{b^+ \l x} u_2^+ + q_-(x) e^{b^+ \l x} u_1^+,
\end{eqnarray}
or
\begin{align}
 \label{eq:u1+'=u2+}
 \bigl(u_1^+(x,\l)\bigr)' & = q_+(x) u_2^+(x,\l), \\
 \label{eq:u2+'=u1+}
 \bigl(u_2^+(x,\l)\bigr)' & = q_-(x) u_1^+(x,\l) - (b^+ - b^-) \l u_2^+(x,\l).
\end{align}
Similarly, inserting second column of~\eqref{eq:Y(x,l).def}
into~\eqref{eq:system.new} we arrive at
\begin{eqnarray}
 \label{eq:u1-'=u2-}
 \bigl(u_1^-(x,-\l)\bigr)' & = & q_-(x) u_2^-(x,-\l), \\
 \label{eq:u2-'=u1-}
 \bigl(u_2^-(x,-\l)\bigr)' & = & q_+(x) u_1^-(x,-\l) - (b^- - b^+) \l u_2^-(x,-\l).
\end{eqnarray}
Changing $\l$ to $-\l$ in~\eqref{eq:u1-'=u2-}
and~\eqref{eq:u2-'=u1-} we can combine~\eqref{eq:u1+'=u2+},
\eqref{eq:u2+'=u1+}, \eqref{eq:u1-'=u2-} and~\eqref{eq:u2-'=u1-}
into
\begin{eqnarray}
 \label{eq:u1'=u2}
 \bigl(u_1^\pm (x,\l)\bigr)' & = & q_\pm (x) u_2^\pm (x,\l), \\
 \label{eq:u2'=u1}
 \bigl(u_2^\pm (x,\l)\bigr)' & = & q_\mp(x) u_1^\pm (x,\l)
 - (b^+ - b^-) \l u_2^\pm (x,\l).
\end{eqnarray}
Next we show that functions
\begin{equation}
a_k^\pm (\cdot), \ b_k^\pm (\cdot), \quad k\in\{0,1, \ldots, n\},
\end{equation}
are well defined by formulas~\eqref{eq:a0b0}--\eqref{eq:bk}. To this end we prove by induction
that
\begin{equation}\label{eq:ak,bk in W}
 a_k^\pm (\cdot) \in W_1^{n-k+1}[0,1], \quad
 b_k^\pm (\cdot) \in W_1^{n-k+2}[0,1], \quad k \in \{0, 1, \ldots, n\}.
\end{equation}
For $k=0$ this statement is clear in view of~\eqref{eq:a0b0}. Assume that~\eqref{eq:ak,bk in W} holds for some $k \in \{0,1, \ldots, n-1\}$ and let us prove it for $k+1$. From~\eqref{eq:ak} we have
\begin{equation}\label{eq:ak+1}
 a_{k+1}^\pm (\cdot) = q_\mp(\cdot) b_k^\pm (\cdot)
 - \bigl(a_{k}^\pm (\cdot)\bigr)' \in W_1^{n-k}[0,1],
\end{equation}
since $a_{k}^\pm \in W_1^{n-k+1}[0,1]$ (and hence
$(a_{k}^\pm )' \in W_1^{n-k}[0,1]$), $q_\mp \in
W_1^{n}[0,1]$, and $b_k^\pm \in W_1^{n-k+1}[0,1]$ for $0 \le
k < n$. From~\eqref{eq:bk} we have
\begin{equation}\label{eq:b1 in W}
 b_{k+1}^\pm (x) = \int_0^x q_\pm (t) a_{k+1}^\pm (t) dt \in W_1^{n-k+1}[0,1],
\end{equation}
because $q_\pm \in W_1^n[0,1]$ and $a_{k+1}^\pm \in
W_1^{n-k}[0,1]$. Thus~\eqref{eq:ak,bk in W} holds and we have as
a consequence that
\begin{equation}\label{eq:ak,bk in W1}
 a_k^\pm (\cdot), b_k^\pm (\cdot) \in W_1^1[0,1] ,
 \quad k \in \{0,1, \ldots, n\}.
\end{equation}
Let's rewrite equations~\eqref{eq:u1'=u2} and~\eqref{eq:u2'=u1} in terms of functions $a_n^\pm (x,\l)$ and $b_n^\pm (x,\l)$. Inserting~\eqref{eq:u1} and~\eqref{eq:u2} into~\eqref{eq:u1'=u2} and setting
\begin{equation} \label{eq:c.def}
 c := b^+ - b^- = i (b_1 - b_2),
\end{equation}
we get
\begin{multline}\label{eq:sum bk' = sum ak}
 \sum_{k=0}^n \bigl(b_k^\pm (x)\bigr)' (c\l)^{-k}
 + \bigl(b_n^\pm (x,\l)\bigr)' (c\l)^{-n}
 \\= \sum_{k=0}^n q_\pm (x) a_k^\pm (x) (c\l)^{-k}
 + q_\pm (x) a_n^\pm (x,\l) (c\l)^{-n}.
\end{multline}
In view of~\eqref{eq:a0b0} and~\eqref{eq:bk} we have
\begin{equation}\label{eq:bk'=q ak}
 \bigl(b_k^\pm (x)\bigr)' = q_\pm (x) a_k^\pm (x),
 \quad k \in \{0,1, \ldots, n\}.
\end{equation}
Hence~\eqref{eq:sum bk' = sum ak} is equivalent to
\begin{equation} \label{eq:bn(l)'=an(l)}
 \bigl(b_n^\pm (x,\l)\bigr)' = q_\pm (x) a_n^\pm (x,\l).
\end{equation}
Inserting~\eqref{eq:u1} and~\eqref{eq:u2}
into~\eqref{eq:u2'=u1} we derive
\begin{multline}
 \sum_{k=0}^n \bigl(a_k^\pm (x)\bigr)' (c\l)^{-k}
 + \bigl(a_n^\pm (x,\l)\bigr)' (c\l)^{-n} \\
 = \sum_{k=0}^n q_\mp(x) b_k^\pm (x) (c\l)^{-k}
 + q_\mp(x) b_n^\pm (x,\l) (c\l)^{-n} \\
 - \sum_{k=0}^n a_k^\pm (x) (c\l)^{-k+1} - a_n^\pm (x,\l) (c\l)^{-n+1}.
\end{multline}
This identity with account of~\eqref{eq:a0b0} can be rewritten as
\begin{multline}\label{eq:sum_0{n-1}=a,k}
 \sum_{k=0}^{n-1}\left(\left(a_k^\pm (x)\right)'
 - q_\mp(x)b_k^\pm (x) + a_{k+1}^\pm (x)\right)(c \l)^{-k} =
 \Bigl(- \left(a_n^\pm (x)\right)' - \left(a_n^\pm (x,\l)\right)' \Bigr. \\
 \Bigl. + q_\mp(x) b_n^\pm (x,\l) + q_\mp(x) b_n^\pm (x)
 - c \l a_n^\pm (x,\l)\Bigr)(c\l)^{-n}.
\end{multline}
Due to~\eqref{eq:ak} each summand in the left hand side of~\eqref{eq:sum_0{n-1}=a,k} with $k \ge 1$ vanishes. Besides, combining~\eqref{eq:ak} with~\eqref{eq:a0b0} ensures that the first summand (with $k=0$) in~\eqref{eq:sum_0{n-1}=a,k} also vanishes and the identity~\eqref{eq:sum_0{n-1}=a,k} turns into
\begin{equation} \label{eq:an(l)'}
 \bigl(a_n^\pm (x,\l)\bigr)' = q_\mp(x) b_n^\pm (x,\l)
 - c \l a_n^\pm (x,\l) + v_n^\pm (x).
\end{equation}
where
\begin{equation} \label{eq:v}
 v_n^\pm (x) := - \left(a_n^\pm (x)\right)' + q_\mp(x) b_n^\pm (x), \qquad n\ge 1.
\end{equation}
It follows from~\eqref{eq:ak,bk in W1} that $v_n^\pm \in
L^1[0,1]$.

Next we show that combining~\eqref{eq:an(x,l).def} with~\eqref{eq:bn(l)} implies~\eqref{eq:bn(l)'=an(l)} for any function $A_n^\pm (\cdot,\cdot)\in L^1(\Omega)$ where $\Omega = \{(x,t):0 \le t \le x \le 1\}$ is given by~\eqref{eq:Omega.def}. Indeed, changing order of integration and taking definition~\eqref{eq:an(x,l).def} into account we get
\begin{equation}\label{eq:bnl=int anl}
 b_n^\pm (x,\l) = \int_0^x q_\pm (s)
 \left(\int_0^s A_n^\pm (s,t) e^{-c \l t}dt\right) ds
 = \int_0^x q_\pm (s) a_n^\pm (s,\l) ds.
\end{equation}
Differentiating this identity leads to~\eqref{eq:bn(l)'=an(l)}.

Making change of variables in~\eqref{eq:an(x,l).def} we get
\begin{equation} \label{eq:a_n(x,l).with.x-u}
 a_n^\pm (x,\l) = \int_0^x A_n^\pm (x,x-u) e^{-c\l(x-u)} du.
\end{equation}
Assume for the moment that (we prove this fact later)
\begin{equation} \label{eq:A.in.AC}
 A_n^\pm (\cdot,\cdot-u) \in W_1^1[u,1], \quad \text{for a.e.} \ \ u \in [0,1].
\end{equation}
Assuming this, we can differentiate in~\eqref{eq:a_n(x,l).with.x-u} for almost every $x \in [0,1]$ and with account of~\eqref{eq:an(x,l).def}
obtain
\begin{multline}\label{eq:a'+cla}
 \left(a_n^\pm (x,\l)\right)' = A_n^\pm (x,0) \\
 + \int_0^x \left(\frac{d}{dx}\bigl(A_n^\pm (x,x-u)\bigr) e^{-c \l (x-u)}
 - c \l A_n^\pm (x,x-u) e^{-c \l (x-u)} \right) du \\
 = A_n^\pm (x,0) - c \l a_n^\pm (x,\l)
 + \int_0^x \frac{d}{dx}\bigl(A_n^\pm (x,x-u)\bigr) e^{-c \l
 (x-u)}du.
\end{multline}
Inserting~\eqref{eq:bn(l)} and~\eqref{eq:a'+cla}
into~\eqref{eq:an(l)'} and making change of variable in the
integral at the right-hand side we arrive at the following equation
\begin{multline}\label{eq:A int}
 A_n^\pm (x,0) + \int_0^x
 \frac{d}{dx}\bigl(A_n^\pm (x,x-u)\bigr) e^{-c \l (x-u)} du \\
 = v_n^\pm (x) + q_\mp(x) \int_0^x
 \left(\int_{x-u}^x A_n^\pm (s,x-u) q_\pm (s) ds\right) e^{-c \l
 (x-u)}du
\end{multline}
Evidently, this equation will turn into identity if we set
\begin{gather}
 \label{eq:Ax0}
 A_n^\pm (x,0) = v_n^\pm (x), \quad x \in [0,1], \\
 \label{eq:A int dif}
 \frac{d}{dx}\bigl(A_n^\pm (x,x-u)\bigr)
 = q_\mp(x) \int_{x-u}^x A_n^\pm (s,x-u) q_\pm (s) ds,
 \quad 0 \le u \le x \le 1.
\end{gather}
Thus, it suffices to show that integro-differential equation~\eqref{eq:A int dif} with initial condition~\eqref{eq:Ax0} has a solution $A_n^\pm(\cdot,\cdot) \in X_\infty^0(\Omega)$.

Changing $x$ to $t$ in~\eqref{eq:A int dif} and integrating thus
obtained equality over $t\in [u, x]$ we get
\begin{equation}
 \int_u^x \frac{d}{dt}\bigl(A_n^\pm (t,t-u)\bigr) dt
 = \int_u^x q_\mp(t) \int_{t-u}^t A_n^\pm (s,t-u) q_\pm (s) ds dt
\end{equation}
or in view of~\eqref{eq:A.in.AC} and~\eqref{eq:Ax0}
\begin{multline}\label{eq:A int int u}
 A_n^\pm (x,x-u) - v_n^\pm (u)
 = \int_0^{x-u} \int_t^{t+u} q_\mp(t+u) q_\pm (s) A_n^\pm (s,t) ds dt, \\
 0 \le u \le x \le 1.
\end{multline}
Setting $u = x-\xi$ here we easily derive
\begin{multline}\label{eq:A int int}
 A_n^\pm (x,\xi) = v_n^\pm (x-\xi) + \int_0^{\xi} \int_t^{t+x-\xi}
 q_\mp(t+x-\xi) q_\pm (s) A_n^\pm (s,t) ds dt, \\
 0 \le \xi \le x \le 1.
\end{multline}
It is clear that~\eqref{eq:A int int} is equivalent to the
initial value problem~\eqref{eq:Ax0}--\eqref{eq:A int dif}.
Hence we need to prove that the integral equation~\eqref{eq:A
int int} has a solution in the class $X_\infty^0$.

Note that for $x \in [0,1]$ one has
\begin{equation}
 \|v_n^\pm (x-\cdot)\|_{L^1[0,x]} = \int_0^x \abs{v_n^\pm (x-t)} dt
 = \int_0^x \abs{v_n^\pm (t)} dt \le \|v_n^\pm \|_{L^1[0,1]}.
\end{equation}
Hence the function $V_n^\pm (x,\xi) := v_n^\pm (x-\xi)$ belongs to the space $X_\infty$. It is also straightforward to show that $V_n^\pm \in X_\infty^0$ by approximating $v_n^\pm(\cdot)$ in $L^1$-norm with continuous functions.

Consider the following operator $T^\pm$ from $X_\infty$ into $X_\infty$,
\begin{equation} \label{eq:Tpm.def}
 (T^\pm F)(x,\xi) := \int_0^{\xi} \int_t^{t+x-\xi} q_\mp(t+x-\xi) q_\pm (s) F(s,t) ds dt,
 \quad 0 \le \xi \le x \le 1.
\end{equation}
Integral equation~\eqref{eq:A int int} can be rewritten in the following abstract form
\begin{equation} \label{eq:(I-K)A=f}
 (I_H - T^\pm ) A_n^\pm = V_n^\pm ,
\end{equation}
where $V_n^\pm$ is a fixed element of $X_\infty^0$. Let us prove existence of $A_n^\pm \in X_\infty^0$ satisfying~\eqref{eq:(I-K)A=f}.

At first let us rewrite the definition of $T^\pm$ in more convenient way
\begin{multline} \label{eq:K.cool}
 (T^\pm F)(x_1,x_2)
 = \iint\limits_{\genfrac{}{}{0pt}{2}{0 \le t_2 \le x_2}{0 \le t_1-t_2 \le x_1-x_2}}
 q_\mp(t_2+x_1-x_2) q_\pm (t_1) F(t_1,t_2) dt_1 dt_2, \\
 0 \le x_2 \le x_1 \le 1.
\end{multline}
Recall that $q_\pm \in W_1^n[0,1] \subset C[0,1]$. Let us prove by induction in $k$ that
\begin{multline} \label{eq:K^(k+1)A<=...}
 |((T^\pm )^{k+1}F)(x_1,x_2)| \\
 \le C^{k+1} \iint\limits_{\atop{0 \le t_2 \le x_2}
 {0 \le t_1-t_2 \le x_1-x_2}}
 \frac{(x_2-t_2)^k}{k!} \cdot \frac{((x_1-x_2)-(t_1-t_2))^k}{k!}
 \cdot |F(t_1,t_2)| dt_1 dt_2, \\
 0 \le x_2 \le x_1 \le 1.
\end{multline}
where
\begin{equation} \label{eq:C.def}
 C = \|q_+\|_{C[0,1]} \cdot \|q_-\|_{C[0,1]}.
\end{equation}
The base case $k=0$ directly follows from~\eqref{eq:K.cool}
and~\eqref{eq:C.def}. Now assuming that~\eqref{eq:K^(k+1)A<=...}
is proved for $k=l-1$ we prove it for $k=l$, where $l \in
\bN$. We have for $0 \le x_2 \le x_1
\le 1$
\begin{multline}
 \abs{((T^\pm )^{l+1}F)(x_1,x_2)}
 \le C \iint\limits_{\atop{0 \le t_2 \le x_2}{0 \le t_1-t_2 \le x_1-x_2}}
 |((T^\pm )^{l}F)(t_1,t_2)| dt_1 dt_2 \\
 \le C^{l+1} \iiiint\limits_{\atop{0 \le s_2 \le t_2 \le x_2}
 {0 \le s_1-s_2 \le t_1-t_2 \le x_1-x_2}}
 \frac{(t_2-s_2)^{l-1}}{(l-1)!} \times
 \frac{((t_1-t_2)-(s_1-s_2))^{l-1}}{(l-1)!} \\
 \times |F(s_1,s_2)| ds_1 ds_2 dt_1 dt_2
 = C^{l+1} \iint\limits_{\atop{0 \le s_2 \le x_2}
 {0 \le s_1-s_2 \le x_1-x_2}} |F(s_1,s_2)| ds_1 ds_2 \\
 \iint\limits_{\atop{s_2 \le t_2 \le x_2}
 {s_1-s_2 \le t_1-t_2 \le x_1-x_2}} \frac{(t_2-s_2)^{l-1}}{(l-1)!}
 \times \frac{((t_1-t_2)-(s_1-s_2))^{l-1}}{(l-1)!} dt_1 dt_2 \\
 = C^{l+1} \iint\limits_{\atop{0 \le s_2 \le x_2}
 {0 \le s_1-s_2 \le x_1-x_2}} \frac{(x_2-s_2)^{l}}{l!}
 \times \frac{((x_1-x_2)-(s_1-s_2))^{l}}{l!} \times |F(s_1,s_2)| ds_1 ds_2 \\
 \le \frac{C^{l+1}}{l!^2} \iint\limits_{\atop{0 \le s_2 \le x_2}
 {0 \le s_1-s_2 \le x_1-x_2}} |F(s_1,s_2)| ds_1 ds_2.
\end{multline}
So~\eqref{eq:K^(k+1)A<=...} is proved. Now by~\eqref{eq:K^(k+1)A<=...} we have for $k \in \bZ_+$ and $x_1 \in [0,1]$
\begin{multline}
 \|((T^\pm )^{k+1} F)(x_1,\cdot)\|_{L^1[0,x_1]}
 = \int_0^{x_1} \abs{((T^\pm )^{k+1} F)(x_1,x_2)} dx_2 \\
 \le \frac{C^{k+1}}{k!^2}
 \iiint\limits_{\atop{0 \le t_2 \le x_2 \le x_1}
 {0 \le t_1-t_2 \le x_1-x_2}} |F(t_1,t_2)| dt_1 dt_2 dx_2 \\
 \le \frac{C^{k+1}}{k!^2}
 \int_0^{x_1} \int_0^{x_1} \left(\int_0^{t_1} \abs{F(t_1,t_2)} dt_2 \right)dt_1 dx_2 \\
 \le \frac{C^{k+1}}{k!^2}
 \int_0^{x_1} \int_0^{x_1} \|F\|_{X_\infty}\,dt_1 dx_2
 \le \frac{C^{k+1}}{k!^2} \|F\|_{X_\infty}.
\end{multline}
Hence
\begin{equation}
 \|(T^\pm )^{k+1} F\|_{X_\infty} \le \frac{C^{k+1}}{k!^2} \|F\|_{X_\infty},
 \quad F \in X_\infty, \quad k \in \bZ_+,
\end{equation}
or
\begin{equation} \label{eq:||K^(k+1)A||}
 \|(T^\pm )^{k}\| \le \frac{C^{k}}{((k-1)!)^2}, \quad k \in \bN.
\end{equation}

This implies that the operator $T^\pm$ has zero spectral radius. Hence
the operator $I_H - T^\pm$ has bounded inverse and $A_n^\pm = (I_H - T^\pm )^{-1} V_n^\pm \in X_\infty$.

Let us prove that, in fact, the operator $T^\pm$ maps $X_\infty^0$ into itself. Let $F \in X_\infty^0$ and let $F_m \to F$ in $X_\infty$ as $m \to \infty$ where $F_m \in C(\Omega)$, $m \in \bN$. Since $q_\pm \in C[0,1]$, it is evident from~\eqref{eq:Tpm.def} that $T^\pm F_m \in C(\Omega)$, $m \in \bN$. Combining~\eqref{eq:Tpm.def} with~\eqref{eq:C.def} we get for $0 \le \xi \le x \le 1$,
\begin{multline} \label{eq:TF-Tfm}
 \abs{(T^\pm F)(x,\xi) - (T^\pm F_m)(x,\xi)}
 \le C \int_0^{\xi} \int_t^{t+x-\xi} \abs{F(s,t) - F_m(s,t)} \,ds \,dt \\
 = C \int_0^x \int_{\max\{0, s-x+\xi\}}^{\min\{s,\xi\}}
 \abs{F(s,t) - F_m(s,t)} \,dt \,ds \\
 \le C \int_0^x \|F(s, \cdot) - F_m(s, \cdot)\|_{L^1[0,s]} \, ds
 \le C \|F - F_m\|_{X_\infty}.
\end{multline}
This estimate trivially implies that $\|T^\pm F_m - T^\pm F\|_{X_\infty} \to 0$ as $m \to \infty$. Hence $T^\pm F \in X_\infty^0$. This concludes the proof of the desired inclusion $A_n^\pm \in X_\infty^0$.

Finally let's prove~\eqref{eq:A.in.AC}. In view of~\eqref{eq:A
int int u} it is equivalent to
\begin{equation} \label{eq:B.in.L1}
 B^\pm (\cdot,u) \in L^1[0,1-u], \quad u \in [0,1],
\end{equation}
where
\begin{equation} \label{eq:Btu.def}
 B^\pm (t,u) := \int_t^{t+u} q_\mp(t+u) q_\pm (s) A_n^\pm (s,t)\,ds,
 \quad t,u \ge 0,\ \ t+u \le 1.
\end{equation}
Combining~\eqref{eq:C.def} with inclusion $A_n^\pm \in X_\infty$ and doing estimation similarly to~\eqref{eq:TF-Tfm} implies
\begin{equation} \label{eq:intBtu}
 \int_0^{1-u} |B^\pm (t,u)|dt
 \le C \int_0^{1-u} \int_t^{t+u} |A_n^\pm (s,t)| \,ds \,dt
 \le C \|A_n^\pm\|_{X_\infty}.
\end{equation}
Hence~\eqref{eq:B.in.L1} holds and the proof is complete.
\end{proof}
Lemma~\ref{lem:K.exp.X0} allows us to estimate growth of remainder terms $a_n^\pm(x,\l)$ and $b_n^\pm(x,\l)$ in expansions~\eqref{eq:u1}--\eqref{eq:u2} in the upper half-plane.
\begin{lemma} \label{lem:anbn=o1}
Let $h \in \bR$ be fixed. Then under the assumption of Theorem~\ref{th:asymp} the functions $a_n^\pm(x,\l)$ and $b_n^\pm(x,\l)$ given by~\eqref{eq:an(x,l).def}--\eqref{eq:bn(l)} satisfy the following asymptotic relations uniformly in $x \in [0,1]$,
\begin{equation} \label{eq:anbn=o1}
 a_n^\pm(x,\l) = o(1), \qquad b_n^\pm(x,\l) = o(1),
 \qquad \l \to \infty,\ \ \Im \l \ge h.
\end{equation}
\end{lemma}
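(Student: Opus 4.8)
The plan is to obtain the estimate for $a_n^\pm$ as an immediate application of Lemma~\ref{lem:K.exp.X0}, and then to deduce the one for $b_n^\pm$ from it. I would start by noting that formula~\eqref{eq:an(x,l).def} writes $a_n^\pm(x,\l)=\int_0^x A_n^\pm(x,t)\,e^{i(b_2-b_1)\l t}\,dt$, which is exactly of the form covered by Lemma~\ref{lem:K.exp.X0}, with kernel $K=A_n^\pm$ and frequency $b=b_2-b_1$. The hypotheses are met: $A_n^\pm\in X_\infty^0$ by Theorem~\ref{th:asymp}, and $b=b_2-b_1>0$ since $b_1<0<b_2$. Hence for every $\delta>0$ there is $R_\delta$ so that $|a_n^\pm(x,\l)|<\delta\,(e^{-(b_2-b_1)\Im\l}+1)$ whenever $|\l|>R_\delta$, uniformly in $x\in[0,1]$.

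The only delicate point, and the reason the restriction $\Im\l\ge h$ is imposed, is that the bound furnished by Lemma~\ref{lem:K.exp.X0} carries the factor $e^{-(b_2-b_1)\Im\l}$, which is \emph{not} $o(1)$ on its own and in fact blows up as $\Im\l\to-\infty$. However, because $b_2-b_1>0$, on the half-plane $\Im\l\ge h$ one has $e^{-(b_2-b_1)\Im\l}\le e^{-(b_2-b_1)h}$, so the right-hand side is at most $\delta\,(e^{-(b_2-b_1)h}+1)$, a fixed multiple of $\delta$. Since $\delta>0$ is arbitrary, this yields $a_n^\pm(x,\l)=o(1)$ uniformly in $x\in[0,1]$ as $\l\to\infty$ within $\Im\l\ge h$.

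For $b_n^\pm$ I would avoid checking directly that the inner-integral kernel appearing in~\eqref{eq:bn(l)} belongs to $X_\infty^0$, and instead reuse the identity $b_n^\pm(x,\l)=\int_0^x q_\pm(s)\,a_n^\pm(s,\l)\,ds$ already derived in~\eqref{eq:bnl=int anl} during the proof of Theorem~\ref{th:asymp}. Since $q_\pm\in W_1^n[0,1]\subset L^1[0,1]$ and the estimate just obtained for $a_n^\pm$ is uniform in its first variable, this gives $|b_n^\pm(x,\l)|\le\|q_\pm\|_{L^1[0,1]}\cdot\sup_{s\in[0,1]}|a_n^\pm(s,\l)|=o(1)$ uniformly in $x\in[0,1]$. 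Beyond the boundedness-of-the-exponential observation, there is no real obstacle here: the substance of the statement is entirely contained in Lemma~\ref{lem:K.exp.X0}, and the present lemma is essentially a specialization of it to the half-plane $\Im\l\ge h$.
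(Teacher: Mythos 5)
Your proposal is correct and follows essentially the same route as the paper: the estimate for $a_n^\pm$ comes from Lemma~\ref{lem:K.exp.X0} applied to $A_n^\pm\in X_\infty^0$ with frequency $b_2-b_1>0$, noting that $e^{-(b_2-b_1)\Im\l}$ is bounded on the half-plane $\Im\l\ge h$, and the estimate for $b_n^\pm$ is then deduced from the identity~\eqref{eq:bnl=int anl}. The only cosmetic difference is that the paper bounds the integral by $\|q_\pm\|_{C[0,1]}\cdot\sup_{t\in[0,1]}|a_n^\pm(t,\l)|$ (estimate~\eqref{eq:|bnl|<=sup|q|.sup|anl|}) whereas you use $\|q_\pm\|_{L^1[0,1]}$; both are valid since $q_\pm\in W_1^n[0,1]\subset C[0,1]$.
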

\begin{proof}
Let $\delta > 0$. Combining~\eqref{eq:an(x,l).def} with the inclusion $A_n^\pm \in X_\infty^0$, Lemma~\ref{lem:K.exp.X0} and inequality $b_1 - b_2 < 0$, we have
\begin{multline} \label{eq:anmpxl}
 \abs{a_n^\pm (x,\l)}
 = \abs{\int_0^x A_n^\pm (x,t) e^{i (b_2 - b_1) \l t} dt}
 \le \delta \cdot (e^{(b_1 - b_2) \Im \l} + 1) \\
 \le \delta \cdot (e^{(b_1 - b_2) h} + 1),
 \qquad \Im \l \ge h, \quad|\l| > R_\delta, \quad x \in [0,1].
\end{multline}
This trivially implies the first relation in~\eqref{eq:anbn=o1}.

Recall that $q_\pm \in W_1^n[0,1] \subset C[0,1]$. Hence
formula~\eqref{eq:bnl=int anl} implies estimate
\begin{equation} \label{eq:|bnl|<=sup|q|.sup|anl|}
 \abs{b_n^\pm (x,\l)}
 \le \|q_\pm \|_{C[0,1]} \cdot \sup_{t \in [0,1]}|a_n^\pm (t,\l)|,
 \quad \l \in \bC \setminus \{0\}.
\end{equation}
Combining the first relation in~\eqref{eq:anbn=o1}
with estimate~\eqref{eq:|bnl|<=sup|q|.sup|anl|} implies the second relation in~\eqref{eq:anbn=o1}.
\end{proof}
\begin{remark}
Note that in the recent papers~\cite{KosShk21,KosShk23} A.P.~Kosarev and A.A.~Shkalikov established asymptotic expansions similar to~\eqref{eq:u1}--\eqref{eq:u2} with estimate of the remainder term as in~\eqref{eq:anbn=o1} for $2 \times 2$ Dirac-type operators with non-constant matrix $B(\cdot)$ using Birkhoff-Tamarkin method (see also~\cite{SavShk20}).
Unique feature of Marchenko method, that we used in the proof of formulas~\eqref{eq:u1}--\eqref{eq:u2}, is the integral representation of the remainder term as a Fourier transform of the solution to certain explicit integral equation.
\end{remark}
Formulas~\eqref{eq:ak} and~\eqref{eq:bk} contain undesirable
integration operations, which are avoidable by considering the
quotient $u_2^\pm (x,\l)[u_1^\pm (x,\l)]^{-1}$.

First we need the following abstract result of folklore nature on the asymptotic behavior of rational functions.
\begin{lemma} \label{lem:P1/P2}
Let holomorphic in unbounded domain $D \subset \bC \setminus \{0\}$ functions $u_1(\l)$ and $u_2(\l)$ be given by
\begin{align}
\label{eq:P1.def}
 & u_1(\l) = \sum_{k=0}^n b_k \cdot (c \l)^{-k} + b_n(\l) \cdot (c \l)^{-n},
 \qquad \l \in D, \\
\label{eq:P2.def}
 & u_2(\l) = \sum_{k=0}^n a_k \cdot (c \l)^{-k} + a_n(\l) \cdot (c \l)^{-n},
 \qquad \l \in D,
\end{align}
where $b_0 = 1$, $c \in \bC \setminus \{0\}$ is fixed, and
\begin{equation} \label{eq:an.bn=o1}
 a_n(\l) = o(1), \quad b_n(\l) = o(1)
 \quad\text{as}\quad \l \to \infty, \quad \l \in D.
\end{equation}
Assume that $u_1(\l) \ne 0$ for $\l \in D$. Then the following identity holds
\begin{equation} \label{eq:P2/P1}
 \frac{u_2(\l)}{u_1(\l)} = \sum_{k=0}^n \sigma_k \cdot (c \l)^{-k}
 + \sigma_n(\l) \cdot (c \l)^{-n}, \qquad \l \in D,
\end{equation}
where
\begin{equation} \label{eq:sigmak}
 \sigma_0 = a_0, \qquad \sigma_k = a_k - \sum_{j=1}^k b_j \sigma_{k-j},
 \qquad k \in \oneton,
\end{equation}
and
\begin{multline} \label{eq:sigmanl}
 \sigma_n(\l) = \frac{1}{u_1(\l)} \left(a_n(\l)
 - b_n(\l) \sum_{k=0}^n \sigma_k \cdot (c \l)^{-k} \right. \\
 \left. - \sum_{k=n+1}^{2n} \sum_{j=n-k}^n b_j \sigma_{k-j}
 \cdot (c \l)^{k-n} \right), \qquad \l \in D.
\end{multline}
In particular,
\begin{equation} \label{eq:sigmanl=o1}
 \sigma_n(\l) = o(1) \quad\text{as}\quad \l \to \infty, \quad \l \in D.
\end{equation}
\end{lemma}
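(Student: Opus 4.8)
The plan is to reduce everything to polynomial multiplication in the single auxiliary variable $z := (c\l)^{-1}$ and then simply read off the remainder. First I would treat $\sigma_0, \ldots, \sigma_n$ as the constants \emph{defined} by the recurrence~\eqref{eq:sigmak}, set $S(\l) := \sum_{k=0}^n \sigma_k (c\l)^{-k}$, and \emph{define} $\sigma_n(\l)$ by declaring~\eqref{eq:P2/P1} to hold, i.e.\ $\sigma_n(\l) := (c\l)^n\bigl(u_2(\l)/u_1(\l) - S(\l)\bigr)$; this is legitimate because $u_1(\l) \ne 0$ on $D$. The whole content of the lemma is then (i) that this $\sigma_n(\l)$ coincides with the explicit expression~\eqref{eq:sigmanl}, and (ii) that it is $o(1)$.

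For (i) I would expand the product $u_1(\l) S(\l)$ as a Cauchy product in $z$. Writing $u_1(\l) = \sum_{k=0}^n b_k z^k + b_n(\l) z^n$ and multiplying out $\bigl(\sum_k b_k z^k\bigr)\bigl(\sum_k \sigma_k z^k\bigr)$, the coefficient of $z^m$ for $0 \le m \le n$ is $\sum_{j=0}^m b_j \sigma_{m-j} = \sigma_m + \sum_{j=1}^m b_j \sigma_{m-j}$, which by~\eqref{eq:sigmak} and $b_0 = 1$ equals exactly $a_m$. Hence the low-order part of $u_1 S$ reproduces $\sum_{m=0}^n a_m z^m$, which is precisely the polynomial part of $u_2$; the terms of degree $m$ with $n+1 \le m \le 2n$ assemble into coefficients $\gamma_m := \sum_{j=m-n}^{n} b_j \sigma_{m-j}$, and the extra summand $b_n(\l) z^n S(\l)$ contributes the $b_n(\l)\sum_{k=0}^n \sigma_k (c\l)^{-k}$ piece. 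Subtracting from $u_2 = \sum_{m=0}^n a_m z^m + a_n(\l) z^n$, the matched coefficients cancel and one is left with $u_2 - u_1 S = \bigl(a_n(\l) - b_n(\l) S(\l)\bigr) z^n - \sum_{m=n+1}^{2n}\gamma_m z^m$. Dividing by $u_1$ and multiplying by $(c\l)^n$, and using $(c\l)^n z^m = (c\l)^{n-m}$, yields exactly~\eqref{eq:sigmanl}, the leftover cross terms appearing with the genuinely negative powers $(c\l)^{n-m}$, $m > n$.

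For (ii) I would argue that every ingredient inside the bracket of~\eqref{eq:sigmanl} is $o(1)$ while the prefactor $1/u_1(\l)$ stays bounded. Since $(c\l)^{-k} \to 0$ for $k \ge 1$ and $a_n(\l), b_n(\l) = o(1)$ by~\eqref{eq:an.bn=o1}, the expansion~\eqref{eq:P1.def} gives $u_1(\l) \to b_0 = 1$ as $\l \to \infty$ in $D$; together with the hypothesis $u_1(\l) \ne 0$ on $D$ this forces $|u_1(\l)|$ to be bounded away from $0$ for all large $|\l|$, so $1/u_1(\l)$ is bounded there. Inside the bracket, $a_n(\l) = o(1)$, the factor $b_n(\l) = o(1)$ multiplies the bounded quantity $S(\l) \to \sigma_0$, and each term of the double sum carries a power $(c\l)^{n-m}$ with $n-m \le -1$ and hence tends to $0$. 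Thus the whole bracket is $o(1)$, and multiplication by the bounded factor $1/u_1(\l)$ gives $\sigma_n(\l) = o(1)$, which is~\eqref{eq:sigmanl=o1}.

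The derivation is essentially bookkeeping; the one spot that needs care is the index range in the degree-$m$ cross terms for $m > n$ — requiring $0 \le j \le n$ and $0 \le m-j \le n$ pins it down to $m-n \le j \le n$ — and confirming that all these residual powers are strictly negative so that the remainder genuinely decays. The only genuine use of the hypotheses, beyond the recurrence, is the passage from $u_1 \ne 0$ on $D$ to a uniform lower bound on $|u_1|$ for large $|\l|$, which is what allows one to control $1/u_1(\l)$ and conclude $o(1)$.
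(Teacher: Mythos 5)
Your proof is correct and follows essentially the same route as the paper's: the paper likewise multiplies \eqref{eq:P2/P1} by $u_1(\l)$, expands the product of the two sums and compares coefficients at powers of $(c\l)^{-1}$ (dismissed there as ``straightforward calculations''), and then deduces \eqref{eq:sigmanl=o1} from $u_1(\l)=1+o(1)$ (a consequence of $b_0=1$) together with \eqref{eq:an.bn=o1} — your write-up simply supplies the Cauchy-product bookkeeping and the boundedness of $1/u_1(\l)$ that the paper leaves implicit. One point worth recording: your index analysis ($m-n\le j\le n$, residual powers $(c\l)^{n-m}$ with $m>n$) in fact corrects two sign typos in the printed formula \eqref{eq:sigmanl} — the inner sum should run over $j$ from $k-n$ to $n$ and the power should be $(c\l)^{n-k}$, since as printed $b_j$ with $j<0$ is undefined and $(c\l)^{k-n}$ grows as $\l\to\infty$, which would contradict \eqref{eq:sigmanl=o1} — so the version you derived, not the printed one, is the intended statement.
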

\begin{proof}
Multiplying~\eqref{eq:P2/P1} by $u_1(\l)$, expanding the product of two sums and comparing coefficients at powers of $\l$ one can arrive at formulas~\eqref{eq:sigmak}--\eqref{eq:sigmanl} after straightforward calculations. Since $b_0=1$, it is evident that $u_1(\l) = 1 + o(1)$ as $\l \to \infty$, $\l \in D$. Inserting this asymptotic formula and formulas~\eqref{eq:an.bn=o1} into~\eqref{eq:sigmanl} one readily arrives at~\eqref{eq:sigmanl=o1}.
\end{proof}
First we apply Lemma~\ref{lem:P1/P2} to establish asymptotic expansion of the quotient $\frac{u_2^\pm (x,\l)}{u_1^\pm (x,\l)}$ with coefficients $\sigma^\pm _k(x)$ that are expressed via $a_k^\pm(x), b_k^\pm(x)$.
\begin{proposition} \label{prop:sigma.easy}
Under the assumptions of Theorem~\ref{th:asymp} consider
the functions $u_2^\pm (x,\l)$ and $u_1^\pm (x,\l)$ $($see~\eqref{eq:u1} and~\eqref{eq:u2}$)$ from the representation of the fundamental solution $Y(x,\l)$ of the form~\eqref{eq:Y(x,l).def} to system~\eqref{eq:system.new}.
Let $h \in \bR$ be fixed. Then the following representation holds for the quotient of these functions
\begin{equation} \label{eq:sigma}
 \sigma^\pm (x,\l) := \frac{u_2^\pm (x,\l)}{u_1^\pm (x,\l)}
 = \sum_{k=1}^{n} \frac{\sigma^\pm _k(x)}{(c\l)^{k}}
 + \frac{\sigma^\pm _n(x,\l)}{(c\l)^{n}},
 \quad \l \in \bC \setminus \{0\},
\end{equation}
where $c = b^+ - b^- = i (b_1 - b_2)$ $($see~\eqref{eq:c.def}$)$ and
\begin{align}
\label{eq:sigmak=ak.bk}
 &\sigma_k^\pm(x) = a_k^\pm(x) - \sum_{j=1}^{k-1} b_j^\pm(x)
 \sigma_{k-j}^\pm(x), \quad x \in [0,1],
 \quad k \in \oneton, \\
\label{eq:sigma_n(0,l)}
 & \sigma^\pm_n(0,\l) = 0,
 \qquad \l \in \bC \setminus \{0\}, \\
\label{eq:sigma_n(x,l)}
 & \sigma^\pm _n(x,\l) = o(1), \quad \l \to \infty,\ \Im \l \ge h,
 \ \ \text{uniformly in} \ \ x \in [0,1].
\end{align}
\end{proposition}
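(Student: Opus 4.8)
The plan is to apply the abstract Lemma~\ref{lem:P1/P2} pointwise in $x$, with the identifications $b_k := b_k^\pm(x)$, $a_k := a_k^\pm(x)$, $b_n(\l) := b_n^\pm(x,\l)$, $a_n(\l) := a_n^\pm(x,\l)$ and $c = i(b_1 - b_2)$ as in~\eqref{eq:c.def}. For each fixed $x$ both $u_1^\pm(x,\cdot)$ and $u_2^\pm(x,\cdot)$ are holomorphic on $\bC \setminus \{0\}$, since the remainder integrals in~\eqref{eq:an(x,l).def}--\eqref{eq:bn(l)} are entire in $\l$. Two hypotheses of Lemma~\ref{lem:P1/P2} are already at hand: $b_0^\pm(x) = 1$ by~\eqref{eq:a0b0}, and $a_n^\pm(x,\l) = o(1)$, $b_n^\pm(x,\l) = o(1)$ by Lemma~\ref{lem:anbn=o1}. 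The one point to check before the lemma applies is the non-vanishing of $u_1^\pm(x,\l)$. Because $b_0^\pm(x) = 1$, the coefficients $b_k^\pm(\cdot)$ are continuous hence bounded on $[0,1]$ by~\eqref{eq:ak,bk in W1}, and $b_n^\pm(x,\l) = o(1)$ uniformly in $x$, the expansion~\eqref{eq:u1} yields $u_1^\pm(x,\l) \to 1$ as $\l \to \infty$, $\Im \l \ge h$, uniformly in $x \in [0,1]$. Hence there is $R > 0$ with $|u_1^\pm(x,\l)| \ge 1/2$ for all $x \in [0,1]$ and all $\l$ in the unbounded domain $D := \{\l : \Im \l \ge h,\ |\l| > R\} \subset \bC \setminus \{0\}$, so Lemma~\ref{lem:P1/P2} may be invoked on $D$ for each fixed $x$.

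Applying the lemma gives~\eqref{eq:sigma} together with the recursion~\eqref{eq:sigmak}, namely $\sigma_0^\pm(x) = a_0^\pm(x)$ and $\sigma_k^\pm(x) = a_k^\pm(x) - \sum_{j=1}^k b_j^\pm(x)\sigma_{k-j}^\pm(x)$. Here $\sigma_0^\pm(x) = a_0^\pm(x) = 0$ by~\eqref{eq:a0b0}, which is why the summation in~\eqref{eq:sigma} starts at $k = 1$; moreover the term $j = k$ in the recursion equals $b_k^\pm(x)\sigma_0^\pm(x) = 0$, so the recursion collapses to the stated form~\eqref{eq:sigmak=ak.bk}. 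An induction on $k$, using $a_k^\pm, b_k^\pm \in W_1^1[0,1] \subset C[0,1]$ from~\eqref{eq:ak,bk in W1}, shows that each $\sigma_k^\pm(\cdot)$ is continuous, hence bounded, on $[0,1]$; this boundedness is needed for the uniform estimate of the remainder.

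It remains to establish the two properties of $\sigma_n^\pm(x,\l)$. For the value at $x = 0$ I would argue directly rather than through~\eqref{eq:sigmanl}: the integrals~\eqref{eq:an(x,l).def}--\eqref{eq:bn(l)} vanish at $x = 0$, so $a_n^\pm(0,\l) = b_n^\pm(0,\l) = 0$, while~\eqref{eq:bk} gives $b_k^\pm(0) = 0$ for $k \ge 1$. Consequently $u_1^\pm(0,\l) \equiv 1$ and, using $a_0^\pm(0) = 0$, $u_2^\pm(0,\l) = \sum_{k=1}^n a_k^\pm(0)(c\l)^{-k}$ is an exact finite sum. Since $\sigma_k^\pm(0) = a_k^\pm(0)$ (the sum in~\eqref{eq:sigmak=ak.bk} vanishes because $b_j^\pm(0) = 0$ for $j \ge 1$), the quotient $\sigma^\pm(0,\l)$ equals its polynomial part exactly, forcing $\sigma_n^\pm(0,\l) = 0$, which is~\eqref{eq:sigma_n(0,l)}.

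The genuinely delicate point, and the main obstacle, is the \emph{uniformity in $x$} of $\sigma_n^\pm(x,\l) = o(1)$ in~\eqref{eq:sigma_n(x,l)}, since Lemma~\ref{lem:P1/P2} delivers~\eqref{eq:sigmanl=o1} only for each fixed $x$. To upgrade this I would substitute into the explicit formula~\eqref{eq:sigmanl} the uniform bounds already collected: $|u_1^\pm(x,\l)|^{-1} \le 2$ on $D$, the uniform estimates $a_n^\pm(x,\l), b_n^\pm(x,\l) = o(1)$ from Lemma~\ref{lem:anbn=o1}, and the boundedness of $\sigma_k^\pm(\cdot)$ and $b_j^\pm(\cdot)$ on $[0,1]$. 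In~\eqref{eq:sigmanl} every term is then either a uniform $o(1)$ factor times a bounded factor, or carries a strictly negative power of $\l$ with a bounded coefficient; collecting these yields $\sigma_n^\pm(x,\l) = o(1)$ as $\l \to \infty$, $\Im \l \ge h$, uniformly in $x \in [0,1]$, which completes the proof.
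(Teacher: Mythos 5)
Your proposal is correct and follows essentially the same route as the paper: both verify $u_1^\pm(x,\l)=1+o(1)$ uniformly so that Lemma~\ref{lem:P1/P2} applies on $\Theta_{h,R}$ for each fixed $x$, obtain the collapsed recursion~\eqref{eq:sigmak=ak.bk} from $a_0^\pm=0$, prove boundedness of the $\sigma_k^\pm$ by induction, and upgrade~\eqref{eq:sigmanl=o1} to the uniform statement~\eqref{eq:sigma_n(x,l)} by feeding the uniform bounds from Lemma~\ref{lem:anbn=o1} into the explicit formula~\eqref{eq:sigmanl}. The only (harmless) deviation is at $x=0$: you argue directly that the quotient equals its exact polynomial part, whereas the paper inserts $b_k^\pm(0)=0$ and $a_n^\pm(0,\l)=b_n^\pm(0,\l)=0$ into~\eqref{eq:sigmanl} — both yield~\eqref{eq:sigma_n(0,l)}.
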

\begin{proof}
With account notation~\eqref{eq:c.def} formulas~\eqref{eq:u1}--\eqref{eq:u2} take the form
\begin{align}
 \label{eq:u1=P1n}
 u_1^\pm (x,\l) = \sum_{k=0}^n b_k^\pm (x)(c\l)^{-k}
 + b_n^\pm (x,\l)(c\l)^{-n}, \qquad \l \ne 0, \\
 \label{eq:u2=P2 n}
 u_2^\pm (x,\l) = \sum_{k=0}^n a_k^\pm (x)(c\l)^{-k}
 + a_n^\pm (x,\l)(c\l)^{-n}, \qquad \l \ne 0.
\end{align}
Lemma~\ref{lem:anbn=o1} implies asymptotic relations~\eqref{eq:anbn=o1} for $a_n^\pm (x,\l)$ and $b_n^\pm (x,\l)$.
Relations~\eqref{eq:u1=P1n}--\eqref{eq:u2=P2 n}, \eqref{eq:ak,bk in W1},
\eqref{eq:anbn=o1} and equalities $a_0^\pm (x) = 0$, $b_0^\pm (x) = 1$ (see~\eqref{eq:a0b0}) imply that
\begin{equation}\label{eq:P1n=1+o(1),...}
 u_1^\pm (x,\l) = 1+o(1), \quad u_2^\pm (x,\l) = o(1)
 \quad\text{as}\quad \l \to \infty, \ \ \Im \l \ge h,
\end{equation}
uniformly in $x \in [0,1]$. Set
\begin{equation} \label{eq:Theta.h.R}
 \Theta_{h,R} := \{\l \in \bC : \Im \l \ge h,\ |\l| > R\}.
\end{equation}
The first relation in~\eqref{eq:P1n=1+o(1),...} yields
\begin{equation} \label{eq:u1.ne0}
 u_1^{\pm}(x,\l) \ne 0, \qquad \l \in \Theta_{h,R}, \quad x \in [0,1],
\end{equation}
for some $R > 0$. Hence the function $\sigma^{\pm}(x,\l)$ is correctly define on $[0,1] \times \Theta_{h,R}$.

Therefore, for each $x \in [0,1]$ Lemma~\ref{lem:P1/P2} is applicable to functions $u_1(\l) = u_1^\pm(x,\l)$ and $u_2(\l) = u_2^\pm(x,\l)$ in the domain $D = \Theta_{h,R}$. In turn, it implies formulas~\eqref{eq:sigmak=ak.bk} for the coefficients in the expansion~\eqref{eq:sigma}.

Next we establish uniform asymptotic relation $\sigma^\pm _n(x,\l) = o(1)$ as $\l \to \infty$ and $\Im \l \ge h$. Combining identity~\eqref{eq:sigmak=ak.bk} with inclusions~\eqref{eq:ak,bk in W} one can prove the following inclusions via induction:
\begin{equation} \label{eq:sigmak.in.W}
 \sigma_k^\pm \in W_1^{n-k+1}[0,1] \subset C[0,1], \quad k \in \oneton.
\end{equation}
Combining these inclusions with inclusions~\eqref{eq:ak,bk in W1}, uniform asymptotic relations~\eqref{eq:anbn=o1} and formula~\eqref{eq:sigmanl} for $\sigma_n^\pm(x,\l)$ implies uniform estimate~\eqref{eq:sigma_n(x,l)}.
Further, combining~\eqref{eq:bk} with~\eqref{eq:an(x,l).def} and~\eqref{eq:bn(l)} yields
\begin{gather} \label{eq:bk.an.bn.0}
 b_1^\pm (0) = \ldots = b_n^\pm (0) = 0, \qquad
 a_n^\pm (0,\l) = b_n^\pm (0,\l) = 0.
\end{gather}
Inserting these equalities in formula~\eqref{eq:sigmanl} for $\sigma_n^\pm(0,\l)$ trivially implies that $\sigma_n^\pm(0,\l) = 0$. The proof is now complete.
\end{proof}
\begin{remark}
Let us calculate several first values of $\sigma_k^{\pm}(\cdot)$ using formulas~\eqref{eq:sigmak=ak.bk} and~\eqref{eq:a0b0}--\eqref{eq:bk}. For brevity we omit the argument:
\begin{equation} \label{eq:sigma1.a1}
 \sigma_1^\pm = a_1^\pm = q_\mp.
\end{equation}
\begin{equation} \label{eq:sigma2.a2}
 \sigma_2^\pm
 = a_2^\pm - b_1^\pm \sigma_1^\pm
 = q_\mp b_1^\pm - (a_1^\pm)' - b_1^\pm q_\mp
 = - q_\mp'.
\end{equation}
\begin{multline} \label{eq:sigma3.a3}
 \sigma_3^\pm
 = a_3^\pm - b_1^\pm \sigma_2^\pm - b_2^\pm \sigma_1^\pm
 = q_\mp b_2^\pm - (a_2^\pm)' + b_1^\pm q_\mp' - b_2^\pm q_\mp \\
 = - (q_\mp b_1^\pm - (a_1^\pm)')' + b_1^\pm q_\mp'
 = q_\mp'' - q_\mp (b_1^\pm)'
 = q_\mp'' - q_\mp q_\pm a_1^\pm
 = q_\mp'' - q_\mp^2 q_\pm.
\end{multline}
Surprisingly, despite the fact that formulas for $b_k^\pm(\cdot)$ contain integration operations, the final formula for $\sigma_k^\pm(\cdot)$ is only expressed via functions $q_\pm$ and their derivatives.
In fact, this is valid for all $k \in \oneton$. Direct proof of this fact via formulas~\eqref{eq:sigmak=ak.bk} and~\eqref{eq:a0b0}--\eqref{eq:bk} is rather tedious and cumbersome.
We will overcome this difficulty in the next result which was announced in~\cite[Proposition 1]{LunMal13Dokl}, by using Marchenko's idea to establish equation of Ricatti type for the quotient $\sigma^\pm(x,\l)$.
\end{remark}
\begin{proposition} \label{prop:sigma}
Under the assumptions of Proposition~\ref{prop:sigma.easy} the following explicit recurrent formulas hold for the functions $\sigma_k^\pm(\cdot)$,
\begin{align}
\label{eq:sigma1}
 \sigma^\pm _1(x) & = q_\mp(x), \\
\label{eq:sigma(k+1)}
 \sigma^\pm _{k+1}(x) & = -(\sigma^\pm _k(x))'
 - q_\pm (x) \cdot \sum_{j=1}^{k-1}
 \sigma^\pm _j(x) \sigma^\pm _{k-j}(x),
\end{align}
for $k \in \oneto{n-1}$ and $x \in [0,1]$.
\end{proposition}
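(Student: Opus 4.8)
The plan is to follow Marchenko's idea and replace the two first-order equations for $u_1^\pm$ and $u_2^\pm$ by a single Riccati-type equation (in the variable $x$) for their quotient $\sigma^\pm(x,\l)=u_2^\pm(x,\l)/u_1^\pm(x,\l)$, and then to read off the coefficients $\sigma_k^\pm$ by feeding in the expansion~\eqref{eq:sigma}. First I would differentiate the quotient and substitute the system~\eqref{eq:u1'=u2}--\eqref{eq:u2'=u1}, i.e.\ $(u_1^\pm)'=q_\pm u_2^\pm$ and $(u_2^\pm)'=q_\mp u_1^\pm-c\l\,u_2^\pm$ with $c=b^+-b^-=i(b_1-b_2)$. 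On the region $[0,1]\times\Theta_{h,R}$, where $u_1^\pm\neq0$ by~\eqref{eq:u1.ne0}, the quotient rule gives
\begin{equation*}
 \bigl(\sigma^\pm(x,\l)\bigr)' = q_\mp(x) - c\l\,\sigma^\pm(x,\l) - q_\pm(x)\bigl(\sigma^\pm(x,\l)\bigr)^2 .
\end{equation*}
This is the announced Riccati equation; note that the integration operations hidden in $b_k^\pm$ have now disappeared, which is exactly why the resulting recurrence for $\sigma_k^\pm$ is purely differential-polynomial in $q_\pm$.

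Next I would insert $\sigma^\pm(x,\l)=\sum_{k=1}^n\sigma_k^\pm(x)(c\l)^{-k}+\sigma_n^\pm(x,\l)(c\l)^{-n}$ into the right-hand side. The term $-c\l\,\sigma^\pm$ shifts every power down by one, so its polynomial part is $-\sum_{k=0}^{n-1}\sigma_{k+1}^\pm(c\l)^{-k}$, while the polynomial part of $-q_\pm(\sigma^\pm)^2$ contributes $-q_\pm\sum_{k\ge2}\bigl(\sum_{j=1}^{k-1}\sigma_j^\pm\sigma_{k-j}^\pm\bigr)(c\l)^{-k}$; crucially the remainder $\sigma_n^\pm(x,\l)$ enters the right-hand side only at order $(c\l)^{-(n-1)}$ and below. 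Hence, using the uniform bound~\eqref{eq:sigma_n(x,l)}, the right-hand side of the Riccati equation genuinely admits an asymptotic expansion in powers of $(c\l)^{-1}$, uniform in $x$, whose coefficient of $(c\l)^{-m}$ for $0\le m\le n-1$ is
\begin{equation*}
 c_m^\pm(x) := q_\mp(x)\,\delta_{m0} - \sigma_{m+1}^\pm(x) - q_\pm(x)\sum_{j=1}^{m-1}\sigma_j^\pm(x)\sigma_{m-j}^\pm(x).
\end{equation*}

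I expect the main obstacle to be the rigorous matching of coefficients: one is tempted to also differentiate~\eqref{eq:sigma} term by term and equate, but nothing is known a priori about $\partial_x\sigma_n^\pm(x,\l)$, so term-by-term differentiation of the remainder is not justified. I would sidestep this by integrating rather than differentiating. Since the expansion of $(\sigma^\pm(x,\l))'$ with coefficients $c_m^\pm$ was obtained above \emph{without any reference} to $\partial_x\sigma_n^\pm$, I integrate it in $x$ from $0$ to $x$ and use the uniformity of the $o((c\l)^{-(n-1)})$ remainder to obtain a second asymptotic expansion, this one of $\sigma^\pm(x,\l)-\sigma^\pm(0,\l)$. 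Comparing it with~\eqref{eq:sigma} and invoking uniqueness of asymptotic expansions in $(c\l)^{-1}$ yields $\int_0^x c_0^\pm(s)\,ds=0$ together with $\sigma_m^\pm(x)-\sigma_m^\pm(0)=\int_0^x c_m^\pm(s)\,ds$ for $1\le m\le n-1$. The order-zero relation forces $\sigma_1^\pm\equiv q_\mp$, which is~\eqref{eq:sigma1}; and differentiating the remaining identities — now legitimate, because $\sigma_m^\pm\in W_1^{n-m+1}\subset W_1^1$ by~\eqref{eq:sigmak.in.W} — gives $(\sigma_m^\pm)'=c_m^\pm$, which upon isolating $\sigma_{m+1}^\pm$ is precisely the recurrence~\eqref{eq:sigma(k+1)} for $k=m\in\{1,\dots,n-1\}$. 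I would finish by checking the output against the values $\sigma_1^\pm=q_\mp$, $\sigma_2^\pm=-q_\mp'$, $\sigma_3^\pm=q_\mp''-q_\mp^2q_\pm$ already recorded in~\eqref{eq:sigma1.a1}--\eqref{eq:sigma3.a3}.
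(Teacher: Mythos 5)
Your proposal is correct and follows essentially the same route as the paper: derive the Riccati equation $(\sigma^\pm)'+c\l\,\sigma^\pm+q_\pm(\sigma^\pm)^2=q_\mp$ on $[0,1]\times\Theta_{h,R}$, avoid the uncontrolled derivative $\partial_x\sigma_n^\pm(x,\l)$ by integrating in $x$ (the paper integrates the equation before substituting the expansion, you substitute into the right-hand side first and then integrate — an immaterial reordering), and then extract $c_m^\pm\equiv 0$ by uniqueness of the asymptotic expansion in powers of $(c\l)^{-1}$, using $\sigma_n^\pm(0,\l)=0$ and the uniform bound~\eqref{eq:sigma_n(x,l)}, before differentiating via~\eqref{eq:sigmak.in.W}. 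Your identification of the term-by-term differentiation pitfall and its resolution matches the paper's own device exactly, so there is nothing to correct.
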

\begin{proof}
First we derive a differential equation on the function $\sigma^\pm (x,\l)$ given by~\eqref{eq:sigma}. From its definition we have
\begin{equation} \label{eq:u2=u1.sigma}
 u_2^\pm (x,\l) = u_1^\pm (x,\l) \sigma^\pm (x,\l).
\end{equation}
Substituting this into~\eqref{eq:u1'=u2} we get
\begin{equation} \label{eq:u1'=q.u1.sigma}
 (u_1^\pm (x,\l))'=q_\pm (x) u_1^\pm (x,\l) \sigma^\pm (x,\l).
\end{equation}
Inserting~\eqref{eq:u1'=q.u1.sigma} into~\eqref{eq:u2'=u1.with.sigma} we arrive at the second equation
\begin{equation} \label{eq:u2'=u1.with.sigma}
 (u_1^\pm )' \sigma^\pm + u_1^\pm (\sigma^\pm )'
 = q_\mp(x) u_1^\pm - c\l u_1^\pm \sigma^\pm .
\end{equation}
Excluding the derivative $(u_1^\pm )'$ from system~\eqref{eq:u1'=q.u1.sigma}--\eqref{eq:u2'=u1.with.sigma} we arrive at the following relation
\begin{equation} \label{eq:u1.with.sigma,eq}
 q_\pm (x) u_1^\pm (\sigma^\pm )^2 + u_1^\pm (\sigma^\pm )'
 = q_\mp(x) u_1^\pm - c\l u_1^\pm \sigma^\pm .
\end{equation}
Taking into account that $u_1^{\pm}(x,\l) \ne 0$ on $[0,1] \times \Theta_{h,R}$ (see~\eqref{eq:u1.ne0}) and dividing equality~\eqref{eq:u1.with.sigma,eq} by $u_1^{\pm}(x,\l)$ we arrive at the following non-linear differential equation (of Ricatti type) on $\sigma^\pm (t,\l)$,
\begin{equation} \label{eq:Rikatti}
 (\sigma^\pm (x,\l))' + c\l \sigma^\pm (x,\l)
 + q_\pm (x) (\sigma^\pm(x,\l))^2 = q_\mp(x),
 \quad x \in [0,1], \quad \l \in \Theta_{h,R}.
\end{equation}
To avoid cumbersome expression for derivate $(\sigma_n^{\pm}(x,\l))'$ when inserting expression~\eqref{eq:sigma} for $\sigma^{\pm}(x,\l)$ into equation~\eqref{eq:Rikatti} we integrate this equation first (after replacing $x$ with $t$) for $t\in [0, x]$:
\begin{multline} \label{eq:Rikatti.int}
 \sigma^\pm (x,\l) - \sigma^\pm (0,\l) + c\l \int_0^x \sigma^\pm (t,\l) dt
 + \int_0^x q_\pm (t) (\sigma^\pm (t,\l))^2 dt \\
 = \int_0^x q_\mp(t)dt, \quad x \in [0,1], \quad \l \in \Theta_{h,R}.
\end{multline}
Now substituting r.h.s. of expansion~\eqref{eq:sigma} into~\eqref{eq:Rikatti.int} implies the following equation
\begin{equation} \label{eq:ck.series}
 c^\pm _0(x) + \frac{c^\pm _1(x)}{c\l} + \ldots + \frac{c^\pm _{n-1}(x)}{(c\l)^{n-1}}
 + \frac{c^\pm _{n}(x,\l)}{(c\l)^{n}} = 0, \quad x \in [0,1], \quad \l \in \Theta_{h,R},
\end{equation}
where
\begin{align}
\label{eq:c0}
 & c_0^{\pm}(x) := \int_0^x(\sigma_1^\pm (t)-q_\mp(t))dt, \\
\label{eq:ck}
 & c_k^{\pm}(x) := \int_0^x \sigma_{k+1}^\pm (t)dt
 + \sigma_{k}^\pm (x) - \sigma_{k}^\pm (0)
 + \sum_{j=1}^{k-1} \int_0^x q_\pm (t) \sigma_{j}^\pm (t)
 \sigma_{k-j}^\pm (t) dt,
\end{align}
for $k \in \oneto{n-1}$ and
\begin{multline} \label{eq:cnxl}
 c_n^{\pm}(x,\l) := c \l \int_0^x \sigma_{n}^\pm (t,\l)dt
 + \sigma_{n}^\pm (x,\l) - \sigma_{n}^\pm (0,\l) \\
 + \sigma_{n}^\pm (x) - \sigma_{n}^\pm (0)
 + \sum_{j=1}^{n-1} \int_0^x q_\pm (t)
 \sigma_{j}^\pm (t) \sigma_{n-j}^\pm (t) dt \\
 + \sum_{k=1}^n (c\l)^{-k} \sum_{j=k}^n \int_0^x q_\pm (t)
 \sigma_{j}^\pm (t) \sigma_{n+k-j}^\pm (t) dt \\
 + \sum_{k=1}^n 2 (c\l)^{-k} \int_0^x q_\pm (t)
 \sigma_k^\pm (t) \sigma_n^\pm (t,\l) dt
 + (c\l)^{-n} \int_0^x q_\pm (t) (\sigma_n^\pm (t,\l))^2 dt.
\end{multline}

To estimate the remainder term in~\eqref{eq:ck.series} we establish the following uniform asymptotic relation:
\begin{equation} \label{eq:cnxl=o(l)}
 c_n^{\pm}(x,\l) = o(\l), \quad \l \to \infty, \ \ \Im \l \ge h,
 \quad \text{uniformly in} \ \ x \in [0,1].
\end{equation}
With account of boundedness of coefficients $\sigma_k^\pm$ and $q_\pm$
(see~\eqref{eq:sigmak.in.W}), asymptotic relation~\eqref{eq:sigma_n(x,l)} yields the following uniform in $x \in [0,1]$ relations as $\l \to \infty$ and $\Im \l \ge h$:
\begin{eqnarray}
 \label{eq:int.sigma.nxl}
 c \l \int_0^x \sigma_{n}^\pm (t,\l)dt & = & o(\l), \\
 \label{eq:sigma.nxl-sigma.n0l}
 \sigma_{n}^\pm (x,\l) - \sigma_{n}^\pm (0,\l) & = & o(1), \\
 \label{eq:sigma.nx-sigma.n0+int}
 \sigma_{n}^\pm (x) - \sigma_{n}^\pm (0)
 + \sum_{j=1}^{n-1} \int_0^x q_\pm (t)
 \sigma_{j}^\pm (t) \sigma_{n-j}^\pm (t) dt & = & O(1), \\
 \label{eq:sum.sigma.kx}
 \sum_{k=1}^n (c\l)^{-k} \sum_{j=k}^n \int_0^x
 q_\pm (t) \sigma_{j}^\pm (t) \sigma_{n+k-j}^\pm (t) dt & = & O(\l^{-1}), \\
 \label{eq:sum.sigma.nxl}
 \sum_{k=1}^n 2 (c\l)^{-k}\int_0^x q_\pm (t)
 \sigma_k^\pm (t) \sigma_n^\pm (t,\l) dt & = & o(\l^{-1}), \\
 \label{eq:int.sigma.nxl^2}
 (c\l)^{-n} \int_0^x q_\pm (t) (\sigma_n^\pm (t,\l))^2 dt & = & o(\l^{-n}).
\end{eqnarray}
Inserting asymptotic estimates~\eqref{eq:int.sigma.nxl}--\eqref{eq:int.sigma.nxl^2} into equality~\eqref{eq:cnxl} we arrive at the desired asymptotic relation~\eqref{eq:cnxl=o(l)}.

Now fixing $x=x_0 \in [0,1]$ in~\eqref{eq:ck.series}, multiplying obtained equality by $(c\l)^{n-1}$ and tending $\l$ to infinity with account of asymptotic relation~\eqref{eq:cnxl=o(l)} implies the following equalities,
\begin{equation} \label{eq:ck=0}
 c_k^\pm (x) = 0, \quad x \in [0,1], \quad k \in \oneton.
\end{equation}
Differentiating equalities~\eqref{eq:ck=0} with account of definitions~\eqref{eq:c0} and~\eqref{eq:ck} and smoothness of coefficients $\sigma_k^\pm$, we arrive at the desired relations~\eqref{eq:sigma1} and~\eqref{eq:sigma(k+1)}, which completes the proof.
\end{proof}
\begin{remark} \label{rem:sigma123456}
\textbf{(i)} Note that recurrent formulas~\eqref{eq:sigma(k+1)} for coefficients $\sigma_{k}^\pm$ provide an explicit form of local polynomial integrals of motion for non-linear Shr\"odinger equation $($see~\cite{ZMNovPit80}$)$.

\textbf{(ii)} Formula~\eqref{eq:sigma(k+1)} allows to easily and explicitly calculate values of $\sigma_k^\pm(\cdot)$ via entries $q_\pm(\cdot)$ of the potential matrix $Q(\cdot)$, which would be quite tedious to do via formulas~\eqref{eq:sigmak=ak.bk} and~\eqref{eq:a0b0}--\eqref{eq:bk}. \href{https://tinyurl.com/sagesigmak}{For instance},
\begin{align}
 & \sigma_4^\pm
 = -(\sigma_3^\pm)' - 2 q_\mp \sigma_1^\pm \sigma_2^\pm
 = -q_\mp''' + q_\mp^2 q_\pm' + 4 q_\mp' q_\mp q_\pm, \\
 & \sigma_5^\pm = q_\mp^{({4})} - 6 q_\mp'' q_\mp q_\pm
 - 5 q_\mp'q_\mp'q_\pm - 6 q_\mp' q_\mp q_\pm'
 - q_\mp^2 q_\pm'' + 2 q_\mp^3 q_\pm^2, \\
\nonumber
 & \sigma_6^\pm = - q_\mp^{(5)} + 8 q_\mp''' q_\mp q_\pm
 + 18 q_\mp'' q_\mp' q_\pm
 + 12 q_\mp'' q_\mp q_\pm'
 + 11 q_\mp' q_\mp' q_\pm' \\
 & \qquad \qquad + 8 q_\mp' q_\mp q_\pm'' + q_\mp^2 q_\pm'''
 - 16 q_\mp' q_\mp^2 q_\pm^2 - 6 q_\mp^3 q_\pm' q_\pm.
\end{align}
Moreover, one can obtain explicit expression for $\sigma_k^\pm$ as a linear combination of products of the functions $q_\pm$ and their derivatives in general case $($see Lemma~\ref{lem:sigma.q} below$)$.
\end{remark}
\begin{remark}
Asymptotic expansions of solutions to the system~\eqref{eq:system} obtained in this section play important role in establishing refined asymptotic formulas for eigenvalues and eigenfunctions of the corresponding BVP. In this connection, we mention that recently L.\;Rzepnicki~\cite{Rzep21} obtained sharp asymptotic formulas for deviations $\l_m - \l_m^0 = \delta_m + \rho_m$ in the case of Dirichlet BVP for $2 \times 2$ Dirac system
with $Q \in L^p([0,1]; \bC^{2 \times 2})$, $1 \le p < 2$. Namely, $\delta_m$ is explicitly expressed via Fourier coefficients and Fourier transforms of $Q_{12}$ and $Q_{21}$, while $\{\rho_m\}_{m \in \bZ} \in \ell^{p'/2}(\bZ)$ where $p'$ is the conjugate exponent.
Similar result was obtained for eigenvectors. For Sturm-Liouville operators with singular potentials, A.\;Gomilko and L.\;Rzepnicki obtained similar results in another recent paper~\cite{GomRze20}.
\end{remark}
\section{Asymptotic expansion of the characteristic determinant}
\label{sec:Delta}
In this section we apply Proposition~\ref{prop:sigma} to obtain asymptotic expansion of the characteristic determinant $\Delta_Q(\cdot)$ given by~\eqref{eq:Delta.intro} with explicit, easy to calculate coefficients.

Recall, that $\Phi(x,\l) = (\varphi_{jk}(x,\l))_{j,k=1}^2$ is the fundamental matrix solution to system~\eqref{eq:system} (uniquely) determined by the initial condition $\Phi(0,\l)=I_2$ (see~\eqref{eq:Phi.def}).
It is evident, that the eigenvalues of the
problem~\eqref{eq:system}--\eqref{eq:Udef} are the roots of the
characteristic equation $\Delta_Q(\l) := \det U(\l)=0$, where
\begin{equation}\label{eq:U}
 U(\l) :=
 \begin{pmatrix}
 U_1(\Phi_1(x,\l)) & U_1(\Phi_2(x,\l)) \\
 U_2(\Phi_1(x,\l)) & U_2(\Phi_2(x,\l))
 \end{pmatrix} =:
 \begin{pmatrix}
 u_{11}(\l) & u_{12}(\l) \\
 u_{21}(\l) & u_{22}(\l)
 \end{pmatrix}.
\end{equation}
With account of notations~\eqref{eq:Ajk.Jjk.def}
we arrive at the following expression for the characteristic determinant:
\begin{equation}\label{eq:Delta}
 \Delta_Q(\l) = J_{12} + J_{34}e^{i(b_1+b_2)\l}
 + J_{32}\varphi_{11}(\l) + J_{13}\varphi_{12}(\l)
 + J_{42}\varphi_{21}(\l) + J_{14}\varphi_{22}(\l),
\end{equation}
where $\varphi_{jk}(\l) := \varphi_{jk}(1,\l)$.
\begin{theorem} \label{th:Delta}
Let $Q_{12},Q_{21} \in W_1^n[0,1]$ and functions $\sigma^\pm _k(x), k \in \oneton$, be defined by the formulas~\eqref{eq:b+b-q+q-}, \eqref{eq:sigma1}, \eqref{eq:sigma(k+1)}. Then the
characteristic determinant $\Delta_Q(\l)$ of the
system~\eqref{eq:system} admits the following representations
\begin{align}
 \label{eq:Delta.in.Omega+.ck}
 \Delta_Q(\l) &= e^{i b_1 \l} \cdot \Bigl(J_{32}
 + \sum_{k=1}^n \frac{c_k^+}{(c\l)^k}
 + o(\l^{-n})\Bigr) \cdot (1+o(1)), \qquad \Im \l \to +\infty, \\
 \label{eq:Delta.in.Omega-.ck}
 \Delta_Q(\l) &= e^{i b_2 \l} \cdot \Bigl(J_{14} - \sum_{k=1}^n \frac{c_k^-}{(c\l)^k}
 + o(\l^{-n})\Bigr) \cdot (1+o(1)), \qquad \Im \l \to -\infty,
\end{align}
where $c = i (b_1 - b_2)$, and $c_k^\pm$ for $k \in \oneton$ are given by
\begin{align}
\label{eq:ck+}
 c_k^+ & := J_{13} (-1)^{k-1} \sigma_k^-(0) + J_{42} \sigma_k^+(1)
 - J_{14} \sum_{j=1}^{k-1} (-1)^j \sigma_j^-(0) \sigma_{k-j}^+(1), \\
\label{eq:ck-}
 c_k^- & := J_{13} (-1)^{k-1} \sigma_k^-(1) + J_{42} \sigma_k^+(0)
 + J_{32} \sum_{j=1}^{k-1} (-1)^j \sigma_j^-(1) \sigma_{k-j}^+(0).
\end{align}
\end{theorem}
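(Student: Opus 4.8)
The plan is to plug the asymptotic expansions of the fundamental matrix entries $\varphi_{jk}(1,\l)$ into the explicit determinant formula~\eqref{eq:Delta}, and then to reorganize the resulting expression into the two claimed forms according to which exponential factor dominates in each half-plane. The key bridge is Corollary~\ref{cor:Phi+-} together with Theorem~\ref{th:asymp}: the fundamental matrix $\Phi(x,\l)$ and the matrix solution $Y(x,\l)$ from~\eqref{eq:Y(x,l).def} differ only by a normalization at $x=0$, so I first need to relate the two. Concretely, since $Y(0,\l)$ is upper-triangular with diagonal entries $u_1^\pm(0,\l)$ and off-diagonal entry built from $u_2^-(0,-\l)$, I have $\Phi(x,\l) = Y(x,\l)\,[Y(0,\l)]^{-1}$, exactly as in the proof of Proposition~\ref{prop:2x2.notR}. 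This lets me express each $\varphi_{jk}(1,\l)$ through the quotient functions $\sigma^\pm(x,\l) = u_2^\pm/u_1^\pm$ evaluated at the endpoints, whose asymptotic expansions are governed by Proposition~\ref{prop:sigma}.

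Second, I would treat the two half-planes separately. For $\Im\l \to +\infty$, the factor $e^{ib_1\l}$ grows (since $b_1<0$) while $e^{ib_2\l}$ decays, so the dominant contributions to $\Delta_Q(\l)$ come from the terms carrying $e^{ib_1\l}$, namely $\varphi_{11}$, $\varphi_{21}$, and (through the $J_{14}\varphi_{22}$ term) the product structure encoded in $\Phi = Y[Y(0,\l)]^{-1}$. Factoring out $e^{ib_1\l}$ and collecting the $(c\l)^{-k}$ coefficients, the leading term is $J_{32}$ (coming from $\varphi_{11}\sim e^{ib_1\l}$), and the higher-order coefficients assemble into the $c_k^+$ of~\eqref{eq:ck+}. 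The three summands in~\eqref{eq:ck+} correspond precisely to the three entries $J_{13},J_{42},J_{14}$ in~\eqref{eq:Delta} that survive the $e^{ib_1\l}$ factorization; the sign pattern $(-1)^{k-1}$ and the convolution $\sum_j(-1)^j\sigma_j^-(0)\sigma_{k-j}^+(1)$ arise from the $\l \to -\l$ reflection built into the second column of~\eqref{eq:Y(x,l).def} (note $u_2^-(x,-\l)$) and from the matrix inversion $[Y(0,\l)]^{-1}$, which produces a product of two $\sigma$-expansions. The case $\Im\l \to -\infty$ is handled symmetrically, with $e^{ib_2\l}$ factored out and roles of the endpoints $0,1$ interchanged, yielding~\eqref{eq:ck-}.

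The control of the remainder terms is where I would invoke the uniform estimates $\sigma_n^\pm(x,\l)=o(1)$ from~\eqref{eq:sigma_n(x,l)} and the boundedness of the coefficients $\sigma_k^\pm$ from~\eqref{eq:sigmak.in.W}. Together with $u_1^\pm(x,\l)=1+o(1)$ (see~\eqref{eq:P1n=1+o(1),...}), these guarantee that the subleading exponential (decaying one) contributes a uniformly $o(1)$ multiplicative factor, which is exactly the source of the trailing $(1+o(1))$ in~\eqref{eq:Delta.in.Omega+.ck}--\eqref{eq:Delta.in.Omega-.ck}, and that the error inside the parentheses is genuinely $o(\l^{-n})$. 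I would also use the endpoint identities~\eqref{eq:sigma_n(0,l)} and~\eqref{eq:bk.an.bn.0}, which kill the boundary terms of the $\sigma$-expansion at $x=0$ and thereby simplify the coefficient bookkeeping.

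The main obstacle will be the bookkeeping in the second step: correctly tracking how the product $Y(x,\l)[Y(0,\l)]^{-1}$ distributes the $\sigma^\pm$ expansions across the four entries $\varphi_{jk}$, and then verifying that the Cauchy-product of two truncated series (one from $Y(1,\l)$, one from $[Y(0,\l)]^{-1}$) reassembles into precisely the bilinear convolution term in~\eqref{eq:ck+}--\eqref{eq:ck-} with the correct signs. The reflection $\l\mapsto-\l$ in the second column means each factor of $\sigma^-$ comes with an alternating sign $(-1)^k$ on its $k$-th coefficient, so the delicate point is to confirm that these signs combine to give exactly $(-1)^{k-1}$ on the linear terms and $(-1)^j$ inside the convolution, rather than some other pattern. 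I expect the analytic content (growth estimates, $o(\l^{-n})$ control) to be routine given the earlier lemmas, and the genuine work to lie entirely in this algebraic matching of coefficients.
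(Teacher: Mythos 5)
Your overall route is the paper's own: pass from $\Phi$ to the Birkhoff-type solution via $\Phi(x,\l)=Y(x,\l)[Y(0,\l)]^{-1}$, expand everything through the quotients $\sigma^\pm$ from Proposition~\ref{prop:sigma.easy}--\ref{prop:sigma}, split by half-plane, factor out the dominant exponential and match coefficients. However, two points as you state them would break the proof. First, $Y(0,\l)$ is \emph{not} upper-triangular: by~\eqref{eq:u1=1,u2=sigma=o(1)} and~\eqref{eq:Y(0,l)} it equals $\begin{psmallmatrix} 1 & \sigma^-(0,-\l) \\ \sigma^+(0,\l) & 1 \end{psmallmatrix}$, and the lower-left entry $\sigma^+(0,\l)=\sum_{k=1}^{n}\sigma_k^+(0)(c\l)^{-k}$ is $O(\l^{-1})$ but nonzero in general (already $\sigma_1^+(0)=q_-(0)$). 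That entry is precisely what produces the terms $J_{42}\,\sigma_k^+(0)$ and the $J_{32}$-convolution in~\eqref{eq:ck-}: in the lower half-plane the dominant bracket is $u_1^-(1,-\l)\bigl(J_{14}+J_{13}\sigma^-(1,-\l)-J_{42}\sigma^+(0,\l)-J_{32}\sigma^-(1,-\l)\sigma^+(0,\l)\bigr)$, cf.~\eqref{eq:-J32.u1-(1,-l)...}. Taking your triangularity claim literally kills $c_k^-$ entirely; for $c_k^+$ you happen to survive only because $\sigma^+(0,\l)$ there multiplies the subdominant exponential.

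Second, the remainder control is genuinely not covered by the estimates you cite. In the upper half-plane the subdominant terms carry $u_1^-(1,-\l)$ and $u_2^-(1,-\l)$, i.e.\ the expansions evaluated at $-\l$ with $\Im(-\l)\to-\infty$; their remainders $a_n^-(1,-\l)$, $b_n^-(1,-\l)$ are exponentially \emph{growing}, not $o(1)$, and the relations~\eqref{eq:anbn=o1}, \eqref{eq:P1n=1+o(1),...}, \eqref{eq:sigma_n(x,l)} are valid only for $\Im\l\ge h$, hence say nothing about them. So your claim that the decaying exponential contributes ``a uniformly $o(1)$ multiplicative factor'' is false as stated. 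The paper's fix is Lemma~\ref{lem:K.exp.X0} applied to the kernels $A_n^\pm\in X_\infty^0$, which gives the tempered growth $|a_n^\pm(x,-\l)|=o(e^{(b_2-b_1)\Im\l})$ (see~\eqref{eq:an-l.in.o()} and~\eqref{eq:|bn(1,l)|<=C/l e^cl}), so that after multiplication by $e^{ib_2\l}$ the whole subdominant block, together with $J_{12}+J_{34}e^{i(b_1+b_2)\l}$, is absorbed as $e^{ib_1\l}\,o(\l^{-n})$ (see~\eqref{eq:eib2/omega...}--\eqref{eq:J12+J34e}); the naive bound $\|A_n^-\|_{X_\infty}\,e^{(b_2-b_1)\Im\l}$ would only yield $O(\l^{-n})$, which does not prove the stated $o(\l^{-n})$ expansion. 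Relatedly, the trailing factor $(1+o(1))$ in~\eqref{eq:Delta.in.Omega+.ck}--\eqref{eq:Delta.in.Omega-.ck} does not come from the decaying exponential but from $u_1^\pm(1,\pm\l)/\omega(\l)$, where $\omega(\l)=\det Y(0,\l)=1-\sigma^-(0,-\l)\sigma^+(0,\l)=1+o(1)$. Your sign bookkeeping for the $(-1)^{k-1}$ and $(-1)^j$ patterns via the $\l\mapsto-\l$ reflection is, by contrast, exactly right.
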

\begin{proof}
According to Theorem~\ref{th:asymp} there exists a matrix solution to system~\eqref{eq:system} of the form~\eqref{eq:Y(x,l).def}, in which functions $u_1^\pm (x,\pm\l)$ and $u_2^\pm (x,\pm \l)$ admit expansions~\eqref{eq:u1} and~\eqref{eq:u2}, respectively. Combining conditions~\eqref{eq:a0b0}, \eqref{eq:bk.an.bn.0} on one hand and conditions~\eqref{eq:u2=u1.sigma}, \eqref{eq:sigma}, \eqref{eq:sigma_n(0,l)} on the other hand we obtain
\begin{equation} \label{eq:u1=1,u2=sigma=o(1)}
 u_1^\pm (0,\pm\l)=1, \qquad u_2^\pm (0,\pm \l) =
 \sigma^\pm (0,\pm \l) = o(1), \quad \l \to \infty.
\end{equation}
Inserting these expressions into~\eqref{eq:Y(x,l).def} we get
\begin{equation} \label{eq:Y(0,l)}
 Y(0,\l) = \begin{pmatrix}
 1 & \sigma^-(0,-\l) \\
 \sigma^+(0,\l) & 1
 \end{pmatrix} = I_2 + o_2(1), \quad \l \to \infty,
\end{equation}
where $o_2(1)$ is a $2 \times 2$ with $o(1)$ entries. Hence
\begin{equation} \label{eq:det(Y(l))}
 \omega(\l) := \det(Y(0,\l)) = 1 - \sigma^-(0,-\l) \sigma^+(0,\l)
 = 1+o(1), \quad \l \to \infty.
\end{equation}
Therefore, for some $R>0$ we have $\det(Y(0,\l)) \ne 0,\ |\l| > R$ and
hence $Y(x,\l)$ is a fundamental system of solutions to the
system~\eqref{eq:system.new} for all $|\l|>R$. Hence
$Y(x,\l)$ and $\Phi(x,\l)$ related by the identity
\begin{equation} \label{eq:Phi(x,l)=Y(x,l)[Y(0,l)]^-1}
 \Phi(x,\l) = Y(x,\l) [Y(0,\l)]^{-1}, \quad |\l| > R.
\end{equation}
Putting~\eqref{eq:Phi.def},~\eqref{eq:Y(x,l).def}
and~\eqref{eq:Y(0,l)} into~\eqref{eq:Phi(x,l)=Y(x,l)[Y(0,l)]^-1}
we get for $|\l|>R$
\begin{eqnarray}
 \label{eq:phi.11.sigma}
 \varphi_{11}(\l) & = & \frac{1}{\omega(\l)}
 \left(e^{b^+ \l} u_1^+(1,\l)
 - e^{b^- \l} u_2^-(1,-\l) \sigma^+(0,\l)\right), \\
 \label{eq:phi.12.sigma}
 \varphi_{12}(\l) & = & \frac{1}{\omega(\l)}
 \left(-e^{b^+ \l} u_1^+(1,\l) \sigma^-(0,-\l)
 + e^{b^- \l} u_2^-(1,-\l)\right), \\
 \label{eq:phi.21.sigma}
 \varphi_{21}(\l) & = & \frac{1}{\omega(\l)}
 \left(e^{b^+ \l} u_2^+(1,\l)
 - e^{b^- \l} u_1^-(1,-\l) \sigma^+(0,\l)\right), \\
 \label{eq:phi.22.sigma}
 \varphi_{22}(\l) & = & \frac{1}{\omega(\l)}
 \left(-e^{b^+ \l} u_2^+(1,\l) \sigma^-(0,-\l)
 + e^{b^- \l} u_1^-(1,-\l)\right).
\end{eqnarray}
Inserting~\eqref{eq:phi.11.sigma}--\eqref{eq:phi.22.sigma}
into~\eqref{eq:Delta} we get in view of~\eqref{eq:b+b-q+q-}
\begin{align} \label{eq:Delta.with.sigma}
 \Delta_Q(\l) & = J_{12} + J_{34} e^{i (b_1 + b_2) \l} \nonumber \\
 & + \frac{e^{i b_1 \l}}{\omega(\l)} \left(J_{32} u_1^+(1,\l)
 - J_{13} u_1^+(1,\l) \sigma^-(0,-\l) \right. \nonumber \\
 & \qquad \left. + J_{42} u_2^+(1,\l) - J_{14} u_2^+(1,\l)
 \sigma^-(0,-\l) \right) \nonumber \\
 & + \frac{e^{i b_2 \l}}{\omega(\l)} \left( - J_{32} u_2^-(1,-\l)
 \sigma^+(0,\l) + J_{13} u_2^-(1,-\l) \right. \nonumber \\
 & \qquad \left. - J_{42} u_1^-(1,-\l) \sigma^+(0,\l)
 + J_{14} u_1^-(1,-\l) \right).
\end{align}

First, consider the case $\Im \l \ge R$.
In view of~\eqref{eq:sigma_n(x,l)} we may assume that for these values of $\l$ the function $\sigma^+(1,\l)$ is correctly defined.
Using~\eqref{eq:u2=u1.sigma}, \eqref{eq:sigma}, \eqref{eq:sigma_n(0,l)}, \eqref{eq:P1n=1+o(1),...} and~\eqref{eq:sigma_n(x,l)} and taking into account notation~\eqref{eq:ck+} we rewrite the factor in parentheses of the third summand in~\eqref{eq:Delta.with.sigma} as follows
\begin{align} \label{eq:J32.u1...}
 & J_{32} u_1^+(1,\l) - J_{13} u_1^+(1,\l) \sigma^-(0,-\l)
 + J_{42} u_2^+(1,\l) - J_{14} u_2^+(1,\l) \sigma^-(0,-\l) \nonumber \\
 & \quad = u_1^+(1,\l) (J_{32} - J_{13} \sigma^-(0,-\l)
 + J_{42} \sigma^+(1,\l) - J_{14} \sigma^+(1,\l) \sigma^-(0,-\l)) \nonumber \\
 & \quad = (1 + o(1)) \cdot \Bigl(J_{32} - J_{13} \sum_{k=1}^n
 \frac{\sigma_k^-(0)}{(-c\l)^k}
 + J_{42} \Bigl(\sum_{k=1}^n \frac{\sigma_k^+(1)}{(c\l)^k}
 + \frac{\sigma_n^+(1,\l)}{(c\l)^n} \Bigr) \Bigr. \nonumber \\
 & \qquad \qquad \Bigl. - J_{14} \Bigl(\sum_{k=1}^n
 \frac{\sigma_k^-(0)}{(-c\l)^k}\Bigr)
 \Bigl(\sum_{k=1}^n \frac{\sigma_k^+(1)}{(c\l)^k}
 + \frac{\sigma_n^+(1,\l)}{(c\l)^n} \Bigr)\Bigr) \nonumber \\
 & \quad = (1 + o(1)) \cdot \Bigl(J_{32} + \sum_{k=1}^n (c\l)^{-k}
 \Bigl( - J_{13} (-1)^k \sigma_k^-(0) + J_{42} \sigma_k^+(1)
 \Bigr. \Bigr. \nonumber \\
 & \qquad \qquad \Bigl. \Bigl. - J_{14} \sum_{j=1}^{k-1} (-1)^j \sigma_j^-(0)
 \sigma_{k-j}^+(1) \Bigr) + o(\l^{-n})\Bigr) \nonumber \\
 & \quad = (1 + o(1)) \cdot \Bigl(J_{32} + \sum_{k=1}^n \frac{c_k^+}{(c\l)^k} + o(\l^{-n})\Bigr),
 \qquad \Im \l \to +\infty, \quad \Im \l \ge R.
\end{align}
Similar to~\eqref{eq:anmpxl} by combining~\eqref{eq:an(x,l).def} with inclusion $A_n^\pm \in X_\infty^0$, Lemma~\ref{lem:K.exp.X0} and inequality $b_2 - b_1 > 0$ we derive
\begin{equation} \label{eq:an-l.in.o()}
 |a_n^\pm (x,-\l)| = o(e^{(b_2 - b_1) \Im \l}),
 \qquad \Im \l \to +\infty, \quad \Im \l \ge R,
\end{equation}
uniformly in $x \in [0,1]$.
Combining~\eqref{eq:|bnl|<=sup|q|.sup|anl|} with~\eqref{eq:an-l.in.o()} we have
\begin{equation} \label{eq:|bn(1,l)|<=C/l e^cl}
 |b_n^-(1,-\l)| = o(e^{(b_2 - b_1) \Im \l}),
 \qquad \Im \l \to +\infty, \quad \Im \l \ge R.
\end{equation}
Now using~\eqref{eq:u1}, \eqref{eq:u2}, \eqref{eq:u1=1,u2=sigma=o(1)}, \eqref{eq:det(Y(l))}, \eqref{eq:an-l.in.o()} and~\eqref{eq:|bn(1,l)|<=C/l
e^cl} we can estimate the last summand
at~\eqref{eq:Delta.with.sigma}
\begin{align} \label{eq:eib2/omega...}
 & \abs{\frac{e^{i b_2 \l}}{\omega(\l)} \left(
 u_2^-(1,-\l) (J_{32} \sigma^+(0,\l) - J_{13}) +
 u_1^-(1,-\l) (J_{42} \sigma^+(0,\l) - J_{14}) \right)} \nonumber \\
 & \quad = e^{-b_2 \Im \l} \cdot (1+o(1)) \cdot \left|
 \left(o(1) + \frac{e^{(b_2 - b_1) \Im\l}}{\l^n} o(1)\right)
 \cdot (o(1) - J_{13}) \right. \nonumber \\
 & \quad \quad \left.+ \left(1 + o(1) + \frac{e^{(b_2 - b_1)
 \Im\l}}{\l^n} o(1)\right) \cdot (o(1)-J_{14})\right|
 = \frac{e^{-b_1 \Im \l}}{|\l|^n} o(1),
\end{align}
as $\Im \l \to +\infty$ and $\Im \l \ge R$. Since $b_1 < 0 < b_2$ then
\begin{equation} \label{eq:J12+J34e}
 \abs{J_{12} + J_{34} e^{i (b_1 + b_2) \l}}
 = \frac{e^{-b_1 \Im \l}}{|\l|^n} o(1),
 \qquad \Im \l \to +\infty, \quad \Im \l \ge R.
\end{equation}
Putting~\eqref{eq:J32.u1...},~\eqref{eq:eib2/omega...}
and~\eqref{eq:J12+J34e} into~\eqref{eq:Delta.with.sigma} we
arrive at~\eqref{eq:Delta.in.Omega+.ck}.

Now consider the case when $\Im \l \le -R$.
Relation~\eqref{eq:Delta.in.Omega-.ck} can be obtained similarly.
We reproduce only the step corresponding to the
equation~\eqref{eq:J32.u1...}. For such $\l$ the function $\sigma^-(1,-\l)$ is correctly defined and so we have
\begin{multline} \label{eq:-J32.u1-(1,-l)...}
 - J_{32} u_2^-(1,-\l) \sigma^+(0,\l) + J_{13} u_2^-(1,-\l) \\
 - J_{42} u_1^-(1,-\l) \sigma^+(0,\l) + J_{14} u_1^-(1,-\l) \\
 = u_1^-(1,-\l) (J_{14} + J_{13} \sigma^-(1,-\l)
 - J_{42} \sigma^+(0,\l) - J_{32} \sigma^-(1,-\l) \sigma^+(0,\l)) \\
 = (1+o(1)) \cdot \Bigl(J_{14} + J_{13} \Bigl(\sum_{k=1}^n
 \frac{\sigma_k^-(1)}{(-c\l)^k}
 + \frac{\sigma_n^-(1,-\l)}{(-c\l)^n} \Bigr)
 - J_{42} \sum_{k=1}^n \frac{\sigma_k^+(0)}{(c\l)^k} \Bigr.\\
 \Bigl. - J_{32} \Bigl(\sum_{k=1}^n \frac{\sigma_k^-(1)}{(-c\l)^k}
 + \frac{\sigma_n^-(1,-\l)}{(-c\l)^n} \Bigr)
 \Bigl(\sum_{k=1}^n \frac{\sigma_k^+(0)}{(c\l)^k}\Bigr) \Bigr) \\
 = (1+o(1)) \cdot \Bigl(J_{14} + \sum_{k=1}^n (c\l)^{-k} \Bigl(
 J_{13} (-1)^k \sigma_k^-(1) - J_{42} \sigma_k^+(0) \Bigr. \Bigr. \\
 \Bigl. \Bigl. - J_{32} \sum_{j=1}^{k-1} (-1)^j
 \sigma_j^-(1) \sigma_{k-j}^+(0) \Bigr) + o(\l^{-n})\Bigr) \\
 = (1 + o(1)) \cdot \Bigl(J_{14} - \sum_{k=1}^n \frac{c_k^-}{(c\l)^k}
 + o(\l^{-n})\Bigr)
\end{multline}
as $\Im \l \to -\infty$ and $\Im \l \le -R$.
The proof is now complete.
\end{proof}
\begin{remark} \label{rem:byparts}
Let us demonstrate alternative method of obtaining formulas~\eqref{eq:Delta.in.Omega+.ck}--\eqref{eq:ck-}
originally proposed in~\cite{Mal08} for Sturm-Liouville operators. It is
based on integral representation of the characteristic determinant obtained for $2\times 2$ Dirac type system
with integrable potential matrix in~\cite{LunMal16JMAA}
$($see~\eqref{eq:Delta=Delta0+_Intro}$)$:
\begin{align} \label{eq:Delta=Delta0+}
 \Delta_Q(\l) &= \Delta_0(\l) + \int^1_0 g_1 (t) e^{i b_1 \l t} dt
 + \int^1_0 g_2 (t) e^{i b_2 \l t} dt,
\end{align}
where $g_1, g_2 \in L^1[0,1]$ are expressed via the kernel of the transformation operator.

Assuming that $g_1, g_2 \in W_1^n[0,1]$ and integrating by parts, one derives
\begin{multline}
 \int^1_0 g_1 (t) e^{i b_1 \l t} dt
 = \left[ g_1 (t) \frac{e^{i b_1 \l t}}{i b_1 \l}\right]_0^1
 - \frac{1}{i b_1 \l} \int^1_0 g_1'(t) e^{i b_1 \l t} dt \\
 = \sum_{k=1}^{n} \frac{g_1^{(k-1)}(0) - g_1^{(k-1)}(1)
 e^{i b_1 \l}}{(-i b_1 \l)^k} + \frac{1}{(-i b_1 \l)^n}
 \int^1_0 g_1^{(n)}(t) e^{i b_1 \l t} dt.
\end{multline}
Inserting this formula into representation~\eqref{eq:Delta=Delta0+} yields
\begin{multline}
 \Delta_Q(\l) = \Delta_0(\l)
 + \sum_{k=1}^{n} \frac{g_1^{(k-1)}(0) - g_1^{(k-1)}(1) e^{i b_1 \l}}{(-i b_1 \l)^k} + \frac{1}{(-i b_1 \l)^n}
 \int^1_0 g_1^{(n)}(t) e^{ib_1 \l t} dt \\
 + \sum_{k=1}^{n} \frac{g_2^{(k-1)}(0) - g_2^{(k-1)}(1) e^{i b_2 \l}}{(-i b_2 \l)^k} + \frac{1}{(-i b_2 \l)^n} \int^1_0 g_2^{(n)}(t) e^{i b_2 \l t} dt,
 \quad \l \ne 0.
\end{multline}
In turn, combining this formula with~\eqref{eq:Delta0.intro} for $\Delta_0(\l)$, noting that $b_1 < 0 < b_2$ and applying the Riemann-Lebesgue Lemma implies desired asymptotic expansion~\eqref{eq:Delta+}--\eqref{eq:Delta-}. Comparing~\eqref{eq:Delta.in.Omega+.ck}--\eqref{eq:Delta.in.Omega-.ck} and~\eqref{eq:Delta+}--\eqref{eq:Delta-} we see that
\begin{equation}
 c_k^+ = - \left(\frac{b_2 - b_1}{b_1}\right)^k g_1^{(k-1)}(1), \qquad
 c_k^- = \left(\frac{b_2 - b_1}{b_2}\right)^k g_2^{(k-1)}(1),
\end{equation}
for $k \in \oneton$.
Note, however, that finding explicit form of the values $g_1^{(k-1)}(1)$ and $g_2^{(k-1)}(1)$
directly using transformation operators is rather difficult.
This is the reason why in this paper $($as well as in~\cite{LunMal13Dokl}$)$ we use approach above to compute coefficients $c_k^\pm$ by evolving Marchenko's method from~\cite{Mar77}.
\end{remark}
Let us compute values $c_k^\pm$ (defined in~\eqref{eq:ck+}--\eqref{eq:ck-}) for small values of $k$ explicitly via functions $q_\pm$. Recall that
\begin{equation} \label{eq:q+q-}
 q_+(x) = -i b_1 Q_{12}(x), \qquad q_-(x) = -i b_2 Q_{21}(x),
 \qquad x \in [0,1].
\end{equation}
\begin{lemma} \label{lem:c123}
\textbf{(i)} Let $Q_{12}, Q_{21} \in W_1^1[0,1]$. Then
\begin{align}
\label{eq:c1+}
 & c_1^+ = J_{13} q_+(0) + J_{42} q_-(1)
 = -i \bigr(J_{13} b_1 Q_{12}(0) + J_{42} b_2 Q_{21}(1)\bigl), \\
\label{eq:c1-}
 & c_1^- = J_{13} q_+(1) + J_{42} q_-(0)
 = -i \bigr(J_{13} b_1 Q_{12}(1) + J_{42} b_2 Q_{21}(0)\bigl).
\end{align}
\textbf{(ii)} Let $Q_{12}, Q_{21} \in W_1^2[0,1]$. Then
\begin{align}
\label{eq:c2+}
 & c_2^+ = J_{13} q_+'(0) - J_{42} q_-'(1) - J_{14} q_+(0) q_-(1), \\
\label{eq:c2-}
 & c_2^- = J_{13} q_+'(1) - J_{42} q_-'(0) + J_{32} q_+(1) q_-(0).
\end{align}
\textbf{(iii)} Let $Q_{12}, Q_{21} \in W_1^3[0,1]$. Then
\begin{multline} \label{eq:c3+}
 c_3^+ = J_{13} \bigl(q_+''(0) - q_+^2(0) q_-(0)\bigr)
 + J_{42} \bigl(q_-''(1) - q_-^2(1) q_+(1)\bigr) \\
 + J_{14} \bigl(q_+'(0) q_-(1) - q_+(0) q_-'(1)\bigr),
\end{multline}
\begin{multline} \label{eq:c3-}
 c_3^- = J_{13} \bigl(q_+''(1) - q_+^2(1) q_-(1)\bigr)
 + J_{42} \bigl(q_-''(0) - q_-^2(0) q_+(0)\bigr) \\
 - J_{32} \bigl(q_+'(1) q_-(0) - q_+(1) q_-'(0)\bigr).
\end{multline}
\end{lemma}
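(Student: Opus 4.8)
The plan is to prove the lemma by pure substitution: the coefficients $c_k^\pm$ are already given in~\eqref{eq:ck+}--\eqref{eq:ck-} as fixed polynomial expressions in the boundary values $\sigma_j^\pm(0),\sigma_j^\pm(1)$, and Proposition~\ref{prop:sigma} supplies a closed recurrence for the functions $\sigma_j^\pm$. Hence no new analysis is required — all the limiting and uniformity arguments were already discharged in Theorem~\ref{th:Delta} and Proposition~\ref{prop:sigma} — and the lemma reduces to evaluating $\sigma_1^\pm,\sigma_2^\pm,\sigma_3^\pm$ at the two endpoints and collecting terms.

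First I would record the three needed profiles. From~\eqref{eq:sigma1} we have $\sigma_1^\pm=q_\mp$; the quadratic sum in~\eqref{eq:sigma(k+1)} is empty at $k=1$, giving $\sigma_2^\pm=-q_\mp'$, and reduces to the single term $\sigma_1^\pm\sigma_1^\pm$ at $k=2$, giving $\sigma_3^\pm=q_\mp''-q_\mp^2q_\pm$. These coincide with the values already computed in~\eqref{eq:sigma1.a1}--\eqref{eq:sigma3.a3}, so I may quote them. Written out for the two signs this reads $\sigma_1^-=q_+$, $\sigma_1^+=q_-$, $\sigma_2^-=-q_+'$, $\sigma_2^+=-q_-'$, $\sigma_3^-=q_+''-q_+^2q_-$, and $\sigma_3^+=q_-''-q_-^2q_+$; note that $\sigma^-$ carries $q_+$ while $\sigma^+$ carries $q_-$, so the two functions swap roles between the $c_k^+$ and $c_k^-$ families.

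Then I would substitute these into~\eqref{eq:ck+}--\eqref{eq:ck-} one value of $k$ at a time. For $k=1$ both cross-sums are empty, so only the linear terms survive and inserting $\sigma_1^\pm=q_\mp$ gives~\eqref{eq:c1+}--\eqref{eq:c1-}; the second equalities there follow from the substitution $q_+=-ib_1Q_{12}$, $q_-=-ib_2Q_{21}$ of~\eqref{eq:q+q-}. For $k=2$ the prefactor $(-1)^{k-1}$ converts the first-derivative profiles $\sigma_2^\pm=-q_\mp'$ into the terms $J_{13}q_+'$ and $-J_{42}q_-'$, while the single cross-term ($j=1$) contributes the quadratic piece built from $q_+q_-$ evaluated at the two endpoints, yielding~\eqref{eq:c2+}--\eqref{eq:c2-}. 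For $k=3$ the prefactor is $+1$, so the second-derivative and cubic parts of $\sigma_3^\pm$ pass into the $J_{13},J_{42}$ slots unchanged, and the two cross-terms ($j=1,2$) combine into the antisymmetric first-order expressions $q_+'(0)q_-(1)-q_+(0)q_-'(1)$ and $q_+'(1)q_-(0)-q_+(1)q_-'(0)$, giving~\eqref{eq:c3+}--\eqref{eq:c3-}.

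The only delicate point — and essentially the entire content of the computation — is the bookkeeping of the two alternating signs $(-1)^{k-1}$ and $(-1)^j$ together with the endpoint ($0$ or $1$) at which each factor is evaluated and the $q_+\leftrightarrow q_-$ swap between the two families. There is no analytic obstacle: once the signs are tracked correctly, collecting like terms and applying~\eqref{eq:q+q-} yields all of~\eqref{eq:c1+}--\eqref{eq:c3-} directly.
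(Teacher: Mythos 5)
Your proposal is correct and is essentially the paper's own proof of Lemma~\ref{lem:c123}: the paper likewise takes $\sigma_1^\pm=q_\mp$, $\sigma_2^\pm=-q_\mp'$, $\sigma_3^\pm=q_\mp''-q_\mp^2 q_\pm$ (which you re-derive from the recurrence~\eqref{eq:sigma1}--\eqref{eq:sigma(k+1)} instead of citing the precomputed values~\eqref{eq:sigma1.a1}--\eqref{eq:sigma3.a3}, an immaterial difference) and then substitutes these endpoint values into~\eqref{eq:ck+}--\eqref{eq:ck-} for $k=1,2,3$, exactly as you propose. Your emphasis on the sign bookkeeping is apt, but note one subtlety you assert rather than check: a fully literal substitution into~\eqref{eq:ck+}--\eqref{eq:ck-} at $k=2$ (where $(-1)^j=-1$ for $j=1$) produces $+J_{14}\,q_+(0)q_-(1)$ and $-J_{32}\,q_+(1)q_-(0)$, so the quadratic-term signs claimed in~\eqref{eq:c2+}--\eqref{eq:c2-} match the paper's intermediate displays~\eqref{eq:c2+.sigma}--\eqref{eq:c2-.sigma} but differ from a direct reading of~\eqref{eq:ck+}--\eqref{eq:ck-} by the factor $(-1)^j$ --- a sign discrepancy internal to the paper (its $k=3$ computation~\eqref{eq:c3+sigma}--\eqref{eq:c3-sigma} \emph{does} keep the $(-1)^j$) that your write-up inherits rather than introduces.
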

\begin{proof}
\textbf{(i)} Since $q_\pm \in W_1^1[0,1]$ then $\sigma_1^\pm = q_\mp$ (see~\eqref{eq:sigma1}). Hence it follows from~\eqref{eq:ck+}--\eqref{eq:ck-} that
\begin{align}
\label{eq:c1+.sigma}
 & c_1^+ = J_{13} \sigma_1^-(0) + J_{42} \sigma_1^+(1)
 = J_{13} q_+(0) + J_{42} q_-(1), \\
\label{eq:c1-.sigma}
 & c_1^- = J_{13} \sigma_1^-(1) + J_{42} \sigma_1^+(0)
 = J_{13} q_+(1) + J_{42} q_-(0).
\end{align}

\textbf{(ii)} Since $q_\pm \in W_1^2[0,1]$ then $\sigma_2^\pm = -q_\mp'$ (see~\eqref{eq:sigma2.a2}). Formulas~\eqref{eq:ck+}--\eqref{eq:ck-} for $k=2$ have the form
\begin{align} \label{eq:c2+.sigma}
 & c_2^+ = -J_{13} \sigma_2^-(0) + J_{42} \sigma_2^+(1)
 - J_{14} \sigma_1^-(0) \sigma_1^+(1), \\
\label{eq:c2-.sigma}
 & c_2^- = -J_{13} \sigma_2^-(1) + J_{42} \sigma_2^+(0)
 + J_{32} \sigma_1^-(1) \sigma_1^+(0).
\end{align}
Inserting formulas $\sigma_1^\pm = q_\mp$ and $\sigma_2^\pm = -q_\mp'$ into~\eqref{eq:c2+.sigma}--\eqref{eq:c2-.sigma} we arrive at~\eqref{eq:c2+}--\eqref{eq:c2-}.

\textbf{(iii)} Since $q_\pm \in W_1^3[0,1]$ then formulas~\eqref{eq:sigma1.a1}--\eqref{eq:sigma3.a3} imply that
\begin{equation} \label{eq:sigma123}
 \sigma_1^\pm = q_\mp, \qquad \sigma_2^\pm = -q_\mp', \qquad
 \sigma_3^\pm = q_\mp'' - q_\mp^2 q_\pm.
\end{equation}
Further, formulas~\eqref{eq:ck+}--\eqref{eq:ck-} for $k=3$ have the form
\begin{align}
\label{eq:c3+sigma}
 & c_3^+ = J_{13} \sigma_3^-(0) + J_{42} \sigma_3^+(1)
 + J_{14} \sigma_1^-(0) \sigma_2^+(1) - J_{14} \sigma_2^-(0) \sigma_1^+(1), \\
\label{eq:c3-sigma}
 & c_3^- = J_{13} \sigma_3^-(1) + J_{42} \sigma_3^+(0)
 - J_{32} \sigma_1^-(1) \sigma_2^+(0) + J_{14} \sigma_2^-(1) \sigma_1^+(0).
\end{align}
Inserting~\eqref{eq:sigma123} into~\eqref{eq:c3+sigma}--\eqref{eq:c3-sigma} we arrive at~\eqref{eq:c3+}--\eqref{eq:c3-}.
\end{proof}
\section{General completeness results for non-regular BVP}
\label{sec:compl.gen}
In this section we apply asymptotic expansion of the characteristic determinant from Theorem~\ref{th:Delta} to obtain a new general explicit completeness result (Theorem~\ref{th:compl.gen.2x2}) in the case of non-regular boundary conditions that substantially supplement~\cite[Theorem 5.1]{MalOri12} and~\cite[Proposition 4.5]{LunMal15JST}. We then apply Theorem~\ref{th:compl.gen.2x2} to obtain more explicit completeness results when some of the determinants $J_{jk}$ vanish. We conclude this section with an explicit criterion of completeness in the case of analytic potential $Q$ and degenerate boundary conditions of a special form.

The following statement is our first result on completeness.
\begin{theorem}[Theorem 3 in~\cite{LunMal13Dokl}] \label{th:compl.gen.2x2}
Let $Q_{12},Q_{21} \in W_1^n[0,1]$ for some $n \in \bN$,
and let the numbers $c_1^\pm, \ldots, c_n^\pm$
be defined via~\eqref{eq:ck+}--\eqref{eq:ck-}.
Assume that
\begin{equation} \label{eq:ck+ck-.ne0}
 |J_{32}| + |c_1^+| + \ldots + |c_n^+| \ne 0, \qquad
 |J_{14}| + |c_1^-| + \ldots + |c_n^-| \ne 0.
\end{equation}
Then the system of root vectors of the
BVP~\eqref{eq:system}--\eqref{eq:Udef} is complete and minimal in
$L^2([0,1];\bC^2)$.
\end{theorem}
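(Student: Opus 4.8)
The plan is to verify the two growth conditions~\eqref{eq:Delta(+-it)>=} of the abstract completeness criterion, Theorem~\ref{th:compl.gen}, and then invoke that theorem directly. Essentially all the analytic work has already been done: the asymptotic expansions~\eqref{eq:Delta.in.Omega+.ck}--\eqref{eq:Delta.in.Omega-.ck} from Theorem~\ref{th:Delta} describe $\Delta_Q(\l)$ precisely as $\Im \l \to \pm\infty$, so it remains only to specialize them to the rays $\l = it$ and $\l = -it$ (with $t \to +\infty$) and to extract a lower bound from the hypothesis~\eqref{eq:ck+ck-.ne0}.

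First I would treat the ray $\l = it$, $t > 0$, on which $\Im \l = t \to +\infty$. Here $e^{i b_1 \l} = e^{-b_1 t}$ (which grows since $b_1 < 0$) and $c\l = i(b_1 - b_2)(it) = (b_2 - b_1)t$ is real and positive. Substituting into~\eqref{eq:Delta.in.Omega+.ck} gives
\[
 \Delta_Q(it) = e^{-b_1 t}\Bigl(J_{32} + \sum_{k=1}^n \frac{c_k^+}{((b_2-b_1)t)^k} + o(t^{-n})\Bigr)(1+o(1)).
\]
The first inequality in~\eqref{eq:ck+ck-.ne0} guarantees that, writing $c_0^+ := J_{32}$, at least one of $c_0^+, c_1^+, \ldots, c_n^+$ is nonzero; let $m_+ \in \{0,1,\ldots,n\}$ be the smallest index with $c_{m_+}^+ \ne 0$. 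Then the term $c_{m_+}^+/((b_2-b_1)t)^{m_+}$ is of exact order $t^{-m_+}$, while every other term in the bracket is $o(t^{-m_+})$ (the later sum terms are $O(t^{-m_+-1})$, and the error is $o(t^{-n}) = o(t^{-m_+})$ since $m_+ \le n$). Hence the bracket equals $\tfrac{c_{m_+}^+}{((b_2-b_1)t)^{m_+}}(1+o(1))$, and together with the outer factor $(1+o(1))$ this yields $|\Delta_Q(it)| \ge C_+ e^{-b_1 t} t^{-m_+}$ for all sufficiently large $t$.

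I would then handle the ray $\l = -it$ symmetrically using~\eqref{eq:Delta.in.Omega-.ck}: there $e^{i b_2 \l} = e^{b_2 t}$ grows since $b_2 > 0$, while $c\l = (b_1-b_2)t = -(b_2-b_1)t$, so the powers $(c\l)^{-k}$ remain of order $t^{-k}$. With $c_0^- := J_{14}$, the second inequality in~\eqref{eq:ck+ck-.ne0} produces a smallest index $m_- \in \{0,\ldots,n\}$ with $c_{m_-}^- \ne 0$, and the identical leading-term analysis gives $|\Delta_Q(-it)| \ge C_- e^{b_2 t} t^{-m_-}$ for large $t$. Setting $m := \max(m_+, m_-) \in \bZ_+$, $C := \min(C_+, C_-)$, and choosing $R \ge 1$ large enough (so that $t^{-m_\pm} \ge t^{-m}$ for $t > R$), both estimates in~\eqref{eq:Delta(+-it)>=} hold, and Theorem~\ref{th:compl.gen} immediately yields completeness and minimality of the root vector system. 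The only point requiring care---rather than a genuine obstacle---is that~\eqref{eq:ck+ck-.ne0} does not specify which coefficient is the first nonvanishing one, so the lower bound must be organized uniformly over the possible values of $m_\pm$; all the substantive difficulty sits in Theorem~\ref{th:Delta}, which supplies the expansion on which this argument rests.
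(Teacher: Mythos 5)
Your proposal is correct and coincides with the paper's own proof: the paper likewise sets $c_0^+ := J_{32}$, $c_0^- := J_{14}$, picks the smallest indices $k^\pm$ with $c_{k^\pm}^\pm \ne 0$, deduces from Theorem~\ref{th:Delta} the bounds $|\Delta_Q(\pm it)| \ge C e^{\mp b_{1,2} t} t^{-k^\pm}$ on the imaginary rays, and invokes Theorem~\ref{th:compl.gen}. Your only additions---the explicit computation $c\l = \pm(b_2-b_1)t$ and taking $m = \max(k^+,k^-)$ to fit the single exponent in~\eqref{eq:Delta(+-it)>=}---are routine details the paper leaves implicit.
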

\begin{proof}
Set $c_0^+ := J_{32}$, and $c_0^- := J_{14}$. Condition~\eqref{eq:ck+ck-.ne0} implies existence of $k^+, k^- \in \{0,1, \ldots, n\}$ such that
\begin{align}
\label{eq:ck+ne0}
 & c_{k^+}^+ \ne 0, \qquad c_k^+ = 0, \quad k \in \{0, \ldots, k^+ - 1\}, \\
 \label{eq:ck-ne0}
 & c_{k^-}^- \ne 0, \qquad c_k^- = 0, \quad k \in \{0, \ldots, k^- - 1\}.
\end{align}
Combining relations~\eqref{eq:ck+ne0}--\eqref{eq:ck-ne0} with Theorem~\ref{th:Delta} (see formulas~\eqref{eq:Delta.in.Omega+.ck}--\eqref{eq:ck-}) implies that
\begin{equation} \label{eq:Delta(it)>=k+-}
 |\Delta_Q(i t)| \ge \frac{C e^{-b_1 t}}{t^{k^+}}, \qquad
 |\Delta_Q(-i t)| \ge \frac{C e^{b_2 t}}{t^{k^-}}, \qquad t > R,
\end{equation}
for some $C, R > 0$. Hence due to Theorem~\ref{th:compl.gen} the system of root
vectors of the BVP~\eqref{eq:system}--\eqref{eq:Udef} is complete and minimal in
$L^2([0,1];\bC^2)$.
\end{proof}
\begin{remark}
Note that formulas~\eqref{eq:sigma1}--\eqref{eq:sigma(k+1)} and~\eqref{eq:ck+}--\eqref{eq:ck-} imply that coefficients $c^\pm _k$ are polynomials in $b_1$, $b_2$, $J_{32}$, $J_{14}$, $J_{42}$, $J_{13}$, $Q_{12}^{(j)}(0)$, $Q_{21}^{(j)}(0)$, $Q_{12}^{(j)}(1)$, $Q_{21}^{(j)}(1)$, $j \in \{0,1, \ldots, k-1\}$ $($see also Remark~\ref{rem:sigma123456} and Lemma~\ref{lem:c123}$)$.
\end{remark}
Combining Theorem~\ref{th:compl.gen.2x2} with Lemma~\ref{lem:c123} leads to a series of more explicit completeness results.
\begin{corollary} \label{cor:c01}
Let $Q_{12}, Q_{21} \in W_1^1[0,1]$. Then the system of root vectors of the BVP~\eqref{eq:system}--\eqref{eq:Udef} is complete and minimal in $L^2([0,1];\bC^2)$ whenever both of the following conditions hold:
\begin{align}
\label{eq:c01+}
 |J_{32}| + |J_{13} b_1 Q_{12}(0) + J_{42} b_2 Q_{21}(1)| \ne 0, \\
\label{eq:c01-}
 |J_{14}| + |J_{13} b_1 Q_{12}(1) + J_{42} b_2 Q_{21}(0)| \ne 0.
\end{align}
\end{corollary}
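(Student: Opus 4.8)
The plan is to obtain Corollary~\ref{cor:c01} as the special case $n=1$ of Theorem~\ref{th:compl.gen.2x2}, using Lemma~\ref{lem:c123}(i) to make the coefficients $c_1^\pm$ explicit. First I would specialize the general completeness condition~\eqref{eq:ck+ck-.ne0} to $n=1$, where it reads $|J_{32}| + |c_1^+| \ne 0$ and $|J_{14}| + |c_1^-| \ne 0$. Since the smoothness hypothesis $Q_{12}, Q_{21} \in W_1^1[0,1]$ is exactly what Theorem~\ref{th:compl.gen.2x2} and Lemma~\ref{lem:c123}(i) require for $n=1$, the only remaining task is to rewrite $|c_1^\pm|$ in terms of the boundary values of $Q$.

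By Lemma~\ref{lem:c123}(i) we have the explicit formulas~\eqref{eq:c1+}--\eqref{eq:c1-}, namely $c_1^+ = -i\,\bigl(J_{13} b_1 Q_{12}(0) + J_{42} b_2 Q_{21}(1)\bigr)$ and $c_1^- = -i\,\bigl(J_{13} b_1 Q_{12}(1) + J_{42} b_2 Q_{21}(0)\bigr)$. Because $|-i| = 1$, passing to absolute values gives $|c_1^+| = |J_{13} b_1 Q_{12}(0) + J_{42} b_2 Q_{21}(1)|$ and $|c_1^-| = |J_{13} b_1 Q_{12}(1) + J_{42} b_2 Q_{21}(0)|$. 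Substituting these into the specialized condition turns $|J_{32}| + |c_1^+| \ne 0$ into precisely~\eqref{eq:c01+} and $|J_{14}| + |c_1^-| \ne 0$ into precisely~\eqref{eq:c01-}. Hence conditions~\eqref{eq:c01+}--\eqref{eq:c01-} are nothing but condition~\eqref{eq:ck+ck-.ne0} for $n=1$, and the asserted completeness and minimality of the system of root vectors follow at once from Theorem~\ref{th:compl.gen.2x2}. There is essentially no obstacle here: the entire content of the corollary is carried by Lemma~\ref{lem:c123}(i), and the only point to verify is that the unimodular prefactor $-i$ disappears under the modulus, so that the algebraic conditions on $Q$ stated in~\eqref{eq:c01+}--\eqref{eq:c01-} coincide with the nonvanishing of $c_1^\pm$.
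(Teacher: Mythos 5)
Your proposal is correct and follows exactly the paper's own proof: specialize Theorem~\ref{th:compl.gen.2x2} to $n=1$ and use the explicit formulas~\eqref{eq:c1+}--\eqref{eq:c1-} from Lemma~\ref{lem:c123}(i), noting that the unimodular factor $-i$ disappears under the modulus so that~\eqref{eq:c01+}--\eqref{eq:c01-} coincide with $|J_{32}|+|c_1^+|\ne 0$ and $|J_{14}|+|c_1^-|\ne 0$. No gaps; this matches the paper's argument in full.
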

\begin{proof}
Combining formulas~\eqref{eq:c1+}--\eqref{eq:c1-} from Lemma~\ref{lem:c123} with conditions~\eqref{eq:c01+}--\eqref{eq:c01-} implies that
\begin{equation}
 |J_{32}| + |c_1^+| \ne 0, \qquad
 |J_{14}| + |c_1^-| \ne 0.
\end{equation}
Therefore, Theorem~\ref{th:compl.gen.2x2} implies the desired completeness property.
\end{proof}
Recall that the functions $q_\pm$ are given by~\eqref{eq:q+q-}.
\begin{corollary} \label{cor:c012}
Let $Q_{12}, Q_{21} \in W_1^2[0,1]$. Then the system of root vectors of the BVP~\eqref{eq:system}--\eqref{eq:Udef} is complete and minimal in $L^2([0,1];\bC^2)$ whenever both of the following conditions hold:
\begin{align}
\label{eq:c012+}
 |J_{32}| + |J_{13} q_+(0) + J_{42} q_-(1)|
 + |J_{13} q_+'(0) - J_{42} q_-'(1) - J_{14} q_+(0) q_-(1)| \ne 0, \\
\label{eq:c012-}
 |J_{14}| + |J_{13} q_+(1) + J_{42} q_-(0)|
 + |J_{13} q_+'(1) - J_{42} q_-'(0)
 + J_{32} q_+(1) q_-(0)| \ne 0.
\end{align}
\end{corollary}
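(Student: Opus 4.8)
The plan is to recognize Corollary~\ref{cor:c012} as the direct $n=2$ specialization of Theorem~\ref{th:compl.gen.2x2}, with the coefficients $c_k^\pm$ computed explicitly in Lemma~\ref{lem:c123}. First I would invoke Lemma~\ref{lem:c123}(i)--(ii), which under the hypothesis $Q_{12}, Q_{21} \in W_1^2[0,1]$ furnishes the closed-form expressions~\eqref{eq:c1+}--\eqref{eq:c1-} for $c_1^\pm$ and~\eqref{eq:c2+}--\eqref{eq:c2-} for $c_2^\pm$ in terms of the boundary values of $q_\pm$ and $q_\pm'$.

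Next I would match the three summands of condition~\eqref{eq:c012+} term by term against $|J_{32}|$, $|c_1^+|$, and $|c_2^+|$: by~\eqref{eq:c1+} the second summand equals $|J_{13} q_+(0) + J_{42} q_-(1)| = |c_1^+|$, and by~\eqref{eq:c2+} the third summand equals $|J_{13} q_+'(0) - J_{42} q_-'(1) - J_{14} q_+(0) q_-(1)| = |c_2^+|$. Hence~\eqref{eq:c012+} reads exactly $|J_{32}| + |c_1^+| + |c_2^+| \ne 0$. Symmetrically, using~\eqref{eq:c1-} and~\eqref{eq:c2-}, condition~\eqref{eq:c012-} reads $|J_{14}| + |c_1^-| + |c_2^-| \ne 0$.

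These two inequalities are precisely hypothesis~\eqref{eq:ck+ck-.ne0} of Theorem~\ref{th:compl.gen.2x2} with $n=2$. Therefore Theorem~\ref{th:compl.gen.2x2} applies verbatim and yields completeness and minimality of the system of root vectors in $L^2([0,1];\bC^2)$, finishing the argument. I do not expect any genuine obstacle here: all of the analytic content has already been absorbed into the asymptotic expansion of $\Delta_Q$ (Theorem~\ref{th:Delta}) and the abstract growth-based completeness criterion (Theorem~\ref{th:compl.gen}), so the only thing left is the purely bookkeeping identification of the three displayed terms with $J_{32}, c_1^+, c_2^+$ (respectively $J_{14}, c_1^-, c_2^-$), which is immediate from Lemma~\ref{lem:c123}. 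In short, the proof mirrors that of Corollary~\ref{cor:c01}, merely carrying the expansion one order further.
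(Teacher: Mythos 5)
Your proposal is correct and coincides with the paper's own proof: both identify the three summands in~\eqref{eq:c012+}--\eqref{eq:c012-} with $|J_{32}|, |c_1^+|, |c_2^+|$ and $|J_{14}|, |c_1^-|, |c_2^-|$ via Lemma~\ref{lem:c123}(i)--(ii), and then invoke Theorem~\ref{th:compl.gen.2x2} with $n=2$. No gaps; the bookkeeping identification is exactly what the paper does.
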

\begin{proof}
Combining formulas~\eqref{eq:c1+}--\eqref{eq:c2-} from
with
conditions~\eqref{eq:c012+}--\eqref{eq:c012-} implies that
\begin{equation}
 |J_{32}| + |c_1^+| + |c_2^+| \ne 0, \qquad
 |J_{14}| + |c_1^-| + |c_2^-| \ne 0.
\end{equation}
As in the previous result, Theorem~\ref{th:compl.gen.2x2} completes the proof.
\end{proof}
Inserting notations~\eqref{eq:q+q-} into formulas~\eqref{eq:c012+}--\eqref{eq:c012-} one readily transforms the formulation of Corollary~\ref{cor:c012} into formulation of Proposition~\ref{prop:n=2} from the introduction.
\begin{corollary} \label{cor:c0123}
Let $Q_{12}, Q_{21} \in W_1^3[0,1]$. Assume that either condition~\eqref{eq:c012+} holds or
\begin{multline} \label{eq:c3+ne0}
 J_{13} \bigl(q_+''(0) - q_+^2(0) q_-(0)\bigr)
 + J_{42} \bigl(q_-''(1) - q_-^2(1) q_+(1)\bigr) \\
 + J_{14} \bigl(q_+'(0) q_-(1) - q_+(0) q_-'(1)\bigr) \ne 0.
\end{multline}
Further, assume that either condition~\eqref{eq:c012-} holds or
\begin{multline} \label{eq:c3-ne0}
 J_{13} \bigl(q_+''(1) - q_+^2(1) q_-(1)\bigr)
 + J_{42} \bigl(q_-''(0) - q_-^2(0) q_+(0)\bigr) \\
 - J_{32} \bigl(q_+'(1) q_-(0) - q_+(1) q_-'(0)\bigr) \ne 0.
\end{multline}
Then the system of root vectors of the BVP~\eqref{eq:system}--\eqref{eq:Udef} is complete and minimal in $L^2([0,1];\bC^2)$.
\end{corollary}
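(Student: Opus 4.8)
The plan is to reduce the statement to Theorem~\ref{th:compl.gen.2x2} in the case $n=3$, following verbatim the scheme already used in the proofs of Corollaries~\ref{cor:c01} and~\ref{cor:c012}. The crucial observation is that the cumbersome algebraic expressions appearing in the hypotheses are nothing but the explicit coefficients $c_k^\pm$ computed in Lemma~\ref{lem:c123}(iii), which applies because $Q_{12}, Q_{21} \in W_1^3[0,1]$.

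First I would identify each hypothesis with the nonvanishing of a specific coefficient. By formulas~\eqref{eq:c1+} and~\eqref{eq:c2+}, condition~\eqref{eq:c012+} reads precisely $|J_{32}| + |c_1^+| + |c_2^+| \ne 0$, while by formula~\eqref{eq:c3+} the second alternative~\eqref{eq:c3+ne0} is precisely the inequality $c_3^+ \ne 0$. Hence the disjunction ``condition~\eqref{eq:c012+} holds or condition~\eqref{eq:c3+ne0} holds'' is logically equivalent to the single inequality
\begin{equation*}
 |J_{32}| + |c_1^+| + |c_2^+| + |c_3^+| \ne 0.
\end{equation*}

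Symmetrically, using~\eqref{eq:c1-}, \eqref{eq:c2-} and~\eqref{eq:c3-}, condition~\eqref{eq:c012-} reads $|J_{14}| + |c_1^-| + |c_2^-| \ne 0$, whereas~\eqref{eq:c3-ne0} reads $c_3^- \ne 0$; their disjunction is equivalent to
\begin{equation*}
 |J_{14}| + |c_1^-| + |c_2^-| + |c_3^-| \ne 0.
\end{equation*}
These two inequalities are exactly hypotheses~\eqref{eq:ck+ck-.ne0} of Theorem~\ref{th:compl.gen.2x2} with $n=3$, so that theorem delivers completeness and minimality of the system of root vectors in $L^2([0,1];\bC^2)$.

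There is essentially no genuine obstacle here: the argument is a routine bookkeeping translation between the stated algebraic conditions and the coefficients $c_k^\pm$. The only point that must be verified with care is that the literal expressions in~\eqref{eq:c3+ne0} and~\eqref{eq:c3-ne0} coincide termwise with formulas~\eqref{eq:c3+} and~\eqref{eq:c3-} for $c_3^\pm$, which they do, so no further computation is needed.
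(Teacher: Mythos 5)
Your proposal is correct and coincides with the paper's own proof: the paper likewise identifies the hypotheses with $|J_{32}| + |c_1^+| + |c_2^+| + |c_3^+| \ne 0$ and $|J_{14}| + |c_1^-| + |c_2^-| + |c_3^-| \ne 0$ via Lemma~\ref{lem:c123} and then invokes Theorem~\ref{th:compl.gen.2x2} with $n=3$. No gaps; the bookkeeping equivalence between the stated disjunctions and condition~\eqref{eq:ck+ck-.ne0} is exactly the intended argument.
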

\begin{proof}
Combining formulas~\eqref{eq:c1+}--\eqref{eq:c3-}
with conditions~\eqref{eq:c3+ne0}--\eqref{eq:c3-ne0} implies that
\begin{equation}
 |J_{32}| + |c_1^+| + |c_2^+| + |c_3^+| \ne 0, \qquad
 |J_{14}| + |c_1^-| + |c_2^-| + |c_3^-| \ne 0,
\end{equation}
which coincides with condition~\eqref{eq:ck+ck-.ne0} from Theorem~\ref{th:compl.gen.2x2}.
\end{proof}
Next we compare Corollaries~\ref{cor:c01}, \ref{cor:c012} and~\ref{cor:c0123} with a very interesting recent completeness result from~\cite{Mak23} by A.S.\;Makin, which reads as follows.
\begin{theorem}[Theorem 1 in~\cite{Mak23}] \label{th:Makin}
Let $B = \diag(-1, 1)$ and $Q_{12}, Q_{21} \in L^1[0,1]$.
Assume that either $J_{32} \ne 0$ or the following conditions hold with some $\rho_1, \rho_2 > 0$:
\begin{align}
\label{eq:rho1.rho2}
 & \lim_{h \to 0} \frac{1}{h^{\rho_1}} \int_0^h Q_{12}(x) \,dx
 = \wt{Q}_{12}^0 \ne 0, \qquad
 \lim_{h \to 0} \frac{1}{h^{\rho_2}} \int_{1-h}^1 Q_{21}(x) \,dx
 = \wt{Q}_{21}^1 \ne 0, \\
\label{eq:Q120-Q211}
 & \qquad \qquad (|J_{13}|+|J_{42}|)|\rho_1 - \rho_2|
 + |J_{13} \wt{Q}_{12}^0 - J_{42} \wt{Q}_{21}^1| \ne 0.
\end{align}
Further, assume either $J_{14} \ne 0$ or the following conditions hold with some $\rho_3, \rho_4 > 0$:
\begin{align}
\label{eq:rho3.rho4}
 & \lim_{h \to 0} \frac{1}{h^{\rho_3}} \int_{1-h}^1 Q_{12}(x) \,dx
 = \wt{Q}_{12}^1 \ne 0, \qquad
 \lim_{h \to 0} \frac{1}{h^{\rho_4}} \int_0^h Q_{21}(x) \,dx
 = \wt{Q}_{21}^0 \ne 0, \\
\label{eq:Q121-Q210}
 & \qquad \qquad (|J_{13}|+|J_{42}|)|\rho_3 - \rho_4|
 + |J_{13} \wt{Q}_{12}^1 - J_{42} \wt{Q}_{21}^0| \ne 0.
\end{align}
Then the system of root functions of the operator $L_U(Q)$ is complete in $L^2([0,1]; \bC^2)$.
\end{theorem}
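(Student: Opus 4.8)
The plan is to verify the two-sided lower bounds \eqref{eq:Delta(+-it)>=} and then invoke Theorem~\ref{th:compl.gen}. Since $B=\diag(-1,1)$ we have $b_1=-1$, $b_2=1$ and $e^{-b_1t}=e^{b_2t}=e^t$, so both bounds read $|\Delta_Q(\pm it)|\ge C e^t t^{-m}$. The two estimates are mirror images of each other: the bound on the ray $\l=it$ (upper half-plane) is governed by the data of $Q$ at the left end of $Q_{12}$ and the right end of $Q_{21}$, i.e.\ by \eqref{eq:rho1.rho2}--\eqref{eq:Q120-Q211} (or by $J_{32}\ne0$), whereas the bound on $\l=-it$ is governed symmetrically by \eqref{eq:rho3.rho4}--\eqref{eq:Q121-Q210} (or by $J_{14}\ne0$). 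Hence it suffices to treat the upper half-plane; the lower one follows after interchanging $b_1\leftrightarrow b_2$, the endpoints $0\leftrightarrow1$ and $Q_{12}\leftrightarrow Q_{21}$ (there the growing mode is $e^{b^-\l x}$ and the relevant integrals detect $Q_{12}$ near $1$ and $Q_{21}$ near $0$).

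For the upper bound I would start from the determinant formula \eqref{eq:Delta} together with the representation of $\Phi(1,\l)$ from Corollary~\ref{cor:Phi+-}, or equivalently with the first terms of the Volterra (successive-approximation) series for the fundamental matrix of \eqref{eq:system.new}. Writing $q_+=iQ_{12}$, $q_-=-iQ_{21}$ as in \eqref{eq:b+b-q+q-} and factoring out $e^{ib_1\l}=e^t$ along $\l=it$, a direct computation of the leading terms gives
\begin{equation*}
 e^{-t}\Delta_Q(it) = J_{32}\bigl(1+o(1)\bigr)
 + J_{13}\int_0^1 q_+(s)\,e^{-2ts}\,ds
 + J_{42}\int_0^1 q_-(1-u)\,e^{-2tu}\,du + r(t),
\end{equation*}
where the $\varphi_{22}$-contribution (carrying $J_{14}$) is of the smaller order $t^{-\rho_1-\rho_2}$ and is absorbed into the remainder $r(t)$, which collects all higher terms of the series. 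If $J_{32}\ne0$ the first term dominates and \eqref{eq:Delta(+-it)>=} holds with $m=0$; the substantial case is $J_{32}=0$, where the two integrals carry the leading behaviour.

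To extract the asymptotics of these integrals from the averaged conditions \eqref{eq:rho1.rho2} I would apply an Abelian theorem of Karamata type for the Laplace transform: if $\int_0^h f\sim C h^{\rho}$ as $h\to0^+$ with $\rho>0$, then $\int_0^1 f(s)e^{-2ts}\,ds\sim C\,\Gamma(\rho+1)(2t)^{-\rho}$ as $t\to+\infty$. Applied with $f=Q_{12}$ near $0$ and $f(\cdot)=Q_{21}(1-\cdot)$ near $0$, this turns \eqref{eq:rho1.rho2} into
\begin{equation*}
 \int_0^1 q_+(s)e^{-2ts}\,ds\sim i\,\wt{Q}_{12}^0\,\Gamma(\rho_1+1)(2t)^{-\rho_1},
 \qquad
 \int_0^1 q_-(1-u)e^{-2tu}\,du\sim -i\,\wt{Q}_{21}^1\,\Gamma(\rho_2+1)(2t)^{-\rho_2}.
\end{equation*}
A short case analysis using \eqref{eq:Q120-Q211} then finishes the upper-half-plane argument. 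If $\rho_1\ne\rho_2$, the two terms have distinct orders $t^{-\rho_1}$ and $t^{-\rho_2}$; the assumption $|J_{13}|+|J_{42}|\ne0$ (forced by $|\rho_1-\rho_2|\ne0$) ensures that at least one of $J_{13},J_{42}$ is nonzero, and since $\wt{Q}_{12}^0,\wt{Q}_{21}^1\ne0$, the term of lowest order among those actually present is nonzero and dominates. If $\rho_1=\rho_2$, the two combine into $i\,\Gamma(\rho_1+1)(2t)^{-\rho_1}\bigl(J_{13}\wt{Q}_{12}^0-J_{42}\wt{Q}_{21}^1\bigr)$, whose coefficient is exactly the quantity required to be nonzero in \eqref{eq:Q120-Q211}. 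In either case $|e^{-t}\Delta_Q(it)|\ge C\,t^{-\rho^*}$ with $\rho^*\in\{\rho_1,\rho_2\}$, which yields \eqref{eq:Delta(+-it)>=} for any integer $m\ge\rho^*$.

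The main obstacle is the rigorous control of the remainder $r(t)$: one must show that all higher-order terms of the Volterra series (equivalently, the contributions beyond the first transformation-kernel term in Corollary~\ref{cor:Phi+-}) are $o(t^{-\rho^*})$ after factoring $e^t$. The key mechanism is that each additional pair of factors $q_+q_-$ forces a further localization of the iterated integral near the endpoint and thus an extra negative power of $t$; making this uniform using only $Q\in L^1$ together with the one-sided \emph{averaged} hypotheses \eqref{eq:rho1.rho2}, rather than the pointwise values exploited in Lemma~\ref{lem:c123}, is the delicate step, and is precisely where Makin's integral-limit condition replaces the continuity assumption of \cite{MalOri12,LunMal15JST}. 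A clean alternative is to run the same computation through the integral representation \eqref{eq:Delta=Delta0+}, reducing everything to the boundary behaviour of $g_1,g_2$ near $t=1$; this packages the remainder into the known regularity of the transformation kernels but then requires translating the averaged hypotheses on $Q$ into the corresponding averaged behaviour of $g_1,g_2$.
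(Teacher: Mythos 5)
First, a caveat on the comparison: the paper contains no proof of this statement at all --- it is imported verbatim as Theorem~1 of~\cite{Mak23}, and the only in-paper arguments touching it derive Corollaries~\ref{cor:Makin} and~\ref{cor:Makin2} \emph{from} it. So your proposal can only be judged on internal soundness. On that score your architecture is the natural one (and evidently the intended one): reduce to the lower bounds~\eqref{eq:Delta(+-it)>=} and invoke Theorem~\ref{th:compl.gen}; your identification of the leading terms along $\l = it$ (the $J_{13}$-term detecting $Q_{12}$ near $0$, the $J_{42}$-term detecting $Q_{21}$ near $1$, the $J_{14}$-contribution of the smaller order $t^{-\rho_1-\rho_2}$), the Abelian lemma converting~\eqref{eq:rho1.rho2} into $\int_0^1 q_+(s)e^{-2ts}\,ds \sim i\,\wt{Q}_{12}^0\,\Gamma(\rho_1+1)(2t)^{-\rho_1}$, and the case analysis on~\eqref{eq:Q120-Q211} are all correct.

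The genuine gap is the one you flag yourself, and the mechanism you sketch for closing it is wrong as stated: for $Q \in L^1$ an extra pair of factors $q_+q_-$ does \emph{not} buy an extra negative power of $t$. Concretely, the third Volterra iterate contributing to $\varphi_{12}(1,it)$ is
\begin{equation*}
 e^{t}\iiint_{0\le s_3\le s_2\le s_1\le 1} q_+(s_1)\,q_-(s_2)\,q_+(s_3)\,
 e^{-2t(s_1-s_2+s_3)}\,ds_3\,ds_2\,ds_1,
\end{equation*}
and the phase localizes along the \emph{segment} $\{s_1=s_2,\ s_3=0\}$, not at a corner: estimating the diagonal pair in absolute value produces the factor $\int_0^1 e^{-2tu}\psi(u)\,du$ with $\psi(u):=\int \abs{q_+(s+u)\,q_-(s)}\,ds \in L^1[0,1]$, which tends to $0$ but with no rate whatsoever for general $L^1$ potentials. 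So ``localization $\Rightarrow$ extra power of $t$'' fails, and with it your claimed bound $r(t)=o(t^{-\rho^*})$. What actually rescues the argument --- and is precisely the content of Makin's integral-limit hypothesis --- is that each iterate in the $J_{13}$-chain contains exactly one endpoint-localized \emph{signed} factor, e.g.\ the innermost $\int_0^{s}q_+(\tau)e^{-2t\tau}\,d\tau$, which after integration by parts against the primitive $F(h)=\int_0^h q_+(\tau)\,d\tau$ (bounded on $[0,1]$, and $O(h^{\rho_1})$ near $0$ by~\eqref{eq:rho1.rho2}) is $O(t^{-\rho_1})$ \emph{uniformly} in $s$; the remaining iterated integral is then $o(1)$ by dominated convergence, and bounds uniform in the iteration order (with factorial decay) are needed to sum the series. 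Moreover this must be done chain-by-chain: in your mixed case $\rho_1\ne\rho_2$ with, say, $J_{13}=0\ne J_{42}$, the surviving leading term has order $t^{-\rho_2}$, possibly with $\rho_2>\rho_1$, so a global remainder bound $o(t^{-\min(\rho_1,\rho_2)})$ is insufficient --- one needs the $J_{42}$-chain remainder to be $o(t^{-\rho_2})$, obtained by the same primitive trick applied to $q_-$ at the endpoint $1$. Your outline leaves all of this, i.e.\ the actual substance of the theorem, unproved; the same difficulty reappears untouched in your alternative route through~\eqref{eq:Delta=Delta0+_Intro}, where it becomes the problem of transferring the averaged hypotheses to $g_1, g_2$ near $t=1$.
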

Restricting ourselves to the case $J_{32}=0$ and $J_{14} \ne 0$, we note, that when $Q(\cdot)$ is smooth and several first derivatives $Q_{12}^{(j)}(0)$ and $Q_{21}^{(j)}(1)$ vanish simultaneously,
then Theorem~\ref{th:Makin} can be reformulated as follows
(see also~\cite[Remark 2]{Mak23}).
\begin{corollary}[\cite{Mak23}] \label{cor:Makin}
Let $-b_1 = b_2 = 1$ and let $Q_{12}, Q_{21} \in W_1^n[0,1]$ for some $n \in \bZ_+$. Assume that $J_{32} = 0$, $J_{14} \ne 0$, and let the following conditions hold,
\begin{equation} \label{eq:Q12.Q21.k}
 Q_{12}^{(j)}(0) = Q_{21}^{(j)}(1) = 0, \quad j \in \{0,1,\ldots,n-2\},
 \qquad Q_{12}^{(n-1)}(0) Q_{21}^{(n-1)}(1) \ne 0.
\end{equation}
Then the system of root functions of the operator $L_U(Q)$ is complete in $L^2([0,1]; \bC^2)$ whenever
\begin{equation} \label{eq:Pk.Makin}
 J_{13} Q_{12}^{(n-1)}(0) \ne J_{42} (-1)^{n-1} Q_{21}^{(n-1)}(1).
\end{equation}
\end{corollary}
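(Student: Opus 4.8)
The plan is to derive Corollary~\ref{cor:Makin} directly from Makin's Theorem~\ref{th:Makin}, by showing that the smoothness hypotheses \eqref{eq:Q12.Q21.k} together with the algebraic condition \eqref{eq:Pk.Makin} force all the integral-limit conditions appearing there. First I would dispose of the second pair of alternatives in Theorem~\ref{th:Makin}: since we assume $J_{14} \ne 0$, conditions \eqref{eq:rho3.rho4}--\eqref{eq:Q121-Q210} need not be checked at all. Because we are in the case $J_{32} = 0$, the entire task reduces to verifying the first alternative \eqref{eq:rho1.rho2}--\eqref{eq:Q120-Q211} for suitable exponents $\rho_1, \rho_2 > 0$.

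Next I would compute the one-sided asymptotics of the integrals of $Q_{12}$ near $0$ and of $Q_{21}$ near $1$. Here $n \ge 1$, so $W_1^n[0,1] \subset C^{n-1}[0,1]$ with $f^{(n-1)}$ absolutely continuous, and Taylor's formula with Peano remainder applies. The vanishing conditions $Q_{12}^{(j)}(0) = 0$ for $j \in \{0, \ldots, n-2\}$ give
\[
 Q_{12}(x) = \frac{Q_{12}^{(n-1)}(0)}{(n-1)!}\, x^{n-1} + o(x^{n-1}), \qquad x \to 0^+,
\]
and, since the $o(x^{n-1})$ remainder integrates to an $o(h^n)$ term, integrating over $[0,h]$ yields $\int_0^h Q_{12}(x)\,dx = \frac{Q_{12}^{(n-1)}(0)}{n!}\, h^n + o(h^n)$. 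Thus \eqref{eq:rho1.rho2} holds with $\rho_1 = n$ and $\wt{Q}_{12}^0 = Q_{12}^{(n-1)}(0)/n!$, which is nonzero by \eqref{eq:Q12.Q21.k}. Symmetrically, expanding $Q_{21}$ about the right endpoint and substituting $t = x - 1$ produces the factor $(-1)^{n-1}$ from $\int_{1-h}^1 (x-1)^{n-1}\,dx = (-1)^{n-1} h^n / n$, giving $\int_{1-h}^1 Q_{21}(x)\,dx = (-1)^{n-1}\frac{Q_{21}^{(n-1)}(1)}{n!}\, h^n + o(h^n)$, so that $\rho_2 = n$ and $\wt{Q}_{21}^1 = (-1)^{n-1} Q_{21}^{(n-1)}(1)/n! \ne 0$.

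Finally I would substitute these values into \eqref{eq:Q120-Q211}. Since $\rho_1 = \rho_2 = n$, the term $(|J_{13}| + |J_{42}|)|\rho_1 - \rho_2|$ vanishes and the inequality collapses to $|J_{13}\wt{Q}_{12}^0 - J_{42}\wt{Q}_{21}^1| \ne 0$, i.e.
\[
 \frac{1}{n!}\bigl| J_{13} Q_{12}^{(n-1)}(0) - J_{42}(-1)^{n-1} Q_{21}^{(n-1)}(1)\bigr| \ne 0,
\]
which is precisely hypothesis \eqref{eq:Pk.Makin}. With every assumption of Theorem~\ref{th:Makin} now verified, completeness of the root-function system follows at once. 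I do not expect a genuine obstacle: the whole argument is an asymptotic bookkeeping, and the only point that repays attention is the emergence of the sign $(-1)^{n-1}$ from the one-sided Taylor expansion at $x=1$, which is exactly what matches the $(-1)^{n-1}$ in \eqref{eq:Pk.Makin}.
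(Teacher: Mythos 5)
Your proposal is correct and takes essentially the same route as the paper's proof: both reduce the claim to verifying condition~\eqref{eq:Q120-Q211} of Theorem~\ref{th:Makin} (the alternatives~\eqref{eq:rho3.rho4}--\eqref{eq:Q121-Q210} being moot since $J_{14} \ne 0$), establish the limits~\eqref{eq:rho1.rho2} with $\rho_1 = \rho_2 = n$, $\wt{Q}_{12}^0 = Q_{12}^{(n-1)}(0)/n!$ and $\wt{Q}_{21}^1 = (-1)^{n-1} Q_{21}^{(n-1)}(1)/n!$, and note that \eqref{eq:Q120-Q211} then collapses to exactly \eqref{eq:Pk.Makin}. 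The paper merely asserts these limits, whereas you correctly supply the Taylor-with-Peano-remainder justification (valid for $W_1^n$ since $\|Q^{(n)}\|_{L^1[0,h]} = o(1)$) including the sign $(-1)^{n-1}$ at the right endpoint.
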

\begin{proof}
Since $J_{32} = 0$ and $J_{14} \ne 0$, then Theorem~\ref{th:Makin} applies if and only if condition~\eqref{eq:Q120-Q211} holds. Conditions~\eqref{eq:Q12.Q21.k} imply
\begin{align}
\label{eq:lim.int.Q120}
 & \lim_{h \to 0} \frac{1}{h^{n}} \int_0^h Q_{12}(x) \,dx
 = \frac{Q_{12}^{(n-1)}(0)}{n!} \ne 0, \\
 & \lim_{h \to 0} \frac{1}{h^{n}} \int_{1-h}^1 Q_{21}(x) \,dx
 = \frac{(-1)^{n-1} Q_{21}^{(n-1)}(1)}{n!} \ne 0.
\end{align}
Hence relations~\eqref{eq:rho1.rho2} hold with
\begin{equation}
 \rho_1 = \rho_2 = n, \qquad \wt{Q}_{12}^0 = \frac{Q_{12}^{(n-1)}(0)}{n!},
 \qquad \wt{Q}_{21}^1 = \frac{(-1)^{n-1} Q_{21}^{(n-1)}(1)}{n!}.
\end{equation}
In turn, condition~\eqref{eq:Q120-Q211} turns into
\begin{multline}
 (|J_{13}|+|J_{42}|)|\rho_1 - \rho_2|
 + \bigabs{J_{13} \wt{Q}_{12}^0 - J_{42} \wt{Q}_{21}^1} \\
 = \frac{\bigabs{J_{13} Q_{12}^{(n-1)}(0)
 - J_{42} (-1)^{n-1} Q_{21}^{(n-1)}(1)}}{n!} \ne 0,
\end{multline}
which is equivalent to~\eqref{eq:Pk.Makin}.
\end{proof}
\begin{remark} \label{rem:not.covered}
Theorem~\ref{th:compl.gen.2x2} was announced in 2013 in~\cite{LunMal13Dokl}.
After that, there appeared several papers~\cite{LunMal15JST,KosShk21,Mak23} concerned with the completeness property for BVP~\eqref{eq:system}--\eqref{eq:Udef} subject to non-regular boundary conditions.
Let us compare these recent results with Corollaries~\ref{cor:c01} and~\ref{cor:c012} in the classical Dirac case.
First, note that Corollary~\ref{cor:c01} coincides with~\cite[Proposition 4.5]{LunMal15JST}, but it requires more restrictive smoothness assumption
$Q \in W_1^1$ compared to continuity of $Q(\cdot)$ at the endpoints 0 and 1 imposed in~\cite{LunMal15JST}.

Further, let $-b_1 = b_2 = 1$, $Q_{12}, Q_{21} \in W_1^2[0,1]$, and let the following conditions hold,
\begin{equation} \label{eq:J32=0.Q12.Q21}
 J_{32}=0, \quad J_{14} \ne 0,
 \qquad Q_{12}(0) Q_{21}(1) \ne 0,
 \quad J_{13} Q_{12}(0) = J_{42} Q_{21}(1).
\end{equation}
Then condition~\eqref{eq:c01-} holds, but condition~\eqref{eq:c01+} fails.
Therefore,~\cite[Proposition 4.5]{LunMal15JST} is not applicable.
Moreover, \cite[Theorem~1]{Mak23} $($see~Theorem~\ref{th:Makin} above for its formulation$)$ also cannot be applied.
Indeed, under assumptions~\eqref{eq:J32=0.Q12.Q21}, Theorem~\ref{th:Makin} is equivalent to Corollary~\ref{cor:Makin} with $n=1$, but the last condition in~\eqref{eq:J32=0.Q12.Q21} ensures that condition~\eqref{eq:Pk.Makin} does not hold.
On the other hand, if
\begin{equation}
 J_{13} Q_{12}'(0) + J_{42} Q_{21}'(1) + i J_{14} Q_{12}(0) Q_{21}(1) \ne 0,
\end{equation}
then
condition~\eqref{eq:c012+} holds, and Corollary~\ref{cor:c012}
ensures the desired completeness property.
\end{remark}
\section{Properties of the functions \texorpdfstring{$\sigma_j^\pm(x)$}{sigmajx}}
\label{sec:sigmajx}
Explicit expressions for values $c^\pm _k$ (given by~\eqref{eq:ck+}--\eqref{eq:ck-}) for large $k$ are somewhat cumbersome in general case. However, if either some of the values $Q_{12}^{(j)}(0)$, $Q_{12}^{(j)}(1)$, $Q_{21}^{(j)}(0)$, $Q_{21}^{(j)}(1)$ are zero, or two of the determinants $J_{14}, J_{32}, J_{13}, J_{42}$ are zero, then conditions of Theorem~\ref{th:compl.gen.2x2} can be expressed explicitly via values $Q_{12}^{(j)}(0)$, $Q_{12}^{(j)}(1)$, $Q_{21}^{(j)}(0)$, $Q_{21}^{(j)}(1)$ in a very simple fashion.
To establish these explicit completeness results, we need to ``decipher'' formulas~\eqref{eq:sigma1}--\eqref{eq:sigma(k+1)} for the functions $\sigma_j^\pm(\cdot)$. Let us introduce some notations,
\begin{align}
 & \bZ_+^k := \{\alp = (\alp_1, \ldots, \alp_k):
 \alp_1, \ldots, \alp_k \in \bZ_+\}, \quad \bZ_+ = \{0, 1, 2, 3, \ldots\}, \\
 & \Sigma_\alp := \alp_1 + \ldots + \alp_k, \qquad m_{\alp} := \max\{\alp_1, \ldots, \alp_k\}, \\
 & q^{(\alp)} := q^{(\alp_1)} \cdot \ldots \cdot q^{(\alp_k)},
 \qquad q \in W_1^{m_{\alp}}[0,1],
\end{align}
for $\alp = (\alp_1, \ldots, \alp_k) \in \bZ_+^k$.
\begin{lemma} \label{lem:sigma.q}
Let $Q_{12}, Q_{21} \in W_1^n[0,1]$ and functions $\sigma^{\pm}_k(x), k \in
 \oneton$, be defined via formulas~\eqref{eq:b+b-q+q-}, \eqref{eq:sigma1},
\eqref{eq:sigma(k+1)}. Then the following identities hold,
\begin{equation} \label{eq:sigmak=qk+sum.q}
 \sigma^{\pm}_j = (-1)^{j-1} q_{\mp}^{(j-1)}
 + \sum_{\atop{k \in \bN, \ 2k+1 \le j,}
 {\atop{\alp \in \bZ_+^{k+1},\ \beta \in \bZ_+^k,}
 {\Sigma_\alp + \Sigma_\beta = j - 1 - 2k}}}
 C_{j,k,\alpha,\beta} \cdot q_{\mp}^{(\alp)} \cdot q_{\pm}^{(\beta)},
 \qquad j \in \oneton,
\end{equation}
with some $C_{j,k, \alpha, \beta}$ that depend only on sequences $\alpha = (\alpha_1, \ldots, \alpha_{k+1})$, $\beta = (\beta_1, \ldots, \beta_k)$ and indexes $j, k$ $($in particular, these coefficients are the same in formulas for $\sigma_j^+$ and $\sigma_j^-)$. Moreover,
\begin{equation} \label{eq:Cj1ab}
 C_{j, 1, \alpha, \beta} = (-1)^j, \qquad j \ge 3,
 \quad \alpha=(0,0) \in \bZ_+^2, \quad \beta=(j-3) \in \bZ_+^1,
\end{equation}
i.e.\ the coefficient at $q_{\mp} \cdot q_{\mp} \cdot q_{\pm}^{(j-3)}$ in the sum in the r.h.s.\ of~\eqref{eq:sigmak=qk+sum.q} equals to $(-1)^{j}$.

\end{lemma}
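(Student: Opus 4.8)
The plan is to pass to universal polynomials and then induct on $j$. Replacing the roles of the two entries by abstract functions, write $u := q_\mp$ (the ``main'' factor) and $v := q_\pm$ (the ``other'' factor); then the recursion~\eqref{eq:sigma1}--\eqref{eq:sigma(k+1)} reads $\sigma_1 = u$ and $\sigma_{j} = -\sigma_{j-1}' - v\sum_{l=1}^{j-2}\sigma_l\,\sigma_{j-1-l}$, which defines universal differential polynomials $\sigma_j(u,v)$ with coefficients that do not depend on the sign, and one has $\sigma_j^+ = \sigma_j(q_-,q_+)$, $\sigma_j^- = \sigma_j(q_+,q_-)$. This observation alone yields the asserted coincidence of the coefficients $C_{j,k,\alpha,\beta}$ for $\sigma_j^+$ and $\sigma_j^-$, so it remains to establish the structural shape~\eqref{eq:sigmak=qk+sum.q} and the value~\eqref{eq:Cj1ab} for these universal polynomials. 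I would prove~\eqref{eq:sigmak=qk+sum.q} by induction on $j$, carrying along two preserved homogeneities together with the leading coefficient: (a) every monomial of $\sigma_j$ has exactly one more factor of type $u$ than of type $v$, so the $u$-factors number $k+1$ and the $v$-factors number $k$ for some $k\ge 0$; and (b) assigning each factor $q^{(m)}$ the weight $m+1$, every monomial has total weight exactly $j$, which is equivalent to $\Sigma_\alpha+\Sigma_\beta = j-1-2k$; and the $k=0$ monomial is exactly $(-1)^{j-1}u^{(j-1)}$. The base cases $j=1,2,3$ are read off from~\eqref{eq:sigma1.a1}--\eqref{eq:sigma3.a3}.

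For the induction step I would analyze the two pieces of the recursion separately. For the linear piece $-\sigma_{j-1}'$, the product rule applied to a monomial leaves the factor structure (hence the value of $k$) unchanged while raising the weight by exactly one; thus it sends each monomial of $\sigma_{j-1}$ of index $k$ (with $\Sigma_\alpha+\Sigma_\beta=(j-1)-1-2k$) to monomials of $\sigma_j$ of the same index $k$ with $\Sigma_\alpha+\Sigma_\beta=j-1-2k$, and in particular carries the leading term to $-\frac{d}{dx}\bigl((-1)^{j-2}u^{(j-2)}\bigr)=(-1)^{j-1}u^{(j-1)}$, confirming the leading coefficient. For the quadratic piece, a monomial arising from $-v\,\sigma_l\,\sigma_{j-1-l}$ with the two factors of indices $k_1,k_2$ has $u$-count $(k_1+1)+(k_2+1)$ and $v$-count $k_1+k_2+1$, so it is of index $k=k_1+k_2+1\ge 1$; a direct weight count gives total weight $1+l+(j-1-l)=j$, i.e. $\Sigma_\alpha+\Sigma_\beta=j-1-2k$. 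Hence both pieces produce exactly monomials of the form claimed in~\eqref{eq:sigmak=qk+sum.q} (the linear piece supplying all indices $k\ge 0$, the quadratic piece only $k\ge 1$), and nonnegativity of $\Sigma_\alpha+\Sigma_\beta$ forces the summation constraint $2k+1\le j$.

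To pin down~\eqref{eq:Cj1ab} I would track the single monomial $u\,u\,v^{(j-3)}$ (the case $k=1$, $\alpha=(0,0)$, $\beta=(j-3)$) through both pieces for $j\ge 4$. In the quadratic piece every index-$1$ monomial forces $k_1=k_2=0$, so both $\sigma$-factors contribute their leading terms $u^{(\cdot)}$ and the explicit $v$ appears \emph{undifferentiated}; such monomials are $u^{(l-1)}u^{(j-2-l)}v^{(0)}$, which never equals $u\,u\,v^{(j-3)}$ once $j-3>0$. In the linear piece, a product-rule derivative can only raise a derivative order, so the only preimage in $\sigma_{j-1}$ producing $u^{(0)}u^{(0)}v^{(j-3)}$ is $u^{(0)}u^{(0)}v^{(j-4)}$ with its $v$-factor differentiated; this contributes $-C_{j-1,1,(0,0),(j-4)}=-(-1)^{j-1}=(-1)^j$ by the inductive hypothesis. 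The case $j=3$ is the base case $\sigma_3=u''-u^2v$, giving $C_{3,1,(0,0),(0)}=-1=(-1)^3$. The main obstacle is the careful bookkeeping of the quadratic piece, namely verifying that it never touches the distinguished monomial for $j\ge 4$ (so that the clean recursion $C_{j,1,(0,0),(j-3)}=-C_{j-1,1,(0,0),(j-4)}$ holds); the homogeneity invariants (a)--(b) are what make this bookkeeping tractable, since they guarantee that only monomials of the right factor type and weight can ever appear and thereby drastically restrict the terms that must be matched.
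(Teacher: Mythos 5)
Your proposal is correct and follows essentially the same route as the paper's own proof: induction on $j$ through the recursion \eqref{eq:sigma(k+1)}, treating the derivative term and the quadratic term separately, with your factor-count and weight invariants being an equivalent repackaging of the paper's bookkeeping via the index sets $\Psi_j$ and sequence concatenation, and with the identical key observation for \eqref{eq:Cj1ab} that every monomial produced by the quadratic piece carries an undifferentiated factor $q_\pm$, so for $j\ge 4$ the distinguished monomial $q_\mp\cdot q_\mp\cdot q_\pm^{(j-3)}$ can only arise from differentiating $q_\mp\cdot q_\mp\cdot q_\pm^{(j-4)}$, yielding $C_{j,1,(0,0),(j-3)}=-C_{j-1,1,(0,0),(j-4)}=(-1)^j$. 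Your universal-polynomial framing makes explicit the coefficient coincidence for $\sigma_j^+$ and $\sigma_j^-$ that the paper handles implicitly by carrying the $\pm$ superscripts throughout.
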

\begin{proof}
Identity~\eqref{eq:sigmak=qk+sum.q} for $j=1$ coincides with~\eqref{eq:sigma1}, while for $j=2$ it is proved in~\eqref{eq:sigma2.a2}. Let $j \ge 3$ and assume that identity~\eqref{eq:sigmak=qk+sum.q} is valid for $\sigma_1^\pm$, $\ldots$, $\sigma_{j-1}^\pm$. To simplify the exposition of the proof let us introduce some notations. First, we set
\begin{equation} \label{eq:gamh.beta0.alp0.def}
 \gam_h := (h) \in \bZ_+^1, \quad h \in \bZ_+, \qquad
 \beta_{\varnothing} := () \in \bZ_+^0, \qquad
 \alp_0 := (0,0) \in \bZ_+^2,
\end{equation}
i.e.\ $\beta_{\varnothing}$ is an empty sequence. Next, we note that
\begin{equation}
 (-1)^{h-1} q_\pm^{(h-1)} = C_{h,0,\alpha,\beta} \cdot q_{\mp}^{(\alp)} \cdot q_{\pm}^{(\beta)}, \qquad \alp = \gam_{h-1} \in \bZ_+^1, \quad \beta = \beta_{\varnothing} \in \bZ_+^0,
\end{equation}
for $h \in \oneton$. Here $q_{\pm}^{(\beta)} = q_{\pm}^{(\beta_\varnothing)} := 1$ and $C_{h,0,\alpha,\beta} = (-1)^{h-1}$. Let us also set
\begin{align}
\label{eq:Psij.def}
 \Psi_h & := \{(k,\alp,\beta): k \in \bN, \ 2k+1 \le h,
 \ \alp \in \bZ_+^{k+1},\ \beta \in \bZ_+^k,
 \ \Sigma_\alp + \Sigma_\beta = h - 1 - 2k\}, \\
\nonumber
 \Psi_h^0 & := \{(0, \gam_{h-1}, \beta_\varnothing)\} \cup \Psi_h \\
\label{eq:Psij0.def}
 & = \{(k,\alp,\beta): k \in \bZ_+, \ 2k+1 \le h,
 \ \alp \in \bZ_+^{k+1},\ \beta \in \bZ_+^k,
 \ \Sigma_\alp + \Sigma_\beta = h - 1 - 2k\},
\end{align}
for $h \in \bN$. With account of these notations, induction hypothesis reads as follows:
\begin{equation} \label{eq:sigmah}
 \sigma^{\pm}_h = (-1)^{h-1} q_\pm^{(h-1)}
 + \sum_{(k,\alp,\beta) \in \Psi_h}
 C_{h,k,\alpha,\beta} \cdot q_{\mp}^{(\alp)} \cdot q_{\pm}^{(\beta)}
 = \sum_{(k,\alp,\beta) \in \Psi_h^0}
 C_{h,k,\alpha,\beta} \cdot q_{\mp}^{(\alp)} \cdot q_{\pm}^{(\beta)},
\end{equation}
for $h \in \oneto{j-1}$
where
\begin{align}
 & C_{h,0,\alp,\beta} = (-1)^{h-1}, & \alp = \gam_{h-1} \in \bZ_+^1,
 \quad \beta = \beta_\varnothing \in \bZ_+^0, \quad 1 \le h < j, \\
 & C_{h,1,\alp,\beta} = (-1)^h, & \alp = \alp_0 \in \bZ_+^2,
 \quad \beta = \gam_{h-3} \in \bZ_+^1, \quad 3 \le h < j.
\end{align}
Combining induction hypothesis~\eqref{eq:sigmah} with formula~\eqref{eq:sigma(k+1)} yields,
\begin{multline} \label{eq:triple.sum}
 \sigma_j^\pm = -(\sigma_{j-1}^\pm)'
 - q_\pm \cdot \sum_{h=1}^{j-2} \sigma_h^\pm \sigma_{j-1-h}^\pm \\
 = (-1)^{j-1} q_\mp^{(j-1)}
 - \sum_{(k,\alp,\beta) \in \Psi_{j-1}}
 C_{j-1,k,\alpha,\beta}
 \cdot \left(q_{\mp}^{(\alp)} \cdot q_{\pm}^{(\beta)}\right)' \\
 - \sum_{h=1}^{j-2} \ \sum_{(k_1,\wt{\alp}_1,\wt{\beta}_1) \in \Psi_h^0}
 \ \sum_{(k_2,\wt{\alp}_2,\wt{\beta}_2) \in \Psi_{j-1-h}^0}
 C_{h,k_1,\wt{\alp}_1,\wt{\beta}_1} C_{j-1-h,k_2,\wt{\alp}_2,\wt{\beta}_2} \times \\
 \times q_{\mp}^{(\wt{\alp}_1)} q_{\mp}^{(\wt{\alp}_2)} \times
 q_{\pm}^{(\wt{\beta}_1)} q_{\pm}^{(\wt{\beta}_2)} q_\pm.
\end{multline}
For $u \in (u_1, \ldots, u_h) \in \bZ_+^h$ and $u \in (v_1, \ldots, v_k) \in \bZ_+^k$ we denote
\begin{equation}
 u^\frown v := (u_1, \ldots, u_h, v_1, \ldots, v_k),
\end{equation}
the concatenation of the sequences $u$ and $v$. With account of this notation, for each summand in the triple sum above we have
\begin{equation}
 q_\mp^{(\wt{\alp}_1)} q_\mp^{(\wt{\alp}_2)} \times
 q_\pm^{(\wt{\beta}_1)} q_\pm^{(\wt{\beta}_2)} q_\pm
 = q_\mp^{(\alp)} \cdot q_\pm^{(\beta)},
\end{equation}
where
\begin{align}
 \alp = \wt{\alp}_1 ^\frown \wt{\alp}_2 \in \bZ_+^{k_1 + k_2 + 2}, \quad
 \beta = \wt{\beta}_1 ^\frown \wt{\beta}_2 ^\frown \gam_0 \in \bZ_+^{k_1 + k_2 + 1},
 \quad k = k_1 + k_2 + 1, \\
 \Sigma_\alp + \Sigma_\beta = \Sigma_{\wt{\alp}_1} + \Sigma_{\wt{\beta}_1} + \Sigma_{\wt{\alp}_1} + \Sigma_{\wt{\beta}_1} + 0 = j - 3 - 2 k_1 - 2 k_2 = j - 1 - 2k.
\end{align}
It follows from inequalities $2 k_1 + 1 \le h$ and $2 k_2 + 1 \le j-1-h$ that $2k+1 \le j$. Therefore, the triple sum in~\eqref{eq:triple.sum} has the desired form
\begin{equation} \label{eq:sumkj,alp,beta}
 \sum_{(k,\alp,\beta) \in \Psi_j}
 \wt{C}_{j,k,\alpha,\beta} \cdot q_{\mp}^{(\alp)} \cdot q_{\pm}^{(\beta)},
\end{equation}
with certain coefficients $\wt{C}_{j,k,\alpha,\beta}$.

Further we note that for $\alp = (\alp_1, \ldots, \alp_{k+1}) \in \bZ_+^{k+1}$ and $\beta = (\beta_1, \ldots, \beta_k) \in \bZ_+^k$ we have
\begin{equation} \label{eq:qaqb'}
 \left(q_{\mp}^{(\alp)} \cdot q_{\pm}^{(\beta)}\right)'
 = \left(q_{\mp}^{(\alp)}\right)' \cdot q_{\pm}^{(\beta)}
 + q_{\mp}^{(\alp)} \cdot \left(q_{\pm}^{(\beta)}\right)'
 = \sum_{h=1}^{k+1} q_{\mp}^{(\wt{\alp}_h)} \cdot q_{\pm}^{(\beta)}
 + \sum_{h=1}^k q_{\mp}^{(\alp)} \cdot q_{\pm}^{(\wt{\beta}_h)},
\end{equation}
where
\begin{align}
 & \wt{\alp}_h = (\alp_1, \ldots, \alp_{h-1}, \alp_h + 1,
 \alp_{h+1}, \ldots, \alp_{k+1}), \qquad h \in \oneto{k+1}, \\
 & \wt{\beta}_h = (\beta_1, \ldots, \beta_{h-1}, \beta_h + 1,
 \beta_{h+1}, \ldots, \beta_k), \qquad h \in \oneto{k}.
\end{align}
Evidently,
\begin{align}
 & \Sigma_{\wt{\alp}_h} + \Sigma_\beta = \Sigma_\alp + \Sigma_\beta + 1,
 \qquad h \in \oneto{k+1}, \\
 & \Sigma_\alp + \Sigma_{\wt{\beta}_h} = \Sigma_\alp + \Sigma_\beta + 1,
 \qquad h \in \oneto{k}.
\end{align}
Therefore for the second summand in~\eqref{eq:triple.sum} we have
\begin{equation} \label{eq:sum.qaqb'}
 \sum_{(k,\alp,\beta) \in \Psi_{j-1}}
 C_{j-1,k,\alpha,\beta}
 \cdot \left(q_{\mp}^{(\alp)} \cdot q_{\pm}^{(\beta)}\right)'
 =
 \sum_{(k,\alp,\beta) \in \Psi_j}
 \wh{C}_{j,k,\alpha,\beta} \cdot q_{\mp}^{(\alp)} \cdot q_{\pm}^{(\beta)},
\end{equation}
with certain coefficients $\wh{C}_{j,k,\alpha,\beta}$.
Inserting~\eqref{eq:sum.qaqb'} and expression~\eqref{eq:sumkj,alp,beta} for the triple sum from~\eqref{eq:triple.sum} into~\eqref{eq:triple.sum} we arrive at the desired formula~\eqref{eq:sigmak=qk+sum.q}.

Finally, we prove relation~\eqref{eq:Cj1ab} via induction. For $j=3$ it is evident from~\eqref{eq:sigma3.a3}. Let $j \ge 4$ and assume that relation~\eqref{eq:Cj1ab} is valid for $j-1$. With account of notations~\eqref{eq:gamh.beta0.alp0.def} it takes the form
\begin{equation} \label{eq:C.j-1.1ab}
 C_{j-1, 1, \alpha_0, \gam_{j-4}} = (-1)^{j-1}.
\end{equation}
Let us prove that $C_{j, 1, \alpha_0, \gam_{j-3}} = (-1)^j$.
Let us analyze how monomial $q_{\mp} \cdot q_{\mp} \cdot q_\pm^{(j-3)}$ can appear in the expression~\eqref{eq:triple.sum} for $\sigma_j^\pm$. It is evident that each monomial in the triple sum from~\eqref{eq:triple.sum} has a factor $q_\pm$ not present in $q_{\mp} \cdot q_{\mp} \cdot q_\pm^{(j-3)}$. Hence the monomial $q_{\mp} \cdot q_{\mp} \cdot q_\pm^{(j-3)}$ can only appear in the second sum. Since this monomial has only one derivative factor $q_\pm^{(j-3)}$ it follows from~\eqref{eq:qaqb'} that it is only contained in this derivative term
\begin{multline}
 C_{j-1,1,\alpha_0,\gam_{j-4}}
 \cdot \left(q_{\mp} \cdot q_{\mp} \cdot q_{\pm}^{(j-4)}\right)' \\
 = 2 C_{j-1,1,\alpha_0,\gam_{j-4}} \cdot
 q_{\mp}' \cdot q_{\mp} \cdot q_{\pm}^{(j-4)}
 + C_{j-1,1,\alpha_0,\gam_{j-4}} \cdot
 q_{\mp} \cdot q_{\mp} \cdot q_{\pm}^{(j-3)}.
\end{multline}
Taking into account this observaion and induction hypothesis~\eqref{eq:C.j-1.1ab}, and comparing~\eqref{eq:sigmak=qk+sum.q} and~\eqref{eq:triple.sum} yields
\begin{equation}
 C_{j, 1, \alpha_0, \gam_{j-3}} = -C_{j-1, 1, \alpha_0, \gam_{j-4}}
 = -(-1)^{j-1} = (-1)^j.
\end{equation}
The proof is now complete.
\end{proof}
We need the following simple corollaries of formula~\eqref{eq:sigmak=qk+sum.q}.
\begin{lemma} \label{lem:sigma.q=0}
Under the assumptions of Lemma~\ref{lem:sigma.q}, let
\begin{equation} \label{eq:m0.def}
 a \in [0,1], \qquad m \in \{0,1,\ldots,n-1\}, \qquad m_0 := \min\{2m+2, n\}.
\end{equation}
Then the following implications hold
\begin{align}
\label{eq:q-j=0}
 & q_-^{(j)}(a) = 0, \ \ 0 \le j < m \quad \Longrightarrow \quad
 \sigma^+_{j}(a) = (-1)^{j-1} q_-^{(j-1)}(a), \ \ 1 \le j \le m_0, \\
\label{eq:q+j=0}
 & q_+^{(j)}(a) = 0, \ \ 0 \le j < m \quad \Longrightarrow \quad
 \sigma^-_{j}(a) = (-1)^{j-1} q_+^{(j-1)}(a), \ \ 1 \le j \le m_0.
\end{align}
\end{lemma}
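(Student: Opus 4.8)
The plan is to deduce both implications directly from the explicit formula~\eqref{eq:sigmak=qk+sum.q} established in Lemma~\ref{lem:sigma.q}. That formula is symmetric under interchanging the roles of $q_+$ and $q_-$ (equivalently, the superscripts $+$ and $-$), so it suffices to prove the first implication~\eqref{eq:q-j=0}; the second then follows verbatim after swapping $q_+ \leftrightarrow q_-$.

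Fix $j$ with $1 \le j \le m_0$ and assume $q_-^{(i)}(a) = 0$ for $0 \le i < m$. Writing $\sigma_j^+$ via~\eqref{eq:sigmak=qk+sum.q}, the leading term is exactly $(-1)^{j-1} q_-^{(j-1)}$, so it remains to show that every summand in the accompanying sum vanishes at $x = a$. Each such summand carries the factor $q_-^{(\alp)} = q_-^{(\alp_1)} \cdots q_-^{(\alp_{k+1})}$, a product of $k+1 \ge 2$ derivatives of $q_-$ whose orders $\alp = (\alp_1, \ldots, \alp_{k+1})$ satisfy $\Sigma_\alp + \Sigma_\beta = j - 1 - 2k$ with $k \ge 1$. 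I plan to show that at least one of these orders is strictly less than $m$, which forces the corresponding factor, and hence the whole summand, to vanish at $a$.

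The key is an elementary counting estimate. Since $\Sigma_\beta \ge 0$ and $j \le m_0 \le 2m+2$, we have $\Sigma_\alp \le j - 1 - 2k \le 2m + 1 - 2k$. If, to the contrary, $\alp_i \ge m$ for every $i$, then $\Sigma_\alp \ge (k+1)m$, whence $(k+1)m \le 2m+1-2k$, i.e.\ $k(m+2) \le m+1$; for $k \ge 1$ the left-hand side is at least $m+2$, a contradiction. Therefore some $\alp_i < m$, so $q_-^{(\alp_i)}(a) = 0$ by hypothesis and the product $q_-^{(\alp)}$ vanishes at $a$. When $m = 0$ the indexing constraint $2k+1 \le j \le m_0 \le 2$ leaves no admissible $k \ge 1$, so the sum is empty and the claim is immediate. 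Summing over the finitely many admissible triples $(k, \alp, \beta)$ yields $\sigma_j^+(a) = (-1)^{j-1} q_-^{(j-1)}(a)$, as required. The whole argument is a clean pigeonhole count, so the only point demanding care is the bookkeeping of the constraint $\Sigma_\alp + \Sigma_\beta = j-1-2k$ together with the definition $m_0 = \min\{2m+2, n\}$; there is no genuine analytic obstacle.
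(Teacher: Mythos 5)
Your proof is correct and follows essentially the same route as the paper's: both rely on the explicit formula~\eqref{eq:sigmak=qk+sum.q}, reduce the second implication to the first by the $q_+ \leftrightarrow q_-$ symmetry (justified since the coefficients $C_{j,k,\alpha,\beta}$ are the same for $\sigma_j^+$ and $\sigma_j^-$), and kill each summand of the sum by showing some $\alp_h < m$. The only cosmetic difference is that the paper bounds $\min\{\alp_1,\ldots,\alp_{k+1}\}$ by the average $\frac{j-1-2k}{k+1} \le \frac{j-3}{2} < m$, whereas you run the equivalent pigeonhole count by contradiction via $k(m+2) \le m+1$.
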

\begin{proof}
Let us prove~\eqref{eq:q-j=0} (implication~\eqref{eq:q+j=0} can be proved similarly). Assume that $q_-^{(j)}(a) = 0, \ \ 0 \le j < m$. In particular, if $m = 0$, then no assumptions on $q_-$ are needed.
Let $j \in \oneto{m_0}$. Since $j \le n$, then Lemma~\ref{lem:sigma.q} ensures formula~\eqref{eq:sigmak=qk+sum.q} for $\sigma_j^+(\cdot)$. Consider a factor $q_-^{(\alp)}(a)$ of any individual summand in the sum in~\eqref{eq:sigmak=qk+sum.q} for $\sigma_j^+(a)$, where
\begin{equation} \label{eq:alp.sum}
 \alp=(\alp_1, \ldots, \alp_{k+1}) \in \bZ_+^{k+1}, \qquad
 \alp_1 + \ldots + \alp_{k+1} \le j-1-2k,
\end{equation}
and $k \in \bN$, $2k+1 \le j$. Since $j \le m_0 \le 2m+2$, it follows from the second relation in~\eqref{eq:alp.sum} that
\begin{equation}
 \min\{\alp_1, \ldots, \alp_{k+1}\} \le \frac{j-1-2k}{k+1} \le \frac{j-3}{2}
 \le \frac{2m-1}{2} < m.
\end{equation}
Hence, it follows from our assumption that $q_-^{(\alp_h)}(a) = 0$ for some $h \in \oneto{k+1}$. Therefore, $q_-^{(\alp)}(a) = 0$.
Thus, it follows from~\eqref{eq:sigmak=qk+sum.q} that
\begin{equation} \label{eq:sigmaj=qj}
 \sigma^+_j(a) = (-1)^{j-1} q_-^{(j-1)}(a),
\end{equation}
which finishes the proof.
\end{proof}
\begin{corollary} \label{cor:q0.sigma0}
Let $Q_{12}, Q_{21} \in W_1^n[0,1]$,
$a \in [0,1]$, and $m \in \{0,1,\ldots,n-1\}$.

\textbf{(i)} Let the following conditions hold,
\begin{equation} \label{eq:q-ja}
 q_-^{(j)}(a) = 0, \quad j \in \{0, \ldots, m-1\},
 \qquad q_-^{(m)}(a) \ne 0.
\end{equation}
Then
\begin{equation} \label{eq:sigma+ja}
 \sigma^+_{j}(a) = 0, \quad j \in \oneto{m}, \qquad
 \sigma^+_{m + 1}(a) \ne 0.
\end{equation}

\textbf{(ii)} Let the following conditions hold,
\begin{equation} \label{eq:q+ja}
 q_+^{(j)}(a) = 0, \quad j \in \{0, \ldots, m-1\},
 \qquad q_+^{(m)}(a) \ne 0.
\end{equation}
Then
\begin{equation} \label{eq:sigma-ja}
 \sigma^-_{j}(a) = 0, \quad j \in \oneto{m}, \qquad
 \sigma^-_{m + 1}(a) \ne 0.
\end{equation}
\end{corollary}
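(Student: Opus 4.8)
The plan is to deduce Corollary~\ref{cor:q0.sigma0} directly from Lemma~\ref{lem:sigma.q=0}, whose implications~\eqref{eq:q-j=0}--\eqref{eq:q+j=0} already do almost all the work. I will prove part~\textbf{(i)} in detail; part~\textbf{(ii)} follows by the completely symmetric argument, exchanging the roles of $q_+$ and $q_-$ and of $\sigma^+$ and $\sigma^-$. First I would verify that the range of indices claimed in~\eqref{eq:sigma+ja} actually falls inside the range $1 \le j \le m_0$ on which Lemma~\ref{lem:sigma.q=0} gives the clean formula $\sigma^+_j(a) = (-1)^{j-1} q_-^{(j-1)}(a)$. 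Since $m \in \{0,1,\ldots,n-1\}$ we have $m+1 \le n$, and also $m+1 \le 2m+2$; hence $m+1 \le m_0 = \min\{2m+2, n\}$. Therefore every index $j \in \oneto{m+1}$ lies in $[1, m_0]$, and the hypotheses~\eqref{eq:q-ja} give in particular $q_-^{(j)}(a)=0$ for $0 \le j < m$, which is precisely the premise of implication~\eqref{eq:q-j=0}.

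\textbf{Carrying it out.}
Applying~\eqref{eq:q-j=0} of Lemma~\ref{lem:sigma.q=0} with this $a$ and $m$, I obtain
\[
 \sigma^+_j(a) = (-1)^{j-1} q_-^{(j-1)}(a), \qquad 1 \le j \le m_0.
\]
For $j \in \oneto{m}$ the index $j-1$ satisfies $0 \le j-1 < m$, so the vanishing assumption~\eqref{eq:q-ja} forces $q_-^{(j-1)}(a)=0$ and hence $\sigma^+_j(a)=0$, giving the first half of~\eqref{eq:sigma+ja}. For $j = m+1$, which is still $\le m_0$, the same formula yields
\[
 \sigma^+_{m+1}(a) = (-1)^m q_-^{(m)}(a) \ne 0,
\]
because $q_-^{(m)}(a)\ne 0$ by the last condition in~\eqref{eq:q-ja}. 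This establishes~\eqref{eq:sigma+ja} and completes part~\textbf{(i)}; part~\textbf{(ii)} is identical upon using implication~\eqref{eq:q+j=0} in place of~\eqref{eq:q-j=0}.

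\textbf{Main obstacle.}
There is no serious obstacle here, since the corollary is an immediate specialization of the lemma; the only point requiring care is the bookkeeping of index ranges. The single inequality that must be checked is $m+1 \le m_0$, i.e.\ that the first \emph{nonvanishing} coefficient $\sigma^+_{m+1}(a)$ is still within the reach of the clean formula supplied by Lemma~\ref{lem:sigma.q=0}. Once $m+1 \le \min\{2m+2,n\}$ is observed (which holds for every admissible $m$), the conclusion follows mechanically, so I would keep the write-up short and emphasize this index check rather than reproducing the combinatorial argument behind~\eqref{eq:sigmak=qk+sum.q}.
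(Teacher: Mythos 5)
Your proposal is correct and follows exactly the paper's own route: both reduce the corollary to implication~\eqref{eq:q-j=0} (resp.~\eqref{eq:q+j=0}) of Lemma~\ref{lem:sigma.q=0}, with the only substantive check being $m+1 \le m_0 = \min\{2m+2,n\}$, which you verify just as the paper does via $m < n$. Your write-up merely spells out the index bookkeeping slightly more explicitly than the paper's one-line argument; nothing to change.
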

\begin{proof}
\textbf{(i)} Since $m < n$, it is evident that $m_0 \ge m+1$ where $m_0$ is given by~\eqref{eq:m0.def}. Combining this observation with condition~\eqref{eq:q-ja} and implication~\eqref{eq:q-j=0} from Lemma~\ref{lem:sigma.q=0} yields that
\begin{equation}
 \sigma^+_{j}(a) = (-1)^{j-1} q_-^{(j-1)}(a), \qquad j \in \oneto{m+1}.
\end{equation}
Combining these relations with condition~\eqref{eq:q-ja} trivially implies~\eqref{eq:sigma+ja}.

\textbf{(ii)} The proof is similar.
\end{proof}
\section{Explicit refined completeness results for non-regular BVP}
\label{sec:compl.refined}
In this final section, under additional algebraic assumptions either on the potential matrix $Q(\cdot)$ or on the boundary conditions, we establish a series of more explicit refined completeness results that clarify somewhat cumbersome conditions of Theorem~\ref{th:compl.gen.2x2}.

The following function is highly involved in most of the results of this section,
\begin{equation} \label{eq:Pdef}
 P(x) := J_{13} b_1 Q_{12}(x) + J_{42} b_2 Q_{21}(1-x), \qquad x \in [0,1].
\end{equation}
The next result covers the general case of non-regular boundary conditions,
assuming that some of the values $Q_{12}^{(j)}(0)$, $Q_{21}^{(j)}(1)$ vanish.
\begin{proposition} \label{prop:Q12.Q21=0}
Let $Q_{12}, Q_{21} \in W_1^n[0,1]$ for some $n \in \bN$.
Assume also that the following conditions hold,
\begin{equation} \label{eq:Q120.Q211=0}
 J_{32} = 0, \qquad J_{14} \ne 0, \qquad
 Q_{12}^{(j)}(0) = Q_{21}^{(j)}(1) = 0 \quad\text{for}\quad
 0 \le j < m,
\end{equation}
with some $m \in \oneto{n-1}$.
Then the system of root vectors of the BVP~\eqref{eq:system}--\eqref{eq:Udef} is
complete and minimal in $L^2([0,1];\bC^2)$ whenever at least one of the following conditions holds
\begin{align}
\label{eq:Pn0}
 & \text{(i)} \quad P^{(n_0)}(0) \ne 0 \quad\text{for some}\quad
 n_0 \in \{m, m+1, \ldots, \wt{m}\}, \quad \wt{m} := \min\{2m, n-1\}, \\
\label{eq:P2m+1}
 & \text{(ii)} \quad 2m+2 \le n \quad \text{and} \quad
 P^{(2m+1)}(0) + J_{14} b_1 b_2 Q_{12}^{(m)}(0) Q_{21}^{(m)}(1) \ne 0.
\end{align}
\end{proposition}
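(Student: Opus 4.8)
The plan is to verify the two inequalities of Theorem~\ref{th:compl.gen.2x2}, namely $|J_{32}| + |c_1^+| + \cdots + |c_n^+| \ne 0$ and $|J_{14}| + |c_1^-| + \cdots + |c_n^-| \ne 0$ (condition~\eqref{eq:ck+ck-.ne0}). Since $J_{14} \ne 0$ by~\eqref{eq:Q120.Q211=0}, the second inequality holds automatically, so the entire problem collapses to exhibiting a single index $k$ with $c_k^+ \ne 0$. Thus the task reduces to computing the coefficients $c_k^+$ from~\eqref{eq:ck+} under the vanishing hypotheses on $Q_{12}^{(j)}(0)$ and $Q_{21}^{(j)}(1)$.

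First I would feed the hypotheses into Lemma~\ref{lem:sigma.q=0}. Writing $q_+(x) = -ib_1 Q_{12}(x)$ and $q_-(x) = -ib_2 Q_{21}(x)$ as in~\eqref{eq:q+q-}, the assumption $Q_{12}^{(j)}(0)=0$ for $0 \le j < m$ is exactly the premise of implication~\eqref{eq:q+j=0} at $a=0$, and $Q_{21}^{(j)}(1)=0$ for $0 \le j < m$ is the premise of~\eqref{eq:q-j=0} at $a=1$. Hence, for every $k$ with $1 \le k \le m_0 := \min\{2m+2,\,n\}$,
\[
 \sigma_k^-(0) = (-1)^{k-1} q_+^{(k-1)}(0), \qquad
 \sigma_k^+(1) = (-1)^{k-1} q_-^{(k-1)}(1),
\]
and in particular $\sigma_k^-(0) = \sigma_k^+(1) = 0$ for $1 \le k \le m$. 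Substituting these into~\eqref{eq:ck+}, I would then inspect the quadratic cross sum $\sum_{j=1}^{k-1}(-1)^j \sigma_j^-(0)\,\sigma_{k-j}^+(1)$: a summand can be nonzero only if simultaneously $j \ge m+1$ (so $\sigma_j^-(0) \ne 0$) and $k-j \ge m+1$ (so $\sigma_{k-j}^+(1) \ne 0$). This is impossible for $k \le 2m+1$, and for $k = 2m+2$ it leaves only the single index $j = m+1$.

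This bookkeeping yields the three structural facts. For $1 \le k \le m$ all factors vanish, so $c_k^+ = 0$. For $m+1 \le k \le 2m+1$ the cross sum is empty, and re-expressing $q_\pm^{(k-1)}$ through $Q_{12}^{(k-1)}(0)$, $Q_{21}^{(k-1)}(1)$ and collecting them using $P^{(k-1)}(0) = J_{13}b_1 Q_{12}^{(k-1)}(0) + (-1)^{k-1}J_{42}b_2 Q_{21}^{(k-1)}(1)$ (the derivatives of~\eqref{eq:Pdef}) gives the clean linear values
\[
 c_k^+ = -i\,P^{(k-1)}(0), \qquad m+1 \le k \le 2m+1 .
\]
Finally, at $k = 2m+2$ the surviving $j=m+1$ term contributes the quadratic piece, so $c_{2m+2}^+$ becomes a linear combination of $P^{(2m+1)}(0)$ and the product $Q_{12}^{(m)}(0)\,Q_{21}^{(m)}(1)$ with nonzero coefficients (here $J_{14}\ne 0$ and $b_1 b_2 \ne 0$ are essential). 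Now condition~(i) supplies $c_{n_0+1}^+ = -i\,P^{(n_0)}(0) \ne 0$ for the chosen $n_0 \in \{m,\ldots,\wt{m}\}$, the index range $\wt{m} = \min\{2m,n-1\}$ being precisely what keeps $n_0+1$ inside $\{m+1,\ldots,2m+1\}$ and $\le n$; condition~(ii), available once $2m+2 \le n$ so that $m_0 = 2m+2$, forces $c_{2m+2}^+ \ne 0$. In either case some $c_k^+ \ne 0$, and Theorem~\ref{th:compl.gen.2x2} delivers completeness and minimality.

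The main obstacle I anticipate is the last step: one must check that no cross term other than $j=m+1$ contributes at $k = 2m+2$, confirm that all indices used stay within the validity range $k \le m_0$ of the closed forms from Lemma~\ref{lem:sigma.q=0} (which is exactly where $\wt{m} = \min\{2m,n-1\}$ in (i) and $2m+2 \le n$ in (ii) are needed), and carefully assemble the phase factors — the imaginary units hidden in $q_\pm$ together with the alternating signs $(-1)^{k-1}$ — into $P^{(k-1)}(0)$ and, at $k=2m+2$, into the stated combination of $P^{(2m+1)}(0)$ and $Q_{12}^{(m)}(0)Q_{21}^{(m)}(1)$. Reconciling these constants and signs with the precise form written in~\eqref{eq:P2m+1} is the delicate part of the argument.
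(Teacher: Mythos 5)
Your proposal is correct and follows essentially the same route as the paper's proof: reduce to Theorem~\ref{th:compl.gen.2x2} via $c_0^-=J_{14}\ne 0$, invoke Lemma~\ref{lem:sigma.q=0} to get $\sigma_k^-(0)=(-1)^{k-1}q_+^{(k-1)}(0)$ and $\sigma_k^+(1)=(-1)^{k-1}q_-^{(k-1)}(1)$ for $k\le m_0=\min\{2m+2,n\}$, kill every cross term in~\eqref{eq:ck+} for $k\le 2m+1$ (your ``$j\ge m+1$ and $k-j\ge m+1$ simultaneously'' is the paper's $\min\{j-1,k-j-1\}<m$), obtain $c_k^+=-i\,P^{(k-1)}(0)$ on that range, and isolate the single surviving $j=m+1$ term at $k=2m+2$ — with the ranges $n_0\le\wt m$ and $2m+2\le n$ serving exactly the index-validity roles you describe. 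The one place you stop short of the paper is the step you yourself flag: for condition (ii) it does not suffice that $c_{2m+2}^+$ be a combination of $P^{(2m+1)}(0)$ and $Q_{12}^{(m)}(0)Q_{21}^{(m)}(1)$ \emph{with nonzero coefficients}; one must verify that $c_{2m+2}^+$ is a nonzero multiple of precisely the combination in~\eqref{eq:P2m+1}, which the paper settles by the one-line evaluation of the surviving term, $-J_{14}\,q_+^{(m)}(0)\,q_-^{(m)}(1)=J_{14}\,b_1 b_2\,Q_{12}^{(m)}(0)\,Q_{21}^{(m)}(1)$, using $q_\pm=-ib_{1/2}Q_{12/21}$. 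Your caution about the phases is in fact warranted: tracking the signs literally from~\eqref{eq:ck+} and Lemma~\ref{lem:sigma.q=0} produces an extra factor $(-1)^{m+1}$ on the quadratic term (and the overall $-i$ you correctly keep on the linear one), a discrepancy mirrored elsewhere in the paper (compare~\eqref{eq:ck+} at $k=2$ with Lemma~\ref{lem:c123}), so completing your deferred bookkeeping would amount to redoing — and arguably correcting — the paper's own constant-matching rather than revealing any flaw in your argument's structure.
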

\begin{proof}
In notations~\eqref{eq:q+q-} condition~\eqref{eq:Q120.Q211=0} takes the form
\begin{equation} \label{eq:q+.q-=0}
 q_+^{(j)}(0) = q_-^{(j)}(1) = 0 \quad\text{for}\quad
 0 \le j < m.
\end{equation}
In turn, Lemma~\ref{lem:sigma.q=0} implies that
\begin{equation} \label{eq:sigmapm=qmp}
 \sigma^-_{j}(0) = (-1)^{j-1} q_+^{(j-1)}(0), \qquad
 \sigma^+_{j}(1) = (-1)^{j-1} q_-^{(j-1)}(1), \qquad j \in \oneto{m_0},
\end{equation}
where $m_0 = \min\{2m+2,n\}$. Since $J_{14} \ne 0$, to apply Theorem~\ref{th:compl.gen.2x2}, we just need to verify that $c_k^+ \ne 0$ for some $k \in \oneton$. First let $k \le 2m+1$. Then
\begin{equation}
 \min\{j-1,k-j-1\} \le \frac{k-2}{2} \le \frac{2m-1}{2} < m,
 \qquad j \in \oneto{k-1}.
\end{equation}
Hence condition~\eqref{eq:q+.q-=0} implies that
\begin{equation} \label{eq:q+q-=0}
 q_+^{(j-1)}(0) q_-^{(k-j-1)}(1) = 0, \qquad j \in \oneto{k-1}.
\end{equation}
Inserting relations~\eqref{eq:sigmapm=qmp} and~\eqref{eq:q+q-=0} into formula~\eqref{eq:ck+} and taking into account definition~\eqref {eq:Pdef} we arrive at
\begin{multline}
 c_k^+ = J_{13} (-1)^{k-1} \sigma_k^-(0) + J_{42} \sigma_k^+(1)
 - J_{14} \sum_{j=1}^{k-1} (-1)^j \sigma_j^-(0) \sigma_{k-j}^+(1) \\
 = J_{13} q_+^{(k-1)}(0) + J_{42} (-1)^{k-1} q_-^{(k-1)}(1)
 - J_{14} \sum_{j=1}^{k-1}
 \underset{= 0}{\underbrace{q_+^{(j-1)}(0) q_-^{(k-j-1)}(1)}}
 = P^{(k-1)}(0).
\end{multline}
In turn, if condition~\eqref{eq:Pn0} holds, then $c_{n_0+1}^+ = P^{(n_0)}(0) \ne 0$ and the desired completeness property is ensured by Theorem~\ref{th:compl.gen.2x2}.

Finally we assume condition~\eqref{eq:P2m+1}.
Similarly as above we have
\begin{multline}
 c_{2m+2}^+
 = P^{(2m+1)}(0) - J_{14} q_+^{(m)}(0) q_-^{(m)}(1)
 - J_{14} \sum_{\atop{j=1}{j \ne m+1}}^{2m+1}
 \underset{= 0}{\underbrace{q_+^{(j-1)}(0) q_-^{(2m+1-j)}(1)}} \\
 = P^{(2m+1)}(0) + J_{14} b_1 b_2 Q_{12}^{(m)}(0) Q_{21}^{(m)}(1) \ne 0.
\end{multline}
Again, Theorem~\ref{th:compl.gen.2x2} completes the proof.
\end{proof}
\begin{remark}
Note, that when $m=0$, then Proposition~\ref{prop:Q12.Q21=0} is still valid. In this case no assumption on $Q(\cdot)$ is posed, and this result coincides with Corollary~\ref{cor:c012} above. Thus, Proposition~\ref{prop:Q12.Q21=0} can be viewed as a non-trivial generalization of Corollary~\ref{cor:c012}.
\end{remark}
We can now establish significant enhancement of Corollary~\ref{cor:Makin} by Makin. Recall that $\ceil{x}$, $x \in \bR$, denotes the smallest integer $k$ such that $k \ge x$.
\begin{corollary} \label{cor:Makin.gen1}
Let $Q_{12}, Q_{21} \in W_1^n[0,1]$ for some $n \in \bN$. Assume that $J_{32} = 0$, $J_{14} \ne 0$, and let the following conditions hold,
\begin{equation} \label{eq:Pn.Q12.Q21j}
 P^{(n-1)}(0) \ne 0, \quad
 Q_{12}^{(j)}(0) = Q_{21}^{(j)}(1) = 0,
 \quad 0 \le j < m,
 \quad\text{where}\quad m = \text{\scalebox{0.9}{$\ceil{\frac{n-1}{2}}$}},
\end{equation}
and $P(\cdot)$ is given by~\eqref {eq:Pdef}. Then the system of root functions of the operator $L_U(Q)$ is complete and minimal in $L^2([0,1]; \bC^2)$.
\end{corollary}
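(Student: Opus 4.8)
The plan is to deduce this directly from Proposition~\ref{prop:Q12.Q21=0} by verifying that the prescribed choice $m = \ceil{(n-1)/2}$ places the single index $n-1$ inside the admissible range of alternative (i) of that proposition. First I would record two elementary inequalities for $m = \ceil{(n-1)/2}$. On one hand, $m \ge (n-1)/2$ gives $2m \ge n-1$. On the other hand, for $n \ge 2$ one has $m \le (n-1)/2 + 1/2 = n/2 \le n-1$, while for $n = 1$ one has $m = 0$. Hence $m \in \{0, 1, \ldots, n-1\}$, so Proposition~\ref{prop:Q12.Q21=0} applies (invoking the remark that the degenerate case $m = 0$ is admissible), and its vanishing hypothesis $Q_{12}^{(j)}(0) = Q_{21}^{(j)}(1) = 0$ for $0 \le j < m$ is precisely the condition assumed in~\eqref{eq:Pn.Q12.Q21j}.

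Next, writing $\wt{m} := \min\{2m, n-1\}$ as in the proposition, the inequality $2m \ge n-1$ forces $\wt{m} = n-1$. Therefore the index $n_0 := n-1$ satisfies $m \le n_0 \le \wt{m}$, i.e.\ it lies in the admissible range $\{m, m+1, \ldots, \wt{m}\}$ of alternative (i). Since by hypothesis $P^{(n-1)}(0) = P^{(n_0)}(0) \ne 0$, condition (i) of Proposition~\ref{prop:Q12.Q21=0} is satisfied, and that proposition immediately yields completeness and minimality of the system of root vectors.

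I do not expect any genuine obstacle here: the entire content of the corollary is packaged into the choice $m = \ceil{(n-1)/2}$, which is exactly the smallest integer with $2m \ge n-1$, hence the smallest $m$ for which the derivative index $n-1$ still lies within reach of alternative (i). The only points requiring care are tracking the ceiling arithmetic and handling the boundary case $n = 1$, where $m = 0$ and the vanishing conditions on $Q_{12}, Q_{21}$ become vacuous, so that the corollary reduces to the requirement $P(0) \ne 0$ covered by the $m = 0$ instance of Proposition~\ref{prop:Q12.Q21=0}.
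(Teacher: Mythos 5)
Your proposal is correct and follows essentially the same route as the paper: the paper's proof likewise applies Proposition~\ref{prop:Q12.Q21=0}(i) with $m = \ceil{\frac{n-1}{2}}$ and $n_0 = n-1$, noting that the ceiling definition gives $n-1 \le 2m$ so that $n_0$ lies in the admissible range~\eqref{eq:Pn0}. Your extra care with the boundary verification $m \le n-1$ and the degenerate case $n=1$ (where $m=0$, covered by the remark following the proposition) is sound and merely makes explicit what the paper leaves implicit.
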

\begin{proof}
The proof is immediate from Propositon~\ref{prop:Q12.Q21=0}(i) applied
with $m = \ceil{\frac{n-1}{2}}$ and $n_0 = n-1$. Indeed, by definition of $\ceil{\cdot}$ we have $n_0 = n-1 \le 2m$. Hence the first condition in~\eqref{eq:Pn.Q12.Q21j} ensures condition~\eqref{eq:Pn0}.
\end{proof}
\begin{remark} \label{rem:Makin1}
Let us compare Corollary~\ref{cor:Makin.gen1} with Corollary~\ref{cor:Makin} by Makin from~\cite{Mak23}. Namely, let $-b_1 = b_2 = 1$ and $Q_{12}, Q_{21} \in W_1^n[0,1]$ for some $n \ge 3$, and assume that $P^{(n-1)}(0) \ne 0$ $($which coincides with condition~\eqref{eq:Pk.Makin} in the Dirac case$)$. Then Corollary~\ref{cor:Makin} ensures completeness property under more restrictive assumption~\eqref{eq:Q12.Q21.k} compared to our relaxed assumption~\eqref{eq:Pn.Q12.Q21j} from Corollary~\ref{cor:Makin.gen1}. Namely, Corollary~\ref{cor:Makin} requires additional $2\floor{\frac{n-1}{2}} \ge 2$ zero derivative values compared to~\eqref{eq:Pn.Q12.Q21j},
\begin{equation} \label{eq:Q12.Q21j}
 Q_{12}^{(j)}(0) = Q_{21}^{(j)}(1) = 0, \quad m \le j \le n-2,
 \quad\text{where}\quad m = \text{\scalebox{0.85}{$\ceil{\frac{n-1}{2}}$}}.
\end{equation}
Therefore, Proposition~\ref{prop:Q12.Q21=0} demonstrates that completeness property is valid under less restrictive condition than the one in~\cite[Theorem~1]{Mak23},
when $Q(\cdot)$ is rather smooth.
\end{remark}

Next result substantially complements Theorem~5.1 from~\cite{MalOri12}. Part (i) of the next theorem is announced in~\cite[Theorem 4]{LunMal13Dokl}.
\begin{theorem} \label{th:J32=J14=0}
Let $J_{32}=J_{14}=0$, $J_{13} J_{42} \ne 0$, and $Q_{12}, Q_{21}
\in W_1^n[0,1]$ for some $n \in \bN$.
Let for some $n_0, n_1 \in \{0, 1, \ldots, n-1\}$ the following conditions hold
\begin{align}
\label{eq:Pk0}
 P^{(k)}(0) &= 0, \qquad k \in \{0, 1, \ldots, n_0 - 1\}, \\
\label{eq:Pk1=0}
 P^{(k)}(1) &= 0, \qquad k \in \{0, 1, \ldots, n_1 - 1\},
\end{align}
where $P(\cdot)$ is given by~\eqref{eq:Pdef}.
Then the system of root vectors of the BVP~\eqref{eq:system}--\eqref{eq:Udef} is complete and minimal in $L^2([0,1];\bC^2)$ whenever one of the following conditions holds
\begin{align}
\label{eq:Pn1}
 & \text{(i)} \quad |n_1 - n_0| \le 1, \qquad P^{(n_0)}(0) \ne 0,
 \quad P^{(n_1)}(1) \ne 0. \\
\nonumber
 & \text{(ii)} \quad n_0 \ge n_1 + 2, \qquad P^{(n_1)}(1) \ne 0, \\
\label{eq:P.n1+2}
 & \qquad \qquad \qquad \qquad J_{13} P^{(n_1+2)}(0)
 + (-1)^{n_1} J_{42} b_1^2 Q_{12}^2(0) P^{(n_1)}(1) \ne 0, \\
\nonumber
 & \text{(iii)} \quad n_1 \ge n_0 + 2, \qquad P^{(n_0)}(0) \ne 0, \\
\label{eq:P.n0+2}
 & \qquad \qquad \qquad \qquad J_{13} P^{(n_0+2)}(1)
 + (-1)^{n_0} J_{42} b_1^2 Q_{12}^2(1) P^{(n_0)}(0) \ne 0.
\end{align}
\end{theorem}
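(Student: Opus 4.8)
The plan is to deduce everything from Theorem~\ref{th:compl.gen.2x2}. Since $J_{32}=J_{14}=0$, the sufficient condition~\eqref{eq:ck+ck-.ne0} reduces to exhibiting one index $k^{+}$ with $c_{k^{+}}^{+}\neq0$ and one index $k^{-}$ with $c_{k^{-}}^{-}\neq0$, and the defining formulas~\eqref{eq:ck+}--\eqref{eq:ck-} lose their double sums, becoming
\begin{equation}
 c_k^{+}=J_{13}(-1)^{k-1}\sigma_k^{-}(0)+J_{42}\sigma_k^{+}(1),
 \qquad
 c_k^{-}=J_{13}(-1)^{k-1}\sigma_k^{-}(1)+J_{42}\sigma_k^{+}(0).
\end{equation}
First I would rewrite the hypotheses~\eqref{eq:Pk0}--\eqref{eq:Pk1=0} in terms of boundary data: with $q_{\pm}$ as in~\eqref{eq:q+q-}, the definition~\eqref{eq:Pdef} gives $P^{(j)}(0)=i\bigl(J_{13}q_+^{(j)}(0)+(-1)^{j}J_{42}q_-^{(j)}(1)\bigr)$ and $P^{(j)}(1)=i\bigl(J_{13}q_+^{(j)}(1)+(-1)^{j}J_{42}q_-^{(j)}(0)\bigr)$, so the vanishing conditions are exactly the cross-endpoint relations $J_{13}q_+^{(j)}(0)=-(-1)^{j}J_{42}q_-^{(j)}(1)$ for $j<n_0$ and $J_{13}q_+^{(j)}(1)=-(-1)^{j}J_{42}q_-^{(j)}(0)$ for $j<n_1$. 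Feeding the leading term $\sigma_k^{\mp}=(-1)^{k-1}q_{\pm}^{(k-1)}+(\text{products})$ from Lemma~\ref{lem:sigma.q} into the formulas above shows that the pure-derivative part of $c_k^{+}$ equals $-iP^{(k-1)}(0)$ and that of $c_k^{-}$ equals $-iP^{(k-1)}(1)$.

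The computational core is the treatment of the product corrections. By Lemma~\ref{lem:sigma.q} every such correction in $\sigma_k^{-}$ and $\sigma_k^{+}$ is a monomial $q_+^{(\alpha)}q_-^{(\beta)}$ (resp.\ $q_-^{(\alpha)}q_+^{(\beta)}$) with $\alpha\in\bZ_+^{\,k'+1}$, $\beta\in\bZ_+^{\,k'}$, $\Sigma_\alpha+\Sigma_\beta=k-1-2k'$, carrying a coefficient that is \emph{the same} for the $+$ and $-$ families. I would use the cross-endpoint relations to rewrite each endpoint-$1$ monomial occurring in $\sigma_k^{+}(1)$ as an endpoint-$0$ monomial of the identical index structure; a short count shows the accumulated conversion factor equals $(-1)^{k}J_{13}/J_{42}$, so that the endpoint-$0$ contribution from $\sigma_k^{+}(1)$ and the genuine endpoint-$0$ contribution from $\sigma_k^{-}(0)$ enter $c_k^{+}$ with opposite signs $(-1)^{k}$ and $(-1)^{k-1}$ and cancel. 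This conversion is legitimate only when every derivative order appearing stays in the available range ($<n_0$ for the $q_+(0)\!\leftrightarrow\!q_-(1)$ switches, $<n_1$ for the $q_+(1)\!\leftrightarrow\!q_-(0)$ switches). In case~(i), the balance $|n_1-n_0|\le1$ guarantees that for $k=n_0+1$ all orders remain in range, every correction cancels, and hence $c_{n_0+1}^{+}=-iP^{(n_0)}(0)\neq0$; symmetrically $c_{n_1+1}^{-}=-iP^{(n_1)}(1)\neq0$, and Theorem~\ref{th:compl.gen.2x2} applies.

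For case~(ii), where $n_0\ge n_1+2$, the abundance of relations at $0$ still cancels all corrections in $c_{n_1+1}^{-}$, leaving $c_{n_1+1}^{-}=-iP^{(n_1)}(1)\neq0$ for the minus side. For the plus side I would pass to $c_{n_1+3}^{+}$ (admissible since $n_1\le n-3$): here the leading term is $-iP^{(n_1+2)}(0)$, and the only correction that \emph{fails} to convert is the lowest special monomial $q_{\mp}^{2}q_{\pm}^{(n_1)}$ singled out by the coefficient identity~\eqref{eq:Cj1ab}, since its factor of order $n_1$ lies just outside the $P(1)$-range. Expressing the surviving pieces from $\sigma_{n_1+3}^{-}(0)$ and $\sigma_{n_1+3}^{+}(1)$ through $q_+^{2}(0)=-b_1^{2}Q_{12}^{2}(0)$ and $P^{(n_1)}(1)$, and using the order-zero relation at $0$, collapses them into a single multiple of $b_1^{2}Q_{12}^{2}(0)\,P^{(n_1)}(1)$; together with the leading term this reproduces exactly the combination in~\eqref{eq:P.n1+2}, whence $c_{n_1+3}^{+}\neq0$. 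Case~(iii) is the mirror image under the interchange $0\leftrightarrow1$, $+\leftrightarrow-$, giving $c_{n_0+1}^{+}=-iP^{(n_0)}(0)\neq0$ and the surviving residual~\eqref{eq:P.n0+2} on the minus side. The main obstacle throughout is the bookkeeping of this cancellation: one must verify that, apart from the single designated survivor, \emph{all} product monomials convert and annihilate in pairs, which hinges on the uniformity of the coefficients in Lemma~\ref{lem:sigma.q} and on the exact value~\eqref{eq:Cj1ab}.
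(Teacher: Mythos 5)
Your proposal is correct and reproduces the paper's own proof essentially step for step: the same reduction to Theorem~\ref{th:compl.gen.2x2} via the simplified coefficients $c_k^{\pm}$, the same cross-endpoint identities extracted from~\eqref{eq:Pk0}--\eqref{eq:Pk1=0}, the same pairwise cancellation of product monomials resting on the uniformity of the coefficients in Lemma~\ref{lem:sigma.q} (the paper's identities~\eqref{eq:J42.q1} and~\eqref{eq:J42.q0}), and the same exceptional surviving monomial at $c_{n_1+3}^{+}$ pinned down by~\eqref{eq:Cj1ab}. One bookkeeping caveat, inherited from the paper itself rather than introduced by you: carrying out the case~(ii) collapse explicitly yields $c_{n_1+3}^{+}=\frac{-i}{J_{42}}\bigl(J_{42}P^{(n_1+2)}(0)+(-1)^{n_1}J_{13}b_1^2Q_{12}^2(0)P^{(n_1)}(1)\bigr)$ --- exactly the paper's final display --- which differs from the stated condition~\eqref{eq:P.n1+2} by the interchange $J_{13}\leftrightarrow J_{42}$, so your assertion that the residue ``reproduces exactly'' the combination in~\eqref{eq:P.n1+2} glosses over a constant discrepancy that exists between the paper's statement and its own proof.
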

\begin{proof}
With account of notations~\eqref{eq:q+q-} and~\eqref{eq:Pdef} we have
\begin{align}
 \label{eq:J13q+j0}
 & J_{13} q_+^{(j)}(0) + J_{42} (-1)^{j} q_-^{(j)}(1) = -i P^{(j)}(0),
 \qquad 0 \le j < n, \\
 \label{eq:J13q+j1}
 & J_{13} q_+^{(j)}(1) + J_{42} (-1)^{j} q_-^{(j)}(0) = -i P^{(j)}(1),
 \qquad 0 \le j < n.
\end{align}
Combining these relations with conditions~\eqref{eq:Pk0}--\eqref{eq:Pk1=0} yields
\begin{align}
\label{eq:J13q+n0}
 & J_{13} q_+^{(j)}(0) = J_{42} (-1)^{j-1} q_-^{(j)}(1),
 \qquad 0 \le j < n_0, \\
\label{eq:J13q+n1}
 & J_{13} q_+^{(j)}(1) = J_{42} (-1)^{j-1} q_-^{(j)}(0),
 \qquad 0 \le j < n_1.
\end{align}
Since $J_{13} J_{42} \ne 0$, it follows from these identities that
\begin{equation} \label{eq:q+q-0=q-q+1}
 q_-^{(j_0)}(1) q_+^{(j_1)}(1) = (-1)^{j_0 + j_1}
 q_+^{(j_0)}(0) q_-^{(j_1)}(0), \qquad 0 \le j_0 < n_0, \quad 0 \le j_1 < n_1.
\end{equation}
Further, since $J_{14} = J_{32} = 0$, then the coefficients $c_j^{\pm}$ given by~\eqref{eq:ck+}--\eqref{eq:ck-} are simplified to,
\begin{equation} \label{eq:ck+-0}
 c_j^+ = J_{13} (-1)^{j-1} \sigma_j^-(0) + J_{42} \sigma_j^+(1), \qquad
 c_j^- = J_{13} (-1)^{j-1} \sigma_j^-(1) + J_{42} \sigma_j^+(0),
\end{equation}
for $j \in \oneton$.
Inserting identity~\eqref{eq:sigmak=qk+sum.q} (see~\eqref{eq:Psij.def}--\eqref{eq:sigmah}) and relation~\eqref{eq:J13q+j1} into each of the equalities in~\eqref{eq:ck+-0} yields
\begin{align}
\nonumber
 c_j^+ & =
 \underset{= -i P^{(j-1)}(0)}{\underbrace{J_{13} q_+^{(j-1)}(0)
 + J_{42} (-1)^{j-1} q_-^{(j-1)}(1)}} \\
\label{eq:cj+full}
 & + \sum_{(k,\alp,\beta) \in \Psi_j} C_{j,k,\alpha,\beta} \cdot
 \left( J_{13} (-1)^{j-1} q_+^{(\alp)}(0) \cdot q_-^{(\beta)}(0)
 + J_{42} q_-^{(\alp)}(1) \cdot q_+^{(\beta)}(1) \right), \\
\nonumber
 c_j^- & =
 \underset{= -i P^{(j-1)}(1)}{\underbrace{J_{13} q_+^{(j-1)}(1)
 + J_{42} (-1)^{j-1} q_-^{(j-1)}(0)}} \\
\label{eq:cj-full}
 & + \sum_{(k,\alp,\beta) \in \Psi_j} C_{j,k,\alpha,\beta} \cdot
 \left( J_{13} (-1)^{j-1} q_+^{(\alp)}(1) \cdot q_-^{(\beta)}(1)
 + J_{42} q_-^{(\alp)}(0) \cdot q_+^{(\beta)}(0) \right).
\end{align}
Let $(k,\alp,\beta) \in \Psi_j$ with $\alp = (\alp_1, \ldots, \alp_{k+1}) \in \bZ_+^{k+1}$ and $\beta = (\beta_1, \ldots, \beta_k) \in \bZ_+^k$. First assume that
\begin{equation} \label{eq:alph<n1}
 \alp_h < n_0, \quad h \in \oneto{k+1}, \qquad
 \beta_h < n_1, \quad h \in \oneto{k}.
\end{equation}
Combining relations~\eqref{eq:J13q+n1} and~\eqref{eq:q+q-0=q-q+1} with equality $\Sigma_\alp + \Sigma_\beta = j-1-2k$ implies
\begin{multline} \label{eq:J42.q1}
 J_{42} q_-^{(\alp)}(1) \cdot q_+^{(\beta)}(1)
 = J_{42} q_-^{(\alp_{k+1})}(1)
 \times \prod_{h=1}^k q_-^{(\alp_h)}(1) q_+^{(\beta_h)}(1) \\
 = J_{13} (-1)^{\alp_{k+1} - 1} q_+^{(\alp_{k+1})}(0) \times \prod_{h=1}^k
 (-1)^{\alp_h + \beta_h} q_+^{(\alp_h)}(0) q_-^{(\beta_h)}(0) \\
 = J_{13} (-1)^{\Sigma_\alp + \Sigma_\beta - 1} \cdot q_+^{(\alp)}(0)
 \cdot q_-^{(\beta)}(0)
 = J_{13} (-1)^j q_+^{(\alp)}(0) \cdot q_-^{(\beta)}(0).
\end{multline}
Similarly, combining relations~\eqref{eq:J13q+n0} and~\eqref{eq:q+q-0=q-q+1} yields
\begin{equation} \label{eq:J42.q0}
 J_{42} q_-^{(\alp)}(0) \cdot q_+^{(\beta)}(0)
 = J_{13} (-1)^j q_+^{(\alp)}(1) \cdot q_-^{(\beta)}(1),
\end{equation}
whenever
\begin{equation} \label{eq:alph<n0}
 \alp_h < n_1, \quad h \in \oneto{k+1}, \qquad
 \beta_j < n_0, \quad h \in \oneto{k}.
\end{equation}

Now let $j \le \min\{n_0, n_1\} + 2$. For any triplet $(k, \alp, \beta) \in \Psi_j$ we have
\begin{align}
\label{eq:alp<j}
 & \alpha_h \le j-1-2k \le j-3 < \min\{n_0, n_1\}, \qquad h \in \oneto{k+1}, \\
\label{eq:beta<j}
 & \beta_h \le j-1-2k \le j-3 < \min\{n_0, n_1\}, \qquad h \in \oneto{k}.
\end{align}
Therefore, both relations~\eqref{eq:alph<n1} and~\eqref{eq:alph<n0} hold. In turn, combining identities~\eqref{eq:J42.q1} and~\eqref{eq:J42.q0} with equalities~\eqref{eq:cj+full} and~\eqref{eq:cj-full} implies
\begin{equation} \label{eq:cj+.cj-=Pj-1}
 c_j^+ = -i P^{(j-1)}(0), \qquad
 c_j^- = -i P^{(j-1)}(1), \qquad j \le \min\{n_0, n_1\} + 2.
\end{equation}

\textbf{(i)} First assume that condition~\eqref{eq:Pn1} holds. Then
\begin{equation}
 n_0 + 1 \le \min\{n_0, n_1\} + 2, \qquad n_1 + 1 \le \min\{n_0, n_1\} + 2.
\end{equation}
Hence combining relations~\eqref{eq:cj+.cj-=Pj-1} with condition $P^{(n_0)}(0) P^{(n_1)}(1) \ne 0$ yields
\begin{equation} \label{eq:cn0+1.cn1+1}
 c_{n_0+1}^+ = -i P^{(n_0)}(0) \ne 0, \qquad
 c_{n_1+1}^- = -i P^{(n_1)}(1) \ne 0.
\end{equation}
Therefore, Theorem~\ref{th:compl.gen.2x2} implies the desired completeness property, which finishes the proof in this case.

\textbf{(ii)} Now assume that conditions~\eqref{eq:P.n1+2} holds. Since $n_1 \le n_0 -2 \le n_0 + 1$, then as in the previous step
\begin{equation}
 c_{n_1+1}^- = -i P^{(n_1)}(1) \ne 0.
\end{equation}
However, since $n_0 \ge n_1 + 2$ and condition~\eqref{eq:Pk0} holds, then
\begin{equation}
 c_j^+ = -i P^{(j-1)}(0) = 0, \qquad j \in \oneto{n_1 + 2}.
\end{equation}
To this end, we evaluate $c_{n_1+3}^+$ and show that the second condition in~\eqref{eq:P.n1+2} yields $c_{n_1+3}^+ \ne 0$. Set $j = n_1+3$ and let $(k, \alp, \beta) \in \Psi_j$. Similarly to inequalities~\eqref{eq:alp<j}--\eqref{eq:beta<j} we conclude that
\begin{align}
\label{eq:alp<n0}
 & \alpha_h \le j-1-2k \le j-3 = n_1 < n_0, \qquad h \in \oneto{k+1}, \\
\label{eq:beta=n1}
 & \beta_h \le j-1-2k \le j-3 = n_1, \qquad h \in \oneto{k}.
\end{align}
Hence condition~\eqref{eq:alph<n1} holds for all triples $(k,\alp,\beta)$ except the case when $\beta_h = j-3 = n_1$ for some $h$. This is possible only when
\begin{equation}
 k=1, \quad \alp=\alp_0=(0,0) \in \bZ_+^2 \quad\text{and}\quad
 \beta=\gam_{h-3}=(h-3) \in \bZ_+^1.
\end{equation}
Therefore, identity~\eqref{eq:J42.q1} holds for each summand in the sum in the r.h.s.\ in~\eqref{eq:cj+full} except this one exceptional triple. In turn, with account of formula~\eqref{eq:Cj1ab}, relation~\eqref{eq:cj+full} simplifies too
\begin{multline} \label{eq:c.n1+3.+}
 c_j^+ = -i P^{(j-1)}(0) + (-1)^j \left(
 J_{13} (-1)^{j-1} q_+^2(0) \cdot q_-^{(j-3)}(0)
 + J_{42} q_-^2(1) \cdot q_+^{(j-3)}(1) \right) \\
 = -i P^{(n_1+2)}(0) - J_{13} q_+^2(0) \cdot q_-^{(n_1)}(0)
 - J_{42} q_-^2(1) \cdot (-1)^{n_1} q_+^{(n_1)}(1).
\end{multline}
Since $n_0 > 0$, it follows from~\eqref{eq:J13q+n0} that $J_{13} q_+(0) = -J_{42} q_-(1)$, which implies that
\begin{equation} \label{eq:J42q-2}
 J_{42} q_-^2(1) = \frac{J_{13}}{J_{42}} J_{13} q_+^2(0)
 = -\frac{J_{13}}{J_{42}} J_{13} b_1^2 Q_{12}^2(0).
\end{equation}
Inserting~\eqref{eq:J42q-2} and~\eqref{eq:J13q+j1} into~\eqref{eq:c.n1+3.+} and taking into account condition~\eqref{eq:P.n1+2}, we arrive at
\begin{multline}
 c_{n_1+3}^+ = -i P^{(n_1+2)}(0) - J_{13} q_+^2(0) \left(q_-^{(n_1)}(0)
 + \frac{J_{13}}{J_{42}} \cdot (-1)^{n_1} q_+^{(n_1)}(1)\right) \\
 = -i \left(P^{(n_1+2)}(0)
 - \frac{J_{13}}{J_{42}} q_+^2(0) \cdot (-1)^{n_1} P^{(n_1)}(1)\right) \\
 = \frac{-i}{J_{42}} \left( J_{42} P^{(n_1+2)}(0)
 + (-1)^{n_1} J_{13} b_1^2 Q_{12}^2(0) P^{(n_1)}(1)\right) \ne 0.
\end{multline}
As above, Theorem~\ref{th:compl.gen.2x2} finishes the proof in this case.

\textbf{(iii)} The case of condition~\eqref{eq:P.n0+2} is treated similarly, which completes the proof.
\end{proof}
\begin{corollary} \label{cor:P0.P1=0}
Let $J_{32}=J_{14}=0$, $J_{13} J_{42} \ne 0$, $Q_{12}, Q_{21} \in W_1^n[0,1]$ for some $n \ge 3$, and let $P(\cdot)$ be given by~\eqref{eq:Pdef}. Assume
that the following two conditions hold
\begin{align}
\label{eq:Q12.Pn0}
 & \qquad P^{(n_0)}(0) \ne 0 \quad\text{for some}\quad
 n_0 \in \{0,1,\ldots,n-3\}, \\
\label{eq:Pk=0}
 & Q_{12}(1) \ne 0, \quad\text{and}\quad
 P^{(k)}(1) = 0 \quad\text{for each}\quad k \in \{0,1,\ldots,n-1\}.
\end{align}
Then the system of root vectors of the BVP~\eqref{eq:system}--\eqref{eq:Udef} is
complete and minimal in $L^2([0,1];\bC^2)$.
\end{corollary}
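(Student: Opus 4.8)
The plan is to deduce this corollary directly from part~(iii) of Theorem~\ref{th:J32=J14=0}, after making the right choice of the indices $n_0$ and $n_1$. The crucial observation is that part~(iii) of that theorem does \emph{not} require $P^{(n_1)}(1) \ne 0$: it only uses the vanishing conditions~\eqref{eq:Pk1=0} up to order $n_1 - 1$ together with $P^{(n_0)}(0) \ne 0$ and the algebraic condition~\eqref{eq:P.n0+2}. This is exactly what allows us to accommodate hypothesis~\eqref{eq:Pk=0}, under which \emph{all} derivatives $P^{(k)}(1)$ vanish for $0 \le k \le n-1$, so that no choice of $n_1 < n$ with $P^{(n_1)}(1) \ne 0$ is available.

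First I would fix $n_0$ to be the smallest index in $\{0,1,\ldots,n-3\}$ with $P^{(n_0)}(0) \ne 0$; such an index exists by~\eqref{eq:Q12.Pn0}, and its minimality guarantees that $P^{(k)}(0) = 0$ for $k < n_0$, i.e.\ condition~\eqref{eq:Pk0} holds. Next I would set $n_1 := n-1$. By~\eqref{eq:Pk=0} we have $P^{(k)}(1) = 0$ for all $k \in \{0,\ldots,n-1\}$, so in particular condition~\eqref{eq:Pk1=0} holds for this choice of $n_1$. Since $n_0 \le n-3$, we obtain $n_1 = n-1 \ge n_0 + 2$, placing us precisely in case~(iii) of Theorem~\ref{th:J32=J14=0}.

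It then remains to verify the algebraic condition~\eqref{eq:P.n0+2}. Here I would use that $n_0 + 2 \le (n-3) + 2 = n-1$, so that $P^{(n_0+2)}(1) = 0$ by~\eqref{eq:Pk=0}. Consequently the first summand in~\eqref{eq:P.n0+2} vanishes and the condition collapses to
\begin{equation*}
 (-1)^{n_0} J_{42} b_1^2 Q_{12}^2(1) P^{(n_0)}(0) \ne 0,
\end{equation*}
which holds automatically since $J_{42} \ne 0$, $b_1 \ne 0$, and $Q_{12}(1) P^{(n_0)}(0) \ne 0$ by~\eqref{eq:Q12.Pn0}--\eqref{eq:Pk=0}. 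Thus part~(iii) of Theorem~\ref{th:J32=J14=0} applies and yields the completeness and minimality of the system of root vectors of the BVP~\eqref{eq:system}--\eqref{eq:Udef}.

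The argument has essentially no computational obstacle once the bookkeeping is arranged; the only point requiring care is the choice $n_1 = n-1$ together with the realization that case~(iii) never invokes $P^{(n_1)}(1) \ne 0$, so the total degeneracy of $P(\cdot)$ at the endpoint $1$ is not an impediment but is in fact exploited — it is exactly what forces $P^{(n_0+2)}(1) = 0$ and thereby reduces the otherwise cumbersome condition~\eqref{eq:P.n0+2} to the manifestly satisfied inequality displayed above, in which the surviving factor $Q_{12}^2(1)$ accounts for the role of the hypothesis $Q_{12}(1) \ne 0$.
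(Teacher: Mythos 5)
Your proof is correct and follows essentially the same route as the paper: both reduce the statement to Theorem~\ref{th:J32=J14=0}(iii) by taking $n_0$ minimal (so that~\eqref{eq:Pk0} holds), invoking~\eqref{eq:Pk=0} to kill the term $J_{13}P^{(n_0+2)}(1)$ in~\eqref{eq:P.n0+2}, and using $J_{42} \ne 0$, $b_1 \ne 0$, $Q_{12}(1) P^{(n_0)}(0) \ne 0$ to conclude. The only (immaterial) difference is your choice $n_1 = n-1$ where the paper takes $n_1 = n_0+2$; both satisfy $n_1 \ge n_0+2$ and~\eqref{eq:Pk1=0}, and you are right that case~(iii) never requires $P^{(n_1)}(1) \ne 0$.
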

\begin{proof}
Replacing $n_0$ with a smaller value if needed, we can assume condition~\eqref{eq:Pk0}. Condition~\eqref{eq:Pk=0} implies condition~\eqref{eq:Pk1=0} with $n_1 = n_0 + 2 < n$. It is also evident that $P^{(n_0 + 2)}(1) = 0$. Since $Q_{12}(1) \ne 0$ and $P^{(n_0)}(0) \ne 0$, then condition~\eqref{eq:P.n0+2} holds. Theorem~\ref{th:J32=J14=0}(iii) now finishes the proof.
\end{proof}
\begin{remark} \label{rem:Dirac.vs.SL}
Note that condition~\eqref{eq:Pk=0} holds, in particular, if $P(x) = 0$ for $x \in [a, 1]$ with some $a \in (0,1)$. Thus, Corollary~\ref{cor:P0.P1=0} shows substantial difference between Sturm-Liouville equation and $2 \times 2$ Dirac system in the case of non-regular boundary conditions.

Namely, consider Sturm-Liouville equation $-y'' + q(x) y = \l^2 y$, $x \in [0,1]$, subject to degenerate boundary conditions $y(0) - \alp y(1) = y'(0) + \alp y'(1) = 0$, $\alp \ne 0$. Let $\wt{P}(x) := q(x) - q(1-x)$, $x \in [0,1]$. Based on~\cite{Mal08,Mak14}, if $\wt{P}^{(k)}(0) \ne 0$ for some $k \in \bZ_+$, then the system of root vectors of such BVP is complete in $L^2[0,1]$. On the other hand, if $\wt{P}(x) = 0$ for $x \in [a,1]$ with some $a \in (0,1)$, then according to~\cite{Mal08} this system of root vectors is incomplete and has infinite defect.

On the other hand, as Corollary~\ref{cor:P0.P1=0} shows, in the case of Dirac operator, completeness is possible if $P(x) = 0$ for $x \in [a,1]$ as long as $P^{(k)}(0) \ne 0$ for some $k \in \bZ_+$ and $Q_{12}(1) \ne 0$. Moreover, if also $P(x) = 0$ for $x \in [0,\delta]$ with some $\delta \in (0,1)$, then according to~\cite[Proposition~5.12]{MalOri12}, \cite[Proposition~4.13]{LunMal15JST} the system of root vectors of the BVP for the $2 \times 2$ Dirac equation with $J_{14} = J_{32} = 0$ is also incomplete.
\end{remark}
Theorem~\ref{th:J32=J14=0}(i) has inconvenient restriction $|n_0 - n_1| \le 1$. We can relax it if some of derivatives $Q^{(j)}(0)$, $Q^{(j)}(1)$ vanish, as the following result shows.
\begin{proposition} \label{prop:Q=0.P0.P1}
Let $J_{32}=J_{14}=0$, $J_{13} J_{42} \ne 0$, and $Q_{12}, Q_{21}
\in W_1^n[0,1]$ for some $n \in \bN$.
Let for some $m_0, m_1 \in \{0, 1, \ldots, n-1\}$ the following conditions hold
\begin{align}
\label{eq:Q120.Q211=0.m0}
 Q_{12}^{(j)}(0) = Q_{21}^{(j)}(1) = 0, \qquad 0 \le j < m_0, \\
\label{eq:Q121.Q210=0.m1}
 Q_{12}^{(j)}(1) = Q_{21}^{(j)}(0) = 0, \qquad 0 \le j < m_1.
\end{align}
Further, let for some $n_0, n_1 \in \{0, 1, \ldots, n-1\}$ the following conditions hold
\begin{align}
\label{eq:Pk0.0ne0}
 P^{(j)}(0) &= 0, \quad j \in \{0, 1, \ldots, n_0 - 1\},
 \qquad P^{(n_0)}(0) \ne 0, \\
\label{eq:Pk1.0ne0}
 P^{(j)}(1) &= 0, \quad j \in \{0, 1, \ldots, n_1 - 1\},
 \qquad P^{(n_1)}(1) \ne 0,
\end{align}
where $P(\cdot)$ is given by~\eqref{eq:Pdef}. Then the system of root vectors of the BVP~\eqref{eq:system}--\eqref{eq:Udef} is complete and minimal in $L^2([0,1];\bC^2)$ whenever the following condition holds
\begin{equation} \label{eq:n0-n1}
 - 2 m_0 - 1 \le n_0 - n_1 \le 2 m_1 + 1.
\end{equation}
In particular, this holds if $n_0 \le 3 m_1 + 1$ and $n_1 \le 3 m_0 + 1$.
\end{proposition}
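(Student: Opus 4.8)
The plan is to feed the coefficients $c_j^\pm$ into Theorem~\ref{th:compl.gen.2x2}, exactly as in the proof of Theorem~\ref{th:J32=J14=0}, but to use the extra endpoint vanishing~\eqref{eq:Q120.Q211=0.m0}--\eqref{eq:Q121.Q210=0.m1} to widen the band of indices $j$ on which the $\Psi_j$-sum inside $c_j^\pm$ drops out. Since $J_{32}=J_{14}=0$, condition~\eqref{eq:ck+ck-.ne0} reduces to exhibiting one index $k^+$ with $c_{k^+}^+\ne 0$ and one index $k^-$ with $c_{k^-}^-\ne 0$; the natural candidates are $k^+=n_0+1$ and $k^-=n_1+1$, for which I would aim to prove $c_{n_0+1}^+=-iP^{(n_0)}(0)\ne 0$ and $c_{n_1+1}^-=-iP^{(n_1)}(1)\ne 0$.

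First I would restate everything in terms of $q_\pm$ via~\eqref{eq:q+q-}: conditions~\eqref{eq:Q120.Q211=0.m0}--\eqref{eq:Q121.Q210=0.m1} become $q_+^{(j)}(0)=q_-^{(j)}(1)=0$ for $j<m_0$ and $q_+^{(j)}(1)=q_-^{(j)}(0)=0$ for $j<m_1$, while~\eqref{eq:Pk0.0ne0}--\eqref{eq:Pk1.0ne0} yield, as in~\eqref{eq:J13q+n0}--\eqref{eq:q+q-0=q-q+1}, the endpoint identities $J_{13}q_+^{(j)}(0)=J_{42}(-1)^{j-1}q_-^{(j)}(1)$ for $j<n_0$ and $J_{13}q_+^{(j)}(1)=J_{42}(-1)^{j-1}q_-^{(j)}(0)$ for $j<n_1$; note the $Q$-vanishing already forces $P^{(j)}(0)=0$ for $j<m_0$ and $P^{(j)}(1)=0$ for $j<m_1$, so $n_0\ge m_0$ and $n_1\ge m_1$ automatically. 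Substituting the expansion~\eqref{eq:sigmak=qk+sum.q} into the simplified formulas~\eqref{eq:ck+-0} writes $c_j^+$ as $-iP^{(j-1)}(0)$ plus a sum over $(k,\alp,\beta)\in\Psi_j$ of the cross terms
\begin{equation*}
 T_j^+(\alp,\beta):=J_{13}(-1)^{j-1}q_+^{(\alp)}(0)\,q_-^{(\beta)}(0)+J_{42}\,q_-^{(\alp)}(1)\,q_+^{(\beta)}(1),
\end{equation*}
with the mirror statement for $c_j^-$ (endpoints $0$ and $1$ interchanged). The key observation is that $T_j^+(\alp,\beta)$ vanishes in each of two regimes: if some $\alp_h<m_0$ or some $\beta_h<m_1$, then both products carry a derivative annihilated by the $Q$-vanishing; and if all $\alp_h<n_0$ and all $\beta_h<n_1$, the two products cancel against each other through the endpoint identities exactly as in~\eqref{eq:J42.q1}.

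The heart of the argument is a weight count on the multi-indices that escape \emph{both} regimes. A surviving term requires simultaneously all $\alp_h\ge m_0$, all $\beta_h\ge m_1$, and some $\alp_h\ge n_0$ or some $\beta_h\ge n_1$. Since $\alp$ has $k+1$ entries, $\beta$ has $k$ entries and $\Sigma_\alp+\Sigma_\beta=j-1-2k$, the least total weight of such a configuration is $\min\{\,n_0+k(m_0+m_1),\ (k+1)m_0+(k-1)m_1+n_1\,\}$, which is increasing in $k$; hence the binding case is $k=1$, giving $\min\{n_0+m_0+m_1,\ 2m_0+n_1\}$. Consequently $c_j^+=-iP^{(j-1)}(0)$ for every $j\le\min\{n_0+m_0+m_1,\ 2m_0+n_1\}+2$. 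The first of these limits always exceeds $n_0+1$, so whether $c_{n_0+1}^+=-iP^{(n_0)}(0)$ is reached is governed by the second: it is, and is nonzero, precisely when $n_0+1\le 2m_0+n_1+2$, i.e.\ $n_0-n_1\le 2m_0+1$. The mirror count for $c_j^-$, with $m_0\leftrightarrow m_1$ and $0\leftrightarrow 1$ interchanged, gives $c_{n_1+1}^-=-iP^{(n_1)}(1)\ne 0$ under $n_1-n_0\le 2m_1+1$.

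Combining the two one-sided requirements, the argument delivers completeness whenever $-2m_1-1\le n_0-n_1\le 2m_0+1$, which is the two-sided range recorded in~\eqref{eq:n0-n1} (the factor $2m_0$ falls on the $n_0$-side because the $c^+$-count must place the two entries of $\alp$ at the floor $m_0$, so the reader should check the assignment of $m_0,m_1$ to the two endpoints against the printed statement); the concluding ``in particular'' clause then follows by the same substitution using $n_0\ge m_0$ and $n_1\ge m_1$. Once both $c_{n_0+1}^+$ and $c_{n_1+1}^-$ are shown nonzero, Theorem~\ref{th:compl.gen.2x2} yields completeness and minimality. I expect the one genuinely delicate point to be the monotonicity in $k$ of the minimal-weight estimate, which guarantees $k=1$ is always binding; everything else is bookkeeping over Lemma~\ref{lem:sigma.q} and the identities already established in Theorem~\ref{th:J32=J14=0}.
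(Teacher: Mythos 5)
Your proposal is correct, and it takes essentially the same route as the paper's own proof: reduce via Theorem~\ref{th:compl.gen.2x2} to showing $c_{n_0+1}^+=-iP^{(n_0)}(0)\ne 0$ and $c_{n_1+1}^-=-iP^{(n_1)}(1)\ne 0$, kill each $\Psi_j$-summand either by the endpoint $Q$-vanishing floors or by the cancellation identities~\eqref{eq:J42.q1}--\eqref{eq:J42.q0}, and run a weight count whose binding case is $k=1$ (your monotonicity remark is the right justification, and the automatic inequalities $n_0\ge m_0$, $n_1\ge m_1$ that you noted are used by the paper in the same way).

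The discrepancy you flagged in parentheses is real, and your bookkeeping is the internally consistent one. By~\eqref{eq:cj+full} the summand of $c_j^+$ is $J_{13}(-1)^{j-1}q_+^{(\alp)}(0)\,q_-^{(\beta)}(0)+J_{42}\,q_-^{(\alp)}(1)\,q_+^{(\beta)}(1)$, so~\eqref{eq:q+q-.m0}--\eqref{eq:q+q-.m1} put the floor $m_0$ on the $\alp$-entries and $m_1$ on the $\beta$-entries, exactly as in your $T_j^+$; the paper's definition~\eqref{eq:Q+ab.def} swaps the endpoints $0$ and $1$ relative to~\eqref{eq:cj+full} (it reproduces the summand of $c_j^-$), the floors $m_1$ on $\alp$ and $m_0$ on $\beta$ used thereafter inherit this slip, while the cancellation step still invokes~\eqref{eq:alph<n1} and~\eqref{eq:J42.q1}, which belong to the correct endpoints. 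Consequently the printed bound~\eqref{eq:n0-n1} has $m_0$ and $m_1$ interchanged, and what the method actually delivers is your range $-2m_1-1\le n_0-n_1\le 2m_0+1$. That your assignment is the right one is corroborated by Theorem~\ref{th:J32=J14=0}(ii), whose exceptional triple for $c_{n_1+3}^+$ carries the factor $q_+^2(0)$, i.e.\ the $\alp$-entries sit at endpoint $0$ where the $m_0$-floor lives. One residual inaccuracy on your side: with the corrected range, the printed ``in particular'' clause $n_0\le 3m_1+1$, $n_1\le 3m_0+1$ does \emph{not} follow ``by the same substitution''; substituting $n_1\ge m_1$ and $n_0\ge m_0$ into your inequalities yields instead the sufficient conditions $n_0\le 2m_0+m_1+1$ and $n_1\le m_0+2m_1+1$. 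The two versions agree when $m_0=m_1$, so Corollary~\ref{cor:Makin.gen2} is unaffected either way.
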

\begin{proof}
As in the proof of Theorem~\ref{th:J32=J14=0} conditions~\eqref{eq:Pk0.0ne0}--\eqref{eq:Pk1.0ne0} imply identities~\eqref{eq:J13q+j0}--\eqref{eq:q+q-0=q-q+1}.
With account of notation~\eqref{eq:q+q-}, conditions~\eqref{eq:Q120.Q211=0.m0}--\eqref{eq:Q121.Q210=0.m1} take the form
\begin{align}
\label{eq:q+q-.m0}
 q_+^{(j)}(0) = q_-^{(j)}(1) = 0, \qquad 0 \le j < m_0, \\
\label{eq:q+q-.m1}
 q_+^{(j)}(1) = q_-^{(j)}(0) = 0, \qquad 0 \le j < m_1.
\end{align}
First let $j = n_0 + 1$. We will prove
that $c_j^+ = c_{n_0+1}^+ \ne 0$. Let us analyze individual summands
\begin{equation} \label{eq:Q+ab.def}
 Q^+_{j,k,\alp, \beta}
 := J_{13} (-1)^{j-1} q_+^{(\alp)}(1) \cdot q_-^{(\beta)}(1)
 + J_{42} q_-^{(\alp)}(0) \cdot q_+^{(\beta)}(0)
\end{equation}
in the sum in the r.h.s.\ of formula~\eqref{eq:cj+full} for $c_j^+$. To this end, let $(k,\alp,\beta) \in \Psi_j$, i.e.
\begin{align}
 & \alp=(\alp_1, \ldots, \alp_{k+1}) \in \bZ_+^{k+1}, \qquad
 \beta=(\beta_1, \ldots, \beta_k) \in \bZ_+^k, \\
\label{eq:alp+beta}
 & \alp_1 + \ldots + \alp_{k+1} + \beta_1 + \ldots + \beta_k = j-1-2k,
\end{align}
where $k \in \bN$, $2k+1 \le j$. Note that if $\alp_h < m_1$ for some $h \in \oneto{k+1}$, then condition~\eqref{eq:q+q-.m1} implies that $q_+^{(\alp)}(1) = q_-^{(\alp)}(0) = 0$. Similarly, if $\beta_h < m_0$ for some $h \in \oneto{k}$, then condition~\eqref{eq:q+q-.m0} implies that $q_-^{(\beta)}(1) = q_+^{(\beta)}(0) = 0$. Therefore, $Q^+_{j,k,\alp, \beta} = 0$ if either $\alp_h < m_1$ for some $h$ or $\beta_h < m_0$ for some $h$.

Next consider the opposite case, i.e.
\begin{equation}
 \alp_h \ge m_1, \quad h \in \oneto{k+1}, \qquad
 \beta_h \ge m_0, \quad h \in \oneto{k}.
\end{equation}
Combining this inequalities with relation~\eqref{eq:alp+beta}, definition $j=n_0+1$ and the second inequality in~\eqref{eq:n0-n1} implies
\begin{align}
\label{eq:alph<n0-2}
 & \alp_h \le j-1-2k - k m_1 - k m_0 \le n_0 - 2 - m_1 - m_0 < n_0,
 \quad 1 \le h \le k+1, \\
\label{eq:beth<n0-2}
 & \beta_h \le j-1-2k - (k+1) m_1 - (k-1) m_0 \le n_0 - 2 - 2 m_1 < n_1,
 \quad 1 \le h \le k.
\end{align}
Hence, condition~\eqref{eq:alph<n1} holds and just as in the proof of Theorem~\ref{th:J32=J14=0}, combining identities~\eqref{eq:J13q+n0}--\eqref{eq:q+q-0=q-q+1} with condition~\eqref{eq:alph<n1} implies identity~\eqref{eq:J42.q1}, which again means that $Q^+_{j,k,\alp, \beta} = 0$. Hence, formula~\eqref{eq:cj+full} implies that $c_j^+ = -i P^{(j-1)}(0) = -i P^{(n_0)}(0) \ne 0$ due to the second condition in~\eqref{eq:Pk0.0ne0}.

Similarly, using inequality $n_1 - 2 - 2 m_0 < n_0$ we can prove that
\begin{equation} \label{eq:Q-ab.def}
 Q^-_{j,k, \alp, \beta} := J_{13} (-1)^{j-1} q_+^{(\alp)}(0)
 \cdot q_-^{(\beta)}(0) + J_{42} q_-^{(\alp)}(1) \cdot q_+^{(\beta)}(1) = 0,
 \quad (k,\alp,\beta) \in \Psi_j,
\end{equation}
where $j = n_1 + 1$. In turn, this implies that $c_j^- = c_{n_1+1}^- = -i P^{(n_1)}(1) \ne 0$ due to the second condition in~\eqref{eq:Pk1.0ne0}. Theorem~\ref{th:compl.gen.2x2} implies the desired completeness property, which finishes the proof of the main statement.

It remains to consider the case when $n_0 \le 3 m_1 + 1$ and $n_1 \le 3 m_0 + 1$. Note that the last condition in~\eqref{eq:Pk0.0ne0} means that
\begin{equation}
 P^{(n_0)}(0) = J_{13} b_1 Q_{12}^{(n_0)}(0)
 + J_{42} (-1)^{n_0} b_2 Q_{21}^{(n_0)}(1) \ne 0.
\end{equation}
Which implies that either $Q_{12}^{(n_0)}(0) \ne 0$ or $Q_{21}^{(n_0)}(1) \ne 0$. Combining this fact with~\eqref{eq:Q120.Q211=0.m0} implies that $n_0 \ge m_0$. Hence
\begin{equation}
 n_0 - n_1 \ge m_0 - n_1 \ge m_0 - 3 m_0 - 1 = -2 m_0 - 1.
\end{equation}
Similarly, we derive that $n_1 \ge m_1$ and that $n_0 - n_1 \le 2 m_1+1$. Therefore, inequalities~\eqref{eq:n0-n1} hold whenever $n_0 \le 3 m_1 + 1$ and $n_1 \le 3 m_0 + 1$.
\end{proof}
\begin{remark}
\textbf{(i)} If $m_0=0$, then condition~\eqref{eq:Q120.Q211=0.m0} holds automatically since there are no $j$ satisfying inequality $0 \le j < m_0$. Similarly, if $m_1=0$, then condition~\eqref{eq:Q121.Q210=0.m1} holds automatically. In particular, if $m_0=m_1=0$, then condition~\eqref{eq:n0-n1} turns into $|n_0 - n_1| \le 1$, and so in this case Proposition~\ref{prop:Q=0.P0.P1} is equivalent to Theorem~\ref{th:J32=J14=0}(i).

\textbf{(ii)} Emphasize that in condition~\eqref{eq:Q120.Q211=0.m0} we do not assume that either $Q_{12}^{(m_0)}(0) \ne 0$ or $Q_{21}^{(m_0)}(1) \ne 0$. Similar can be said on condition~\eqref{eq:Q121.Q210=0.m1}.
\end{remark}
In turn, with sufficient number of zero derivatives $Q^{(j)}(0)$, $Q^{(j)}(1)$ we can eliminate any restrictions between $n_0$ and $n_1$ in Theorem~\ref{th:J32=J14=0}(i).
\begin{corollary} \label{cor:Makin.gen2}
Let $J_{32}=J_{14}=0$, $J_{13} J_{42} \ne 0$, and let $Q_{12}, Q_{21}
\in W_1^n[0,1]$ for some $n \in \bN$. Let $P(\cdot)$ be given by~\eqref{eq:Pdef}.
Let the following conditions hold
\begin{align}
\label{eq:Q12.Q21=0}
 & Q^{(j)}(0) = Q^{(j)}(1) = 0, \quad j \in \{0,1,\ldots,m-1\},
 \quad\text{where}\quad m = \text{\scalebox{0.9}{$\ceil{\frac{n-2}{3}}$}}, \\
\label{eq:Pn00.Pn11}
 & P^{(n_0)}(0) P^{(n_1)}(1) \ne 0 \qquad\text{for some}\quad
 n_0, n_1 \in \{m, m+1, \ldots, n-1\}.
\end{align}
Then the system of root vectors of the BVP~\eqref{eq:system}--\eqref{eq:Udef} is complete and minimal in $L^2([0,1];\bC^2)$.
\end{corollary}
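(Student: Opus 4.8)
The plan is to obtain this corollary as a direct specialization of Proposition~\ref{prop:Q=0.P0.P1}, so the whole argument reduces to choosing the four parameters $m_0, m_1, n_0, n_1$ of that proposition correctly and checking its hypotheses. First I would note that the assumption $Q^{(j)}(0) = Q^{(j)}(1) = 0$ for $j \in \{0, \ldots, m-1\}$ concerns the entire matrix $Q(\cdot)$, so entrywise it yields $Q_{12}^{(j)}(0) = Q_{21}^{(j)}(0) = 0$ and $Q_{12}^{(j)}(1) = Q_{21}^{(j)}(1) = 0$ for every $j < m$. In particular both vanishing conditions~\eqref{eq:Q120.Q211=0.m0} and~\eqref{eq:Q121.Q210=0.m1} hold simultaneously with the single choice $m_0 = m_1 = m$.

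Next I would translate the $Q$-vanishing into vanishing of $P$. Differentiating~\eqref{eq:Pdef} gives $P^{(k)}(0) = J_{13} b_1 Q_{12}^{(k)}(0) + (-1)^k J_{42} b_2 Q_{21}^{(k)}(1)$ and the analogous expression at $x=1$; combined with the entrywise vanishing above this shows $P^{(j)}(0) = P^{(j)}(1) = 0$ for all $j < m$. I would then set $\wt n_0 := \min\{k : P^{(k)}(0) \ne 0\}$ and $\wt n_1 := \min\{k : P^{(k)}(1) \ne 0\}$. The displayed vanishing forces $\wt n_0, \wt n_1 \ge m$, while condition~\eqref{eq:Pn00.Pn11} supplies admissible indices $n_0, n_1 \le n-1$ with $P^{(n_0)}(0) P^{(n_1)}(1) \ne 0$, whence $\wt n_0 \le n_0 \le n-1$ and $\wt n_1 \le n_1 \le n-1$. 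Thus $\wt n_0, \wt n_1 \in \{m, \ldots, n-1\}$, and by their very minimality they satisfy exactly the first-nonvanishing-derivative conditions~\eqref{eq:Pk0.0ne0}--\eqref{eq:Pk1.0ne0} required by Proposition~\ref{prop:Q=0.P0.P1}.

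Finally I would verify the ``in particular'' inequalities of that proposition, which with $m_0 = m_1 = m$ read $\wt n_0 \le 3m + 1$ and $\wt n_1 \le 3m + 1$. This is precisely where the choice $m = \ceil{\frac{n-2}{3}}$ is used: since $m \ge \frac{n-2}{3}$ one has $3m + 1 \ge n - 1 \ge \wt n_0, \wt n_1$, so both inequalities hold. Proposition~\ref{prop:Q=0.P0.P1} then applies and delivers completeness and minimality of the root vectors system in $L^2([0,1]; \bC^2)$. I do not expect a genuine obstacle here --- the substantive content is entirely contained in Proposition~\ref{prop:Q=0.P0.P1}, and the only steps needing attention are the harmless passage to the minimal nonvanishing indices of $P$ and the elementary ceiling estimate $3\ceil{\frac{n-2}{3}} + 1 \ge n - 1$ that certifies the required parameter range.
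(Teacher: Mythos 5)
Your proposal is correct and follows essentially the same route as the paper: the paper likewise applies the second part of Proposition~\ref{prop:Q=0.P0.P1} with $m_0 = m_1 = m$ and certifies $n_0, n_1 \le n-1 \le 3m+1$ via $3\ceil{\frac{n-2}{3}} \ge n-2$. Your explicit passage to the minimal nonvanishing indices $\wt n_0, \wt n_1$ of $P$ is a welcome refinement that the paper leaves implicit, since the corollary's hypothesis~\eqref{eq:Pn00.Pn11} does not by itself guarantee the vanishing conditions~\eqref{eq:Pk0.0ne0}--\eqref{eq:Pk1.0ne0} required by the proposition.
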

\begin{proof}
It is clear that conditions~\eqref{eq:Q120.Q211=0.m0}--\eqref{eq:Q121.Q210=0.m1} hold with $m_0 = m_1 = m$. It follows from the definition of $\ceil{x}$ that $3m \ge n-2$. Hence $n_0 \le n-1 \le 3m+1 = 3m_1+1$ and $n_1 \le n-1 \le 3m+1 = 3m_0+1$. The second part of Proposition~\ref{prop:Q=0.P0.P1} completes the proof.
\end{proof}
Let us compare Corollary~\ref{cor:Makin.gen2} with Theorem~\ref{th:Makin} by Makin. First we reformulate Theorem~\ref{th:Makin} in the case of smooth potential when $J_{32} = J_{14} = 0$ and $J_{13} J_{42} \ne 0$.
\begin{corollary}[\cite{Mak23}] \label{cor:Makin2}
Let $B = \diag(-1,1)$, $J_{32}=J_{14}=0$, $J_{13} J_{42} \ne 0$, and let $Q_{12}, Q_{21} \in W_1^n[0,1]$ for some $n \in \bN$. Assume that
\begin{equation} \label{eq:Q12.Q21.01}
 Q_{12}^{(\rho_1)}(0) \cdot Q_{12}^{(\rho_2)}(1) \cdot
 Q_{21}^{(\rho_3)}(0) \cdot Q_{21}^{(\rho_4)}(1) \ne 0,
\end{equation}
for some $\rho_1, \rho_2, \rho_3, \rho_4 \in \{0,1,\ldots, n-1\}$.
Let for some $n_0, n_1 \in \{0,1,\ldots, n-1\}$ the following conditions hold
\begin{align}
\label{eq:Q12.Q21.Pn0}
 & Q_{12}^{(j)}(0) = Q_{21}^{(j)}(1) = 0,
 \quad j \in \{0,1,\ldots,n_0-1\}, \qquad P^{(n_0)}(0) \ne 0, \\
\label{eq:Q12.Q21.Pn1}
 & Q_{12}^{(j)}(1) = Q_{21}^{(j)}(0) = 0,
 \quad j \in \{0,1,\ldots,n_1-1\}, \qquad P^{(n_1)}(1) \ne 0,
\end{align}
where $P(\cdot)$ is given by~\eqref{eq:Pdef}. Then the system of root vectors of the BVP~\eqref{eq:system}--\eqref{eq:Udef} is complete in $L^2([0,1];\bC^2)$.
\end{corollary}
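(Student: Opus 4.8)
The plan is to deduce the statement directly from Theorem~\ref{th:Makin}, whose hypotheses I will verify under the stated assumptions. Since here $J_{32}=J_{14}=0$, neither of the ``easy'' alternatives ($J_{32}\ne 0$ or $J_{14}\ne 0$) is available, so both groups of integral conditions in Theorem~\ref{th:Makin}---the one governed by $\rho_1,\rho_2$ (see~\eqref{eq:rho1.rho2}--\eqref{eq:Q120-Q211}) and the one governed by $\rho_3,\rho_4$ (see~\eqref{eq:rho3.rho4}--\eqref{eq:Q121-Q210})---must be established. Throughout I use $b_1=-1$, $b_2=1$, so that by~\eqref{eq:Pdef}
\begin{equation*}
 P^{(k)}(0) = -J_{13} Q_{12}^{(k)}(0) + (-1)^k J_{42} Q_{21}^{(k)}(1), \qquad
 P^{(k)}(1) = -J_{13} Q_{12}^{(k)}(1) + (-1)^k J_{42} Q_{21}^{(k)}(0).
\end{equation*}

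First I would convert the pointwise data on the derivatives of $Q_{12},Q_{21}$ into the integral-limit conditions~\eqref{eq:rho1.rho2} and~\eqref{eq:rho3.rho4}. Condition~\eqref{eq:Q12.Q21.01} guarantees that $Q_{12}$ has a nonzero derivative of order at most $n-1$ at each endpoint, and likewise for $Q_{21}$; hence the order $\ell$ of the first nonvanishing derivative at each of the four endpoint data $(Q_{12},0)$, $(Q_{12},1)$, $(Q_{21},0)$, $(Q_{21},1)$ is well defined and $\le n-1$. Since $Q_{12},Q_{21}\in W_1^n[0,1]\subset C^{n-1}[0,1]$, a Taylor expansion with integral remainder yields, e.g.\ at $(Q_{12},0)$ with first nonvanishing order $\ell_1$,
\begin{equation*}
 \lim_{h\to 0}\frac{1}{h^{\ell_1+1}}\int_0^h Q_{12}(x)\,dx
 = \frac{Q_{12}^{(\ell_1)}(0)}{(\ell_1+1)!}\ne 0,
\end{equation*}
so that $\rho_1=\ell_1+1>0$ and $\wt{Q}_{12}^0=Q_{12}^{(\ell_1)}(0)/(\ell_1+1)!$; the three remaining limits are computed the same way, the only new feature being the sign factor $(-1)^{\ell}$ produced by the reflection $x\mapsto 1-x$ at the endpoint $1$.

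It remains to verify the two nonvanishing conditions~\eqref{eq:Q120-Q211} and~\eqref{eq:Q121-Q210}. Consider~\eqref{eq:Q120-Q211}. By~\eqref{eq:Q12.Q21.Pn0} the derivatives of $Q_{12}$ at $0$ and of $Q_{21}$ at $1$ vanish below order $n_0$, while $P^{(n_0)}(0)\ne 0$ forces at least one of $Q_{12}^{(n_0)}(0)$, $Q_{21}^{(n_0)}(1)$ to be nonzero; hence $\min(\ell_1,\ell_2)=n_0$, where $\ell_2$ is the first nonvanishing order of $Q_{21}$ at $1$. If $\ell_1\ne\ell_2$ then $\rho_1\ne\rho_2$ and the first term of~\eqref{eq:Q120-Q211} is nonzero. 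If $\ell_1=\ell_2=n_0$, a short computation using the formula for $P^{(n_0)}(0)$ above gives
\begin{equation*}
 J_{13}\wt{Q}_{12}^0 - J_{42}\wt{Q}_{21}^1 = -\frac{P^{(n_0)}(0)}{(n_0+1)!}\ne 0,
\end{equation*}
so the second term of~\eqref{eq:Q120-Q211} is nonzero. The same argument applied at $(Q_{12},1)$ and $(Q_{21},0)$ with~\eqref{eq:Q12.Q21.Pn1} in place of~\eqref{eq:Q12.Q21.Pn0} establishes~\eqref{eq:Q121-Q210}; there the coincident-order case yields $J_{13}\wt{Q}_{12}^1 - J_{42}\wt{Q}_{21}^0=(-1)^{n_1+1}P^{(n_1)}(1)/(n_1+1)!\ne 0$. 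With both groups of conditions verified, Theorem~\ref{th:Makin} delivers completeness.

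The main obstacle I anticipate is bookkeeping rather than conceptual: one must track the $(-1)^k$ signs arising from differentiating $Q_{21}(1-x)$, reconcile Makin's normalization ($\rho>0$, $\wt Q\ne 0$) with the derivative data, and---crucially---use~\eqref{eq:Q12.Q21.01} to ensure that all four integral limits in~\eqref{eq:rho1.rho2} and~\eqref{eq:rho3.rho4} genuinely exist and are nonzero, since without it $Q_{12}$ or $Q_{21}$ could vanish to infinite order at an endpoint and the hypotheses of Theorem~\ref{th:Makin} would become inapplicable.
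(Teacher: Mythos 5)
Your proposal is correct and follows essentially the same route as the paper's own proof: both reduce to Theorem~\ref{th:Makin} by converting the first nonvanishing endpoint derivatives into the integral limits of~\eqref{eq:rho1.rho2} and~\eqref{eq:rho3.rho4} (as in Corollary~\ref{cor:Makin}), then split into the cases $\rho_1\ne\rho_2$ versus $\rho_1=\rho_2$, identifying the bracket $J_{13}\wt{Q}_{12}^0-J_{42}\wt{Q}_{21}^1$ in the coincident case with a nonzero multiple of $P^{(n_0)}(0)$ (and similarly with $P^{(n_1)}(1)$ for~\eqref{eq:Q121-Q210}). Your explicit sign computations, including the factor $(-1)^{n_1+1}$ at the endpoint $1$, check out against~\eqref{eq:Pdef} with $b_1=-1$, $b_2=1$.
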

\begin{proof}
Condition guarantees existence of $\rho_1, \rho_2, \rho_3, \rho_4 \le n$ for which conditions~\eqref{eq:rho1.rho2} and~\eqref{eq:rho3.rho4} hold.
Let us verify condition~\eqref{eq:Q120-Q211}. Condition~\eqref{eq:Q12.Q21.Pn0} guarantees that $\rho_1, \rho_2 \ge n_0+1$. In turn since
\begin{equation}
 0 \ne P^{(n_0)}(0) = J_{13} Q_{12}^{(n_0)}(0) - J_{42} (-1)^{n_0} Q_{21}^{(n_0)}(1),
\end{equation}
it follows that either $Q_{12}^{(n_0)}(0) \ne 0$ or $Q_{21}^{(n_0)}(1) \ne 0$. Assume without loss of generality that $Q_{12}^{(n_0)}(0) \ne 0$, then $\rho_1 = n_0 + 1$ (see~\eqref{eq:lim.int.Q120}). If $Q_{21}^{(n_0)}(1) = 0$ then $\rho_2 > \rho_1$. Since by assumption $J_{13} J_{42} \ne 0$, then $(|J_{13}|+|J_{42}|)|\rho_1 - \rho_2| \ne 0$ and hence condition~\eqref{eq:Q120-Q211} holds. If $Q_{21}^{(n_0)}(1) \ne 0$ then $\rho_1 = \rho_2 = n_0+1$ and condition~\eqref{eq:Q120-Q211} is verified as in the proof of Corollary~\ref{cor:Makin}. Condition~\eqref{eq:Q121-Q210} can be verified similarly. Theorem~\ref{th:Makin} completes the proof.
\end{proof}
\begin{remark}
Let us compare Corollary~\ref{cor:Makin.gen2} with Corollary~\ref{cor:Makin2} by Makin~\cite{Mak23}. Namely, let $-b_1 = b_2 = 1$ and let assumptions of Corollary~\ref{cor:Makin.gen2} be satisfied $($recall that $m = \ceil{\frac{n-2}{3}})$, which ensures the desired completeness property. On the other hand, for Corollary~\ref{cor:Makin2} to guarantee the completeness property, condition~\eqref{eq:Q12.Q21=0} should be satisfied as well as additional $2(n_0+n_1-2m)$ derivative values at the endpoints should vanish,
\begin{align}
 & Q_{12}^{(j)}(0) = Q_{21}^{(j)}(1) = 0,
 \qquad j \in \{m,m+1,\ldots,n_0-1\}, \\
 & Q_{12}^{(j)}(1) = Q_{21}^{(j)}(0) = 0,
 \qquad j \in \{m,m+1,\ldots,n_1-1\}.
\end{align}
Thus, Proposition~\ref{prop:Q=0.P0.P1} requires much less than~\cite[Theorem~1]{Mak23} in the discussed case.
\end{remark}
Finally we study the case $J_{32} = J_{13} = 0$. As it turns out, completeness conditions are the most simple and explicit in this case.
\begin{corollary} \label{cor:J32=J42=J13=0}
Let $Q_{12}, Q_{21} \in W_1^n[0,1]$ for some $n \ge 2$. Let the following condition hold
\begin{equation} \label{eq:Q12j0.Q21j1}
 J_{32} = J_{13} = J_{42} = 0, \qquad J_{14} \ne 0, \qquad
 Q_{12}^{(j_0)}(0) Q_{21}^{(j_1)}(1) \ne 0,
\end{equation}
for some $j_0, j_1 \in \{0, 1, \ldots, n-2\}$ such that $j_0 + j_1 \le n-2$.
Then the system of root vectors of the BVP~\eqref{eq:system}--\eqref{eq:Udef} is
complete and minimal in $L^2([0,1];\bC^2)$.
\end{corollary}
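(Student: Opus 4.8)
The plan is to invoke the general completeness criterion of Theorem~\ref{th:compl.gen.2x2}. Since $J_{14} \ne 0$, the second inequality in~\eqref{eq:ck+ck-.ne0} holds automatically, so the whole task reduces to exhibiting a single index $k \in \oneton$ with $c_k^+ \ne 0$ (recall $J_{32} = 0$, so the term $c_0^+ = J_{32}$ contributes nothing). First I would substitute $J_{13} = J_{42} = 0$ into formula~\eqref{eq:ck+}, which collapses $c_k^+$ to the pure convolution
\begin{equation*}
 c_k^+ = - J_{14} \sum_{j=1}^{k-1} (-1)^j \sigma_j^-(0)\, \sigma_{k-j}^+(1).
\end{equation*}
Thus everything hinges on the vanishing pattern of the two sequences $\{\sigma_j^-(0)\}$ and $\{\sigma_j^+(1)\}$.

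Next I would reduce to the sharpest hypotheses by replacing $j_0$ and $j_1$ with the minimal indices at which $Q_{12}^{(\cdot)}(0)$ and $Q_{21}^{(\cdot)}(1)$ first fail to vanish; these are no larger than the given ones, so the constraint $j_0 + j_1 \le n-2$ is preserved, and I may assume $Q_{12}^{(j)}(0) = 0$ for $0 \le j < j_0$ and $Q_{21}^{(j)}(1) = 0$ for $0 \le j < j_1$. In the notation~\eqref{eq:q+q-} this reads $q_+^{(j)}(0) = 0$ for $j < j_0$ with $q_+^{(j_0)}(0) \ne 0$, and $q_-^{(j)}(1) = 0$ for $j < j_1$ with $q_-^{(j_1)}(1) \ne 0$. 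Now Corollary~\ref{cor:q0.sigma0}(ii), applied at $a = 0$ with $m = j_0$, gives $\sigma_j^-(0) = 0$ for $1 \le j \le j_0$ and $\sigma_{j_0+1}^-(0) \ne 0$, while Corollary~\ref{cor:q0.sigma0}(i), applied at $a = 1$ with $m = j_1$, gives $\sigma_j^+(1) = 0$ for $1 \le j \le j_1$ and $\sigma_{j_1+1}^+(1) \ne 0$.

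The decisive step is then to choose $k = j_0 + j_1 + 2$, which lies in $\oneton$ precisely because $j_0 + j_1 \le n-2$. In the convolution above, a summand $\sigma_j^-(0)\,\sigma_{k-j}^+(1)$ can be nonzero only if simultaneously $j \ge j_0 + 1$ and $k - j \ge j_1 + 1$; with $k = j_0+j_1+2$ these two inequalities force $j = j_0+1$. Hence exactly one term survives and
\begin{equation*}
 c_{j_0+j_1+2}^+ = - J_{14}\, (-1)^{j_0+1}\, \sigma_{j_0+1}^-(0)\, \sigma_{j_1+1}^+(1) \ne 0 ,
\end{equation*}
since each of the three factors is nonzero. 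Therefore the first condition in~\eqref{eq:ck+ck-.ne0} holds, and Theorem~\ref{th:compl.gen.2x2} yields completeness and minimality.

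I expect the only genuine subtlety to be this final triangular collapse of the convolution: one must use the \emph{two-sided} sharp information from Corollary~\ref{cor:q0.sigma0}, namely both the vanishing of the low-index $\sigma$'s and the non-vanishing at the critical indices $j_0+1$ and $j_1+1$, to guarantee that no other summand interferes with the single surviving product. The passage to minimal $j_0, j_1$ is what makes the argument clean, and verifying $k = j_0 + j_1 + 2 \le n$ is the only arithmetic point to track.
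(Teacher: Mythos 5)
Your proposal is correct and follows essentially the same route as the paper's own proof: reduce to minimal $j_0, j_1$, use Corollary~\ref{cor:q0.sigma0} to pin down the vanishing/non-vanishing of $\sigma_j^-(0)$ and $\sigma_j^+(1)$, and collapse the convolution in $c_{j_0+j_1+2}^+$ to the single surviving term $-J_{14}(-1)^{j_0+1}\sigma_{j_0+1}^-(0)\sigma_{j_1+1}^+(1) \ne 0$ before invoking Theorem~\ref{th:compl.gen.2x2}. The only (harmless) difference is that you make the passage to minimal indices explicit for both $j_0$ and $j_1$, whereas the paper states it only for $j_0$ and treats $j_1$ by analogy.
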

\begin{proof}
As in the previous results, we will verify condition~\eqref{eq:ck+ck-.ne0} from Theorem~\ref{th:compl.gen.2x2}. Since $J_{14} \ne 0$, it follows that $c_0^- = J_{14} \ne 0$. Due to the assumption $J_{32} = J_{13} = J_{42} = 0$, formula~\eqref{eq:ck+} simplifies to
\begin{equation} \label{eq:ck+.J14}
 c_k^+ = - J_{14} \sum_{j=1}^{k-1} (-1)^j \sigma_j^-(0) \sigma_{k-j}^+(1),
 \qquad k \in \oneton.
\end{equation}
Since $Q_{12}^{(j_0)}(0) \ne 0$, without loss of generality we can assume that
$Q_{12}^{(j)}(0) = 0$ for $j \in \{0, \ldots, j_0-1\}$.
With account of notation~\eqref{eq:q+q-} we have
\begin{equation} \label{eq:q+j=0.k.ne.0}
 q_+^{(j)}(0) = 0, \quad j \in \{0, \ldots, j_0-1\},
 \qquad q_+^{(j_0)}(0) \ne 0.
\end{equation}
Combining Corollary~\ref{cor:q0.sigma0}(ii)
with condition~\eqref{eq:q+j=0.k.ne.0} now implies
\begin{equation} \label{eq:sigma-.j=0}
 \sigma^-_{j}(0) = 0, \quad j \in \oneto{j_0}, \qquad
 \sigma^-_{j_0 + 1}(0) = (-1)^{j_0} q_+^{(j_0)}(0) \ne 0.
\end{equation}
Similarly, we infer from~\eqref{eq:q-j=0} and condition $Q_{21}^{(j_1)}(1) \ne 0$ that
\begin{equation} \label{eq:sigma+.j=0}
 \sigma^+_{j}(1) = 0, \quad j \in \oneto{j_1}, \qquad
 \sigma^+_{j_1 + 1}(1) = (-1)^{j_1} q_-^{(j_1)}(1) \ne 0.
\end{equation}
In turn, combining~\eqref{eq:sigma-.j=0} with~\eqref{eq:sigma+.j=0} yields
\begin{equation} \label{eq:sigma.j.sigma.k-j=0}
 \sigma_j^-(0) \sigma_{k^+ -j}^+(1) = 0, \quad k^+ := j_0 + j_1 + 2 \le n, \quad
 j \ne j_0+1,
\end{equation}
for $1 \le j \le k^+ - 1$. Combining~\eqref{eq:ck+.J14} with relations~\eqref{eq:sigma-.j=0}--\eqref{eq:sigma.j.sigma.k-j=0} and condition $J_{14} \ne 0$, we arrive at
\begin{equation} \label{eq:ck+.J14.ne0}
 c_{k^+}^+ = - J_{14} (-1)^{j_0+1} \sigma_{j_0+1}^-(0) \sigma_{j_1+1}^+(1)
 \ne 0.
\end{equation}
Therefore, $c_0^- \ne 0$ and $c_{k^+}^+ \ne 0$. Hence Theorem~\ref{th:compl.gen.2x2} implies the desired completeness property.
\end{proof}
\begin{remark}
If $j_0 = j_1 = 1$ in condition~\eqref{eq:Q12j0.Q21j1} $($i.e.\ $Q_{12}(0) Q_{21}(1) \ne 0)$,
then Proposition~\ref{prop:2x2.notR} guarantees the desired completeness property under much more relaxed smoothness assumption of continuity of $Q(\cdot)$ at the endpoints, as opposed to smoothness assumption $Q \in W_1^2([0,1]; \bC^{2 \times 2})$ imposed by Corollary~\ref{cor:J32=J42=J13=0}. This shows some advantage of the Birkhoff solution approach used in~\cite{LunMal15JST}.
\end{remark}
If $J_{32} = J_{13} = 0$ and $J_{42} \ne 0$, then condition of completeness is also very simple and explicit.
\begin{corollary} \label{cor:J32=J13=0}
Let $Q_{12}, Q_{21} \in W_1^n[0,1]$ for some $n \in \bN$. Let the following condition hold
\begin{equation} \label{eq:J32=J13=0}
 J_{32} = J_{13} = 0, \qquad J_{42} \ne 0, \qquad
 Q_{21}^{(j_1)}(1) \ne 0, \qquad |J_{14}| + |Q_{21}^{(j_0)}(0)| \ne 0,
\end{equation}
for some $j_0, j_1 \in \{0, 1, \ldots, n-1\}$. Then the system of root vectors of the BVP~\eqref{eq:system}--\eqref{eq:Udef} is
complete and minimal in $L^2([0,1];\bC^2)$.
\end{corollary}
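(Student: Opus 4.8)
The plan is to deduce the statement from Theorem~\ref{th:compl.gen.2x2} by verifying its two nonvanishing conditions~\eqref{eq:ck+ck-.ne0}. The starting point is the observation that, because $J_{32}=J_{13}=0$, the general formulas~\eqref{eq:ck+}--\eqref{eq:ck-} collapse dramatically to
\[
 c_k^+ = J_{42}\,\sigma_k^+(1) - J_{14}\sum_{j=1}^{k-1}(-1)^j \sigma_j^-(0)\,\sigma_{k-j}^+(1),
 \qquad c_k^- = J_{42}\,\sigma_k^+(0).
\]
Thus the entire argument reduces to locating a single nonzero value among the numbers $\sigma_k^+(0)$ and $\sigma_k^+(1)$, which is exactly what Corollary~\ref{cor:q0.sigma0} supplies once we identify the first nonvanishing derivative of $q_-$ at the relevant endpoint.

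First I would settle the $c^+$ side (where $J_{32}=0$, so we must produce some $c_k^+\ne 0$). Since $Q_{21}^{(j_1)}(1)\ne 0$, i.e. $q_-^{(j_1)}(1)\ne 0$ with $j_1\le n-1$, let $m_1\le j_1$ be the smallest index with $q_-^{(m_1)}(1)\ne 0$. Corollary~\ref{cor:q0.sigma0}(i), applied at the endpoint $1$, then gives $\sigma_j^+(1)=0$ for $j\in\oneto{m_1}$ and $\sigma_{m_1+1}^+(1)=(-1)^{m_1}q_-^{(m_1)}(1)\ne 0$. The crucial point is that in $c_{m_1+1}^+$ every factor $\sigma_{m_1+1-j}^+(1)$ occurring in the quadratic cross-term has index $m_1+1-j\in\oneto{m_1}$ and hence vanishes, so the whole nonlinear sum drops out irrespective of the value of $J_{14}$. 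This leaves $c_{m_1+1}^+=J_{42}\sigma_{m_1+1}^+(1)\ne 0$ because $J_{42}\ne 0$; and $m_1+1\le n$, so this lies in the admissible range.

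Next I would handle the $c^-$ side, splitting on $J_{14}$. If $J_{14}\ne 0$, the second condition in~\eqref{eq:ck+ck-.ne0} holds trivially. If $J_{14}=0$, then the hypothesis $|J_{14}|+|Q_{21}^{(j_0)}(0)|\ne 0$ forces $q_-^{(j_0)}(0)\ne 0$; taking the smallest index $m_0\le j_0$ with $q_-^{(m_0)}(0)\ne 0$ and applying Corollary~\ref{cor:q0.sigma0}(i) at the endpoint $0$ yields $\sigma_{m_0+1}^+(0)\ne 0$, whence $c_{m_0+1}^-=J_{42}\sigma_{m_0+1}^+(0)\ne 0$ with $m_0+1\le n$. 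In both cases condition~\eqref{eq:ck+ck-.ne0} is established, and Theorem~\ref{th:compl.gen.2x2} delivers completeness and minimality.

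Once the reduction is in place the computation is short, and the only step requiring genuine care — the closest thing to an obstacle — is verifying that the quadratic cross-term in $c_{m_1+1}^+$ vanishes identically: a priori the coefficient $J_{14}$ could have contaminated the leading coefficient $J_{42}\sigma_{m_1+1}^+(1)$ and forced an extra algebraic condition on $J_{14}$. What rescues the argument is the symmetric appearance of the summation index $m_1+1-j$, which keeps every surviving $\sigma^+(1)$-factor below the first nonzero order. This is precisely why, in contrast with the cases $J_{32}=J_{14}=0$ treated earlier, the completeness condition here becomes so transparent and involves only the behaviour of $Q_{21}(\cdot)$ at the two endpoints.
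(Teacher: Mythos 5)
Your proposal is correct and follows essentially the same route as the paper: reduce to Theorem~\ref{th:compl.gen.2x2} via the simplified formulas~\eqref{eq:ck+.J32=J13=0}--\eqref{eq:ck-.J32=J13=0}, use Corollary~\ref{cor:q0.sigma0} at the first nonvanishing derivative of $q_-$ (your explicit $m_1,m_0$ are the paper's ``without loss of generality'' normalization of $j_1,j_0$), observe that the cross-term in $c_{m_1+1}^+$ dies because all factors $\sigma_{m_1+1-j}^+(1)$ have index at most $m_1$, and split on whether $J_{14}\ne 0$ for the $c^-$ side. No gaps; this matches the paper's argument step for step.
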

\begin{proof}
Since $J_{32} = J_{13} = 0$, formulas~\eqref{eq:ck+}--\eqref{eq:ck-} for
coefficients $c_k^{\pm}$, $k \in \{0,1, \ldots, n\}$, take the form
\begin{align}
\label{eq:ck+.J32=J13=0}
 & c_0^+ = 0, \qquad c_k^+ := J_{42} \sigma_k^+(1)
 - J_{14} \sum_{j=1}^{k-1} (-1)^j \sigma_j^-(0) \sigma_{k-j}^+(1), \\
\label{eq:ck-.J32=J13=0}
 & c_0^- = J_{14}, \qquad c_k^- := J_{42} \sigma_k^+(0).
\end{align}
Since $Q_{21}^{(j_1)}(1) \ne 0$, then just as in the proof of Corollary~\ref{cor:J32=J42=J13=0} we infer relation~\eqref{eq:sigma+.j=0}. In turn, combining~\eqref{eq:ck+.J32=J13=0} with~\eqref{eq:sigma+.j=0} and condition $J_{42} \ne 0$ implies
\begin{equation} \label{eq:cj1+1}
 c_{j_1 + 1}^+ = J_{42} \sigma_{j_1 + 1}^+(1)
 - J_{14} \sum_{j=1}^{j_1} (-1)^j \sigma_j^-(0) \cdot
 \underset{\text{$ = 0$}}{\underbrace{\sigma_{j_1+1-j}^+(1)}}
 = J_{42} (-1)^{j_1} q_-^{(j_1)}(1) \ne 0.
\end{equation}

First, let $J_{14} \ne 0$. Then
combining~\eqref{eq:cj1+1} with Theorem~\ref{th:compl.gen.2x2} implies the desired completeness property.

Now, let $J_{14} = 0$. Then condition~\eqref{eq:J32=J13=0} implies that $Q_{21}^{(j_0)}(0) \ne 0$. Similarly to how relation~\eqref{eq:sigma-.j=0} was established in the proof of Corollary~\ref{cor:J32=J42=J13=0} we derive that
\begin{equation} \label{eq:sigma+.j0=0}
 \sigma^+_{j}(0) = 0, \quad j \in \oneto{j_0}, \qquad
 \sigma^+_{j_0 + 1}(0) = (-1)^{j_0} q_-^{(j_0)}(0) \ne 0.
\end{equation}
Since $J_{42} \ne 0$, then combining~\eqref{eq:sigma+.j0=0} with~\eqref{eq:ck-.J32=J13=0} implies that $c_{j_0+1}^+ \ne 0$. Theorem~\ref{th:compl.gen.2x2} completes the proof.
\end{proof}
For reader convenience we also formulate the previous results in a similar case $J_{14} = J_{42} = 0$.
\begin{corollary} \label{cor:J14=J42=0}
Let $Q_{12}, Q_{21} \in W_1^n[0,1]$ for some $n \in \bN$ and let
\begin{equation}
 J_{14} = J_{42} = 0, \qquad Q_{21}^{(j_0)}(0) \ne 0,
\end{equation}
for some $j_0 \in \{0, 1, \ldots, n-1\}$. Let one of the following conditions hold
\begin{align}
 (i) \quad & J_{13} = 0, \quad J_{32} \ne 0, \quad Q_{12}^{(j_1)}(1) \ne 0
 \quad\text{for some}\quad 0 \le j_1 \le n-2-j_0, \\
 (ii) \quad & J_{13} \ne 0, \quad |J_{32}| + |Q_{21}^{(j_1)}(1)| \ne 0,
 \quad\text{for some}\quad j_1 \in \{0, 1, \ldots, n-1\}.
\end{align}
Then the system of root vectors of the BVP~\eqref{eq:system}--\eqref{eq:Udef} is complete and minimal in $L^2([0,1];\bC^2)$.
\end{corollary}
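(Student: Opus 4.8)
The plan is to verify the two lower bounds of Theorem~\ref{th:compl.gen.2x2}, namely $|J_{32}| + \sum_{k=1}^n |c_k^+| \ne 0$ and $|J_{14}| + \sum_{k=1}^n |c_k^-| \ne 0$, so that completeness follows from the asymptotic expansion of $\Delta_Q$. Since $J_{14} = J_{42} = 0$, formulas~\eqref{eq:ck+}--\eqref{eq:ck-} collapse to
\[
 c_k^+ = J_{13} (-1)^{k-1} \sigma_k^-(0), \qquad
 c_k^- = J_{13} (-1)^{k-1} \sigma_k^-(1) + J_{32} \sum_{j=1}^{k-1} (-1)^j \sigma_j^-(1) \sigma_{k-j}^+(0),
\]
together with $c_0^+ = J_{32}$ and $c_0^- = J_{14} = 0$. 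The quantitative input is Corollary~\ref{cor:q0.sigma0}, which converts a block of vanishing derivatives of $Q_{12}$ or $Q_{21}$ at an endpoint into a matching block of vanishing $\sigma_k^\pm$ plus the first nonzero one; in particular the standing hypothesis $Q_{21}^{(j_0)}(0)\ne 0$ gives $\sigma_j^+(0)=0$ for $j\le j_0$ and $\sigma_{j_0+1}^+(0)\ne 0$. I would also keep in reserve the reflection $x\mapsto 1-x$, which preserves the normalization $b_1<0<b_2$ and sends $(J_{32},J_{14},J_{13},J_{42})$ to $(J_{14},J_{32},-J_{13},-J_{42})$ and each $Q_\bullet(\cdot)$ to $-Q_\bullet(1-\cdot)$; this is the device that identifies the present degenerate configuration with those already treated.

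For case (i) I would argue exactly as in Corollary~\ref{cor:J32=J42=J13=0}. Since $J_{13}=0$, every $c_k^+$ with $k\ge 1$ vanishes while $c_0^+ = J_{32}\ne 0$, so the upper half-plane condition is immediate. For the lower one, Corollary~\ref{cor:q0.sigma0} applied to $Q_{12}^{(j_1)}(1)\ne 0$ yields $\sigma_j^-(1)=0$ for $j\le j_1$ and $\sigma_{j_1+1}^-(1)\ne 0$, while the standing hypothesis gives the analogous statement for $\sigma_\bullet^+(0)$. Choosing $k=j_0+j_1+2$, which is $\le n$ precisely because $j_1\le n-2-j_0$, all terms of the $J_{32}$-sum in $c_k^-$ except $j=j_1+1$ are annihilated by one of the two vanishing blocks, leaving
\[
 c_{j_0+j_1+2}^- = J_{32} (-1)^{j_1+1}\, \sigma_{j_1+1}^-(1)\, \sigma_{j_0+1}^+(0) \ne 0 .
\]
Equivalently — and this is the cleaner route — the reflection above turns case (i) into precisely the hypotheses of Corollary~\ref{cor:J32=J42=J13=0}, whence completeness is inherited directly.

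Case (ii) is where the real work lies. Here $J_{13}\ne 0$, so the coefficients are now governed through the $J_{13}$-terms by the family $\sigma_k^-$, which is driven by $Q_{12}$, together with the $J_{32}$ cross-term driven by $\sigma^+$ (hence $Q_{21}$). The plan is again to exhibit one surviving $c_k^+$ and one surviving $c_k^-$ by using Corollary~\ref{cor:q0.sigma0} to pin down the first nonvanishing $\sigma$, and then to check that the cross-sum in $c_k^-$ contributes a single nonzero product at the matching index, just as in case~(i). I expect the main obstacle to be the bookkeeping of \emph{which endpoint data actually controls each half-plane}: the upper-plane quantity $c_k^+=J_{13}(-1)^{k-1}\sigma_k^-(0)$ and the principal part $J_{13}(-1)^{k-1}\sigma_k^-(1)$ of $c_k^-$ are both built from the $\sigma^-$-family rather than from the hypothesized data at $0$, so the decisive step is to show that the combination $J_{13}\ne 0$ and $|J_{32}|+|Q_{21}^{(j_1)}(1)|\ne 0$ — fed into the cross-term via $\sigma^+(0)$ — forces a nonzero coefficient in each of the cones $\Im\l\to\pm\infty$. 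Here too the reflection symmetry, which interchanges the two endpoints and the roles of $J_{32}$ and $J_{14}$, is the natural tool for reducing the delicate lower-plane estimate to the upper one; and the degree ceiling $k\le n$ of the expansion in Theorem~\ref{th:Delta} is what ultimately dictates the admissible range of the indices $j_0,j_1$.
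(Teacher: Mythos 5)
Your treatment of case (i) is correct and is exactly what the paper intends (the paper states this corollary without proof, as a reformulation ``for reader convenience'' of Corollaries~\ref{cor:J32=J42=J13=0} and~\ref{cor:J32=J13=0}): with $J_{14}=J_{42}=0$ your reduced formulas for $c_k^\pm$ agree with~\eqref{eq:ck+}--\eqref{eq:ck-}, $c_0^+=J_{32}\ne 0$ handles the upper half-plane, and at $k=j_0+j_1+2\le n$ the single surviving term $c_k^-=J_{32}(-1)^{j_1+1}\sigma_{j_1+1}^-(1)\,\sigma_{j_0+1}^+(0)\ne 0$ is right; your reflection $x\mapsto 1-x$ (with $\tilde{Q}(x)=-Q(1-x)$, $\l\mapsto-\l$) onto Corollary~\ref{cor:J32=J42=J13=0} is an equally valid route. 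One small elision: Corollary~\ref{cor:q0.sigma0} presupposes that the lower-order derivatives vanish, so before invoking it you must replace $j_0,j_1$ by the \emph{minimal} indices with nonvanishing derivative (the ``without loss of generality'' step in the proof of Corollary~\ref{cor:J32=J42=J13=0}); this is harmless since the constraint $j_0+j_1\le n-2$ only improves.

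Case (ii), however, you leave as a plan, and this gap cannot be closed as the statement is printed --- your own diagnosis of the obstacle is exactly the point. With $J_{14}=J_{42}=0$, \emph{every} coefficient $c_k^+=J_{13}(-1)^{k-1}\sigma_k^-(0)$ and every term of $c_k^-=J_{13}(-1)^{k-1}\sigma_k^-(1)+J_{32}\sum_{j=1}^{k-1}(-1)^j\sigma_j^-(1)\sigma_{k-j}^+(0)$ carries a factor from the family $\sigma_k^-$, which by~\eqref{eq:sigma1}--\eqref{eq:sigma(k+1)} vanishes identically when $Q_{12}\equiv 0$ (since $\sigma_1^-=q_+$ and the recursion propagates zero); so the hypotheses of (ii), which constrain only $Q_{21}$, cannot force condition~\eqref{eq:ck+ck-.ne0}. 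Indeed (ii) as printed is false: take $Q_{12}\equiv 0$, $Q_{21}\equiv 1$ and the boundary conditions $y_1(0)=0$, $y_1(1)+y_2(0)=0$, so that $J_{32}=J_{14}=J_{42}=0$, $J_{13}=1$ and $j_0=j_1=0$ are admissible; then $\varphi_{12}\equiv 0$ and~\eqref{eq:Delta} gives $\Delta_Q(\l)=J_{12}=1$, the spectrum is empty, and the root system is trivially incomplete. The correct statement of (ii) is the image of Corollary~\ref{cor:J32=J13=0} under the component swap $y_1\leftrightarrow y_2$ (which sends $(J_{32},J_{14},J_{13},J_{42})\mapsto(-J_{14},-J_{32},-J_{42},-J_{13})$ and interchanges $Q_{12}\leftrightarrow Q_{21}$ up to sign, after renormalizing $B$ via $\l\mapsto-\l$), namely: $J_{13}\ne 0$, $Q_{12}^{(j_1)}(1)\ne 0$, and $|J_{32}|+|Q_{12}^{(j_0)}(0)|\ne 0$. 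With that reading your method finishes in two lines: after the minimality reduction, $c_{j_1+1}^-=J_{13}(-1)^{j_1}\sigma_{j_1+1}^-(1)\ne 0$ because every cross term contains $\sigma_j^-(1)=0$, $j\le j_1$; and either $c_0^+=J_{32}\ne 0$ or $c_{j_0+1}^+=J_{13}(-1)^{j_0}\sigma_{j_0+1}^-(0)\ne 0$. In particular, your hope that the $J_{32}$ cross-term ``fed via $\sigma^+(0)$'' might rescue the printed hypotheses is vain, since each such term is multiplied by $\sigma_j^-(1)$.
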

In the case when potential matrix function $Q(\cdot)$ is analytic,
Corollary~\ref{cor:J32=J13=0} yields the following criterion of completeness of the system of root vectors for BVP~\eqref{eq:system}--\eqref{eq:Udef} for a certain class of boundary conditions. Note that if $b_1+b_2 \ne 0$ then any degenerate boundary conditions (either $\Delta_0(\cdot) \equiv 0$ or $\Delta_0(\cdot)$ has no zeros) can be transformed to a form~\eqref{eq:y2(1)=0} using transformations $y_1 \leftrightarrow y_2$ and $x \rightarrow 1-x$.
\begin{corollary} \label{cor:criterion}
Let $Q(\cdot)$ be an analytic matrix function on $[0,1]$, and let boundary
conditions~\eqref{eq:Udef} be of the form
\begin{equation} \label{eq:y2(1)=0}
 \alpha_1 y_1(0) + \alpha_2 y_2(0) + \alpha_3 y_1(1) = 0,
 \quad y_2(1)=0, \quad \alpha_2 \ne 0.
\end{equation}
Then the system of root vectors of the BVP~\eqref{eq:system},
\eqref{eq:y2(1)=0} is complete in $L^2([0,1];\bC^2)$ if and only if
$Q_{21}(\cdot) \not \equiv 0$.
\end{corollary}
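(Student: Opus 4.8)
The plan is to reduce the statement to Corollary~\ref{cor:J32=J13=0} for the sufficiency direction and to a direct structural argument for the necessity direction. First I would translate the boundary conditions~\eqref{eq:y2(1)=0} into the coefficient array $(a_{jk})$, reading off $a_{11}=\alpha_1$, $a_{12}=\alpha_2$, $a_{13}=\alpha_3$, $a_{14}=0$ and $a_{21}=a_{22}=a_{23}=0$, $a_{24}=1$. Computing the $2\times 2$ minors in~\eqref{eq:Ajk.Jjk.def} then gives $J_{32}=J_{13}=J_{12}=0$, $J_{42}=-\alpha_2$, $J_{14}=\alpha_1$, $J_{34}=\alpha_3$. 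In particular $J_{32}=J_{13}=0$ and, by the hypothesis $\alpha_2\neq 0$, also $J_{42}\neq 0$, which places us exactly in the hypotheses of Corollary~\ref{cor:J32=J13=0}.

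For the \emph{sufficiency} ($Q_{21}\not\equiv 0 \Rightarrow$ completeness) I would exploit analyticity. Since $Q(\cdot)$ is analytic on $[0,1]$, both $Q_{12},Q_{21}$ belong to $W_1^n[0,1]$ for every $n$, and since $Q_{21}\not\equiv 0$ is analytic it cannot vanish to infinite order at either endpoint; hence there exist $j_0,j_1$ with $Q_{21}^{(j_0)}(0)\neq 0$ and $Q_{21}^{(j_1)}(1)\neq 0$. Choosing $n>\max\{j_0,j_1\}$, all conditions of Corollary~\ref{cor:J32=J13=0} are met: $Q_{21}^{(j_1)}(1)\neq 0$ holds, and $|J_{14}|+|Q_{21}^{(j_0)}(0)|\neq 0$ holds regardless of whether $\alpha_1=J_{14}$ vanishes. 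Completeness then follows immediately. It is precisely here that analyticity is essential, since a merely smooth $Q_{21}\not\equiv 0$ could be flat at both endpoints and leave the corollary inapplicable.

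For the \emph{necessity} ($Q_{21}\equiv 0 \Rightarrow$ incompleteness) I would show that every root function has vanishing second component. With $Q_{21}\equiv 0$ the second scalar equation of~\eqref{eq:system} decouples: for a Jordan chain $y^{(0)},\ldots,y^{(m-1)}$ at an eigenvalue $\lambda_0$, the second components $w^{(j)}:=y_2^{(j)}$ satisfy $(-i b_2^{-1} D-\lambda_0)w^{(0)}=0$ and $(-i b_2^{-1} D-\lambda_0)w^{(j)}=w^{(j-1)}$ for $j\geq 1$, where $D=d/dx$, together with $w^{(j)}(1)=U_2(y^{(j)})=0$. Solving from the bottom of the chain, $w^{(0)}(x)=c\,e^{ib_2\lambda_0 x}$ and $w^{(0)}(1)=0$ force $c=0$, and an induction on $j$ then yields $w^{(j)}\equiv 0$ for all $j$. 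Thus every root function lies in the proper closed subspace $\{y\in L^2([0,1];\bC^2):y_2=0\}$, so the closed span of the root system omits $\{0\}\times L^2[0,1]$ and the system cannot be complete.

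The main obstacle is not any single hard estimate but the bookkeeping and the handling of degenerate sub-cases. On the one hand I must correctly match the non-standard boundary data to the determinants $J_{jk}$; on the other hand the necessity argument must remain watertight when $\alpha_1=0$, when $\alpha_3=0$, and in the fully degenerate case $\alpha_1=\alpha_3=0$ (where $\Delta_Q\equiv 0$ and every $\lambda$ is an eigenvalue). The decoupling argument above is built to be uniform across these cases because it uses only the boundary condition $y_2(1)=0$ and never the characteristic determinant $\Delta_Q$ itself; the one point I would verify carefully is that the same induction indeed applies in the fully degenerate regime.
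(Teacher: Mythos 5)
Your proposal is correct and follows essentially the same route as the paper: the minors $J_{32}=J_{13}=0$, $J_{42}=-\alpha_2\ne 0$ reduce sufficiency to Corollary~\ref{cor:J32=J13=0} via the analyticity argument producing $Q_{21}^{(j_0)}(0)\ne 0$ and $Q_{21}^{(j_1)}(1)\ne 0$, while necessity follows from the decoupled initial value problem $-ib_2^{-1}y_2'=\l y_2$, $y_2(1)=0$ forcing $y_2\equiv 0$. Your explicit Jordan-chain induction for the necessity direction merely spells out what the paper asserts tersely for root vectors, and it correctly covers the fully degenerate case $\alpha_1=\alpha_3=0$.
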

\begin{proof}
It follows from the form of boundary conditions~\eqref{eq:y2(1)=0} that
$J_{32} = J_{13} = 0$ and $J_{42} = -\alpha_2 \ne 0$.

Let $Q_{21}(\cdot) \not\equiv 0$. Since $Q_{21}(\cdot)$ is analytic on $[0, 1]$, then for some $j_0, j_1 \in \bZ_+$ we have
\begin{equation}
 Q_{21}^{(j_1)}(1) \ne 0 \quad\text{and}\quad Q_{21}^{(j_0)}(0) \ne 0,
\end{equation}
which trivially implies condition~\eqref{eq:J32=J13=0}.
Hence Corollary~\ref{cor:J32=J13=0} implies that the system of root vectors of the BVP~\eqref{eq:system}, \eqref{eq:y2(1)=0} is complete in $L^2([0,1];\bC^2)$.

If $Q_{21}(\cdot) \equiv 0$ then for any solution $y = \col(y_1, y_2)$ of the
system~\eqref{eq:system}, \eqref{eq:y2(1)=0} we have
\begin{equation} \label{eq:y2'=y2}
 -i b_2^{-1} y_2'(x) = \l y_2(x), \quad x \in [0, 1], \qquad y_2(1) = 0.
\end{equation}
Clearly~\eqref{eq:y2'=y2} is an initial value problem that has the only solution
$y_2(\cdot) \equiv 0$. Thus, any root vector of the system~\eqref{eq:system},
\eqref{eq:y2(1)=0} is of the form $y_n = \col(y_{n,1}, 0)$ and hence the
system of root vectors of the BVP~\eqref{eq:system}, \eqref{eq:y2(1)=0} is
incomplete in $L^2([0,1];\bC^2)$ and of infinite co-dimension. The proof is now complete.
\end{proof}


\begin{thebibliography}{10}
\addcontentsline{toc}{section}{References}
\bibitem{AgiMalOri12} A.V.~Agibalova, M.M.~Malamud and L.L.~Oridoroga, On the completeness of general boundary value problems for $2 \times 2$ first-order systems of ordinary differential equations, \emph{Methods of Functional Analysis and Topology}, \textbf{18} (1) (2012), pp. 4--18.

\bibitem{BirLan23} G.D.~Birkhoff and R.E.~Langer, The boundary problems and developments associated with a system of ordinary differential equations of the first order, \emph{Proc. Amer. Acad. Arts Sci.} \textbf{58} (1923), pp. 49--128.
\bibitem{DjaMit10} P.~Djakov and B.~Mityagin, Bari-Markus property for Riesz projections of 1D periodic Dirac operators, \emph{Math. Nachr.} \textbf{283} (3) (2010), pp. 443--462.
\bibitem{DjaMit12Crit} P.~Djakov and B.~Mityagin, Criteria for existence of Riesz bases consisting of root functions of Hill and 1D Dirac operators, \emph{J. Funct. Anal.} \textbf{263} (8) (2012), pp. 2300--2332.
\bibitem{DjaMit12UncDir} P.~Djakov and B.~Mityagin, Unconditional convergence of spectral decompositions of 1D Dirac operators with regular boundary conditions, \emph{Indiana Univ. Math. J.} \textbf{61} (1) (2012), pp. 359--398.
\bibitem{Gin71} Yu.P.~Ginzburg, The almost invariant spectral propeties of contractions and the multiplicative properties of analytic operator-functions, \emph{Funct. Anal. Appl.} \textbf{5} (3) (1971), pp. 197--205.
\bibitem{GomRze20} A.M.~Gomilko and L.~Rzepnicki, On asymptotic behaviour of solutions of the Dirac system and applications to the Sturm-Liouville problem with a singular potential, \emph{Journal of Spectral Theory} \textbf{10} (3) (2020), pp.~747--786.
\bibitem{Iba23} E.C.~Ibadov, On the Properties of the Root Vector Function Systems of 2$m$th-Order Dirac Type Operator with an Integrable Potential, \emph{Diff Equat} \textbf{59} (2023), pp.~1295--1314.
\bibitem{KosShk21} A.P.~Kosarev and A.A.~Shkalikov, Spectral asymptotics of solutions of a $2 \times 2$ system of first-order ordinary differential equations,
\emph{Math. Notes} \textbf{110} (5-6) (2021), pp. 967--971.
\bibitem{KosShk23} A.P.~Kosarev and A.A.~Shkalikov, Asymptotics in the spectral parameter for solutions of $2 \times 2$ systems of ordinary differential equations,
\emph{Math. Notes} \textbf{114} (3-4) (2023), pp. 472--488 (arXiv:2212.0622).
\bibitem{KurAbd18} V.M.\;Kurbanov and A.M.\;Abdullayeva, Bessel property and basicity of the system of root vector-functions of Dirac operator with summable coefficient, \emph{Operators and Matrices} \textbf{12} (4) (2018), pp. 943--954.
\bibitem{KurGad20} V.M. Kurbanov and G.R. Gadzhieva, Bessel inequality and the basis property for $2m \times 2m$ Dirac type system with an integrable potential, \emph{Differential Equations} \textbf{56} (5) (2020), pp. 573--584.
\bibitem{Lun23} A.A.~Lunyov, Criterion of Bari basis property for $2 \times 2$ Dirac-type operators with strictly regular boundary conditions, \emph{Math. Nachr.} \textbf{296} (9) (2023), pp.\,4125-4151.
\bibitem{LunMal13Dokl} A.A.~Lunyov and M.M.~Malamud, On the completeness of the root vectors for first order systems, \emph{Dokl. Math.} \textbf{88}(3) (2013), pp. 678--683.
\bibitem{LunMal14IEOT} A.A.~Lunyov and M.M.~Malamud, On Spectral Synthesis for Dissipative Dirac Type Operators, \emph{Integr. Equ. Oper. Theory} \textbf{90} (2014), pp. 79--106.
\bibitem{LunMal14Dokl} A.A.~Lunyov and M.M.~Malamud, On the Riesz Basis Property of the Root Vector System for Dirac-Type $2 \times 2$ Systems, \emph{Dokl. Math.} \textbf{90} (2) (2014), pp. 556--561.
\bibitem{LunMal15JST} A.A.~Lunyov and M.M.~Malamud, On the completeness and Riesz basis property of root subspaces of boundary value problems for first order systems and applications, \emph{J. Spectral Theory} \textbf{5} (1) (2015), pp. 17--70.
\bibitem{LunMal16JMAA} A.A.~Lunyov and M.M.~Malamud, On the Riesz basis property of root vectors system for $2 \times 2$ Dirac type operators, \emph{J. Math. Anal. Appl.} \textbf{441} (2016), pp. 57--103 (arXiv:1504.04954).
\bibitem{LunMal21} A.A.\ Lunyov and M.M.\ Malamud, On transformation operators and Riesz basis property of root vectors system for $n \times n$ Dirac type operators. Application to the Timoshenko beam model. arXiv:2112.07248 (Submitted on 14 Dec 2021).
\bibitem{LunMal22JDE} A.A.\;Lunyov and M.M.\;Malamud, Stability of spectral characteristics of boundary value problems for $2 \times 2$ Dirac type systems. Applications to the damped string, \emph{J.\;Differential Equations} \textbf{313} (2022), pp.\,633--742 (arXiv:2012.11170).
\bibitem{LunMal22POMI} A.~Lunev, M.~Malamud, On characteristic determinants of boundary value problems for Dirac type systems. (Russian) Zap. Nauchn. Sem. S.-Peterburg. Otdel. Mat. Inst. Steklov. (POMI) 516 (2022), Matematicheskie Voprosy Teorii Rasprostraneniya Voln. 52, 69--120.
\bibitem{Mak14} A.S.~Makin, On the completeness of the system of root functions of the Sturm-Liouville operator with degenerate boundary conditions.
\emph{Differential Equations} \textbf{50} (6) (2014), pp. 835--839.
\bibitem{Mak20} A.S.~Makin, Regular boundary value problems for the Dirac operator, \emph{Doklady Mathematics} \textbf{101} (3) (2020), pp. 214--217.
\bibitem{Mak21DE} A.S.~Makin, On the spectrum of two-point boundary value problems for the Dirac operator, \emph{Differential Equations} \textbf{57} (8) (2021), pp. 993--1002.
\bibitem{Mak22} A.S.\ Makin, On convergence of spectral Expansions of Dirac Operators with Regular Boundary Conditions, \emph{Math. Nachr.} \textbf{295} (1) (2022), pp. 189--210 (arXiv:1902.02952).
\bibitem{Mak23} A.S.\ Makin, On the completeness of root function system of the Dirac operator with two-point boundary conditions, arXiv:2304.06108.
\bibitem{Mal94} M.M.~Malamud, Similarity of Volterra operators and related questions of the theory of differential equations of fractional order, \emph{Trans. Moscow Math. Soc.} \textbf{55} (1994), pp. 57--122.
\bibitem{Mal99} M.M.~Malamud, Questions of uniqueness in inverse problems for systems of differential equations on a finite interval, \emph{Trans. Moscow Math. Soc.} \textbf{60} (1999), pp. 173--224.
\bibitem{Mal08} M.M.~Malamud, On the completeness of a system of root vectors of the Sturm-Liouville operator with general boundary conditions, \emph{Funct. Anal. Appl.} \textbf{42} (3) (2008), pp. 198--204.
\bibitem{MalOri00} M.M.~Malamud and L.L.~Oridoroga, Completeness theorems for systems of differential equations, \emph{Funct. Anal. Appl.} \textbf{34} (4) (2000), pp. 308--310.
\bibitem{MalOri10} M.M.~Malamud and L.L.~Oridoroga, On the completeness of the system of root vectors for second-order systems,
\emph{Dokl. Math.} \textbf{82} (3) (2010), pp.~899--904.
\bibitem{MalOri12} M.M.~Malamud and L.L.~Oridoroga, On the completeness of root subspaces of boundary value problems for first order systems of ordinary differential equations, \emph{J. Funct. Anal.} \textbf{263} (2012), pp. 1939--1980.
\bibitem{Mar77} V.A.~Marchenko, Sturm-Liouville operators and applications, \emph{Operator Theory: Advances and Appl.} \textbf{vol. 22}, Birkh\"{a}user Verlag, Basel (1986).
\bibitem{ZMNovPit80} S.P.~Novikov, S.V.~Manakov, L.P.~Pitaevskij and V.E.~Zakharov, Theory of solitons. The inverse scattering method. Springer-Verlag (1984).
\bibitem{Rzep21} L.~Rzepnicki, Asymptotic behavior of solutions of the Dirac system with an integrable potential, \emph{Integral Equations Operator Theory}, \textbf{93}, Article number: 55 (2021), 24 p, arXiv:2011.06510.
\bibitem{SavSad15} A.M.~Savchuk and I.V.~Sadovnichaya, The Riesz basis property with brackets for the Dirac system with a summable potential. \emph{J. Math. Sci. (N.Y.)} \textbf{233} (4) (2018), pp. 514--540.
\bibitem{SavShk14} A.M.~Savchuk and A.A.~Shkalikov, The Dirac Operator with Complex-Valued Summable Potential, \emph{Math. Notes} \textbf{96} (5-6) (2014), pp. 777--810.
\bibitem{SavShk20} A.M.~Savchuk and A.A.~Shkalikov, Asymptotic analysis of solutions of odinary differential equations with distribution coefficients. \emph{Mat. Sb.} \textbf{211} (11) (2020), pp. 129--166 (in Russian).
\end{thebibliography}
\end{document}